\documentclass[12pt]{amsart}

\usepackage[utf8]{inputenc}
\usepackage[english]{babel}
\usepackage[a4paper,twoside,top=1.2in, bottom=1.0in, left=0.8in, right=0.8in]{geometry}

\usepackage{graphicx}
\usepackage{caption} 

\usepackage{amsmath}
\usepackage{amsthm}
\usepackage{amssymb}
\usepackage{esint} 
\usepackage{stmaryrd} 
\usepackage{enumitem} 
\usepackage{mathrsfs} 
\usepackage{faktor} 
\usepackage{dsfont} 
\usepackage{xcolor}
\usepackage{array}
\usepackage{hhline}
\usepackage{comment, verbatim} 
\usepackage{imakeidx} 
\usepackage{xparse} 
\usepackage{mathtools}
\usepackage{fancyhdr} 
\usepackage{ifthen} 
\usepackage{forloop} 
\usepackage{xstring}
\usepackage{emptypage} 
\usepackage{minibox}
\usepackage{galois}
\usepackage[all]{xy}
\usepackage{color}

\definecolor{dkgreen}{rgb}{0,0.6,0}
\definecolor{gray}{rgb}{0.5,0.5,0.5}
\definecolor{mauve}{rgb}{0.58,0,0.82}
\usepackage{listings}
\usepackage[initials]{amsrefs} 

\usepackage{tikz}
\usetikzlibrary{arrows,shapes,patterns,calc,fadings,decorations.pathreplacing,cd}

\usepackage{hyperref} 
\hypersetup{
	colorlinks = true,
	linkcolor = {blue},
	urlcolor = {red},
	citecolor = {blue},
}

\newcounter{results}[section] 

\newcounter{steps}[section] 

\theoremstyle{plain}
\newtheorem{theorem}[results]{Theorem}

\newtheorem{lemma}[results]{Lemma}
\newtheorem{proposition}[results]{Proposition}
\newtheorem{corollary}[results]{Corollary}

\newtheorem*{theorem*}{Theorem}
\newtheorem*{lemma*}{Lemma}
\newtheorem*{proposition*}{Proposition}
\newtheorem*{corollary*}{Corollary}
\newtheorem*{exercise*}{Exercise}
\newtheorem*{fact*}{Fact}

\theoremstyle{remark}
\newtheorem{remark}[results]{Remark}

\theoremstyle{definition}
\newtheorem{definition}[results]{Definition}
\newtheorem{example}[results]{Example}

\newtheorem*{definition*}{Definition}
\newtheorem*{example*}{Example}

\theoremstyle{plain}
\newtheorem{maintheorem}{Theorem}

\newtheorem{maincorollary}[maintheorem]{Corollary}

\usepackage{accents}
\newlength{\dhatheight}

\newcommand{\Z}{\ensuremath{\mathbb Z}}
\newcommand{\R}{\ensuremath{\mathbb R}}
\newcommand{\C}{\ensuremath{\mathbb C}}

\newcommand{\st}{\ensuremath{\ :\ }} 
\newcommand{\id}{\ensuremath{\mathds{1}}}

\DeclareMathOperator{\tr}{tr}

\DeclareMathOperator{\sym}{Sym}

\DeclareMathOperator{\II}{I\!I} 
\DeclareMathOperator{\dist}{dist} 
\DeclareMathOperator{\inj}{inj} 

\DeclareMathOperator{\ind}{index} 
\DeclareMathOperator{\nul}{nullity} 
\DeclareMathOperator{\Ogroup}{O}
\DeclareMathOperator{\SOgroup}{SO}
\DeclareMathOperator{\Stab}{Stab} 
\DeclareMathOperator{\Iso}{Isom} 

\newcommand{\Diff}{\mathrm{Diff}}
\newcommand{\met}{\mathrm{Met}} 

\newcommand{\be}{\begin{equation}}
\newcommand{\ee}{\end{equation}}

\newcommand{\al}{\alpha}

\newcommand{\nocontentsline}[3]{}
\newcommand\stoptoc{%
   \let\origcontentsline\addcontentsline
   \let\addcontentsline\nocontentsline
}
\newcommand\resumetoc{%
   \let\addcontentsline\origcontentsline
}

\newcommand{\BB}{\mathbb{B}}

\newcommand{\RR}{\mathbb{R}}
\newcommand{\SSp}{\mathbb{S}}

\newcommand{\cH}{\mathcal{H}}

\newcommand{\cJ}{\mathcal{J}}

\newcommand{\cM}{\mathcal{M}}

\newcommand{\cT}{\mathcal{T}}

\newcommand{\cX}{\mathcal{X}}

\newcommand{\mbfA}{\mathbf{A}}
\newcommand{\mbfB}{\mathbf{B}}
\newcommand{\mbfC}{\mathbf{C}}

\newcommand{\mbfP}{\mathbf{P}}

\newcommand{\mbfV}{\mathbf{V}}

\newcommand{\mbfx}{\mathbf{x}}

\newcommand{\mbfy}{\mathbf{y}}

\newcommand{\rmI}{\mathrm{I}}
\newcommand{\rmJ}{\mathrm{J}}

\newcommand{\mfr}{\mathfrak{r}}

\newcommand{\scC}{\mathscr{C}}

\newcommand{\scH}{\mathscr{H}}

\newcommand{\scR}{\mathscr{R}}

\newcommand{\scT}{\mathscr{T}}

\newcommand{\orig}{\mathbf{0}}

\newcommand{\Hom}{\operatorname{Hom}}

\newcommand{\spt}{\operatorname{spt}}

\newcommand{\injrad}{\operatorname{injrad}}
\newcommand{\Reg}{\operatorname{Reg}}
\newcommand{\Sing}{\operatorname{Sing}}

\newcommand{\Exp}{\operatorname{Exp}}

\newcommand{\MSI}{\textbf{MSI}}
\newcommand{\conetimes}{\times\!\!\!\!\!\times}
\newcommand{\eucl}{\mathrm{euc}}

\newcommand{\Tr}{\operatorname{tr}}

     \title[Observing MSI on open sets of metrics] 
     {Observing minimal submanifolds with isolated singularities on open sets of metrics}
     \author{Alessandro Carlotto, Yangyang Li, Zhihan Wang}
     \address{
     \newline \indent Alessandro Carlotto: 
     \newline Universit\`a di Trento, Dipartimento di Matematica,
via Sommarive 14, 38123 Trento, Italy
\newline\textit{E-mail address: alessandro.carlotto@unitn.it}
 \newline \newline \indent Yangyang Li: 
\newline Department of Mathematics, University of Notre Dame, 218 Hayes-Healy Center, Notre Dame, IN, 46556, United States of America
       \newline \textit{E-mail address: 
       yangyangli@nd.edu} 
     	   	 \newline \newline \indent Zhihan Wang: 
\newline Department of Mathematics, Cornell University, 310 Malott Hall,
Ithaca, NY, 14853, United States of America
       \newline \textit{E-mail address: zw782@cornell.edu} 
       }

\begin{document}

\begin{abstract} We develop a perturbative theory for minimal submanifolds with only (strongly) isolated singularities, with respect to varying background Riemannian metrics. 
As an application, we obtain a general ``sporadic persistence" criterion, thus naturally complementing the non-persistence theorem by the authors. In particular, such a criterion is satisfied by the spherical suspension in $\mathbb{S}^{3m+2}$ of the Veronese embedding of $\mathbb{K}\mathbb{P}^2$ for $\mathbb{K}=\R,\C,\mathbb{H},\mathbb{O}$ (the normed division algebras) in the round spheres of dimension $3m+1=4,7,13,25$ respectively, thereby providing families of continuously varying $(2m+1)$-dimensional stationary varifolds with isolated singularities that are observable on an open set of metrics. By our previous results, these phenomena cannot possibly occur in the codimension one scenario.
\end{abstract}

\maketitle
    \tableofcontents

\section{Introduction}\label{sec:Intro}

The purpose of the present manuscript is to contribute to the study of minimal submanifolds with isolated singularities (that is, in the language of \cite{CLW24}, of $\MSI$), obtaining some results that complement (essentially: in the \emph{opposite direction}) those contained in our previous work on the subject. More specifically, motivated by a question posed by Neves in relation to Hsiang's solution of the spherical Bernstein conjecture \cite{Hsi83}, we proved in \cite{CLW24} that non-smooth $\MSI$ (albeit easily constructible in space forms, e.\,g. by spherical suspensions of closed, minimal submanifolds) are \emph{generically not observable} (namely: they do not exist for a Baire-generic choice of the background metric) 
except for some \emph{potentially persistent} ones that are singled out by the property of saturating certain inequalities pertaining to the geometry of the links at their singularities. Our scope here is to provide a general, verifiable criterion for actual persistence of $\MSI$ on open sets of metrics (see Theorem \ref{thm:main} for a precise statement) and then to present instances where such a criterion is actually satisfied (see Corollary \ref{cor:main}).  We recall that the aforementioned inequalities in fact \emph{never} hold in the codimension one case, which rules out any sort of ``cheap" examples, and that they rely on the actual computation of the Morse index of (smooth, closed) minimal submanifolds in round spheres, that is a notoriously challenging task in its own right.  

The previous existence results are obtained by developing a perturbation theory for $\MSI$. Since -- among other reasons -- most examples of $\MSI$ are in space forms, it is immediately clear that one is naturally confronted with the symmetries of the ambient manifold, which results in obstructions to a direct application of the implicit function theorem in suitable (albeit a priori not obviously determined) functional spaces. What we do is rather, in general terms, a Lyapunov-Schmidt reduction ``in the large" of the problem with respect to nonlinear constraints. As a byproduct, we also obtain multiplicity results of Morse-theoretic character, which we work out explicitly in the special case of the (codimension two) spherical suspension of the Veronese embedding of the real projective plane $\R\mathbb{P}^2$ in the round four-dimensional sphere.
That example, which is perhaps the simplest possible instance of application of our theory, already displays a number of interesting features. 

We wish to stress that, differently e.\,g. from the perspective considered by Smale in \cite{Sma93Crelle}, here we do \emph{not} fix the singular structure (namely: the singular points and the corresponding tangent cones) but rather avoid any sort of a priori geometric restriction, so to make the obstruction space as small as possible and, in certain cases, to get rid of obstructions at all. In fact, when working with varying Riemannian metrics (as opposed to varying boundaries in Euclidean space) the idea of fixing the singular structure is doomed to fail as far as one wishes to obtain perturbative existence results, for reasons we are about to explain. In general terms, the approach we present is inspired (at least for the ``local" aspects of the theory, see Section \ref{sec:AnMeanCurv}) by White's influential work \cite{Whi91} for smooth, closed minimal submanifolds. However, although his own functional-analytic setup for the problem is very general and widely applicable, the singular case  poses a number of substantial and, in some cases, foundational challenges, which we will briefly describe in the sequel of the present introduction (see Subsection \ref{subs:Technical challenges}), after presenting our setup and the main theorems we can obtain.  
\subsection{Statements of the main results.}\label{subs:Statements}
Let $M$ denote a compact, boundaryless differentiable manifold (i.\,e. a manifold of class $C^\infty$) of dimension $N\geq 3$.
Given a smooth Riemannian metric $\bar g$ on $M$ and a minimal submanifold with only strongly isolated singularities in $(M,\bar g)$, of dimension $2\leq n<N$  -- see Definition \ref{def:MSI} of $\MSI$ and Section \ref{sec:Prelim} for context -- we introduce the following notation. We let:
\begin{itemize}
\item
$G:=\Iso(M,\bar g)$ be the group of Riemannian
isometries of the ambient manifold. By the
Myers--Steenrod theorem and compactness of $M$,
this is a finite-dimensional compact Lie group;

\item
$H:=\Stab_G(\Sigma)$ be the subgroup of $G$
consisting of ambient isometries that preserve
$\Sigma$ as a set. This is a closed subgroup
of $G$, hence an embedded Lie subgroup by
Cartan's closed subgroup theorem;

\item
$G/H$ be the set of left cosets, endowed with
its canonical quotient smooth structure.
It is a compact, boundaryless smooth manifold
and a homogeneous space under the natural
left action of $G$%
\footnote{The subgroup $H$ need not be normal
in $G$, so $G/H$ need not be a quotient Lie
group; an equatorial hypersphere in a round
sphere already provides an example.
Nevertheless, since $G$ is compact, $G/H$
always admits a $G$-invariant Riemannian
metric, obtained by descending a bi-invariant
Riemannian metric on $G$.}.
\end{itemize}

We may think of $G/H$ as the 
space of congruent copies of $\Sigma$ in $(M,\bar g)$; note that $G$ may (and, by virtue of a classical result of Ebin, generically will) consist of just one element, or be a finite set. 
That being said, we let:
\vspace{-1mm}
\begin{multline*}
LSN(G/H):= \text{Lusternik-Schnirelmann number of} \ G/H \\
(\text{least number of critical points of a smooth function} \ f:G/H\to \R )
\end{multline*}
\vspace{-1mm}
and
\vspace{-1mm}
\begin{multline*}
MN(G/H):= \text{Morse number of} \ G/H \\
(\text{least number of critical points of a smooth Morse function} \ f:G/H\to \R).
\end{multline*}
We recall the following well-known facts in differential topology (for which we refer the reader to the standard monograph \cite{CLOT03}): if $f$ is a Morse function on  $G/H$, then the number of critical points of
index $i$ is greater than or equal to the rank of the $i$-th cohomology space \emph{(over any coefficient field)}, and so the Morse number of  $G/H$ (and of any compact manifold alike) is bounded below 
by the sum of the ranks of such cohomology spaces;
on the other hand, the Lusternik-Schnirelmann number of $G/H$ \emph{minus one} is bounded below
by the Lusternik-Schnirelmann category of $G/H$ (and, in turn, the latter is bounded below by
the cup-length of $G/H$, which is the maximum number of elements
in the cohomology ring, all having positive degree, such that their cup product is nonzero). These topological invariants will be studied and computed in Appendix \ref{sec:topology} for the special case treated in Corollary \ref{cor:main}, for $\mathbb{K}=\R$. 

\

With this terminology and notation in place, our main result (in a slightly informal fashion) reads as follows:

\begin{maintheorem}\label{thm:main}
Let $(M,\bar g)$ be a compact, boundaryless Riemannian manifold, and let $\Sigma$ be an $\MSI$ therein satisfying the following two assumptions:
\begin{enumerate}[label=(\roman*)]
\item\label{MainThm:hp1}{the effective index, and the effective nullity at the links of all singular points both vanish;}
\item\label{MainThm:hp2}{all Jacobi fields are integrable, in the sense
that each is the normal component along $\Sigma$
of a Killing vector field on $(M,\bar g)$.}
\end{enumerate}
Then: for any metric $g$ that is sufficiently close to $\bar g$ (in the smooth topology) the manifold $(M,g)$ contains at least $LSN(G/H)$ minimal submanifolds with only strongly isolated singularities ($\MSI$) modeled on $\Sigma$, and generically at least $MN(G/H)$ such submanifolds.
\end{maintheorem}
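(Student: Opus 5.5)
The plan is a Lyapunov--Schmidt reduction in which the finite-dimensional kernel is parametrized by the orbit $G/H$ of congruent copies of $\Sigma$.

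\textbf{Step 1: function spaces and the mean curvature map.} We set up a Banach-manifold framework in which the unknowns are deformations of $\Sigma$ that may also move the singular points and rotate or deform their tangent cones. Concretely, an unknown is a point $p'$ near each singular point $p$, a cone $\mathbf{C}'$ near the tangent cone $\mathbf{C}_p$, and a normal graph over $\Sigma$ measured in weighted H\"older spaces $C^{2,\alpha}_{\nu}$. The weight $\nu$ is chosen slightly above $1$ and avoids the indicial roots of the Jacobi operator of $\mathbf{C}_p$. On this space we consider the mean curvature map
\[
\mathcal{H}:(g,u)\longmapsto \vec H_g(\Sigma_u),
\]
with values in $C^{0,\alpha}_{\nu-2}$, and we show it is smooth in the sense of White's structure theorem.

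\textbf{Step 2: the linearization is Fredholm with kernel given by Killing fields.} Hypothesis (i) says the effective nullity at the links vanishes. This means there are no indicial roots in the critical window beyond those produced by moving the vertex and rotating the cone, and these are exactly the directions already absorbed by the parameters $(p',\mathbf{C}')$. Hence $D_u\mathcal{H}$ is Fredholm. Hypothesis (ii) then identifies its kernel with $\{X^\perp : X \text{ Killing}\}$, which is the tangent space $T_{[e]}(G/H)$.

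\textbf{Step 3: cokernel and the reduced function.} The linearized operator is formally self-adjoint. The vanishing of the effective index and nullity is what guarantees that the cokernel, computed with the dual weight, coincides with the kernel and picks up no extra obstructions coming from the cone ends. This matching is the main obstacle, since it requires a careful duality argument across the weights $\nu$ and $n-2-\nu$ together with the geometric parameters.

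\textbf{Step 4: solving the infinite-dimensional equation.} We apply the implicit function theorem transversally to the orbit. For $g$ close to $\bar g$ and each coset $[\phi]\in G/H$, we obtain a unique $\Sigma_{g,[\phi]}$ near $\phi(\Sigma)$, depending smoothly on $g$ and $[\phi]$, which is minimal up to an error lying in the kernel, i.e. the normal part of a Killing field of $\bar g$.

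We then define
\[
f_g([\phi]) := \mathrm{Vol}_g\bigl(\Sigma_{g,[\phi]}\bigr).
\]
By the first variation formula, the differential of $f_g$ pairs the remaining error with the kernel directions. A standard argument shows that the map $Df_g$ is a nondegenerate change of coordinates from the error, so critical points of $f_g$ are exactly the genuine $\MSI$. Here the first variation must be justified for conical singularities, which holds because the weights make all boundary terms at the singular points vanish.

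\textbf{Step 5: counting.} Lusternik--Schnirelmann theory gives at least $LSN(G/H)$ critical points of $f_g$. For generic $g$, $f_g$ is Morse, which we get from a transversality argument in $g$ using the bumpy-type perturbation available because metric variations span the cokernel. This gives at least $MN(G/H)$ critical points.

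Finally, $f_{\bar g}$ is constant, so for distinct critical points we must check that the resulting submanifolds are geometrically distinct. This holds because distinct cosets give distinct submanifolds near distinct copies $\phi(\Sigma)$.
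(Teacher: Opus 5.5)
Your overall architecture is the paper's: augmenting by vertex translations and cone rotations, a Lyapunov--Schmidt reduction over $G/H$ with a reduced area function, Lusternik--Schnirelmann and Morse counting, and transversality because metric variations span the cokernel. However, Step~1 fails as stated. For $\nu>1$ the map $(g,u)\mapsto\vec H_g(\Sigma_u)\in C^{0,\alpha}_{\nu-2}$ is not defined once $g\neq\bar g$. At a singular point $p'$, the constant-coefficient metric $g_{p'}$ on $T_{p'}M$ is in general not a multiple of $\bar g_{p'}$. A $\bar g_{p'}$-minimal cone is then not $g_{p'}$-minimal, and its $g_{p'}$-mean curvature is homogeneous of degree $-1$ and nonzero. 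Hence $|\vec H_g(\Sigma_u)|\sim c\,\rho^{-1}$ with $c\neq0$ however small $u$ is, and $\rho^{-1}\notin C^{0,\alpha}_{\nu-2}$. Moving the vertex and rotating the cone cannot remove this term, since $\bar g_{p'}$-orthogonal rotations preserve $\bar g_{p'}$-minimality, not $g_{p'}$-minimality.

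The missing idea is the anisotropic local adjustment of Section~\ref{sec:Diffeo}. One precomposes with a diffeomorphism which near each singular point is, after a radial cutoff, the unique positive $\bar g$-self-adjoint $T=S^{-1/2}$ with $T^*g_p=\bar g_p$. The effective metric $g_{\mbfx,\mbfA}=(\Xi^g_{\mbfx,\mbfA})^*g$ then coincides with $\bar g$ on $\Sing(\Sigma)$, and only for such metrics does the mean curvature of normal graphs land in the weighted target. Its regularity is also not a consequence of White's theorem, which concerns smooth closed submanifolds. It requires the bundle-map analysis of Appendix~\ref{app:C^q}, which again uses $g=\bar g$ on $\Sing(\Sigma)$ to bound $\check{\mathfrak h}(g,0)$ in the weighted norm.

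Three further steps are asserted rather than proved.

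\emph{Descent to $G/H$.} A generalized graph over $\phi(\Sigma)$ depends on the representative $\phi$, and there is no global slice $G/H\to G$. The paper defines $\tilde{\mathfrak a}(g,\gamma)=A(\gamma^*g,\varphi(\gamma^*g,0))$ on $G$ and proves $H$-invariance. This uses three ingredients: the equivariance $\Xi^g_{\gamma.\mbfx,\gamma.\mbfA}\circ\gamma=\gamma\circ\Xi^{\gamma^*g}_{\mbfx,\mbfA}$ of Lemma~\ref{lem:ActionIdentity}, which needs the radial cutoffs and the uniqueness of the positive square root $T$; the $H$-invariance of $K$; and uniqueness in the implicit function theorem.

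\emph{Steps~2--3.} Fredholmness does not use hypothesis (i); the choice of weight suffices. What (i) gives is index zero, via the counting formula. The paper then obtains $\operatorname{Ran}J(\bar g,0)=Y\cap K^\perp$ without any weight duality (your dual weight should in any case be $2-n-\nu$). Instead it combines index zero with the symmetry of $J(\bar g,0)$ on the augmented space (Lemma~\ref{lem:J_SelfAdjoint}). That symmetry is proved as symmetry of the second Fr\'echet derivative of $A(\bar g,\cdot)$, which avoids boundary terms from the non-decaying translation- and rotation-like directions.

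\emph{Distinctness.} Your argument fails for nearby cosets. If $[\phi_1]\neq[\phi_2]$ are close, $\phi_1(\Sigma)$ and $\phi_2(\Sigma)$ may be much closer than the $O(\|g-\bar g\|)$ perturbation, so you need local injectivity uniform in $g$. The paper gets this from the functionals $b_f(g,[\gamma])=\int_{\Sigma_g(\gamma)}f\,d\scH^n_{\bar g}$, whose differentials at $\bar g$ span $T^*(G/H)$. It applies the inverse function theorem to these, together with a compactness argument (Proposition~\ref{Prop:Main_LS_Red}).
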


We refer the reader to Theorem \ref{thm:mainPrecise} for a fully accurate version of this statement, relying on the setup we develop in the core of the paper.

\begin{remark}\label{rem:ClarifyMainTheorems} Some further clarifications are appropriate:
\begin{itemize}
 \item in the context described above, we will say that $\Sigma'$ is modeled on $\Sigma$ if -- loosely speaking -- $\Sigma'$ is the graph of a superlinearly decaying normal section over a copy of $\Sigma$ obtained by, possibly, slightly moving its singular points and acting on its tangent cones by rotations (the precise definitions are given in Section \ref{sec:Diffeo});
 \item differently from the case of smooth, closed minimal submanifolds it is important here to specify the functional space where Jacobi fields are considered (e.\,g. to properly interpret hypothesis (\ref{MainThm:hp2}) above). Basically, for fixed $\Sigma$ (and corresponding normal bundle $\mbfV$ etc\ldots) we consider a doubly augmented functional space $\hat W^{k,2}_{\tau}(\Sigma;\mbfV)$ as presented in Definition \ref{def:BiAugmentedSpaces}. The definition, albeit a bit technical at first, turns out to be geometrically natural (as it encodes the ``degrees of freedom" described in the previous item);
\item the notions of \emph{effective index} and \emph{effective nullity} of a minimal submanifold of a round sphere (and, by abuse of language, of the cone over it)  are presented in Section \ref{sec:Refine} (in fact the former is already present in our earlier work, see \cite[Section 2]{CLW24}); more specifically, we refer the reader to Definition \ref{def:EffMorsIndexNull};
 
\item the theorem holds true equally well for $C^\infty$ metrics (which is tacitly understood)
or for metrics of finite $C^q$ regularity with
$q\geq q_0(k,n)$, and -- in both cases -- the word \emph{generic} is understood in the sense of Baire; it is only for the ``improved counting claim" in the generic case that the $C^\infty$ version is derived from the $C^q$ version;
\item if $\Sigma$ has no nontrivial Jacobi fields,
Corollary \ref{cor:ExistenceUnobstructedCase}
gives existence and uniqueness of an $\MSI$, that is a
generalized graph over the fixed representative
$\Sigma$ for every sufficiently close metric.
Under the hypotheses of the theorem, $G/H$
is then a finite set, possibly containing
more than one element.
\end{itemize}
\end{remark}

There is a special instance of applicability of Theorem \ref{thm:main} that warrants special attention for -- among other things -- both assumptions above can be checked at once.

\begin{maincorollary}\label{cor:main} Let $\Sigma_0$ be a closed, non totally geodesic, minimal submanifold in the round sphere $\SSp^{N-1}$, and let $\Sigma$ denote the spherical suspension of $\Sigma_0$ in $\SSp^{N}$. Assume that the effective index, and the effective nullity of $\Sigma_0$ both vanish.
Then: for any metric $g$ that is sufficiently close to the standard round metric the manifold $(S^N,g)$ contains at least $LSN(G/H)$ minimal submanifolds with only strongly isolated singularities ($\MSI$) modeled on $\Sigma$, and generically at least $MN(G/H)$ such submanifolds. 

The preceding hypotheses are indeed satisfied, and thus the corresponding conclusions hold true, in the case when $\Sigma_0$ is the image of the Veronese embedding of $\mathbb{KP}^2$ into the round sphere $\SSp^{N-1}$ (as defined in Appendix \ref{sec:topology}, see Remark \ref{rem:VeroneseOtherFields}), where $N = 3\dim_{\mathbb{R}} \mathbb{K} + 2$ and $\mathbb{K}$ is one of the normed division algebras $\mathbb{R}$, $\mathbb{C}$, $\mathbb{H}$, $\mathbb{O}$.
In particular, for the Veronese embedding of $\mathbb{RP}^2$, we have $LSN(G/H)=4$ and $MN(G/H)=8$.
\end{maincorollary}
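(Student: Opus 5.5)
The plan is to deduce Corollary \ref{cor:main} from Theorem \ref{thm:main}, more precisely from its accurate form Theorem \ref{thm:mainPrecise}, and then to check its two hypotheses for the suspension. The suspension $\Sigma$ of $\Sigma_0$ has exactly two singular points, the poles $\pm e_{N+1}$. The link at each pole is $\Sigma_0\subset\SSp^{N-1}$ itself. Hence hypothesis \ref{MainThm:hp1} follows at once from the assumed vanishing of the effective index and effective nullity of $\Sigma_0$ (Definition \ref{def:EffMorsIndexNull}). The assumption that $\Sigma_0$ is not totally geodesic ensures that the poles are genuine singularities, so that $\Sigma$ is an $\MSI$ in the sense of Definition \ref{def:MSI}.

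The main work is hypothesis \ref{MainThm:hp2}: every Jacobi field in $\hat W^{k,2}_{\tau}(\Sigma;\mbfV)$ must be the normal part of a Killing field of $\SSp^N$.
\begin{enumerate}
\item I would write $\Sigma$ in polar form $\theta\in(0,\pi)$, $x\in\Sigma_0$, with induced metric $d\theta^2+\sin^2\theta\, g_{\Sigma_0}$. The Jacobi operator then separates as $\partial_\theta^2+(n-1)\cot\theta\,\partial_\theta+\sin^{-2}\theta\, L_{\Sigma_0}+n$, where $n=\dim\Sigma$.
\item I would expand a Jacobi field in eigensections of the link operator $L_{\Sigma_0}$, which acts on normal sections of $\Sigma_0$ in $\SSp^{N-1}$, together with the component along $e_{N+1}$ (the $\SSp^{N-1}$ direction).
\item Each mode satisfies a Legendre-type ODE in $\theta$. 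The admissible behaviour at both poles is fixed by the augmented space: decay at rate $\tau$, plus the finitely many augmented terms encoding translations of singular points and rotations of the tangent cones.
\item Vanishing effective index and nullity should kill all modes except the following: those coming from Killing fields of $\SSp^{N-1}$ (rotations fixing the axis), those from the conformal and translation directions of $\SSp^{N-1}$ (the normal components of coordinate functions, with eigenvalues $n-1$ resp. $0$ of the link problem), and the mode moving the suspension axis.
\item For each surviving mode I would solve the ODE explicitly. The expected solutions are $\cos\theta$ or $\sin\theta$ times a link Jacobi field, which are exactly normal parts of Killing fields of $\SSp^N$ (rotations in the planes spanned by $e_{N+1}$ and $e_i$).
\item The remaining step is to show that the second, singular ODE solution violates the boundary behaviour allowed by $\hat W^{k,2}_\tau$ at one of the poles.
\end{enumerate}

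I expect this mode analysis, and in particular tracking which indicial roots at the poles fall inside the augmented space, to be the main obstacle. The effective index and nullity definitions are presumably designed precisely so that this bookkeeping closes.

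For the Veronese examples, I would use the classical homogeneity of $\mathbb{KP}^2\subset\SSp^{3d+1}$ with $d=\dim_\R\mathbb{K}$. The index and nullity of these embeddings in the round sphere are computable by representation theory (the eigenvalues of the Laplacian and of the normal Jacobi operator on a symmetric space). One checks that the only contributions to index and nullity are the "trivial" ones: coordinate functions and Killing fields. Subtracting these as in Definition \ref{def:EffMorsIndexNull} gives zero, and I would cite the known spectral computations for these isoparametric/Cartan examples.

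Finally, for $\mathbb{K}=\R$ I would identify $G=\Ogroup(6)$, acting on $\SSp^5\subset\R^6$. The stabilizer $H$ is the product of $\SOgroup(3)$, acting via the Veronese representation on the $\R^5$ containing $\Sigma_0$, with the reflection $\pm1$ in the axis; possibly a sign group also enters, which I would check. So $G/H$ is a compact manifold of dimension $15-3=12$. Appendix \ref{sec:topology} then computes its cohomology with $\Z_2$ and rational coefficients, and its cup-length. The target is that the total Betti number gives $MN\ge 8$, realized by an explicit Morse function (e.g.\ a height function), and that the cup-length $3$ gives $LSN\ge4$, matched by an explicit function with four critical points.
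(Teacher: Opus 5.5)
Your reduction to Theorem~\ref{thm:mainPrecise}, your handling of hypothesis (i), the citation of spectral computations for the Veronese embeddings, and your description of $H$ all agree with the paper. The gap is in integrability (ii), the only substantive step: it is not carried out, and the mechanism you sketch for it is not the right one.

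\emph{The separated operator is wrong.} Write $\mathcal L_0:=\Delta^\perp_{\Sigma_0}+\mathcal A_{\Sigma_0}$ for the normal Laplacian plus Simons' term, so that the Jacobi operator of $\Sigma_0\subset\SSp^{N-1}$ is $\mathcal L_0+(n-1)$ in your convention. Then $L_\Sigma=\partial_\theta^2+(n-1)\cot\theta\,\partial_\theta+\sin^{-2}\theta\,\mathcal L_0+n$. With your formula, neither $\sin\theta\,(\mbfA x)^\perp$ nor $\cos\theta\,v^\perp$ is a solution. Also, the normal bundle of the suspension is just the pullback of that of $\Sigma_0$: the part of $e_{N+1}$ tangent to $\SSp^N$ is $-\sin\theta\,\partial_\theta\in T\Sigma$, so there is no extra component.

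\emph{The hypotheses do not kill the high modes.} Take an eigensection with $\mathcal L_0\phi=-\mu\phi$. Vanishing effective index and nullity only tell you that the spectrum with $\mu\le n-1$ consists of $\mu=0$ on $\{v^\perp\}$ and $\mu=n-1$ on the Killing normals. They say nothing about $\mu>n-1$, which is where the work lies. (In $\hat W^{k,2}_\tau$ with $\tau>1$, non-geometric low modes are already excluded by the weight, since the augmentation only contains $v^\perp$ and Killing modes.)

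\emph{Step 6 is misframed.} At each pole, the admissible solutions of $f''+(n-1)\cot\theta\,f'-\mu\sin^{-2}\theta\,f+nf=0$ form a line: the branch $\theta^{\gamma_+}$, where $\gamma_+(\gamma_++n-2)=\mu$. So the real question is whether the solution regular at one pole is also regular at the other. That is, whether $n$ is an eigenvalue of a singular two-point Sturm--Liouville problem.

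The missing idea is the spectrum of this problem.
\begin{itemize}
\item For $\mu>n-1$: the function $\sin^{\gamma_+}\theta$ is a positive solution with eigenvalue $\gamma_+(\gamma_++n-1)$. A ground-state (Picone) argument then shows that any solution with $\theta^{\gamma_+}$ behaviour at both poles has eigenvalue at least $\gamma_+(\gamma_++n-1)$. This exceeds $n$ when $\gamma_+>1$, i.e.\ when $\mu>n-1$, so those modes vanish.
\item For $\mu=n-1$ and $\mu=0$: the explicit solutions $\sin\theta$ and $\cos\theta$ span the admissible lines at both poles. They agree with the rotation-like and translation-like augmentations up to $O(\theta^2)=o(\theta^\tau)$, which uses $\tau<2$. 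They are exactly the normal parts of Killing fields of $\SSp^N$.
\end{itemize}
More precisely, $f=\sin^{\gamma_+}\theta\,g(\cos\theta)$ turns the equation into Gegenbauer's equation, whose doubly regular eigenvalues are $(\gamma_++j)(\gamma_++j+n-1)$. So $n$ occurs iff $\gamma_++j=1$. This is exactly the dichotomy $\gamma\in1-\Z_{\ge0}$ from \cite[Proposition 3.19]{CMSW25} that the paper uses, after identifying Jacobi fields on $\Sigma$ with $1$-homogeneous finite-energy Jacobi fields on the cylindrical cone $\mbfC_0\times\R$. Completed this way, your route is a self-contained alternative that avoids the finite-energy framework and the strict stability of $\mbfC_0$ the cited classification needs.

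Finally, for $\mathbb K=\R$ the upper bounds $MN\le 8$ and $LSN\le 4$ on the total space $E$ of the $B^7$-bundle over $\SSp^5$ do not come from explicit height functions. They come from Smale's minimal handle decomposition theorem \cite{Sma62} and Takens' theorem \cite{Tak68}, using $2$-connectedness and the integral cohomology of $E$. The lower bound $MN\ge 8$ needs $\Z_2$ coefficients, since the rational Betti numbers only sum to $4$.
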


\begin{remark}\label{rema:mainVeronese}
In the instance of $\mathbb{RP}^2$ embedded in $\SSp^{4}$ described in the statement, the space $G/H$ is diffeomorphic to a $B^7$-bundle over $\SSp^5$ where $B^7$ stands for the (7-dimensional) Berger space (see Appendix \ref{subs:Veronese}). 
In fact, using adequate tools in Algebraic Topology one can show that the fibration in question is non-trivial (in particular: the 12-dimensional manifold in question is not a product $\SSp^5\times B^7$).
This study and the proper justification of the previous multiplicity bounds are the object of Appendix \ref{sec:topology}.
\end{remark}

\begin{remark}\label{rem:OtherFields}
The second part of the previous corollary clearly builds upon the computation of the index and nullity of the Veronese $\mathbb{KP}^2$; in the case $\mathbb{K}=\mathbb{R}$ this follows from work by Karpukhin \cite{Kar21}, while for $\mathbb{K}=\mathbb{C}, \mathbb{H}$, or $\mathbb{O}$ we rely on the very recent result by Rauchenberger and Semmelmann \cite{RauSem26}, who proved (rephrased in the terminology of the present paper) that the effective index and nullity of the Veronese embedding of $\mathbb{K}\mathbb{P}^2$, for $\mathbb{K}$ one of the three fields above, both vanish. The reader is referred to Lemma \ref{lem:RauSem}. Multiplicity bounds in these cases could be
obtained by an approach similar to that of
Appendix \ref{sec:topology}, though computing
the cohomology rings of the corresponding
critical manifolds would involve more elaborate
Serre spectral sequence calculations, possibly
with computer assistance.
\end{remark}

This persistence result, showing that certain $\MSI$ occur on open sets of ambient metrics, demonstrates
the sharpness of the non-persistence theorem in \cite{CLW24}.
Furthermore, it follows from a recent, related result by the authors (see \cite{CLW26}, to be compared with the improved index estimate by Bernstein and Ketover \cite{BK26}, and with the earlier ones proven in \cite{KusWan24} for the special case of tori) that for a Baire-generic smooth Riemannian metric $g$ on a five-manifold $M$, the link of the tangent cone at every singular point of a three-dimensional $\MSI$ is congruent to the Veronese projective plane.
 Consequently, if $\Sigma_0\subset\SSp^4$ is a closed,
embedded minimal surface that is neither totally
geodesic nor congruent to the Veronese projective plane,
its spherical suspension cannot persist in the sense
of Theorem \ref{thm:main}.
Finally, a two-dimensional $\MSI$ with connected
singular links is necessarily smooth, since every
connected closed minimal curve in a round sphere
is a great circle. Together with the generic
non-persistence result in codimension one from
\cite{CLW24}, this shows that the Veronese example
in Corollary \ref{cor:main}, with $n=3$ and $N-n=2$,
realizes the smallest possible dimension and
codimension for persistence of non-smooth $\MSI$
with connected singular links.

It is to be mentioned that partly similar results have been obtained for other, significant geometric problems. Focusing on those that are closest to the present work, we shall cite here the study of special Lagrangian conifolds by Joyce  (see in particular \cite[Section 5]{Joy03}, that is based upon \cite{Joy04}) and, much more recently, the results by Gutwein and Langlais on deformation of harmonic maps with isolated singularities, see \cite{GutLan26}.

\subsection{Technical challenges and structure of the paper}\label{subs:Technical challenges} Let us now briefly illustrate the technical challenges that naturally come up when aiming for a perturbative theory for $\MSI$ and, in parallel, describe the organization of the present manuscript. To begin with, let us ignore -- just for a second -- the anticipated necessity to move singularities and ``rotate" the corresponding limit cones (we will come to that point later) and just focus on developing a perturbation theory for normal graphs over a given $\MSI$. In order to ensure embeddedness of the submanifolds one would produce (thus: to ensure one would indeed obtain $\MSI$) it is necessary to consider superlinearly decaying sections of the normal bundle (namely: sections $u\in W^{k,2}_{\tau}(\Sigma;\mbfV)$ for $\tau>1$). In turn, this immediately poses two problems. On the one hand, we need to prove a version of the ``counting formula" for the Fredholm index of the Jacobi (stability) operator of an $\MSI$ for such a range of weights (which is different than \cite[Section 3]{CLW24}, where -- aiming for a strongest possible general \emph{non-persistence} result -- we had, rather, to allow for milder decay on approach to the singularities). This task is accomplished in Section \ref{sec:Refine}, and what we get (see Proposition \ref{thm:CountRefined}) is a formula that brings both the Morse index \emph{and the nullity} of the links at the singularities into play. In particular, in order for the Jacobi operator of the $\MSI$ in question to have Fredholm index zero it is necessary that the \emph{effective index and nullity vanish at all singularities}. Roughly speaking, this means that for the link of each singular cone (that is a minimal submanifold of a round sphere $\SSp^{N-1}$) all contributions to the nullity must be imputable to ambient isometries (namely to a subgroup of $\Ogroup(N)$) and all contributions to the index must be imputable to ambient translations $\R^{N}$ (as first noted by Simons in \cite{Sim68}). At that stage, it becomes quite clear that, in order to make the obstruction space as small as possible (and, when feasible, to reduce to an unobstructed problem) one needs to allow for deformations of $\MSI$ that are more general than graphical, and rather encompass (local) translations of the singularities as well as (local) rotations of the corresponding cones. But, as even basic examples show (think of a polygon in the plane) we cannot possibly (locally) parametrize the space of all such deformations of a given $\MSI$ by means of (suitably regular\ldots)  sections of the normal bundle thereof. The way-around we designed is, essentially, to decouple the problem: we handle the local rigid motions near singularities by means of suitably designed diffeomorphisms, and the graphical deformations by sections of the normal bundle. 

Here, a second challenge naturally comes up:  Euclidean (regular) minimal cones do not maintain their minimality as we consider a different positive-definite quadratic form. In fact, we witness linear blow-up of the mean curvature for arbitrarily small perturbations of the ambient metric.
Hence, if we again begin by focusing solely on graphical deformations, the natural ``mean curvature map" (see below for due clarifications) is by no means well-defined as a map between the natural functional spaces $W^{k,2}_{\tau}(\Sigma;\mbfV)\to W^{k-2,2}_{\tau-2}(\Sigma;\mbfV)$. Thus, we also need to introduce another class of diffeomorphisms that take care of the necessary ``local adjustments" (roughly speaking: anisotropic dilations) which have to be done: this is the object of Section \ref{sec:Diffeo}. The aforementioned ``decoupling" is reflected in the way we set up (e.\,g. in terms of choice of functional spaces) the deformation problem locally, around a given $\MSI$, which we do in Section \ref{sec:AnMeanCurv}. In particular, in analogy with the approach in \cite{Whi91}, the need of working with section of a fixed bundle (that is, the normal bundle of the $\MSI$ in question at the reference metric) forces us to introduce a map (which we named \emph{analytic mean curvature}) that has the desired mapping properties and serves the basic scope of detecting minimality like the standard mean curvature would do. That being said, it is quite a technical challenge to actually justify the expected regularity of such a map. We present a rather direct approach through (carefully designed) local trivializations of the normal bundle of an $\MSI$ and a fairly general study of abstract composition maps as bundle maps. This is done in Appendix \ref{app:C^q}, that plays a technically important role in the economy of the present paper.

All in all, the machinery developed in Section \ref{sec:AnMeanCurv} is sufficient to get a perturbative theory in the unobstructed case (as spelled out in Corollary \ref{cor:ExistenceUnobstructedCase}), but is only a piece of the puzzle when working with $\MSI$ having non-trivial Jacobi fields. What we accomplish in Section \ref{sec:LSreduction} (see, in particular, Proposition  \ref{Prop:Main_LS_Red}) is then to construct a suitable functional $\mathfrak{a}:\Gamma(q)\times (G/H)\to\R$ providing a ``Lyapunov-Schmidt reduction" of the problem (i.\,e. a functional whose critical points determine $\MSI$, and in fact all and only appearing in a perturbative regime).
This is highly non-trivial for the following reason. Fix a reference $\MSI$, say $\Sigma$, in $(M,\bar g)$:
each member $\Sigma'=\gamma(\Sigma)$ of its
isometry orbit corresponds to the coset
$\gamma\cdot H\in G/H$, whereas on the other hand defining a ``generalized normal graph" over $\Sigma'$ will crucially depend on the choice of the representative for the coset in question. In other words, the (inherently topological) impossibility of selecting global ``slice maps" for the problem at hand obstructs the natural construction one would like to go for. Note that in other geometric circumstances this obstacle is overcome due to some special topological features of the critical manifold (e.\,g. it is associated to a parallelizable bundle, or similar \ldots), which however cannot be hoped for in general. 

With such tools at disposal, in Section \ref{sec:MainProofs} we put everything together to provide the proofs of the main results. Note that the multiplicity results in Corollary \ref{cor:main} in the case $\mathbb{K}=\R$ require, on the one hand, the careful study of the space of Veronese footballs in $\SSp^5$ (see Appendix \ref{sec:topology}), and, on the other hand (for the corresponding improved bound in the Baire-generic case), a suitable Sard-type Lemma for $\mathfrak{a}$ as one varies the metric $g$ in an open neighborhood of $\bar g$, that is item \label{Item_LS_a-generic-nondegenerate} of Proposition \ref{Prop:Main_LS_Red}.

\section{General setup and notation}\label{sec:Prelim}

\subsection{Key definitions and preliminaries}\label{subs:DefPrelim}Throughout this article, $M$ is a smooth, connected, compact
boundaryless manifold of dimension $N\geq3$.
Regularity of maps and tensors is understood
with respect to smooth local coordinates.
For $q\in\mathbb N\cup\{\infty\}$, $q\geq2$,
let $\Gamma(q)$ denote the space of $C^q$
Riemannian metrics on $M$.
For $g\in\Gamma(q)$, we denote by $\dist_g$
the induced distance, by $\scH^n(g,\cdot)$
the corresponding $n$-dimensional Hausdorff
measure, and by $\inj^g_p(M)$ the injectivity
radius at $p\in M$; $\exp^g_x(\cdot): T_x M\to M$ stands for the exponential map, in metric $g$, with center $x\in M$.

\begin{definition}\label{def:RegSet}
Let $(M,g)$ be a Riemannian manifold of dimension $N\geq 3$; let $B^{g}_r(x)$ denote the metric ball of center $x$ and radius $r$, with respect to the metric $g$. Given an integral varifold $V$, of dimension $2\leq n<N $, we say that $p\in\spt(\|V\|)\subset M$ belongs to its regular set if there exists $r>0$ such that $\spt(\|V\|)\cap \overline{B^g_r(p)}$ is a smooth, embedded, compact, connected $n$-dimensional submanifold of class $C^2$ with boundary contained in $\partial B^g_r(p)$. The set of regular points of $V$ shall be denoted by $\Reg(V)$; its complement in $\spt(\|V\|)$ will be referred to as singular set of $V$, i.\,e. we let $\Sing{V}=\spt(\|V\|)\setminus \Reg(V)$.
\end{definition}

Note that, when we say that $\Sigma$ is an embedded $C^{\ell\geq 2}$ submanifold of dimension $2\leq n<N$ we mean that for every $p\in \Sigma$, there exist $C^\infty$ coordinate charts $M\supset U_p\to \RR^N$ on a neighborhood of $p$, mapping $\Sigma$ to a graph of some $\RR^{N-n}$-valued function, of class $C^{\ell}$.

\begin{remark}\label{rem:OptRegMinSubm}
 $\Reg(V)$ (respectively: $\Sing{V}$) is open (respectively: closed) in $\spt(\|V\|)$. Furthermore, if $V$ is stationary, then by standard elliptic regularity (Schauder theory applied to the minimal surface system) if -- in the setting of the preceding definition -- $p\in\Reg(V)$ and the ambient metric $g$ is $C^{q}$ then, in fact, $\spt(\|V\|)$ is an  $n$-dimensional submanifold of class $C^{q,\alpha}$ in an open neighborhood of the point in question for any $\alpha\in (0,1)$; in particular if the ambient metric is smooth (by which we shall mean $C^{\infty}$) then so will be $\spt(\|V\|)$, at least away from the singular set of $V$.
\end{remark}

Let us proceed and recall from \cite[Section 2]{CLW24} the definition of the objects that we study throughout this article:

\begin{definition}\label{def:MSI}
Let $(M,g)$ be a Riemannian manifold of dimension $N\geq 3$. For every stationary integral varifold $V$, of dimension $2\leq n<N $, we call a point $p\in \Sing(V)$ \textbf{strongly isolated}, if \emph{some} tangent cone of $V$ at $p$ is \textbf{regular}, i.\,e. it is of multiplicity one and has smooth link (equivalently: if the singular set of such a cone coincides with the origin). We will then say that a stationary integral varifold $V$ \textbf{has only strongly isolated singularities} (and refer to it as $\MSI$) if  every $p\in \Sing(V)$ is strongly isolated (vacuously true when $\Sing(V) = \emptyset$).
\end{definition}

\begin{remark}\label{rem:MSI}
   We remark that, in the setting of the preceding definition, at any singular point $p\in\Sing(V)$ there holds uniqueness of the tangent cone in question, thanks to a deep result of Simon \cite{Sim83} (see also \cite{Sim85}). Furthermore, the corollary stated after Theorem 5
therein implies that every singular point is
isolated. Thus $\Sing(V)$ is discrete and, being closed in the compact manifold $M$, finite.
 \end{remark}

 \begin{remark}\label{rem:MSInotation} Throughout this article, we restrict attention
to multiplicity-one varifolds.
   Based on the previous remark, we will switch to a somewhat more  convenient notation: given one such varifold, we will rather employ the letter $\Sigma$ to denote it, meaning that $V=|\Sigma|$ (that is to say: $V$ is the \emph{multiplicity 1} integral varifold associated to the submanifold $\Sigma$)
   and we will practically identify $\Sigma$ with the regular part of $\overline{\Sigma}$. Thus $\Sigma$ is treated as a regular (cf. Remark \ref{rem:OptRegMinSubm}) boundaryless but not necessarily closed submanifold. 
   The (unique) tangent cone of $\Sigma$ at a singular point $p$ is denoted by $\mbfC_p(\Sigma)$, or simply $\mbfC_p$ if there is no risk of confusion. 
\end{remark}

For $\Sigma$ a fixed $\MSI$ in a Riemannian
manifold $(M,g)$ with $g$ smooth, let
$\injrad(g,\Sigma)$ be defined as in \cite{CLW24}.
We say that
$
\rho=\rho_{\Sigma,g}
\in C^0(M;[0,1])
\cap C^\infty(M\setminus\Sing(\Sigma);\R)
$
is a radius function for $\Sigma$ with respect
to $g$ if it is positive on
$M\setminus\Sing(\Sigma)$ and there exists
$0<r_0\leq\injrad(g,\Sigma)/10$ such that
$
\rho_{\Sigma,g}
=\dist_g(\Sing(\Sigma),\cdot)$
 on
$\bigcup_{p\in\Sing(\Sigma)}B^g_{9r_0}(p).$
We call the largest such $r_0$ the scale of
$\rho$. 
When $\Sing(\Sigma)=\emptyset$, we take
$\rho_{\Sigma,g}\equiv1$ and
$r_0=\injrad(g,\Sigma)/10$.

\subsection{Distinguished metric and related notation.}

Throughout the article, we let $\bar g$ denote a distinguished $C^\infty$ Riemannian metric on $M$ (in various applications, like e.\,g. Corollary \ref{cor:main} $\bar g$ will be the standard, unit round metric on the $N$-dimensional sphere), and the letter $g$ will typically denote a nearby metric, that is suitably close to $\bar g$ in $\Gamma(q)$ (as specified at due course). We let $\Diff(M)$ denote the space of diffeomorphisms on $M$, endowed with the smooth topology determined by the background metric $\bar g$; similarly all norms on $\Gamma(q)$ are understood measured with respect to $\bar g$ in the standard fashion. We shall denote by $\Gamma_{\delta}(q)$ the open subset of $\Gamma(q)$ consisting of metrics distant less than $\delta$ from $\bar g$.
Since we are concerned here with existence and multiplicity results of perturbative character, we will adopt privileged notation for $\bar g$:  the $\bar \cdot$ symbol shall typically identify differential operators in metric $\bar g$ such as e.\,g. for $\bar\nabla$ that is the Riemannian gradient of a regular function. 

\subsection{Conventions.}
We shall collect here some more notation and conventions that are assumed throughout the paper.

\vspace{2mm}

\textbf{Cutoff functions and their scalings.} First of all, let $\zeta\in C^\infty(\RR; [0,1])$ be a monotonically non-decreasing cutoff function (fixed once and for all) such that $\zeta \equiv 0$ on $(-\infty, 1]$ and $\zeta \equiv 1$ on $[2, +\infty)$. Given $\delta>0$ we shall then set $\zeta^{(\delta)}(s)=\zeta(s/\delta)$.
    Secondly, let $\xi\in C^\infty(\RR; [0,1])$ be given by $\xi:=1-\zeta$ and, in analogy with the above, we declare  $\xi^{(\delta)}(s)=\xi(s/\delta)$.

            \vspace{2mm}

\textbf{Euclidean models.} In $\R^N$ we shall denote by $g_{\eucl}$ the standard Euclidean metric and we let $\BB_r(x)$ denote the open Euclidean ball of center $x\in \R^N$ and radius $r>0$; in the special case of balls centered at the origin we shall simply write $\BB_r$. The ambient dimension will be explicitly indicated whenever this is appropriate to avoid confusion, as for $\BB^N_r$ vs. $\BB^n_r$.

            \vspace{2mm}

\textbf{Constants and comparison notation.}
The letter $C$ denotes a positive constant whose
value may change from one occurrence to another.
We indicate its dependence by writing, for example,
$C=C(\Lambda,\sigma)$.
The dimensions $n,N$ and the cutoff functions fixed
above are omitted from such lists.

For nonnegative quantities $A,B$, we write
$A\lesssim B$ if $A\leq CB$, and $A\sim B$ if both inequalities
hold. Subscripts indicate the permitted dependence
of the implicit constants, in addition to any
dependence explicitly stipulated for the section
or proof under consideration; for instance, we write $A\sim_{a, b, \dots} A'$ if for some constant $C(a, b, \dots)>1$, there holds $C(a, b, \dots)^{-1}A'\leq A\leq C(a, b, \dots)A'$. The same convention applies to constants implicit
in $O(\cdot)$. When no dependencies are displayed, they are those
specified in the surrounding argument, but
whenever uniformity in a varying metric or another parameter is required, this is stated explicitly.
In statements outside Appendix \ref{app:C^q},
we use explicit constants and specify their
dependence. In Appendix \ref{app:C^q}, we also
allow for asymptotic notation in statements, with dependencies specified by subscripts and the local conventions.

\section{A refined counting formula}\label{sec:Refine}

Here we compute the Fredholm index of the Jacobi operator of $\MSI$ in terms of the links of the cones at the corresponding singular points. This is a refinement, related to the necessity of considering superlinearly decaying sections, of \cite[Theorem 3.2]{CLW24}.

\

\textbf{General notation: the stabilizer of a cone.} Throughout this section, let $\Sigma$ be an
$\MSI$ in the fixed smooth Riemannian manifold
$(M,\bar g)$ and let $\rho_{\Sigma,\bar g}$ be a radius function thereof (with $r_0$ denoting its scale).
 
  Suppose $\Sing(\Sigma) = \{p_1, \dots p_Q\}$; $\mbfC_j\subset T_{p_j}M$ be the tangent cone of $\Sigma$ at $p_j$ (defined by blowing up $(\exp_{p_j}^{\bar g})^{-1}(\Sigma)$ centered at $0$), $\mathfrak{g}_j\subset \mathfrak{so}(T_{p_j}M)$ be the Lie algebra of the stabilizer $G_j\subset\Ogroup(T_{p_j}M)$ of $\mbfC_j$ (equivalently: $G_j$ is the subgroup leaving the link $\mbfC_j\cap\SSp^{N-1}$ invariant as a set) and $\mathfrak{g}_j^\perp$ be its orthogonal complement with respect to the metric 
  \[
  \langle X, Y\rangle := -\Tr(X\circ Y) \ \text{on} \ \mathfrak{so}(T_{p_j}M).
  \]

Recall that the Lie algebra $\mathfrak{so}(N)$ of $\Ogroup(N)$ (or, equivalently, of $\SOgroup(N)$) in Euclidean $\R^{N}$ is proven to correspond to the space of $N\times N$ skew-symmetric matrices and thus has dimension equal to $N(N-1)/2$. As a result, the dimension of $\mathfrak{g}_j^\perp$ is exactly equal to $N(N-1)/2-\dim \mathfrak{g}_j$.

(For instance, as we will see in Appendix \ref{sec:topology}, in the case of the spherical suspension of the Veronese embedding of $\R\mathbb{P}^2$ in $\SSp^4$ one has $\dim\mathfrak{g}_j=3$ for $j=1,2$ (corresponding to the singularities at the north and south poles) inside the $10$-dimensional linear space $\mathfrak{so}(5)$.) 

\

\textbf{Fredholm index of the Jacobi operator.} We will now proceed and sharpen the analysis carried out in \cite[Section 3]{CLW24} essentially by modifying (i.\,e. further enlarging) the domain of the Jacobi operator. In what follows we let $\mbfV$ denote the normal bundle of $\Sigma$ (regarded as an open, smoothly embedded submanifold) in $(M,\bar g)$.

 \begin{definition}\label{def:RotLikeSection}
  Let $\Sigma$ be an $\MSI$ in $(M,\bar g)$, we define a section $\psi\in C^{\ell}_{loc}(\Sigma;\mbfV)$ \textbf{rotation-like} if for every $q=p_j\in \Sing(\Sigma)$, there exist a neighborhood $U\ni q$ in $M$, an orthonormal basis $\left\{e_1,\ldots, e_N\right\}$ of $T_q M$ (with associated normal coordinates $\left\{x\right\}$ centered at $q$) and $\mbfA\in \mathfrak{g}_j^{\perp}$ such that, under the identifications induced by such coordinates, there holds
  \[
  \psi|_U(x) = \Pi^\perp_{x}(\mbfA x)
  \]
  where \[
    \Pi^\perp_x: T_xM \to T_xM
  \]  
  is the $\bar g$-orthogonal projection map onto $T_x^\perp \Sigma (:=\mbfV_x)$. 
  \end{definition}

  \begin{definition}\label{def:BiAugmentedSpaces} Let $\ell\geq 2$ and $1\leq p<\infty$.
  If $W^{\ell,p}_{loc}(\Sigma;\mbfV)$ denotes the space of
	 sections that are locally in the Sobolev space $W^{\ell,p}$ then one sets  
 \begin{multline*}
 \hat{W}^{\ell,p}_\tau(\Sigma;\mbfV) := \{u\in W^{\ell,p}_{loc}(\Sigma;\mbfV): \|u-\phi-\psi\|_{W^{\ell,p}_{\tau}}<+\infty \\
 \text{ for some translation-like function} \ \phi \
 \text{and for some rotation-like function} \ \psi \}\,
  \end{multline*}
  where the norm $\|\cdot\|_{W^{\ell,p}_\tau}$ and the corresponding weighted Sobolev space $W^{\ell,p}_{\tau}(\Sigma;\mbfV)$ are defined exactly as in \cite[Section 2.3]{CLW24}. (In particular, we are considering the same weight along all ends, as prescribed by the scalar $\tau\in\R$.)
  \end{definition}

  For $\tau\geq 1$ we simply have the direct-sum decomposition
  \be\label{eq:Augment}
 \hat{W}^{\ell,p}_\tau(\Sigma;\mbfV)=W_{\text{Tr}}(\Sigma;\mbfV)\oplus  W_{\text{Rot}}(\Sigma;\mbfV)\oplus {W}^{\ell,p}_\tau(\Sigma;\mbfV)
  \ee 
  for $W_{\text{Rot}}(\Sigma;\mbfV)$ defined as the span of
  \be\label{eq:BasisRotLikeFunctions}
\left\{\xi_{\Sigma,\bar g,r_0} \ \cdot \  \Pi^{\perp}_x(\mbfA^i_j x) \st j=1,\ldots, Q, \ \mbfA^i_j\in \mathfrak{g}_j^{\perp} \right\}
\ee
  
where $\xi_{\Sigma,\bar g,r_0}(x):=\xi(\rho_{\Sigma,\bar g}(x)/r_0)$ in terms of the smooth cutoff function introduced in the dedicated paragraph of Section \ref{sec:Prelim} and it is understood that $\mbfA^i_j$ varies in a basis for the linear space $\mathfrak{g}_j^{\perp}$ associated with the identification of $T_{p_j}M$ with $\R^N$ descending from a choice of orthonormal basis (see Definition \ref{def:RotLikeSection}). In particular, this space has dimension equal to
\[
\dim W_{\text{Rot}}(\Sigma;\mbfV)= \sum_{i=1}^{Q}\left(\dim \frak{so}(N)-\dim \mathfrak{g}_i\right)=\frac{N(N-1)Q}{2}-\sum_{i=1}^Q\dim \mathfrak{g}_i \,.
\]

To proceed, we need to prove a result analogous to \cite[Lemma 2.22]{CLW24}. In what follows, and throughout the present article, we let $L_{\Sigma,\bar g}$ denote the Jacobi (stability) operator of $\Sigma$ in $(M,\bar g)$; the reader may wish to consult \cite[Section 2]{CLW24} for background about the second variation of an $\MSI$.

\begin{lemma}\label{lem:JacobiWellPosedRotations}
Let $\Sigma$ be an $\MSI$ in $(M,\bar g)$.
For every $\phi\in W_{\textrm{Rot}}(\Sigma;\mbfV)$,
there holds
$L_{\Sigma,\bar g}\phi\in L^\infty(\Sigma;\mbfV)$.
More precisely, there exists
$C=C(\bar g,\Sigma)>0$ such that, near each
$q=p_j\in\Sing(\Sigma)$, in $\bar g$-normal
coordinates centered at $q$,
\[
\bigl|L_{\Sigma,\bar g}((\mbfA x)^\perp)(x)\bigr|
\leq C|x|
\]
for every $\mbfA\in\mathfrak g_j^\perp$ of unit norm.
Consequently,
\[
L_{\Sigma,\bar g}\phi
\in W^{0,p}_{1-\varepsilon}(\Sigma;\mbfV)
\]
for every $1\leq p<\infty$ and
$0<\varepsilon<1$.
\end{lemma}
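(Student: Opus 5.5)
The estimate is local. Since $\xi_{\Sigma,\bar g,r_0}$ is smooth and equals $1$ on $B_{r_0}(p_j)$ and $0$ outside $B_{2r_0}(p_j)$, the image $L_{\Sigma,\bar g}\phi$ is smooth and bounded on the compact region $\{r_0\le\rho\le 2r_0\}\cap\Sigma$, and it vanishes away from the singular points. It therefore suffices to prove the pointwise bound $|L_{\Sigma,\bar g}((\mbfA x)^\perp)(x)|\le C|x|$ near each $p_j$. The weighted statement follows immediately: with the convention of \cite[Section 2.3]{CLW24}, the $W^{0,p}_{1-\varepsilon}$ norm is, up to constants, $\|\rho^{-(1-\varepsilon)}\cdot\|_{L^p(\Sigma,\rho^{-n}d\scH^n)}$. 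The bound $|L\phi|\lesssim\rho$ makes the integrand $\lesssim\rho^{\varepsilon p-n}$, and this is integrable against $\rho^{n-1}d\rho$ since $\varepsilon>0$.

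For the pointwise bound, I would compare with the Euclidean cone. Fix $\bar g$-normal coordinates at $q=p_j$, so that $\bar g_{ab}=\delta_{ab}+O(|x|^2)$ with derivatives $\partial\bar g=O(|x|)$ and $\partial^2\bar g=O(1)$. The vector field $X=\mbfA x$ is a Euclidean Killing field, since $\mbfA$ is skew. On the cone $\mbfC_j$, which is minimal in $(\R^N,g_{\eucl})$, its normal part is therefore a Euclidean Jacobi field: $L_{\mbfC_j,\eucl}(\mbfA x)^\perp=0$.

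I would then exploit the identity, valid for any ambient vector field $X$ on a minimal submanifold,
\[
L_{\Sigma,\bar g}(X^\perp)=\bigl(\text{terms in }\mathcal{L}_X\bar g\text{ and }\bar\nabla(\mathcal{L}_X\bar g)\bigr)^\perp+(\text{tangential correction terms}).
\]
This comes from differentiating $H$ along the flow of $X$: $\frac{d}{dt}H_{\bar g}(\Phi^X_t\Sigma)$ equals $L(X^\perp)$ plus terms linear in $\mathcal L_X\bar g$ and its first covariant derivative (via the first variation of $H$ under metric change $\Phi_t^*\bar g$), plus $\nabla^\Sigma_{X^T}H$. The last term vanishes because $H=0$. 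In normal coordinates, $\mathcal{L}_{\mbfA x}\bar g=O(|x|\cdot|x|\cdot|\partial\bar g|+|x|^2)=O(|x|^2)$, because the Euclidean part cancels exactly by skewness and only the $O(|x|^2)$ deviation of $\bar g$ contributes. Its covariant derivative is then $O(|x|)$. The metric-variation formula for $H$ involves $\mathcal L_X\bar g$ multiplied by the second fundamental form $A_\Sigma$, which is $O(|x|^{-1})$, together with $\bar\nabla\mathcal L_X\bar g$. Both contributions are $O(|x|)$, and this gives the claim.

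The main obstacle is making this variational identity precise on the regular part of $\Sigma$ near the singularity, with constants uniform up to $q$. This requires the scale-invariant bounds $|A_\Sigma|\lesssim|x|^{-1}$ and $|\nabla A_\Sigma|\lesssim |x|^{-2}$, which follow from the regularity of the tangent cone and Simon's asymptotics (Remark \ref{rem:MSI}). It also requires confirming that no term of size $O(1)$ or $O(|x|^{-1})$ survives. Such a term would have to come from $\mathcal{L}_X\bar g$ having an $O(|x|)$ component, so I would verify carefully that the first-order part of $\bar g$ vanishes in normal coordinates. As an alternative, one could rescale: set $\Sigma_r=r^{-1}\Sigma$, where $L$ scales like $r^{-2}$ and $X$ is scale-invariant. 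Smooth convergence of $\Sigma_r$ to $\mbfC_j$ in the rescaled metric $\bar g_r=\delta+O(r^2)$ on annuli then gives $|L_{\Sigma_r,\bar g_r}X^\perp|=O(r^2)$ directly from the identity above, and rescaling back yields $O(r)$.
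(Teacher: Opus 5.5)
Your proposal is correct and is essentially the paper's argument: you differentiate the naturality identity for $H$ along the flow $x\mapsto e^{t\mbfA}x$, bound $L_{\Sigma,\bar g}((\mbfA x)^\perp)$ through the metric-variation formula by $|h|\,|\II_{\Sigma,\bar g}|+|\bar\nabla h|$ with $h=\mathcal L_{\mbfA x}(\bar g-g_{\eucl})=O(|x|^2)$ and $\bar\nabla h=O(|x|)$ by skewness of $\mbfA$, and use $|\II_{\Sigma,\bar g}|\lesssim|x|^{-1}$. The final weighted-integrability step also matches the paper, and the bound on $\nabla A_\Sigma$ that you list as needed does not in fact enter the argument.
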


\begin{proof}
Fix $q=p_j\in\Sing(\Sigma)$ and use
$\bar g$-normal coordinates centered at $q$.
For $\mbfA\in\mathfrak g_j^\perp$ of unit norm,
let $F_t$ be the locally defined diffeomorphism determined by
$F_t(x)=e^{t\mbfA}x$, and set
\[
g_t:=F_t^*\bar g,\qquad
h:=\left.\frac{d}{dt}\right|_{t=0}g_t
=\mathcal L_V\bar g,\qquad V(x):=\mbfA x.
\]
Naturality of the mean curvature vector gives
\[
H_{\Sigma,g_t}(x)
=(dF_t|_x)^{-1}
H_{F_t(\Sigma),\bar g}(F_t(x)),
\]
on the domain where these quantities are defined.
Since $\Sigma$ is minimal, differentiation
of this equation at $t=0$ yields
\[
L_{\Sigma,\bar g}(V^\perp)
=\left.\frac{d}{dt}\right|_{t=0}H_{\Sigma,g_t}.
\]
At each regular point, the tangential component
of $V$ induces only a local reparametrization
of $\Sigma$, whose contribution to the
linearized mean curvature vanishes since
$H_{\Sigma,\bar g}\equiv0$.

The metric variation formula for mean curvature
therefore implies
\[
|L_{\Sigma,\bar g}(V^\perp)|
\leq C\bigl(
|h|\,|\II_{\Sigma,\bar g}|+|\bar\nabla h|
\bigr),
\]
with all norms and covariant derivatives taken
with respect to $\bar g$.

Write $\bar g=g_{\eucl}+b$ in the chosen
normal coordinates. Smoothness of $\bar g$ gives
\[
|b(x)|\leq C|x|^2,\qquad
|Db(x)|\leq C|x|,\qquad
|D^2b(x)|\leq C.
\]
Since $\mbfA$ is skew-symmetric,
$\mathcal L_Vg_{\eucl}=0$, so $h=\mathcal L_Vb$.
Consequently,
\[
|h(x)|\leq C|x|^2,\qquad
|\bar\nabla h(x)|\leq C|x|.
\]
The smooth convergence to the regular tangent
cone on rescaled annuli also gives
$|\II_{\Sigma,\bar g}(x)|\leq C|x|^{-1}$.
Combining these estimates proves
\[
\bigl|L_{\Sigma,\bar g}((\mbfA x)^\perp)(x)\bigr|
\leq C|x|,
\]
uniformly over the finitely many singular points
and all unit-norm $\mbfA$.
For an arbitrary $\phi\in W_{\textrm{Rot}}$,
write $\phi=(\mbfA_jx)^\perp$ near each
$p_j\in\Sing(\Sigma)$. By linearity, the
preceding estimate gives
\[
|L_{\Sigma,\bar g}\phi|
\leq C\Bigl(\max_j|\mbfA_j|\Bigr)\rho
\]
near the singular set, with the same
$C=C(\bar g,\Sigma)$. Away from that set,
$L_{\Sigma,\bar g}\phi$ is smooth.
Thus it is globally bounded and belongs to
$W^{0,p}_{1-\varepsilon}(\Sigma;\mbfV)$
for every $1\leq p<\infty$ and
$0<\varepsilon<1$, by the definition of the
weighted norm and the conical volume growth
near each singular point.
\end{proof}

With this notation in place, we wish to study the mapping properties of the Jacobi operator
\[
L_{\Sigma,\bar g}: \hat{W}^{\ell,2}_\tau(\Sigma;\mbfV) \to W^{\ell-2,2}_{\tau-2}(\Sigma;\mbfV).
\]

We recall, for each $\mbfC=\mbfC_p(\Sigma)$ with $p\in\Sing(\Sigma)$, that $\Gamma(\mbfC)$ denotes the asymptotic spectrum of the cone $\mbfC$ (that is to say: the collection of all possible rates of radial decay/growth of homogeneous Jacobi fields along the cone in question) and further set
\be\label{eq:UpperThreshold}
   \gamma_-(\mbfC) := \sup (\Gamma(\mbfC)\cap \R_{<1})\in[0,1), \ \text{and} \ \gamma_+(\mbfC) := \min\left\{\inf (\Gamma(\mbfC)\cap \R_{>1}); 2\right\} 
    \ee
and, in turn,
\[
\gamma_-(\Sigma) := \sup_{p\in\Sing(\Sigma)}\gamma_{-}(\mbfC_p)\in[0,1), \ \text{and} \ \gamma_{+}(\Sigma)=\inf_{p\in\Sing(\Sigma)}\gamma_+(\mbfC_p)
\]
    whence the open interval(s)
\be
  I_{-}(\Sigma)=(\gamma_{-}(\Sigma),1)\subset (0,1), \quad 
  I_{+}(\Sigma)=(1,\gamma_{+}(\Sigma))\subset (1,2).  \label{Def_Intervals I_-, I_+}
\ee

\begin{definition}\label{def:EffMorsIndexNull}
Let $\Sigma_0$ denote a closed, embedded minimal submanifold in the round sphere of dimension $N-1$, i.\,e. $\SSp^{N-1}\subset \R^N$, and let $\mbfC$ denote the cone over $\Sigma_0$ having vertex at the origin $0\in\R^N$ (that is: $\mbfC:=\mathbf{0} \conetimes \Sigma_0$). Then we define the \textbf{effective Morse index} of $\Sigma_0$ as
\be\label{Equ_GenReg:EffectiveInd_Cone}
    \rmI(\mbfC) := (\,\text{Morse index of the Jacobi operator of } \mbfC\cap \mathbb{S}^{N-1}\,) - N \ee
and, denoted $\mathfrak{g}$ the Lie algebra of the stabilizer of $\Sigma_0$, the \textbf{effective nullity} of $\Sigma_0$ as
\be\label{Equ_GenReg:EffectiveNul_Cone}
    \rmJ (\mbfC) := (\,\text{nullity of the Jacobi operator of } \mbfC\cap \mathbb{S}^{N-1}\,) - \left( \frac{N(N-1)}{2}-\dim\mathfrak{g} \right). \ee

(Here it is to be stressed that we are not assuming $\Sigma_0$ to be connected, and note that -- if $\Sigma_0$ is not totally geodesic -- both numbers in question are non-negative.)    
\end{definition}

By following, almost verbatim \cite[Section 3]{CLW24} (but now considering a different range of values of $\tau$) we prove the following result:

\begin{proposition}
 \label{thm:CountRefined}
Let $(M, \bar g)$ be an $N$-dimensional closed Riemannian manifold, 
    and let $\Sigma$ be an $\MSI$ thereof. 
    Let $\ell\geq 2$ and 
    \be\label{eq:RangeTauExt}
      \tau \in  I_{+}(\Sigma)\,.
    \ee
    Then the Jacobi operator \be\label{eq:JacobiExtDomain}
      L_{\Sigma,\bar g} : \hat{W}^{\ell,2}_\tau(\Sigma;\mbfV) \to W^{\ell-2,2}_{\tau-2}(\Sigma;\mbfV)
    \ee
    is a Fredholm operator of index \[
      \widehat{\ind}_{\tau}(L_{\Sigma,\bar g}) = -\sum_{p\in \Sing(\Sigma)} \rmI(\mbfC_p) -\sum_{p\in \Sing(\Sigma)} \rmJ(\mbfC_p)\,.
    \]
   \end{proposition}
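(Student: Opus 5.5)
The plan is to reduce to the non-augmented weighted operator and then account for the finite-dimensional augmentation by elementary linear algebra. First, for $\tau\in I_+(\Sigma)$ the weight $\tau$ is not an indicial root of any cone $\mbfC_p$, since $I_+(\Sigma)\cap\Gamma(\mbfC_p)=\emptyset$ by the definition of $\gamma_+$. Hence, by the weighted Fredholm theory on conical ends used in \cite[Section 3]{CLW24}, the restricted operator $L_{\Sigma,\bar g}:W^{\ell,2}_\tau(\Sigma;\mbfV)\to W^{\ell-2,2}_{\tau-2}(\Sigma;\mbfV)$ is Fredholm. Its index is computed by the usual wall-crossing relation:
\[
\ind_\tau = \ind_{\tau'} - \sum_{p}\sum_{\gamma\in\Gamma(\mbfC_p)\cap(\tau',\tau)} m_p(\gamma),
\]
where $\tau'\in I_-(\Sigma)$ and $m_p(\gamma)$ is the dimension of the space of homogeneous Jacobi fields of rate $\gamma$ on $\mbfC_p$. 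The only rate in $(\tau',\tau)$ is $\gamma=1$. Its multiplicity equals the nullity of the link's Jacobi operator, because rate-one homogeneous Jacobi fields on the cone correspond to eigenvalue $-(n-1)$ of the link operator, i.e. to the Jacobi fields of $\mbfC\cap\SSp^{N-1}$. We then insert the formula from \cite[Theorem 3.2]{CLW24} for $\tau'\in I_-$. There the index equals $-\sum_p \rmI(\mbfC_p)$ modulo the translation-like augmentation already accounted for; more precisely, the $N$ translation directions per singular point, which are counted in the effective index, are exactly the augmentation $W_{\text{Tr}}$.

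Second, we add the augmentation. By \eqref{eq:Augment}, $\hat W^{\ell,2}_\tau=W_{\text{Tr}}\oplus W_{\text{Rot}}\oplus W^{\ell,2}_\tau$ is a finite-dimensional extension of the domain. Lemma \ref{lem:JacobiWellPosedRotations} shows that $L_{\Sigma,\bar g}$ maps $W_{\text{Rot}}$ into $L^\infty\subset W^{\ell-2,2}_{\tau-2}$, since $\tau-2<0$. The regularity upgrade to order $\ell-2$ is obtained from the same naturality argument with higher derivatives of $h=\mathcal L_V b$, giving $|\nabla^k L(\ldots)|\lesssim |x|^{1-k}$, which suffices because $\tau-2<1$. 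The analogous statement for $W_{\text{Tr}}$ is contained in \cite[Lemma 2.22]{CLW24}, but it must be checked at the new weight. For translations $\mbfA x$ is replaced by a constant vector $v$; the variation $h$ is then $O(|x|)$ and $\bar\nabla h=O(1)$, so $L\phi=O(|x|^{-1})$, which lies in $W^{0,2}_{\tau-2}$ precisely because $\tau-2>-1$. Adding a finite-dimensional space to the domain of a Fredholm operator preserves the Fredholm property and raises the index by the dimension of that space. This gives
\[
\widehat{\ind}_\tau=\ind_\tau + QN + \sum_p\Bigl(\tfrac{N(N-1)}{2}-\dim\mathfrak g_p\Bigr).
\]

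Third, we reconcile the bookkeeping. In \cite{CLW24} the index for $\tau'\in I_-$ on the translation-augmented space was $-\sum_p\rmI(\mbfC_p)$, so for the bare space it equals $-\sum_p\rmI(\mbfC_p)-QN$. Crossing the rate $1$ subtracts $\sum_p \nul(\mbfC_p\cap\SSp^{N-1})$. Adding $QN$ and the rotation dimension back then yields $-\sum_p\rmI-\sum_p\rmJ$, by Definition \ref{def:EffMorsIndexNull}.

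The main obstacle is the correct identification of the multiplicity of the rate-$1$ crossing together with the sign conventions of the wall-crossing formula, i.e. verifying that homogeneous degree-one Jacobi fields on the cone correspond exactly, with no log-type or extra terms, to the link's Jacobi kernel. I would verify this by separating variables, $u=r^\gamma\varphi(\omega)$, and reading off the indicial equation $\gamma(\gamma+n-2)=\mu$ with $\mu=\lambda+(n-1)$ for link eigenvalues $\lambda$. I would also confirm that the translation-like contributions behave compatibly at the new weight, as checked above.
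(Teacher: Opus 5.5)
Your route is essentially the paper's. You compute the index of the unaugmented operator $W^{\ell,2}_\tau(\Sigma;\mbfV)\to W^{\ell-2,2}_{\tau-2}(\Sigma;\mbfV)$ for $\tau\in I_+(\Sigma)$ as $-\sum_p(\text{Morse index}+\text{nullity of }\mbfC_p\cap\SSp^{N-1})$. You then add $NQ$ for $W_{\text{Tr}}$ and $\sum_p(\tfrac{N(N-1)}{2}-\dim\mathfrak g_p)$ for $W_{\text{Rot}}$. The paper reads the bare index directly off \cite[Remark 3.1]{CLW24}. You instead obtain it by crossing the single indicial root $\gamma=1$, starting from the $I_-$ result of \cite[Theorem 3.2]{CLW24}. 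This is equivalent. The root $\gamma=1$ is simple, because the only double root of $\gamma(\gamma+n-2)=\mu$ sits at $-(n-2)/2$, and its multiplicity equals the link nullity. Your final bookkeeping is also correct.

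One sub-step is wrong as written: the check that translation-like sections are mapped into the target space.
\begin{itemize}
\item Your own inputs are $h=\mathcal L_v b=O(|x|)$, $\bar\nabla h=O(1)$ and $|\II_{\Sigma,\bar g}|\lesssim|x|^{-1}$. Inserted into $|L\phi|\lesssim|h|\,|\II_{\Sigma,\bar g}|+|\bar\nabla h|$, they give $L\phi=O(1)$, not $O(|x|^{-1})$.
\item Your inclusion criterion runs the wrong way. A section with $|\nabla^j u|\lesssim\rho^{a-j}$ lies in $W^{\ell-2,2}_{\sigma}$ as soon as $a>\sigma$. A section genuinely of size $\rho^{a}$ with $a\leq\sigma$ does not. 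This is exactly how you argued, correctly, for rotations, via $1>\tau-2$.
\item With $a=-1$ and $\sigma=\tau-2\in(-1,0)$ the criterion fails. So an image of size $|x|^{-1}$ would not lie in $W^{0,2}_{\tau-2}$, and the argument as you wrote it does not show that the augmented operator lands in $W^{\ell-2,2}_{\tau-2}$.
\end{itemize}
The correct estimate is $|\nabla^j L\phi|\lesssim|x|^{-j}$. It gives membership in $W^{\ell-2,2}_{\tau-2}$ exactly when $\tau<2$. This is the reason for the cap at $2$ in the definition \eqref{eq:UpperThreshold} of $\gamma_+$. With this correction your argument is complete.
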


\begin{proof}(Sketch)
One first looks at the Jacobi operator on standard weighted Sobolev spaces, namely \[
      L_{\Sigma,\bar g} : W^{\ell,2}_\tau(\Sigma;\mbfV) \to W^{\ell-2,2}_{\tau-2}(\Sigma;\mbfV)\,;
    \]
    by the way $\gamma_{+}$ is defined, if we take $\tau$ in the range above we find 
    that the Fredholm index is equal to $-\sum_{p\in \Sing(\Sigma)}(\text{Morse index} \ + \text{nullity of} \ \mbfC_p(\Sigma)\cap\SSp^{N-1})$; this is a direct consequence of Remark 3.1 therein. Then, by virtue of the augmentation of the domain, the Fredholm index increases by $NQ$ due to $W_{\text{Tr}}(\Sigma;\mbfV)$ plus $\sum_{p\in\Sing(\Sigma)}(N(N-1)/2-\dim\Stab_{\;\SOgroup(N)}(\mbfC_p(\Sigma)\cap\SSp^{N-1}))$ due to $W_{\text{Rot}}(\Sigma;\mbfV)$. If we keep in mind the definitions of effective index and nullity and rearrange the terms we get the desired conclusion. 
\end{proof}

\begin{remark}\label{rem:Inclusions}
For later reference, keeping in mind the definition of weighted Sobolev spaces, let us note that there always holds $-2(\tau-2)-n<0$ if $n\geq 2$ and $\tau\in I_{+}\subset (1,2)$ and thus one has the inclusions $W^{\ell-2,2}_{\tau-2}(\Sigma;\mbfV)\subset W^{0,2}_{\tau-2}(\Sigma;\mbfV)\subset L^2(\Sigma;\mbfV)$.
\end{remark}

\begin{example}\label{ex:DisjSpheres}
Let $n\geq2$, set $N=2n$, and let
$\Sigma_0\subset\SSp^{2n-1}$ be the union of
the great $(n-1)$-spheres in two orthogonal
complementary $n$-planes.
Its Morse index is $N$
(cf. \cite[Remark 2.9]{CLW24}), whereas its
nullity is $2n^2$
(cf. \cite[Proposition 5.1.1]{Sim68}).
The stabilizer Lie algebra has dimension
$2\dim\mathfrak{so}(n)=n(n-1)$.
Hence the spherical suspension
$\Sigma\subset\SSp^{2n}$ has two singular
points $p_1,p_2$, with
$\rmI(\mbfC_{p_i})=0$ and
\be\label{eq:EffNullSpheres}
\rmJ(\mbfC_{p_i})
=2n^2-\bigl(n(2n-1)-n(n-1)\bigr)
=n^2,\qquad i=1,2.
\ee
By \ref{thm:CountRefined}, the operator in
\eqref{eq:JacobiExtDomain} therefore has
Fredholm index $-2n^2$.

This negative index should be interpreted in
relation to the prescribed singular models.
The local adjustments of Section \ref{sec:Diffeo}
include anisotropic linear maps, but these
compensate for the change of ambient metric:
the resulting tangent cones are isometric
to the original ones when measured in the
respective ambient metrics. In particular,
their two sheets retain their relative angles.

By contrast, applying the smooth perturbation
theory of \cite{Whi91} separately to the two
great $n$-spheres concerns their respective
isometry orbits, rather than a fixed pair
or its orbit under a common ambient isometry.
The relative position of the resulting minimal
sheets is therefore not constrained in the
same way. When they meet transversely, their
union has cross singularities whose tangent
cones need not be isometric to the prescribed
ones. Persistence in this broader sense is
thus compatible with the negative Fredholm
index computed above, and differs from the
persistence with prescribed singular models
established by our main theorem.
\end{example}

\section{Deformations and diffeomorphisms}\label{sec:Diffeo}

The goal of this manuscript is essentially to find (under appropriate assumptions) $\MSI$ near a given one -- or congruent copies thereof -- under small perturbations $g$ of $\bar g$. For that purpose, based on the linear theory presented in Section \ref{sec:Refine}, in order to minimize the obstruction space it is appropriate to allow (not only for graphical deformation, but also) for motion of singular points and ``rotations" of the associated tangent cones.

\subsection{Generalized graphs}\label{subs:GenGraph}

In what follows, we let $(M,\bar g)$ be as stipulated in Section \ref{sec:Prelim} and  let $\Sigma$ denote an $n$-dimensional $\MSI$ in $(M,\bar g)$.

We let $\Sing(\Sigma)=\left\{p_1,\ldots, p_{Q}\right\}$ denote the singular set of $\Sigma$, and we define vector spaces
  \begin{align*}
    \scT := \prod_{j=1}^Q T_{p_j}M\,, & &
    \scR := \prod_{j=1}^Q \mathfrak{g}_j^\perp
  \end{align*}
  each of which has a canonical metric induced by Riemannian metric $\bar g$. 

For the sake of convenience, we also assume to have fixed (once and for all), at each such point $p_i$, an orthonormal frame for $T_{p_i}M$ which determines geodesic normal coordinates in a neighborhood of $p_i$ (with respect to $\bar g$); as usual, $\mbfC_i$ denotes the tangent cone of $\Sigma$ at $p_i$.

\

As anticipated in the introduction, minimal cones in $\R^N\cong (T_p M,\bar g_p)$ do not need to be minimal with respect to $g_p$, no matter how close $g$ might be to $\bar g$; roughly speaking, such cones may need to be deformed by means of an anisotropic dilation to be minimal in metric $g$. With that issue in mind, we will now present the design of a suitable class of diffeomorphisms that allows to deal with this issue. Let us begin with the following definition:
  
  \begin{definition}\label{def:MetricsCoincide}
Let $(M,\bar g)$ be a Riemannian manifold. We will say that a metric $g\in \Gamma(q)$ \textbf{coincides with $\bar g$ on $Z\subset M$} if for every $p\in Z$, there holds $g|_p = \bar g_p$ (as symmetric $2$-tensors on $T_pM$). In particular, for $\Sigma$ a given $\MSI$ in $(M,\bar g)$ we will denote by
\[
\Gamma(q; \bar g,\Sigma)
\]
the subset of $\Gamma(q)$ consisting of those $C^q$ Riemannian metrics that coincide with $\bar g$ on $\Sing(\Sigma)$.
\end{definition}

Throughout this section we let $0<r_0(\Sigma,\bar g)<\injrad(\bar g,\Sigma)/10$ and $\delta\in (0,r_0/2)$. For constructions associated with the fixed
submanifold $\Sigma$, we choose $r_0$ once and
for all in terms of $(\bar g,\Sigma)$.
Its dependence may then be absorbed into that
on $(\bar g,\Sigma)$.

We define $M(Q,r_0)$ as the space of unordered $Q$-tuples \(Z=\{p_1,\ldots,p_Q\}\subset M\) whose pairwise distances are greater than \(10r_0\), endowed with its natural smooth structure as an open subset of \(M^Q/S_Q\) for $S_Q$ the set of permutations of $Q$ objects. The following statement, whose proof is given in Section \ref{subs:ConstrDiff}, provides the desired ``local adjustment" at the singular points:

  \begin{lemma}\label{lem:LocalAdjMap}
  (Setting as above.)
    There exist $\delta=\delta(r_0, Q,\bar g)$ and a $C^q$ map
    \[
    \Phi: \Gamma_{\delta}(q)\times M(Q,r_0) \to \Diff(M)
    \]
    such that
\begin{enumerate}[label=(\roman*)]

\item $\Phi(g,Z)$ equals the identity on $Z$ and on $M\setminus \cup_{p\in Z}(B_{2r_0}(p))$; 
\item 
    $(\Phi(g,Z))^{*}g$ coincides with $\bar g$ on $Z$;
    \item\label{item:Locality} if $g$ coincides with $\bar g$ on $Z$ then $\Phi(g,Z)$ equals the identity on $M$;
\item 
for any $\ell\in\mathbb{N}$ there holds the estimate
       \be\label{eq:DiffVsIndentity}
      \|\Phi(g,Z) - \id\|_{C^\ell} \leq C(\ell,Q,r_0,\bar g)\|g-\bar g\|_{C^{0}}\,.
    \ee
    \end{enumerate}
     
  \end{lemma}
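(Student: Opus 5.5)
The idea is to build $\Phi(g,Z)$ as a sum of cut-off linear maps in $\bar g$-normal coordinates around each point of $Z$. The linear part is chosen so that its pullback of $g_p$ equals $\bar g_p$. Fix $p\in Z$. Regard $g_p$ and $\bar g_p$ as symmetric positive-definite endomorphisms relative to $\bar g_p$, and let $S_{g,p}:=(\bar g_p^{-1}g_p)^{-1/2}$ be the $\bar g_p$-self-adjoint square root, built by holomorphic functional calculus. Then $S_{g,p}^{\,T}g_pS_{g,p}=\bar g_p$, and $S_{g,p}=\mathrm{Id}$ exactly when $g_p=\bar g_p$. The map $g\mapsto S_{g,p}$ is real-analytic in the finite-dimensional datum $g_p$ near $\bar g_p$. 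Moreover $|S_{g,p}-\mathrm{Id}|\le C|g_p-\bar g_p|_{\bar g}\le C\|g-\bar g\|_{C^0}$ once $\delta$ is small.

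Next, fix a smooth cutoff $\chi(s)=\xi(s/r_0)$, so that $\chi\equiv1$ for $s\le r_0$ and $\chi\equiv 0$ for $s\ge 2r_0$. In $\bar g$-normal coordinates $x=(\exp^{\bar g}_p)^{-1}(\cdot)$ centered at $p$, set
\[
\Phi(g,Z)(\exp^{\bar g}_p x):=\exp^{\bar g}_p\bigl(x+\chi(|x|)\,(S_{g,p}-\mathrm{Id})x\bigr)\quad\text{for }|x|<2r_0,
\]
and let $\Phi(g,Z)$ be the identity elsewhere. The balls $B_{2r_0}(p)$ are disjoint because the points of $Z$ are more than $10r_0$ apart, so the definition is consistent. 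It is independent of the ordering of $Z$, which makes it well defined on the unordered configuration space. It also does not depend on the orthonormal frame chosen at $p$, since the construction is equivariant under $\Ogroup(T_pM,\bar g_p)$.

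Properties (i) and (iii) are then immediate. For (ii), note that $d\Phi|_p=S_{g,p}$, because $\chi\equiv1$ near $0$ and $d\exp_p^{\bar g}|_0=\mathrm{Id}$. For (iv), the map $x\mapsto \chi(|x|)(S-\mathrm{Id})x$ is smooth with $C^\ell$ norm at most $C(\ell,r_0)|S-\mathrm{Id}|$. Composing with $\exp^{\bar g}_p$ and its inverse, both smooth and depending smoothly on $p$ over the compact set where $r_0<\injrad/10$, gives \eqref{eq:DiffVsIndentity}.

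It remains to check that $\Phi(g,Z)$ is a diffeomorphism. Write the local map as $x\mapsto x+\chi(|x|)(S-\mathrm{Id})x$. This is a $C^1$-small perturbation of the identity supported in $\BB_{2r_0}$, with $C^1$ size at most $C(r_0)\delta$. For $\delta$ small it is therefore injective, since $\mathrm{Id}+(\text{contraction})$ is injective, and it has invertible differential. It also equals the identity outside the ball, so a degree (properness) argument shows it is a diffeomorphism of $\BB_{2r_0}$ onto itself. This fixes $\delta=\delta(r_0,Q,\bar g)$.

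The main obstacle is the claimed $C^q$ regularity of $\Phi$ as a map into $\Diff(M)$, jointly in $(g,Z)$. Dependence on $g$ enters only through the finitely many values $g_{p}$. The evaluation $\Gamma(q)\times M\ni(g,p)\mapsto g_p$ is $C^q$ in $p$ and linear (hence smooth) in $g$, so it is $C^q$ jointly. The square root is analytic. Dependence on $Z$ enters through the smooth family $(p,x)\mapsto\exp^{\bar g}_p x$, through its inverse on the relevant domain, and through the smooth cutoff. All of these are smooth, and the formula is pointwise, so each derivative in the $\Diff(M)$ topology can be checked in local coordinates on $M^Q/S_Q$ near a given configuration. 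I would verify this carefully, using the frame-independence noted above to avoid choosing frames globally. One more caveat: $\Diff(M)$ carries the smooth topology, so the loss of regularity in $p$ must be tracked. Derivatives in $x$ of any order are fine, and only $q$ derivatives in the parameters are required.
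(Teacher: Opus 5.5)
Your proposal is correct and follows essentially the same route as the paper: your $S_{g,p}=(\bar g_p^{-1}g_p)^{-1/2}$ is exactly the map $T_{(p)}=S^{-1/2}$ of Lemma~\ref{lem:LocalAdj}, and your cut-off formula $x+\chi(|x|)(S_{g,p}-\mathrm{Id})x$ coincides with the paper's $\xi^{(r_0)}(|x|)T_{(p)}x+(1-\xi^{(r_0)}(|x|))x$. Properties (ii)--(iv) and the $C^q$ dependence through the finitely many values $g_p$ are handled the same way; the only cosmetic differences are that the paper gets the sharp bound $\|T_{(p)}-\id\|\le\|g_p-\bar g_p\|$ by an explicit eigenvalue comparison rather than by analyticity, and proves surjectivity by an open-and-closed image argument on the compact connected $M$ rather than by your degree/properness argument on the ball.
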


 \emph{Notation: if $\Sigma$ is an $\MSI$ in $(M,\bar g)$ we will write 
  $\Phi^{\Sigma}(g)$ in place of $\Phi(g,\Sing(\Sigma))$.} 

\
 
  For fixed $\Sigma$, the construction depends smoothly
on $g$ only through its values at the finitely many
singular points of $\Sigma$. Consequently, the map
\begin{equation*}
\Gamma_{\delta}(q) \longrightarrow \operatorname{Diff}(M),
\qquad g \longmapsto \Phi^{\Sigma}(g),
\end{equation*}
is smooth and satisfies
\begin{equation*}
\bigl(\Phi^{\Sigma}(g)\bigr)^{*}g = \bar{g}
\quad \text{on } \operatorname{Sing}(\Sigma).
\end{equation*}

\

   In our setting, as described above, for every $p\in \Sing(\Sigma)$ and $\mbfx\in T_pM$ with $|\mbfx|\leq \delta$, let $\mbfP_\mbfx$ be the map defined as 
   \[
      \mbfP_\mbfx: T_p M \to T_p M, \quad v\mapsto \mbfx+v \,;
    \]
   note that $\mbfP_\mbfx(0)=\mbfx$ and $d\mbfP_\mbfx|_0$ is the identity map.
   Then there exists $\delta(\bar g, \Sigma)>0$ such that, 
\begin{equation}\label{eqn:def_Psi_p_x_A}
    \Psi_{p, \mbfx, \mbfA}(v) := \xi^{(r_0)}(|v|)\mbfP_{\mbfx}(e^\mbfA v) + (1-\xi^{(r_0)}(|v|))v
  \end{equation}
  is a diffeomorphism of $T_pM$ to itself, provided $\mbfx\in T_pM$, $\mbfA\in \mathfrak{so}(T_pM)$ with $|\mbfx|, |\mbfA|\leq \delta$. Indeed, direct differentiation gives
\[
\|d\Psi_{p,\mbfx,\mbfA}-\id\|_{C^0}
\leq C\bigl(r_0^{-1}|\mbfx|+|\mbfA|\bigr)<1
\]
for $\delta$ sufficiently small, where $C$
depends only on the fixed cutoff.
Since $\Psi_{p,\mbfx,\mbfA}$ equals the identity
outside $B_{2r_0}(0)$, the injectivity and
open-and-closed image argument used in the
proof of Lemma \ref{lem:LocalAdjMap} shows
that it is a diffeomorphism.
    
  Now, given $\Sigma$ an $\MSI$ in $(M,\bar g)$ (and adopting the notational conventions described above) for $\mbfx=(\mbfx_1, \dots, \mbfx_Q)\in \scT$ with $|\mbfx_j|\leq \delta$ and $\mbfA=(\mbfA_1, \dots, \mbfA_Q)\in \scR$ with $|\mbfA_j|\leq \delta$, define the diffeomorphism $\Psi_{\mbfx, \mbfA}$ by 
  \begin{equation}\label{eqn:def_Psi_x_A}
     \Psi_{\mbfx, \mbfA}(y):=
     \begin{cases}
\exp^{\bar g}_{p_j}\circ\,\Psi_{p_j, \mbfx_j, \mbfA_j}\circ (\exp^{\bar g}_{p_j})^{-1}(y), & \text{ if }y\in B_{2r_\circ}(p_j) \\
y & \text{ if }y\notin B_{2r_\circ}(p_j) 
\end{cases}
  \end{equation}
Each $\Psi_{p_j,\mbfx_j,\mbfA_j}$ maps
$B_{2r_0}(0)$ onto itself, since it is a
bijection fixing its complement pointwise.
Thus its conjugate by the exponential map
is a diffeomorphism of $B_{2r_0}(p_j)$.
These maps patch smoothly with the identity
to give a diffeomorphism of $M$, because
the balls are pairwise disjoint and the
cutoff is smooth. Moreover, for every
$\ell\in\mathbb{N}$ there holds
\be\label{eq:TranslRotVsIndentity}
\|\Psi_{\mbfx,\mbfA}-\id\|_{C^\ell}
\leq C(\ell,Q,r_0,\bar g)
\bigl(|\mbfx|+|\mbfA|\bigr).
\ee

  \begin{lemma}\label{lem:AdjustCoincideWith}
In the setting above, given $\Sigma$ an $\MSI$ in $(M,\bar g)$, for $\delta=\delta(r_0,Q,\bar g)>0$ small enough the $C^q$ map 
\be
\Xi: \Gamma(q)\oplus \scT \oplus\scR\supset ((\bar g,0,0))_{\delta}\longrightarrow \Diff(M) 
\ee
(where $((\bar g,0,0))_{\delta}$ stands for the metric $\delta$-neighborhood of $(\bar g,0,0)\in \Gamma(q)\oplus \scT \oplus\scR$)
defined by
\be\label{eq:Xi}
\Xi(g,\mbfx,\mbfA)=\Psi_{\mbfx,\mbfA}\circ  \Phi^\Sigma((\Psi_{\mbfx,\mbfA})^\ast g)\,.
\ee
satisfies the following properties:
\begin{enumerate}[label=(\roman*)]
\item\label{item:Xi} there holds \[
\Xi(g,0,0)=\id \ \hspace{2mm} (\forall g\in \Gamma(q;\bar g,\Sigma))\,;
\]
\item\label{item:BoundComposite} for any $\ell\in\mathbb{N}$ there holds the estimate
\be\label{eq:DiffBoundComposite}
      \|\Xi(g,\mbfx,\mbfA) - \id\|_{C^\ell} \leq C(\ell,\bar g,\Sigma)(\|g-\bar g\|_{C^{0}}+|\mbfx|+|\mbfA|)\,.
    \ee

    \item\label{item:XiAst} the map
\[
\Xi^\ast:
\Gamma(q)\oplus\scT\oplus\scR
\supset ((\bar g,0,0))_{\delta}
\longrightarrow \Gamma(q;\bar g,\Sigma),
\qquad
\Xi^\ast(g,\mbfx,\mbfA)
:=
\Xi(g,\mbfx,\mbfA)^\ast g,
\]
is continuous and is a retraction when restricted to
$\Gamma_\delta(q)\oplus0\oplus0
\cong\Gamma_\delta(q)$.
Moreover, for every integer $1\leq s\leq q-1$,
the same map, regarded as taking values in
$\Gamma(q-s;\bar g,\Sigma)$, is of class $C^s$.
\end{enumerate}
\end{lemma}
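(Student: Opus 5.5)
The plan is to verify the three items directly from the definition \eqref{eq:Xi}, relying only on Lemma \ref{lem:LocalAdjMap} and on the properties \eqref{eq:TranslRotVsIndentity} of $\Psi_{\mbfx,\mbfA}$. First, choose $\delta$ small enough that, whenever $(g,\mbfx,\mbfA)\in((\bar g,0,0))_\delta$, the pulled-back metric $(\Psi_{\mbfx,\mbfA})^\ast g$ lies in the domain $\Gamma_{\delta'}(q)$ of $\Phi$ for the $\delta'$ of Lemma \ref{lem:LocalAdjMap}. This is possible because
\[
\|(\Psi_{\mbfx,\mbfA})^\ast g-\bar g\|_{C^0}\le \|(\Psi_{\mbfx,\mbfA})^\ast (g-\bar g)\|_{C^0}+\|(\Psi_{\mbfx,\mbfA})^\ast\bar g-\bar g\|_{C^0},
\]
and both terms are controlled by $C(\|g-\bar g\|_{C^0}+|\mbfx|+|\mbfA|)$ thanks to \eqref{eq:TranslRotVsIndentity} with $\ell=1$. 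The map $\Xi$ is then well defined. It is $C^q$ because $(\mbfx,\mbfA)\mapsto\Psi_{\mbfx,\mbfA}$ is smooth, pullback of a $C^q$ metric by a smooth diffeomorphism depending smoothly on finitely many parameters is $C^q$ in the relevant sense, and $\Phi^\Sigma$ is $C^q$ (indeed smooth, since it depends only on the values of the metric at $\Sing(\Sigma)$).

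Item \ref{item:Xi} is immediate. Since $\Psi_{0,0}=\id$, we have $\Xi(g,0,0)=\Phi^\Sigma(g)$, and this is the identity by Lemma \ref{lem:LocalAdjMap}\ref{item:Locality} when $g\in\Gamma(q;\bar g,\Sigma)$.

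For item \ref{item:BoundComposite}, write $\Xi-\id=(\Psi_{\mbfx,\mbfA}-\id)\circ\Phi+(\Phi-\id)$ with $\Phi:=\Phi^\Sigma((\Psi_{\mbfx,\mbfA})^\ast g)$. The $C^\ell$ norm of a composition is controlled by the chain rule (Faà di Bruno) in terms of $C^\ell$ norms of both factors, which are uniformly bounded near the identity. The first summand is therefore bounded by $C(|\mbfx|+|\mbfA|)$ via \eqref{eq:TranslRotVsIndentity}. The second summand is bounded by $C\|(\Psi_{\mbfx,\mbfA})^\ast g-\bar g\|_{C^0}$ via \eqref{eq:DiffVsIndentity}, and hence by the estimate of the first step.

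For item \ref{item:XiAst}, first check that $\Xi^\ast$ lands in $\Gamma(q;\bar g,\Sigma)$:
\[
\Xi^\ast g=\Phi^\ast\bigl((\Psi_{\mbfx,\mbfA})^\ast g\bigr),
\]
and by Lemma \ref{lem:LocalAdjMap}(ii), applied with $Z=\Sing(\Sigma)$ and the metric $(\Psi_{\mbfx,\mbfA})^\ast g$, this coincides with $\bar g$ on $\Sing(\Sigma)$. Restricted to $\mbfx=\mbfA=0$, the map is $g\mapsto\Phi^\Sigma(g)^\ast g$. On $\Gamma(q;\bar g,\Sigma)$ it is the identity by item \ref{item:Xi}, so it is a retraction.

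The regularity claim is the main obstacle. The map $(\Theta,g)\mapsto\Theta^\ast g$, from diffeomorphisms to $C^q$ metrics, is not differentiable into $\Gamma(q)$ because of the loss of derivatives: $\Theta^\ast g$ involves $g\circ\Theta$, and differentiating in $\Theta$ costs one derivative of $g$. My plan is to use the standard $\omega$-lemma-type fact that $(\Theta,g)\mapsto g\circ\Theta$ (together with $d\Theta$ factors, which are smooth in the parameters) is $C^s$ as a map into $C^{q-s}$ when $\Theta$ ranges over a smooth finite-dimensional family. Here $\Xi(g,\mbfx,\mbfA)$ depends on $g$ only through finitely many values (the point values $((\Psi_{\mbfx,\mbfA})^\ast g)(p_j)$, which are $C^q$ functionals on $\Gamma(q)$), and smoothly on $(\mbfx,\mbfA)$. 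So $\Xi$ factors through a finite-dimensional smooth family of diffeomorphisms, and I would reduce to the composition-map results of Appendix \ref{app:C^q}, or prove the needed statement directly by differentiating under composition $s$ times. Continuity into $\Gamma(q)$ (the case $s=0$) follows in the same way from continuity of composition.
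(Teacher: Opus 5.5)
Your proposal is correct and follows essentially the same route as the paper. Item \ref{item:Xi} comes from $\Psi_{0,0}=\id$ and item \ref{item:Locality} of Lemma \ref{lem:LocalAdjMap}; item \ref{item:BoundComposite} from \eqref{eq:DiffVsIndentity} and \eqref{eq:TranslRotVsIndentity}; and item \ref{item:XiAst} from the coordinate formula for the pullback, using that $\Xi$ depends $C^q$ on $(g,\mbfx,\mbfA)$ through finitely many parameters and that each parameter derivative costs one spatial derivative of $g$, so $s$ derivatives land in $C^{q-s}$. Your write-up in fact supplies details the paper leaves implicit: the argument of $\Phi^\Sigma$ is $(\Psi_{\mbfx,\mbfA})^\ast g$, so its distance from $\bar g$ must first be bounded by $C(\|g-\bar g\|_{C^0}+|\mbfx|+|\mbfA|)$ before invoking \eqref{eq:DiffVsIndentity}, and the composition (Fa\`a di Bruno) estimate hidden behind the paper's ``triangle inequality'' in item \ref{item:BoundComposite}.
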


\emph{Notation: we will from now on systematically write $\Xi_{\mbfx, \mbfA}^{g}$ in place of $\Xi(g,\mbfx,\mbfA)$, and $g_{\mbfx,\mbfA}$ in place of  $(\Xi_{\mbfx, \mbfA}^{g})^*g$. When, in later sections, we wish to stress the dependence on the $\MSI$ in question, we will write  $\Xi_{\mbfx, \mbfA}^{\Sigma,g}$.}

  \begin{proof}

When $\mbfx=0$ and $\mbfA=0$ we note that -- straight from the corresponding definition -- $\Psi_{0,0}$ is the identity map on $M$, as is $\Phi^\Sigma$ as soon as $g\in \Gamma(q;\bar g,\Sigma)$ (this is item \ref{item:Locality} of Lemma \ref{lem:LocalAdjMap}), and so will their composition under our assumptions. This gives \ref{item:Xi}. Estimate \eqref{eq:DiffBoundComposite}, that is \ref{item:BoundComposite}, comes (via the triangle inequality) from \eqref{eq:DiffVsIndentity} and \eqref{eq:TranslRotVsIndentity}.

For \ref{item:XiAst}, write
$F=\Xi(g,\mbfx,\mbfA)$.
In smooth local coordinates, the pullback has components
\[
(F^\ast g)_{ij}(y)
=
g_{ab}(F(y))\,\partial_iF^a(y)\,\partial_jF^b(y).
\]
The continuous dependence of $F$ on its parameters
in every fixed $C^\ell$ topology, together with
continuity of composition and multiplication in
integer $C^j$ norms, proves continuity of $\Xi^\ast$
with values in $C^q$.

More generally, the map
$(g,\mbfx,\mbfA)\mapsto F$ is $C^q$ with values
in every fixed $C^\ell$ topology.
For $1\leq s\leq q-1$, differentiating the displayed
formula $s$ times with respect to the parameters
involves spatial derivatives of $g$ of order at most
$s$. Measuring the resulting expressions in
$C^{q-s}$ therefore requires spatial derivatives
of $g$ of order at most $q$.
The chain and product rules, justified by Taylor's
formula in local coordinates, yield continuous
Fr\'echet derivatives through order $s$ with values
in $C^{q-s}$.
A finite coordinate covering of the compact
manifold $M$ gives the asserted $C^s$ regularity.

By the definition of $\Xi$ and
Lemma \ref{lem:LocalAdjMap}, the metric $F^\ast g$
coincides with $\bar g$ on $\Sing(\Sigma)$.
Thus all the maps above take values in the
indicated affine spaces of metrics.
Finally, the retraction property follows from
\ref{item:Xi}.
  \end{proof}

\begin{definition}\label{def:GenNormGraph}(Generalized normal graph) Given $\Sigma$ an $\MSI$ in $(M,\bar g)$, let $\delta\in (0,1/4)$ be small enough as afforded by Lemma \ref{lem:AdjustCoincideWith}. Let $\Exp_\Sigma:\mbfV\to M$ be the normal exponential map of $\Sigma$ with respect to the metric $\bar g$; thus $\Exp_\Sigma(v)=\exp_x^{\bar g}(v)$ for $v\in\mbfV_x$. For
$k\geq 2, \tau>1$ and
$u\in W^{k,2}_\tau(\Sigma;\mbfV)$  with $\|u\|_{W^{k,2}_{\tau}(\Sigma)}<\delta$ (possibly taking $\delta>0$ smaller, as provided by the tubular neighborhood theorem in this context)
let \[
\Sigma(u):=\left\{\Exp_\Sigma(u(x)) \st x\in\Sigma\right\}\,.
\]

Then, for $(g,\mbfx, \mbfA)\in ((\bar g,0,0))_{\delta}\subset \Gamma(q)\oplus \scT \oplus\scR$ we let
\be\label{eq:GenNormGraph}
 \Sigma(g; u, \mbfx, \mbfA):= \Xi^{\Sigma,g}_{\mbfx, \mbfA}(\Sigma(u))\,.
\ee
 \end{definition}

  \begin{remark}
Tautologically, there holds the following equivalence:
$\Sigma(g; u, \mbfx, \mbfA)$ is minimal in $(M,g)$ if and only if $\Sigma(u)$ is minimal in $(M,g_{\mbfx,\mbfA})$. Note that on the one hand $\Sigma(u)$ is independent of $g$ and on the other hand $g_{\mbfx,\mbfA}$ coincides with $\bar g$ on $\Sing(\Sigma)$. 
  \end{remark}

  The following statement, whose proof is an elementary consequence of the bound \eqref{eq:DiffBoundComposite}, describes the ``velocity vector field" associated to the variations presented above, thereby connecting this discussion with the functional spaces that have been introduced in Section \ref{sec:Refine}.

    \begin{lemma}\label{lem:VelocityField}
     Let $\Sigma$ be an $\MSI$ in $(M,\bar g)$, let $t\in (-1,1)$ and set
     \begin{equation}\label{eq:TranslVelocity}
     \psi_\mbfx:=\left[\frac{d}{dt}\right]_{t=0} \Xi^{\Sigma,\bar g}_{t\mbfx, \orig}\,,
     \end{equation}
     as well as
     \begin{equation}\label{eq:RotVelocity}
     \psi_\mbfA:=\left[\frac{d}{dt}\right]_{t=0} \Xi^{\Sigma,\bar g}_{ \orig, t\mbfA}.
     \end{equation}
     Then, for every $j=1,\ldots, Q$, for every $y\in B_{r_\circ}(p_j)$ there holds 
     \[
     \psi_\mbfx (y)=\mbfx_j, \ \ \ \text{and} \ \ \ \psi_\mbfA (y)=\mbfA_j y \,;
     \]
     furthermore a basis of the vector space $W_{\text{Tr}}(\Sigma;\mbfV)$ (respectively: of $W_{\text{Rot}}(\Sigma;\mbfV)$)  for $\mbfV$ the normal bundle of $\Sigma$ in $(M,\bar g)$ is given by
     \[
     \left\{(\psi_{e^i_j})^{\perp} \ \st \ \left\{e^i_j\right\}_{i=1,\ldots N} \ \text{is an o.n. basis of $T_{p_j} M$} \right\}
     \]
     or, respectively, by
     \[
     \left\{(\psi_{\mbfA^i_j})^{\perp} \ \st \ \left\{\mbfA^i_j\right\}_{i=1,\ldots N} \ \text{is an o.n. basis of $\mathfrak{g}^{\perp}_j$} \right\}\,.
     \]
    \end{lemma}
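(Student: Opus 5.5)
The strategy is to compute the velocity fields directly from the definition \eqref{eq:Xi} of $\Xi$ at the metric $\bar g$, exploiting the fact that the local adjustment map $\Phi^\Sigma$ contributes nothing to first order along these particular curves. The first thing to observe is the following. Since $\Psi_{t\mbfx,0}$ and $\Psi_{0,t\mbfA}$ are diffeomorphisms, the pulled-back metrics $(\Psi_{t\mbfx,0})^\ast\bar g$ and $(\Psi_{0,t\mbfA})^\ast\bar g$ are in general no longer equal to $\bar g$ at $\Sing(\Sigma)$. Hence $\Phi^\Sigma$ of these metrics need not be the identity, and we must control it. Write
\[
\Xi^{\Sigma,\bar g}_{t\mbfx,\orig}=\Psi_{t\mbfx,0}\circ\Phi^\Sigma\bigl((\Psi_{t\mbfx,0})^\ast\bar g\bigr)
\]
and differentiate at $t=0$ by the product rule for compositions of curves of diffeomorphisms through the identity. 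The result is
\[
\psi_\mbfx=\tfrac{d}{dt}\big|_{0}\Psi_{t\mbfx,0}+\tfrac{d}{dt}\big|_{0}\Phi^\Sigma\bigl((\Psi_{t\mbfx,0})^\ast\bar g\bigr).
\]

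The first term is explicit. For $y\in B_{r_0}(p_j)$ one has $\xi^{(r_0)}\equiv1$, so in normal coordinates $\Psi_{p_j,t\mbfx_j,0}(v)=t\mbfx_j+v$. Differentiating gives the constant vector $\mbfx_j$ in these coordinates. Likewise, for rotations $\Psi_{p_j,0,t\mbfA_j}(v)=e^{t\mbfA_j}v$, whose derivative is $\mbfA_j v$.

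For the second term, I will use the fact that $\Phi^\Sigma(g)$ depends smoothly on $g$ only through the values $g|_{p_j}$. So it suffices to compute $\frac{d}{dt}\big|_0\bigl((\Psi_{t\cdot})^\ast\bar g\bigr)_{p_j}$, which equals $(\mathcal L_V\bar g)_{p_j}$ with $V$ the first-term velocity field. For translations, $V=\mbfx_j$ is constant in normal coordinates near $p_j$, and $\partial\bar g_{ab}(0)=0$ in normal coordinates, so $\mathcal L_V\bar g$ vanishes at $p_j$. For rotations, $V=\mbfA_j v$ vanishes at $p_j$, and $(\mathcal L_V\bar g)_{p_j}=\bar g(\mbfA_j\cdot,\cdot)+\bar g(\cdot,\mbfA_j\cdot)=0$ by skew-symmetry. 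Therefore the derivative of $\Phi^\Sigma$ along these curves vanishes, using the $C^1$ dependence of $\Phi$ on $g$ and property (iii) of Lemma \ref{lem:LocalAdjMap} (namely $\Phi^\Sigma(\bar g)=\id$). This yields the pointwise identities for $\psi_\mbfx$ and $\psi_\mbfA$.

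This step is the main obstacle. Making the chain rule rigorous requires the differentiability of $g\mapsto\Phi^\Sigma(g)$ into $\Diff(M)$ with a suitable topology, together with the estimate \eqref{eq:DiffBoundComposite} to control remainders uniformly. Equivalently, one can argue more crudely through the finite-dimensional dependence on the $Q$ values $g|_{p_j}$, which is smooth. Strictly speaking, the first-order vanishing of $g_t|_{p_j}-\bar g_{p_j}$ is what is needed: it gives $\|\Phi^\Sigma(g_t)-\id\|_{C^\ell}=o(t)$ by the smoothness just mentioned.

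For the basis claims, I restrict to $\Sigma$ and take normal components. On $B_{r_0}(p_j)\cap\Sigma$, the field $(\psi_{e^i_j})^\perp$ is exactly the translation-like section $\Pi^\perp_x(e^i_j)$, and $(\psi_{\mbfA^i_j})^\perp=\Pi^\perp_x(\mbfA^i_j x)$. These agree with the spanning sections of \eqref{eq:BasisRotLikeFunctions} near the singular points, and they differ from them only by smooth sections supported in $\{r_0\le\rho\le 2r_0\}$, which lie in $W^{k,2}_\tau$. So, modulo $W^{k,2}_\tau$ (which is how $W_{\text{Tr}}$ and $W_{\text{Rot}}$ are realized in the direct sum \eqref{eq:Augment}), they span the same spaces. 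Linear independence follows from the injectivity of $\mbfA\mapsto\Pi^\perp(\mbfA x)$ on $\mathfrak g_j^\perp$: if $(\mbfA x)^\perp\equiv0$ on the cone $\mbfC_j$, then $\mbfA$ is tangent to the cone, so $\mbfA\in\mathfrak g_j$, and hence $\mbfA=0$. The translation case follows similarly, since the cone is not a linear subspace, using the non-flatness of the tangent cone at a genuine singular point. Finally, the count of basis elements matches the dimensions computed after \eqref{eq:BasisRotLikeFunctions}.
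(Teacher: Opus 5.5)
Your proposal is correct, and it does more than the paper, which only remarks that the lemma is an elementary consequence of \eqref{eq:DiffBoundComposite}. That bound is linear in $\|g-\bar g\|_{C^0}+|\mbfx|+|\mbfA|$, so it only gives $\Xi^{\Sigma,\bar g}_{t\mbfx,\orig}-\id=O(t)$. It cannot identify the velocity, because the factor $\Phi^\Sigma((\Psi_{t\mbfx,0})^\ast\bar g)$ is a priori also of order $t$: indeed $\|(\Psi_{t\mbfx,0})^\ast\bar g-\bar g\|_{C^0}$ is of order $t$ on the cutoff annulus. The ingredient you isolate is what actually makes the statement true. Namely, $\Phi^\Sigma(g)$ depends smoothly on $g$ only through the finitely many values $g_{p_j}$; the proof of Lemma \ref{lem:LocalAdjMap} gives $\|\Phi^\Sigma(g)-\id\|_{C^\ell}\lesssim\max_j\|g_{p_j}-\bar g_{p_j}\|$. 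Combined with $(\mathcal L_V\bar g)_{p_j}=0$, this kills the first-order contribution of $\Phi^\Sigma$.

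Two sharpenings are available. First, for rotations the pulled-back metric equals $\bar g$ at $\Sing(\Sigma)$ exactly, since its differential at $p_j$ is the isometry $e^{t\mbfA_j}$. So item \ref{item:Locality} of Lemma \ref{lem:LocalAdjMap} gives $\Xi^{\Sigma,\bar g}_{\orig,t\mbfA}=\Psi_{\orig,t\mbfA}$ identically. Second, the $\Phi^\Sigma$-term has vanishing derivative everywhere, not just on $B_{r_0}(p_j)$. Hence $\psi_\mbfA=\xi^{(r_0)}(|v|)\,\mbfA_jv$ on all of $B_{2r_0}(p_j)$, and this cutoff coincides with $\xi_{\Sigma,\bar g,r_0}$ (same $r_0$), because $\rho_{\Sigma,\bar g}=|v|$ there. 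So the sections $(\psi_{\mbfA^i_j})^\perp$ are literally the spanning sections in \eqref{eq:BasisRotLikeFunctions}, and your ``modulo $W^{k,2}_\tau$'' detour is unnecessary for $W_{\text{Rot}}$.

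In the independence step, note that the sections live on $\Sigma$, not on $\mbfC_j$. You need one blow-up line to pass from $\Pi^\perp_x(\mbfA x)\equiv0$ near $p_j$ to $(\mbfA x)^\perp\equiv0$ on $\mbfC_j$: rescale about $p_j$, use smooth convergence to $\mbfC_j$, and use the homogeneity of $\mbfA x$ (respectively, of a constant vector). For translations, the cleanest reason is that a regular cone invariant under a nonzero translation has translation-invariant singular set. It must therefore be a plane, which contradicts $p_j\in\Sing(\Sigma)$.
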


    \textbf{Notation: link isomorphisms.} With the preceding notation in place, we note that -- associated to any given $\MSI$ in $(M,\bar g)$ -- there is a linear isomorphism
    \be\label{eq:mapP}
    Q_{\Sigma,\bar g}: W^{k,2}_\tau(\Sigma;\mbfV)\oplus \scT\oplus\scR \to \hat W^{k,2}_\tau(\Sigma;\mbfV), \ \ (u,\mbfx,\mbfA)\mapsto u+\psi^{\perp}_{\mbfx}+\psi^{\perp}_\mbfA
    \ee
whose inverse shall from now on be denoted by
$P_{\Sigma,\bar g}$. The isomorphism property
follows from \eqref{eq:Augment} and
Lemma \ref{lem:VelocityField}: injectivity of
the translation parametrization follows from
\cite[Lemma 4.7]{CLW24}, while injectivity of
the rotation parametrization follows from
the restriction to the orthogonal complement
of the stabilizer Lie algebra.

\subsection{Construction of local adjustments}\label{subs:ConstrDiff}

  \begin{lemma}\label{lem:LocalAdj}(Local Adjustment)
    There exists $\delta=\delta(n)\in (0,1/4)$ such that: if $(V, \bar g)$ is a Euclidean space of dimension $n$, $g$ is another (positive-definite) inner product on $V$ such that $\|g-\bar g\|\leq \delta$, then there exists a unique $\bar g$-self-adjoint, positive-definite map $T=T(V, \bar g, g): V\to V$ such that $T^*g =\bar g$ and it varies smoothly in $\bar g, g$. Moreover, there holds:
    \be\label{eq:AdjDiffNormBound}
    \|\id - T\|\leq \|g-\bar g\|.
    \ee
  (In this statement and throughout the following proof all norms $\|\cdot\|$ are understood with respect to the metric $\bar g$, and $\|\id-S\|$ denotes the operator norm with respect to $\bar g$).
  \end{lemma}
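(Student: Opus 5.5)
The plan is to reduce everything to the spectral theory of a single $\bar g$-self-adjoint operator. By the Riesz representation theorem with respect to $\bar g$, there is a unique linear map $G=G(\bar g,g):V\to V$ such that
\[
g(v,w)=\bar g(Gv,w)\qquad\text{for all } v,w\in V .
\]
Since $g$ is symmetric, $G$ is $\bar g$-self-adjoint, and since $g$ is positive definite, $G$ is positive definite. Moreover $G$ depends linearly, hence smoothly, on the pair $(\bar g,g)$. I will interpret $\|g-\bar g\|$ as $\sup\{|g(v,w)-\bar g(v,w)| : |v|_{\bar g}=|w|_{\bar g}=1\}$. For a self-adjoint operator this equals the operator norm $\|G-\id\|$. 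Writing $\varepsilon:=\|g-\bar g\|\le\delta$, the spectrum of $G$ therefore lies in $[1-\varepsilon,1+\varepsilon]$.

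\textbf{Existence and uniqueness.} For a $\bar g$-self-adjoint $T$ we have
\[
(T^*g)(v,w)=g(Tv,Tw)=\bar g(GTv,Tw)=\bar g(TGTv,w),
\]
so the condition $T^*g=\bar g$ is equivalent to $TGT=\id$.

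For existence, take $T:=G^{-1/2}$, defined by the spectral theorem. It is self-adjoint, positive, and commutes with $G$, so $TGT=G^{-1/2}GG^{-1/2}=\id$.

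For uniqueness, let $T$ be any positive self-adjoint solution. Then $T$ is invertible, and $TGT=\id$ gives directly $G=T^{-1}T^{-1}=T^{-2}$, so $T^{-1}$ is a positive square root of $G$. Uniqueness of the positive square root of a positive self-adjoint operator then forces $T^{-1}=G^{1/2}$, that is $T=G^{-1/2}$. Note that no commutation assumption is needed here, because the identity $G=T^{-2}$ comes straight from the equation.

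\textbf{Smoothness.} The map $S\mapsto S^{-1/2}$ is real-analytic on the open cone of positive self-adjoint operators. One way to see this is via the holomorphic functional calculus,
\[
S^{-1/2}=\frac{1}{2\pi i}\oint_\gamma z^{-1/2}(z-S)^{-1}\,dz,
\]
with $\gamma$ a contour around $[1/2,3/2]$ avoiding $\R_{\le0}$. Alternatively, one can apply the implicit function theorem to $X\mapsto X^2-S^{-1}$ at a positive $X$: its linearization $Y\mapsto XY+YX$ is invertible, because in an eigenbasis of $X$ it multiplies matrix entries by $\lambda_i+\lambda_j>0$. Composing with the smooth map $(\bar g,g)\mapsto G$ gives smooth dependence of $T$ on $(\bar g,g)$. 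Here $\delta<1/4$ is used only to stay well inside the positive cone, uniformly, with a fixed contour.

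\textbf{The bound \eqref{eq:AdjDiffNormBound}.} Since $\id-T$ is self-adjoint, its norm is $\max|1-\lambda^{-1/2}|$ over the eigenvalues $\lambda\in[1-\varepsilon,1+\varepsilon]$ of $G$, and the task is the scalar inequality $|1-\lambda^{-1/2}|\le\varepsilon$.
\begin{itemize}
\item For $\lambda\ge1$ this follows from $1-(1+\varepsilon)^{-1/2}\le\varepsilon/2$.
\item For $\lambda<1$ one needs $(1-\varepsilon)^{-1/2}\le1+\varepsilon$, i.e. $(1+\varepsilon)^2(1-\varepsilon)\ge1$. This reads $\varepsilon(1-\varepsilon-\varepsilon^2)\ge0$, which holds for $\varepsilon\le1/4$.
\end{itemize}
This elementary step is the only place where the precise size of $\delta$ enters, and it shows that $\delta=1/4$ (independent of $n$) already suffices. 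The main potential subtlety is only the convention for the norm of the bilinear form; with the operator-norm convention fixed above, the argument closes.
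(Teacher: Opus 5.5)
Your proposal is correct and follows essentially the same route as the paper: you represent $g$ through the $\bar g$-self-adjoint operator $S$ (your $G$), take $T=S^{-1/2}$, and reduce \eqref{eq:AdjDiffNormBound} to a scalar eigenvalue inequality. The paper instead writes $|1-\lambda^{-1/2}|=f(\lambda)|1-\lambda|$ with $f(\lambda)=1/(\sqrt{\lambda}(1+\sqrt{\lambda}))<1$ on $(3/4,5/4)$ and measures $g-\bar g$ in the Hilbert--Schmidt norm, which dominates your operator-norm convention, so your bound implies theirs. Your uniqueness argument via $G=T^{-2}$ and uniqueness of positive square roots is cleaner than the paper's eigenbasis remark. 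The only slip is that $G$ is linear in $g$ but only smooth in $\bar g$ (it is $\bar g^{-1}g$ in coordinates); this does not affect the conclusion.
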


  \begin{proof}
 Let $S:V\to V$ be the linear, $\bar g$-self adjoint map defined by requiring
 \[
 g(v_1, v_2)=\bar g(Sv_1, v_2) \ \ \hspace{2mm} (\forall v_1, v_2\in V)\,.
 \]
This definition obviously relies upon Riesz' theorem; note that any such $S$ must always be a linear isomorphism under the assumptions on $g$.
 
 Then it is readily checked that the operator $T=S^{-1/2}$, namely the linear map determined by letting $T\in \text{End}_{\R}(V)$ as composite of the two (analytic) maps defined by the Neumann series
 \[
 S \mapsto \sum_{k=0}^\infty (\id -S)^k =:S', \ \ \ S' \mapsto \sum_{\ell=0}^{\infty}(-1)^{\ell} \binom{1/2}{\ell}(\id - S')^{\ell}
 \]
 that is indeed well-defined for (say) $\|\id -S\|<1/4$
 , satisfies the claimed pullback property.

Since $\id-S$ is $\bar g$-self-adjoint, the
spectral theorem and the defining identity
$g(v,w)=\bar g(Sv,w)$ give
\[
\|\id-S\|
=\sup_{v\in V\setminus\{0\}}
  \frac{\|v-Sv\|}{\|v\|}
=\sup_{v\in V\setminus\{0\}}
  \frac{|\bar g((\id-S)v,v)|}{\|v\|^2}
=\sup_{v\in V\setminus\{0\}}
  \frac{|\bar g(v,v)-g(v,v)|}{\|v\|^2}
\leq \|\bar g-g\|,
\]
where $\|\bar g-g\|$
denotes the Hilbert--Schmidt norm of the
symmetric bilinear form $\bar g-g$.
 In particular, if $\left\{v_1,\ldots, v_n\right\}$ is a $\bar g$-orthonormal basis of $V$ diagonalizing $S$ (whose existence is guaranteed by the spectral theorem) with $Sv_i=\lambda_i v_i$ (thus $\lambda_i\in (3/4,5/4)$ for all $i=1,\ldots, n$) then there holds
 \[
 Tv_i=\frac{1}{\sqrt{\lambda_i}}v_i \ \ (i=1,\ldots n)
 \]
 and it is clear that, on the other hand, this relation must hold for any $\bar g$-self adjoint positive-definite map that pulls back $g$ to $\bar g$. Thus the previously defined map $T$ does indeed enjoy the claimed uniqueness property. 

  Lastly, the bound \eqref{eq:AdjDiffNormBound} can be justified by comparing the eigenvalues of $S$ with those of $S^{-1/2}$, which boils down (under the norm assumptions above) to studying the function
  \[
  f(\lambda)=\frac{1-\sqrt{\lambda}}{\sqrt{\lambda}(1-\lambda)}=\frac{1}{\sqrt{\lambda}(1+\sqrt{\lambda})} \ \ \text{for} \ \lambda\in I:=\left(\frac{3}{4},\frac{5}{4}\right)\,;
  \]
  in particular $\sup_{\lambda\in I}f(\lambda)=\frac{4}{3+2\sqrt{3}}<1$ which implies \eqref{eq:AdjDiffNormBound}.
  \end{proof}

 \begin{proof}[Proof of Lemma \ref{lem:LocalAdjMap}]
Keeping in mind the smallness assumption on $r_0$ and working in $\bar g$-geodesic normal coordinates centered at each given point $p\in Z$, we let, for $x\in B_{3r_0}(p)$,
\[
\Phi(x)=\xi^{(r_0)}(|x|)T_{(p)}x+
(1-\xi^{(r_0)}(|x|))x,
\]
where
$T_{(p)}:=T (T_p M,\bar g_p,g_p)$ is as provided by Lemma \ref{lem:LocalAdj}. We extend this definition by requiring $\Phi$ to be the identity on the complement of
$\bigcup_{p\in Z}B_{2r_0}(p)$; notice that, by the definition of $M(Q,r_0)$, the balls $B_{3r_0}(p)$, for $p\in Z$, are pairwise disjoint.
Thus, for $x\in B_{2r_0}(p)$ and $v\in T_xM$, differentiation gives
\[
(d\Phi_x-\id)[v]
=\xi^{(r_0)}(|x|)(T_{(p)}-\id)v
+({\xi^{(r_0)}})'(|x|)
\bar g\left(\frac{x}{|x|},v\right)
(T_{(p)}-\id)x\,,
\]
and so it follows that
\[
\|d\Phi-\id\|_{C^0(B_{2r_0}(p))}
\leq C(r_0,\bar g)\|T_{(p)}-\id\|.
\]
By Lemma \ref{lem:LocalAdj},
$\|T_{(p)}-\id\|
\leq \|g_p-\bar g_p\|$
and therefore
\[
\|d\Phi-\id\|_{C^0(B_{2r_0}(p))}
\leq C(r_0,\bar g)\|g-\bar g\|_{C^0}.
\]
Upon decreasing $\delta$ if necessary, we may assume that the right-hand side is strictly smaller than $1$.
We claim that $\Phi$ is a diffeomorphism of $M$. To this end, let us write for brevity
$F:=\Phi-\id$. Since each $B_{3r_0}(p)$ is (geodesically) convex and $\|dF\|_{C^0}<1$, for any $x,y$ in that ball the fundamental theorem of calculus implies (via the reverse triangle inequality) \[
|\Phi(x)-\Phi(y)|
\geq
\bigl(1-\|dF\|_{C^0}\bigr)|x-y|.
\]
Thus $\Phi$ is injective on each such ball. Moreover, since $\Phi$ agrees with the identity on the annulus $B_{3r_0}(p)\setminus B_{2r_0}(p)$, injectivity implies that
$\Phi(B_{2r_0}(p))\subset B_{2r_0}(p). $
As the balls $B_{3r_0}(p)$ are pairwise disjoint and $\Phi$ is the identity outside their union, it follows that $\Phi$ is globally injective on $M$.
On the other hand, the estimate $\|d\Phi-\id\|_{C^0}<1$ shows that $d\Phi$ is a linear isomorphism at every point. Hence $\Phi$ is a local diffeomorphism. Since $M$ is compact, $\Phi(M)$ is compact and therefore closed in $M$; being the image of a local diffeomorphism, it is also open. Thus, being $M$ connected by assumption, $\Phi$ is a diffeomorphism.
By construction, $\Phi(p)=p$ for every $p\in Z$, while, since $\xi^{(r_0)}\equiv 1$ in a neighborhood of the origin,
$
d\Phi_p=T_{(p)}.$
Consequently,
$
(\Phi^*g)_p=T_{(p)}^*g_p=\bar g_p,$
which proves that $\Phi^*g$ coincides with $\bar g$ on $Z$.
Moreover, if $g$ already coincides with $\bar g$ on $Z$, then $g_p=\bar g_p$ for every $p\in Z$ and this implies $\Phi(g,Z)=\id$ on $M$.
Finally, for every $\ell\in\mathbb N$, differentiating the defining expression for $\Phi-\id$, and recalling that $r_0$ and the cutoff function are fixed, yields
\[
\|\Phi(g,Z)-\id\|_{C^\ell}
\leq
C(\ell,Q,r_0,\bar g)
\max_{p\in Z}\|T_{(p)}-\id\|.
\]
Recalling the bound given in the statement of Lemma \ref{lem:LocalAdj} we then get
\[
\|\Phi(g,Z)-\id\|_{C^\ell}
\leq
C(\ell,Q,r_0,\bar g)\|g-\bar g\|_{C^0},
\]
which is \eqref{eq:DiffVsIndentity}.
The $C^q$ dependence of $\Phi$ on $(g,Z)$ follows from the smooth dependence of $T_{(p)}$ on $(\bar g_p,g_p)$, together with the $C^q$ dependence of $g_p$ on $p$ (working locally with respect to exponential coordinates in metric $\bar g$). This concludes the proof.
 \end{proof}

\section{The analytic mean curvature map}\label{sec:AnMeanCurv}

In this section, we present a proper, \emph{ad hoc} setup that will be employed towards the development of the local aspects of the deformation theory and eventually towards a proof of Theorem \ref{thm:main}.  The core aspect is the introduction of the notion of \emph{analytic mean curvature map}. 

\subsection{Notation and general preliminaries}
Let $(M,\bar g)$ satisfy the standing assumptions in Section \ref{sec:Prelim} ane let $\Sigma$ denote a fixed  
$\MSI$ therein, with
$\mbfV$ denoting its normal bundle. Unless otherwise
stated, integration and differential operators are
understood with respect to $\bar g$.

We use the weighted Sobolev spaces and their doubly
augmented counterparts introduced in
Section \ref{sec:Refine}. Recall the linear isomorphism
\[
P:=P_{\Sigma,\bar g}:
\hat W^{k,2}_{\tau}(\Sigma;\mbfV)
\longrightarrow
W^{k,2}_{\tau}(\Sigma;\mbfV)
\oplus\scr T\oplus\scr R,
\qquad
P(\hat w)=(u,\mbfx,\mbfA).
\]
Since $\Sigma$ is fixed, we abbreviate
$\Xi^{\Sigma,g}_{\mbfx,\mbfA}$ to
$\Xi^g_{\mbfx,\mbfA}$.

In an attempt of adapting, to the extent possible, White's theory \cite{Whi91} to our setting of $\MSI$, we will henceforth conveniently adopt the following special notation, to be understood for $k\geq 2$ and $\tau\geq 1$:
\begin{align*}
\Gamma &:= \Gamma(q)\,, \\
X &:= \hat W^{k,2}_{\tau}(\Sigma;\mbfV)\,, \\
Y &:= W^{k-2,2}_{\tau-2}(\Sigma;\mbfV)\,,
\end{align*}
and then
\begin{align*}
X_I &:= W^{k,2}_{\tau}(\Sigma;\mbfV) \,.
\end{align*}

In particular, we recall that for this range of decay there holds
\be\label{eq:FinInfSplit}
X=X_{I}\oplus \underbrace{{W_{\text{Tr}}\oplus W_{\text{Rot}}}}_{X_F}
\ee
with the natural projectors $\pi_I:X\to X_I, \pi_F:X\to X_F$,
and the obvious (continuous) dense inclusions 
\[
X\subset Y\subset L^2(\Sigma;\mbfV)=:\cH
\]
based upon the fact that $n\geq 2$ throughout,
which allow to regard $X$ and $Y$ as linear subspaces of the ambient Hilbert space $(\cH,\langle\cdot,\cdot\rangle_{\cH})$.

We assume to have fixed two open connected subsets $\Gamma\supset \Gamma_0\ni \overline{g}$ and $X\supset X_0\ni 0$ (small enough) so that Lemma \ref{lem:AdjustCoincideWith} holds. We 
will write, for brevity, $\Gamma_{\Sigma}:=\Gamma(q; \bar g,\Sigma)$ for the set of $C^q$ metrics which coincide with $\bar g$ on the set $\Sing(\Sigma)$ as per Definition \ref{def:MetricsCoincide}; this is the intersection of a closed affine subspace with $\Gamma$.

\

\textbf{The area functional for normal graphs.} Throughout the paper we let $\check{A}$ denote the ``area functional" on normal graphs over $\Sigma$: more precisely, with the notation described above,
we set
\be\label{eq:AreaFunctNormGraphs}
\check A: (\Gamma_0\cap \Gamma_{\Sigma})\times (X_0\cap X_I) \to \mathbb{R}, \ \ \check A(g,u):=\scH^n(g,(\Sigma(u)))
\ee
where, we recall, $\Sigma(u)$ is the image through the $\bar g$-exponential map of the section $u$ of $\mbfV$. 
That being said, we can proceed towards the definition of \emph{analytic mean curvature map}. A key technical step is the following statement:

\begin{proposition} \label{Prop:C^qMSO}
 Let $\Sigma$ be an $n$-dimensional $\MSI$ in $(M,\bar g)$ under the general standing assumptions listed above, and let
 \be\label{eq:RangeOfParameters}
 n+3\leq k\leq q-1, \ \text{and} \ \tau \in [1, 2)\,.
 \ee
  Possibly by further shrinking the neighborhoods $\Gamma_0$ of $\bar g$, of $X_0$ of the zero section of the normal bundle $\mbfV$, there exists a unique well-defined $C^{q-k}$ map $\mathfrak{\check{h}}: (\Gamma_0\cap \Gamma_\Sigma)\times (X_I\cap X_0) \to Y$ such that \[
    \left[\frac{d}{dt}\right]_{t=0} \check A(g, u+tv) = \langle \mathfrak{\check{h}}(g, u), v\rangle_{\cH}\,,
  \] 
  for every $g\in \Gamma_0\cap \Gamma_\Sigma$, $u\in X_I\cap X_0$ and $v\in X_I$.
\end{proposition}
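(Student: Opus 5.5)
We follow White's strategy \cite{Whi91}: write $\check A(g,u)$ as the integral over $\Sigma$ of a pointwise Lagrangian, compute its first variation, and identify $\mathfrak{\check h}$ as the resulting Euler--Lagrange operator. Concretely, we fix local trivializations of $\mbfV$ (frames of the normal bundle adapted to the conical structure near each $p_j$, obtained by parallel transport or projection from the tangent cone in $\bar g$-normal coordinates). In these trivializations we write
\[
\check A(g,u)=\int_\Sigma F\bigl(x,g(\Exp_\Sigma(u(x))),\, u(x),\, \nabla u(x)\bigr)\,d\scH^n_{\bar g}(x),
\]
where $F$ is the Jacobian of the map $x\mapsto\Exp_\Sigma(u(x))$ measured in $g$. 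This $F$ is smooth in $(u,\nabla u)$ and depends on $g$ only through the pullback of $g$ and its first derivatives along $\Exp_\Sigma$. Differentiating in $t$ and integrating by parts gives
\[
\mathfrak{\check h}(g,u)=-\mathrm{div}_\Sigma\bigl(\partial_{\nabla u}F\bigr)+\partial_u F,
\]
a quasilinear second-order expression in $u$, with coefficients depending on $g$ and $Dg$ evaluated along the graph. Uniqueness is immediate from the density of $X_I$ in $\cH$, once we know that $\mathfrak{\check h}(g,u)\in Y\subset\cH$ (Remark \ref{rem:Inclusions}).

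The integration by parts must be justified on the noncompact $\Sigma$. We excise $B_\varepsilon(p_j)$ and check that the boundary terms, of size about $\varepsilon^{n-1}|\partial_{\nabla u}F|\,|v|$, vanish as $\varepsilon\to0$. This holds because $v\in W^{k,2}_\tau$ with $k\geq n+3$ gives $|v|\lesssim\rho^{\tau}$ via weighted Sobolev embedding, and $\partial_{\nabla u}F$ is bounded.

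The main task is the mapping property $\mathfrak{\check h}(g,u)\in W^{k-2,2}_{\tau-2}$ together with $C^{q-k}$ dependence, and here the hypothesis $g\in\Gamma_\Sigma$ is essential. At $u=0$ we have $\mathfrak{\check h}(\bar g,0)=0$, and $\mathfrak{\check h}(g,0)$ is the $g$-mean curvature of $\Sigma$ (up to a normal projection and a volume factor). Since $g$ agrees with $\bar g$ at $p_j$, we get $|g-\bar g|\lesssim\rho$. The first-variation formula for mean curvature with respect to the metric then gives $|H_{\Sigma,g}|\lesssim |g-\bar g|\,|\II|+|D(g-\bar g)|\lesssim 1\lesssim\rho^{\tau-2}$, and the same bound holds for higher derivatives after rescaling. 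For general $u$, we write $\mathfrak{\check h}(g,u)=\mathfrak{\check h}(g,0)+\int_0^1 D_u\mathfrak{\check h}(g,su)[u]\,ds$. The linearization is a Jacobi-type operator whose coefficients are bounded in the natural scaling: the weights $\rho^{-2},\rho^{-1},1$ multiply $u,\nabla u,\nabla^2u$, because $|u|/\rho,|\nabla u|$ are small by the weighted Sobolev embedding with $\tau\geq1$, $k-2>n/2$. Hence this term maps $W^{k,2}_\tau\to W^{k-2,2}_{\tau-2}$.

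For the regularity in $(g,u)$, we check on each dyadic annulus $\{\rho\sim 2^{-i}\}$, after rescaling to unit size (where the rescaled cone piece has uniformly bounded geometry), that $\mathfrak{\check h}$ is a composition of a smooth function of $(x,u,\nabla u,\nabla^2 u)$ with $g$ and its derivatives evaluated at $\Exp_\Sigma(u)$. Composition with the $C^q$ metric $g$ via a $W^{k,2}$ map loses $k$ derivatives of smoothness in the parameter, which is the source of the exponent $q-k$. The abstract composition-map results in Appendix \ref{app:C^q} give $C^{q-k}$ dependence with constants uniform in $i$, and summing the weighted squares recovers the global statement.

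I expect this composition step to be the main obstacle: controlling the dependence on $g$ through $g\circ\Exp_\Sigma(u)$ uniformly across scales, together with the need for trivializations compatible with the conical rescaling. I would handle it by proving a scale-invariant estimate on the rescaled annuli and then invoking Appendix \ref{app:C^q}.
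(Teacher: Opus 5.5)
Your proposal is correct and follows essentially the same route as the paper's proof in Appendix \ref{app:C^q}. Both arguments take the Euler--Lagrange expression for $\check{\mathfrak h}$ in scale-adapted trivializations and view it as a bundle map applied to the $2$-jet of $u$, with coefficients built from $(g,Dg)$ along the graph; both use the first-order vanishing of $g-\bar g$ at $\Sing(\Sigma)$ to get $\check{\mathfrak h}(g,0)=O(1)$, a mean-value/Lipschitz estimate for the $u$-dependent part on rescaled pieces, and the abstract composition lemma for the $C^{q-k}$ regularity. The one point to state more carefully is the membership of $\check{\mathfrak h}(g,0)$ in $Y$. A pointwise bound $O(\rho^{\tau-2})$ alone would be only borderline, and logarithmically divergent, in $W^{0,2}_{\tau-2}$. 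What places $\check{\mathfrak h}(g,0)$ in $Y$ is the stronger $O(1)$ bound together with the strict inequality $\tau<2$, which is exactly what the paper's summation over the chart scales $\mfr_\alpha$ uses.
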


We stress here that the assumption $g\in\Gamma_{\Sigma}$ is actually \emph{necessary} for the map $\mathfrak{\check{h}}$ to take values in $Y$ as the mean curvature of non-minimal cones blows up with rate $-1$ on approach to the singularity. 

\begin{proof}
This is proven in Appendix \ref{app:C^q}.
\end{proof}

\

\textbf{The area functional for generalized graphs.}
 More generally, we will consider
the area functional on generalized graphs (as defined in Section \ref{subs:GenGraph}), for which we give the definition
\be\label{eq:WhiteAreaFunctional}
A:\Gamma_0\times X_0\subset \Gamma\times X \to \R, \ \ \ A(g,\hat w):=\scH^{n}(g, \Xi^g_{\mbfx,\mbfA}(\Sigma(u))) \ \ \
[P(\hat w)=(u,\mbfx,\mbfA)]
\ee
where 
$\Sigma(g; u,\mbfx,\mbfA):=\Xi^g_{\mbfx,\mbfA}(\Sigma(u))$
denotes a generalized graph as defined in Section \ref{sec:Diffeo}. 

By diffeomorphism invariance of the area functional, $A(g,\cdot): X\to \R$ satisfies the property
\be
\label{eq:Invariance}
    A(g,\hat w)=A(g, Q_{\Sigma,\overline{g}}(u,\mbfx,\mbfA)) 
= \check A((\Xi^g_{\mbfx,\mbfA})^\ast g, u)=\check A(g_{\mbfx,\mbfA},u)
\ee
for $g\in\Gamma_0$ (i.\,e. for all $g\in\Gamma$ sufficiently close to $\overline{g}$). In terms of the well-defined map $\Xi^*$ (cf. Lemma \ref{lem:AdjustCoincideWith}) possibly by taking $\Gamma'_0\subset \Gamma_0$ and $X'_0\subset X_0$ small enough that
\be\label{eq:MetMap}
\Gamma'_0\times (X'_0\cap X_F)\ni (g,(\mbfx,\mbfA)) \ \mapsto^{\Xi^*} \ g_{\mbfx,\mbfA}:=(\Xi^g_{\mbfx,\mbfA})^\ast g \in \Gamma_0\cap\Gamma_{\Sigma}
\ee
we will have, for all $g\in \Gamma'_0, \hat w\in X'_0$
\[
A(g,\hat w)=\check A(\Xi^*(g,Q(\pi_F \hat w)),Q(\pi_I\hat w))\,.
\]

For the sake of readability let us henceforth rename $\Gamma'_0, X'_0$ respectively $\Gamma_0, X_0$, so that $A(g,\hat w)$ satisfies the above identities, and let us further assume that such neighborhoods are small enough that for any $u\in X_0\cap X_I$ the map associating to $v\in X_I$ the normal projection of the variational vector field, $V^{\perp}$, along $\Sigma(u)$ of the family $\Sigma(u+tv)$ at $t=0$, with respect to the metric $g_{\mbfx,\mbfA}$, is invertible.

\begin{lemma}\label{Lem_LS_Full1stVar}
Under the standing assumptions, let
$u\in W^{k,2}_\tau(\Sigma;\mbfV)$ be sufficiently small, and set
\[
\Phi_u(x):=\exp_x^{\bar g}(u(x)).
\]
There exists a bundle map
\[
\Pi_{\Sigma,u}:TM|_{\Sigma(u)}\longrightarrow T^\perp\Sigma
\]
covering $\Phi_u^{-1}$, characterized as follows:
for every $x\in\Sigma$ and $\xi\in T_{\Phi_u(x)}M$,
$\zeta:=\Pi_{\Sigma,u}(\xi)\in T_x^\perp\Sigma$ is the unique
vector satisfying
\[
\xi-d(\exp_x^{\bar g})|_{u(x)}[\zeta]
\in T_{\Phi_u(x)}\Sigma(u).
\]
All norms below are measured with respect to $\bar g$.
The following properties hold:
\begin{enumerate}[label={\normalfont(\roman*)}]
\item\label{Item_LS_|Pi(xi)|<2|xi|}
For every $\xi\in TM|_{\Sigma(u)}$,
\[
|\Pi_{\Sigma,u}(\xi)|\leq2|\xi|.
\]

\item\label{Item_LS_|Pi(xi)-xi^perp|<|g-bar g||xi|}
For every global $C^1$ vector field $\xi$ on $M$,
\[
\left\|
\Pi_{\Sigma,u}(\xi|_{\Sigma(u)})
-(\xi|_\Sigma)^\perp
\right\|_{L^\infty(\Sigma)}
\leq
C\|u\|_{W^{k,2}_\tau}\|\xi\|_{C^1(M)},
\]
where sections along $\Sigma(u)$ are identified with sections
over $\Sigma$ through $\Phi_u$, and we let $C=C(\bar g,\Sigma,k,\tau)$.

\item\label{Item_LS_Full-1stVar}
Let $g\in\Gamma_0\cap\Gamma_\Sigma$, and let
$\{\varphi_t:\Sigma\to M\}_{|t|<\epsilon}$ be a family of
$C^1$ embeddings, depending $C^1$ on $t$ in the
$C^1_{\mathrm{loc}}$ topology, with $\varphi_0=\Phi_u$.
Assume that there exist constants $c_0,C_0>0$ such that,
uniformly for $|t|<\epsilon$, $x\in\Sigma$, and $a\in T_x\Sigma$,
\[
c_0|a|\leq|d\varphi_t(x)[a]|\leq C_0|a|,
\qquad
|\dot\varphi_t(x)|
+\rho(x)|\bar\nabla\dot\varphi_t(x)|
\leq C_0.
\]
Here $\rho$ is the fixed radius function, and
$\bar\nabla\dot\varphi_t$ is the covariant derivative along
$\Sigma$ of the section $\dot\varphi_t$ of $\varphi_t^*TM$.
Then
\[
\left.\frac{d}{dt}\right|_{t=0}
\scH^n(g,\varphi_t(\Sigma))
=
\left\langle
\check{\mathfrak h}(g,u),
\Pi_{\Sigma,u}(\dot\varphi_0)
\right\rangle_{\cH}.
\]
\end{enumerate}
\end{lemma}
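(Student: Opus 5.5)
The plan is to reduce everything to the geometry of the normal exponential map near the zero section, using the smallness of $u$ in $W^{k,2}_\tau$ with $\tau>1$ and $k\geq n+3$. By weighted Sobolev embedding this gives pointwise control
\[
\rho^{-1}|u|+|\bar\nabla u|+\rho|\bar\nabla^2u|\lesssim\|u\|_{W^{k,2}_\tau}\,\rho^{\tau-1},
\]
so $\Sigma(u)$ is a $C^1$-small perturbation of $\Sigma$ uniformly up to the singular points.

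\textbf{Definition of $\Pi_{\Sigma,u}$.} Fix $x\in\Sigma$ and consider the linear map
\[
E_x:T_x^\perp\Sigma\oplus T_{\Phi_u(x)}\Sigma(u)\to T_{\Phi_u(x)}M,\qquad (\zeta,\eta)\mapsto d(\exp^{\bar g}_x)|_{u(x)}[\zeta]+\eta .
\]
I will show that $E_x$ is an isomorphism, with $\|E_x-E_x^0\|\lesssim\|u\|_{W^{k,2}_\tau}$, where $E_x^0$ is the analogous map at $u=0$. Indeed $E_x^0$ is the orthogonal identification $T^\perp_x\Sigma\oplus T_x\Sigma=T_xM$. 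After parallel transport along the geodesic $t\mapsto\exp_x(tu(x))$, two estimates apply. First, $d\exp_x|_{u(x)}-\id=O(|u(x)|^2)$ by the curvature bounds of $\bar g$. Second, $T_{\Phi_u(x)}\Sigma(u)$ is spanned by $d\Phi_u[a]$, whose difference from the transport of $a$ is bounded by
\[
|\bar\nabla u|+|u||\II_\Sigma|\lesssim|\bar\nabla u|+\rho^{-1}|u|.
\]
Both quantities are small by the embedding above. Hence $\Pi_{\Sigma,u}(\xi)$ is well defined as the first component of $E_x^{-1}\xi$, and it depends smoothly on $x$. This also proves (i), since the first component of $(E_x^0)^{-1}$ is an orthogonal projection of norm $1$, and a small perturbation raises the norm to at most $2$.

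For (ii), the first component of $(E_x^0)^{-1}\xi|_x$ is $(\xi|_\Sigma)^\perp$. Comparing it with $\Pi_{\Sigma,u}(\xi|_{\Sigma(u)})$ splits the error into two pieces. The first is the perturbation of the inverse, bounded by $C\|u\||\xi|$. The second is the transport of $\xi$ from $x$ to $\Phi_u(x)$, bounded by $|u(x)|\|\xi\|_{C^1}$. Taking the supremum gives (ii).

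\textbf{First variation, item (iii).} I will decompose $\dot\varphi_0=d(\exp_x)_{u(x)}[\zeta]+\eta$ with $\zeta=\Pi_{\Sigma,u}(\dot\varphi_0)$, where $\eta$ is tangent to $\Sigma(u)$. The tangential part is handled by reparametrization: one writes
\[
\varphi_t=\Phi_{u+t\zeta}\circ\theta_t+o(t)
\]
for a family of diffeomorphisms $\theta_t$ of $\Sigma$ generated by $\eta$. The tangential flow contributes nothing to the area derivative, since it amounts to integrating the divergence of a tangential field. The normal part then gives
\[
\frac{d}{dt}\Big|_{t=0}\check A(g,u+t\zeta)=\langle\check{\mathfrak h}(g,u),\zeta\rangle_{\cH}
\]
by Proposition \ref{Prop:C^qMSO}. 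The catch is that $\zeta$ is only $C^0$ with $\rho|\bar\nabla\zeta|$ bounded, and need not lie in $X_I$.

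The cleanest route is to compute directly. The first variation formula for the $g$-area of $\Sigma(u)$ gives $-\int\langle H_g,\dot\varphi_0\rangle_g$ plus a boundary flux term. This formula has to be justified by cutting off in $\rho$-balls around the singular points and letting the radius go to zero. The flux on $\partial B_r$ is bounded by $r^{n-1}\cdot C_0\to0$, using that $\Sigma(u)$ has conical area growth. The interior integrand $H_g|\dot\varphi_0|$ is integrable because $H_g=O(\rho^{\tau-2})$ and $n\geq2$. Then $\langle H_g,\dot\varphi_0\rangle$ equals $\langle H_g,d\exp[\zeta]\rangle$, since $H_g$ is normal to $\Sigma(u)$. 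This identifies the derivative with $\int\langle\check{\mathfrak h}(g,u),\zeta\rangle$, where $\check{\mathfrak h}$ is, by its construction, the pullback of $-H_g$ through $d\exp$ and the Jacobian. To match this with Proposition \ref{Prop:C^qMSO}, I will test against $\zeta\in X_I$ and use density. The identity extends by continuity to bounded $\zeta$ with $\rho|\bar\nabla\zeta|$ bounded, since $\check{\mathfrak h}(g,u)\in Y\subset L^2$, and such $\zeta$ can be approximated in $L^2$ together with the cutoff argument.

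\textbf{Main obstacle.} I expect the main obstacle to be exactly this step: justifying the first variation in the presence of the singular points under the weak hypotheses on $\varphi_t$. This requires uniform bounds on the boundary flux near each $p_j$, together with an identification of $\check{\mathfrak h}$ with the geometric mean curvature pulled back. That identification should come from the explicit formula for $\check{\mathfrak h}$ in Appendix \ref{app:C^q}, obtained there by integrating by parts against compactly supported variations.
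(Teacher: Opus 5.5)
Your proposal is correct and follows essentially the same route as the paper. The paper likewise obtains $\Pi_{\Sigma,u}$ by inverting the parallel-transported splitting $(a,\zeta)\mapsto P_u^{-1}\bigl(d\Phi_u[a]+d(\exp^{\bar g}_x)|_{u(x)}[\zeta]\bigr)$, which is $C\|u\|_{W^{k,2}_\tau}$-close to the identity, and this gives (i) and (ii) exactly as you describe. For (iii), the paper combines three ingredients:
\begin{enumerate}
\item the pointwise identity $\langle\check{\mathfrak h}(g,u),\zeta\rangle_{\bar g}=-j_u\,g\bigl(H_{\Sigma(u),g}\circ\Phi_u,\,d(\exp^{\bar g}_x)|_{u(x)}[\zeta]\bigr)$, where $j_u$ is the area Jacobian of $\Phi_u$, obtained by testing against compactly supported sections;
\item the normality of the mean curvature vector;
\item a $\rho$-cutoff argument whose flux term is $O(r^{n-1})$.
\end{enumerate}

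The one point the paper makes explicit that you leave implicit is the pair of bounds $|\partial_t j_t|\lesssim\rho^{-1}$ on the pulled-back area densities and $|\operatorname{div}_{\Sigma(u),g}V|\lesssim\rho^{-1}$ for $V=\dot\varphi_0\circ\Phi_u^{-1}$. Both follow from the admissibility hypotheses, and they are what justify differentiating under the integral and letting $r\to0$. Also, once the pointwise identity is available, your extra density argument in $\zeta$ is unnecessary.
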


\begin{proof}
All norms and orthogonal projections in the construction of
$\Pi_{\Sigma,u}$ are taken with respect to $\bar g$. Also, throughout this proof, $C$ denotes a positive constant depending
only on the fixed data $(M,\bar g,\Sigma,k,\tau)$ and the chosen
radius function, and may change from line to line; in the proof of \ref{Item_LS_Full-1stVar}, it may additionally
depend on the admissibility constants $c_0,C_0$ and uniform
bounds for $g$ in $C^1$ and $g^{-1}$ in $C^0$. After shrinking $\Gamma_0$, the required bounds
for $g$ in $C^1$ and $g^{-1}$ in $C^0$ are
uniform for $g\in\Gamma_0$.
Write
\[
\Phi_u(x):=\exp_x^{\bar g}(u(x)),
\qquad
B_u(x):=d(\exp_x^{\bar g})|_{u(x)}
             \big|_{T_x^\perp\Sigma}.
\]
Let $P_u(x):T_xM\to T_{\Phi_u(x)}M$ denote parallel transport
along $s\mapsto\exp_x^{\bar g}(su(x))$.

Set $\delta:=\|u\|_{W^{k,2}_\tau}$.
The weighted Sobolev embedding and the assumption $\tau\geq1$
give
\[
\sup_\Sigma\left(\rho^{-1}|u|+|\bar\nabla^\perp u|\right)
\leq C\delta.
\]
Moreover, the geometry of the strongly isolated singularities
gives $|\mathrm{II}_\Sigma|\leq C\rho^{-1}$.
Using the Weingarten formula to differentiate $u$ as an ambient
vector field along $\Sigma$, and then differentiating the
exponential map, we obtain
\[
\begin{aligned}
\bigl|P_u(x)^{-1}d\Phi_u(x)[a]-a\bigr|
&\leq C\delta |a|,
&& a\in T_x\Sigma,\\
\bigl|P_u(x)^{-1}B_u(x)[\zeta]-\zeta\bigr|
&\leq C\delta |\zeta|,
&& \zeta\in T_x^\perp\Sigma.
\end{aligned}
\]
The constants are uniform in $x$: near the singularities this
follows from the preceding weighted estimates, and away from
them it follows by compactness.

Consider the linear map
\[
\mathcal T_u(x):T_x\Sigma\oplus T_x^\perp\Sigma\longrightarrow T_xM,
\qquad
\mathcal T_u(x)(a,\zeta)
=
P_u(x)^{-1}\bigl(d\Phi_u(x)[a]+B_u(x)[\zeta]\bigr).
\]
Identifying $T_x\Sigma\oplus T_x^\perp\Sigma$ with $T_xM$,
the preceding estimates imply
\[
\|\mathcal T_u(x)-\mathrm{id}\|\leq C\delta.
\]
Thus, for sufficiently small $\delta$, these maps are
invertible, uniformly in $x$, and
\[
\|\mathcal T_u(x)^{-1}\|\leq2,
\qquad
\|\mathcal T_u(x)^{-1}-\mathrm{id}\|\leq C\delta.
\]
In particular,
\[
T_{\Phi_u(x)}M
=
T_{\Phi_u(x)}\Sigma(u)\oplus B_u(x)(T_x^\perp\Sigma).
\]
This proves existence and uniqueness in the definition of
$\Pi_{\Sigma,u}$. More explicitly, if $\pi_x^\perp$ is the
$\bar g$-orthogonal projection onto $T_x^\perp\Sigma$, then
\[
\Pi_{\Sigma,u}(\xi)
=
\pi_x^\perp\mathcal T_u(x)^{-1}P_u(x)^{-1}\xi,
\qquad \xi\in T_{\Phi_u(x)}M.
\]
Since parallel transport is an isometry, this formula proves
\ref{Item_LS_|Pi(xi)|<2|xi|}. It also gives
\begin{equation}\label{Equ_LS_Pi-transport-estimate}
\left|
\Pi_{\Sigma,u}(\xi)
-\pi_x^\perp P_u(x)^{-1}\xi
\right|
\leq C\delta|\xi|.
\end{equation}

If now $\xi$ is a global $C^1$ vector field on $M$, integration
of its covariant derivative along the same geodesic yields
\[
\left|
P_u(x)^{-1}\xi(\Phi_u(x))-\xi(x)
\right|
\leq |u(x)|\,\|\bar\nabla\xi\|_{L^\infty(M)}.
\]
Combining this with \eqref{Equ_LS_Pi-transport-estimate} and
$\|u\|_{L^\infty}\leq C\delta$, we obtain
\[
\left\|
\Pi_{\Sigma,u}(\xi|_{\Sigma(u)})
-(\xi|_\Sigma)^\perp
\right\|_{L^\infty(\Sigma)}
\leq C\delta\|\xi\|_{C^1(M)},
\]
which proves \ref{Item_LS_|Pi(xi)-xi^perp|<|g-bar g||xi|}.

We turn to \ref{Item_LS_Full-1stVar}, with $g$ in the domain
of $\check{\mathfrak h}$ and with the admissible variations
specified in the statement.
Let $d\mu=d\mathscr H^n_{\bar g}|_\Sigma$, and define $j_u>0$ by
\[
\Phi_u^*(d\mathscr H^n_g|_{\Sigma(u)})=j_u\,d\mu.
\]
For every compactly supported smooth normal section $v$ on
$\Sigma$, the variation $\Phi_{u+tv}$ has velocity $B_uv$.
The defining property of $\check{\mathfrak h}$ and the
classical first-variation formula therefore give
\[
\int_\Sigma
\langle\check{\mathfrak h}(g,u),v\rangle_{\bar g}\,d\mu
=
-\int_\Sigma
g\bigl(H_{\Sigma(u),g}\circ\Phi_u,B_uv\bigr)\,j_u\,d\mu.
\]
By testing with arbitrary such sections, we obtain the
pointwise identity
\begin{equation}\label{Equ_LS_h-pointwise}
\langle\check{\mathfrak h}(g,u)(x),\zeta\rangle_{\bar g}
=
-j_u(x)\,
g\bigl(H_{\Sigma(u),g}(\Phi_u(x)),B_u(x)\zeta\bigr)
\end{equation}
for $\zeta\in T_x^\perp\Sigma$.

Write $V(\Phi_u(x)):=\dot\varphi_0(x)$ and
\[
\eta(x):=\Pi_{\Sigma,u}\bigl(V(\Phi_u(x))\bigr).
\]
By definition of $\Pi_{\Sigma,u}$,
\[
V(\Phi_u(x))-B_u(x)\eta(x)
\in T_{\Phi_u(x)}\Sigma(u).
\]
Since the mean curvature vector is $g$-normal to $\Sigma(u)$,
\eqref{Equ_LS_h-pointwise} implies
\begin{equation}\label{Equ_LS_h-velocity}
-j_u\,g(H_{\Sigma(u),g}\circ\Phi_u,V\circ\Phi_u)
=
\langle\check{\mathfrak h}(g,u),\eta\rangle_{\bar g}.
\end{equation}
The velocity is bounded by admissibility, so part
\ref{Item_LS_|Pi(xi)|<2|xi|} implies $\eta\in L^\infty\cap\cH$.
Since $\check{\mathfrak h}(g,u)\in Y\subset\cH$, the
right-hand side is integrable.

We justify the first-variation formula across the isolated
singularities. Choose cutoffs $\chi_r$ on $\Sigma$ that vanish
where $\rho<r$, equal one where $\rho>2r$, and satisfy
$|\bar\nabla\chi_r|\leq C/r$.
The conical geometry and the uniform graph estimates give
\[
\mathscr H^n_g\bigl(\Phi_u(\{\rho<2r\})\bigr)\leq Cr^n,
\qquad
\left|\nabla_{\Sigma(u)}(\chi_r\circ\Phi_u^{-1})\right|
\leq C/r.
\]
Integration by parts on the regular part, with this compactly
supported cutoff, gives
\[
\begin{aligned}
\int_{\Sigma(u)}(\chi_r\circ\Phi_u^{-1})
                 \operatorname{div}_{\Sigma(u),g}V\,d\mu_g
={}&-\int_{\Sigma(u)}(\chi_r\circ\Phi_u^{-1})
                 g(H_{\Sigma(u),g},V)\,d\mu_g\\
&-\int_{\Sigma(u)}
g\bigl(\nabla_{\Sigma(u),g}(\chi_r\circ\Phi_u^{-1}),
       V^\top\bigr)\,d\mu_g.
\end{aligned}
\]
The absolute value of the last integral is at most
$C\|V\|_{L^\infty}r^{n-1}$ and hence tends to zero, since
$n\geq2$.
The admissibility bounds imply
$|\operatorname{div}_{\Sigma(u),g}V|\leq C\rho^{-1}$
after pullback to $\Sigma$.
This function is integrable by virtue of the same volume estimate.
Together with \eqref{Equ_LS_h-velocity}, dominated convergence
therefore yields
\[
\int_{\Sigma(u)}\operatorname{div}_{\Sigma(u),g}V\,d\mu_g
=
-\int_{\Sigma(u)}g(H_{\Sigma(u),g},V)\,d\mu_g.
\]

Finally, define the positive density $j_t$ by
\[
d\mu_{\varphi_t^*g}=j_t\,d\mu.
\]
Since $\varphi_t$ is an embedding,
\[
\scH^n(g,\varphi_t(\Sigma))=\int_\Sigma j_t\,d\mu.
\]
The uniform bounds on $d\varphi_t$ and its inverse on tangent
spaces, together with the velocity bounds, give
\[
|\partial_tj_t|\leq C\rho^{-1}.
\]

Thus differentiation under the integral is justified, and
the usual derivative of the area density gives
\[
\begin{aligned}
\left.\frac{d}{dt}\right|_{t=0}
\mathscr H^n(g,\varphi_t(\Sigma))
&=\int_{\Sigma(u)}
      \operatorname{div}_{\Sigma(u),g}V\,d\mu_g\\
&=-\int_{\Sigma(u)}g(H_{\Sigma(u),g},V)\,d\mu_g\\
&=\langle\check{\mathfrak h}(g,u),
         \Pi_{\Sigma,u}(\dot\varphi_0)\rangle_{\cH},
\end{aligned}
\]
as required.
\end{proof}

\begin{remark}\label{rem:AreaRegularity}
We now record the differentiability of the area functional.
For every integer $r\geq1$ such that $q\geq k+2r$,
the map
\[
A:\Gamma_0\times X_0\longrightarrow\mathbb R
\]
is of class $C^r$.

To see this, recall that
\[
A(g,\hat w)=\check A(g_{\mbfx,\mbfA},u),
\qquad P(\hat w)=(u,\mbfx,\mbfA).
\]
By Lemma \ref{lem:AdjustCoincideWith}\,\ref{item:XiAst},
with $s=r$, the map
$(g,\mbfx,\mbfA)\mapsto g_{\mbfx,\mbfA}$
is $C^r$ when its values are endowed with the
$C^{q-r}$ topology. This intermediate topology
accounts for the loss of derivatives under pullback.

Since $q-r\geq k+r$, the coordinate expression
for the area integrand, weighted Sobolev embedding
and finiteness of the area of $\Sigma$ justify
differentiation under the integral through order $r$.
 Thus $\check A$
is jointly $C^r$ with its metric variable endowed
with the $C^{q-r}$ topology, and the chain rule
gives the asserted regularity of $A$.
\end{remark}

\begin{corollary}[Analytic mean curvature and critical points]
\label{cor:CharCritPoints}
Under the standing assumptions, let $r\geq1$ be an integer
and assume $q\geq k+2r$.
Then there exists a unique $C^r$ map
\[
\mathfrak h:\Gamma_0\times X_0\to Y
\]
such that
\begin{equation}\label{Def_H_G_new}
D_2A(g,\hat w)[v]
=
\langle\mathfrak h(g,\hat w),v\rangle_{\cH}
\qquad(v\in X_I).
\end{equation}
Writing $P(\hat w)=(u,\mbfx,\mbfA)$, this map is given by
\begin{equation}\label{eq:AnalMeanCurv}
\mathfrak h(g,\hat w)
=
\check{\mathfrak h}(g_{\mbfx,\mbfA},u).
\end{equation}
Moreover, the following conditions are equivalent:
\begin{enumerate}[label=(\roman*)]
\item\label{Item_A1stvariation_H_G=0}
$\mathfrak h(g,\hat w)=0$;
\item\label{Item_A1stvariation_CritPt}
$\hat w$ is a critical point of $A(g,\cdot)$.
\end{enumerate}
If $\tau>1$, these conditions imply that the generalized graph
\[
\Sigma(g;u,\mbfx,\mbfA)
=
\Xi^g_{\mbfx,\mbfA}(\Sigma(u))
\]
is an $\MSI$ in $(M,g)$.
\end{corollary}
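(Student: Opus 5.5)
The plan is to use the factorization \eqref{eq:Invariance}. Writing $P(\hat w)=(u,\mbfx,\mbfA)$, we have $A(g,\hat w)=\check A(g_{\mbfx,\mbfA},u)$. For $v\in X_I$ the decomposition \eqref{eq:FinInfSplit} gives $P(\hat w+tv)=(u+tv,\mbfx,\mbfA)$, so the metric $g_{\mbfx,\mbfA}$ does not move along such variations. Hence
\[
D_2A(g,\hat w)[v]=\left.\frac{d}{dt}\right|_{t=0}\check A(g_{\mbfx,\mbfA},u+tv)=\langle\check{\mathfrak h}(g_{\mbfx,\mbfA},u),v\rangle_{\cH},
\]
by Proposition \ref{Prop:C^qMSO}, which applies because $g_{\mbfx,\mbfA}\in\Gamma_0\cap\Gamma_\Sigma$ by \eqref{eq:MetMap}. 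So we define $\mathfrak h$ by \eqref{eq:AnalMeanCurv}. Uniqueness follows from density of $X_I$ in $\cH$: a $Y\subset\cH$ element orthogonal to all of $X_I$ vanishes.

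For regularity, $\mathfrak h$ is the composition of $(g,\hat w)\mapsto(\Xi^*(g,Q(\pi_F\hat w)),\pi_I\hat w)$ with $\check{\mathfrak h}$. By Lemma \ref{lem:AdjustCoincideWith}\ref{item:XiAst} with $s=r$, the first map is $C^r$ into $\Gamma(q-r;\bar g,\Sigma)$. We then apply Proposition \ref{Prop:C^qMSO} with $q$ replaced by $q-r$: this is permitted since $k\leq q-r-1$ (from $q\geq k+2r$ and $r\geq1$), and it yields $\check{\mathfrak h}$ of class $C^{q-r-k}\supseteq C^r$. The chain rule then gives $C^r$. One caveat is that the neighborhoods must be shrunk consistently in the weaker topology; the bounds in Appendix \ref{app:C^q} are expected to depend only on lower-order norms, so this should be harmless.

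For the equivalence, \ref{Item_A1stvariation_CritPt}$\Rightarrow$\ref{Item_A1stvariation_H_G=0} follows from the displayed identity and the density argument. The converse is the main step, because criticality also requires vanishing of derivatives along $X_F$, which alter the metric $g_{\mbfx,\mbfA}$ rather than $u$. Here we use diffeomorphism invariance: for $\hat v\in X_F$, the curve $t\mapsto\Sigma(g;P(\hat w+t\hat v))=\Xi^g_{\mbfx+t\mbfx',\mbfA+t\mbfA'}(\Sigma(u))$ is a one-parameter family of embeddings $\varphi_t$ of $\Sigma$ with $\varphi_0$ a reparametrization of $\Xi^g_{\mbfx,\mbfA}\circ\Phi_u$, and $A(g,\hat w+t\hat v)=\scH^n(g,\varphi_t(\Sigma))$. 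Pulling back by the fixed diffeomorphism $\Xi^g_{\mbfx,\mbfA}$, this equals $\scH^n(g_{\mbfx,\mbfA},\tilde\varphi_t(\Sigma))$ with $\tilde\varphi_0=\Phi_u$. The velocity $\dot{\tilde\varphi}_0$ is the restriction of a smooth global vector field, bounded together with its derivative by \eqref{eq:DiffBoundComposite}, so the admissibility hypotheses of Lemma \ref{Lem_LS_Full1stVar}\ref{Item_LS_Full-1stVar} hold. That lemma gives
\[
D_2A(g,\hat w)[\hat v]=\langle\check{\mathfrak h}(g_{\mbfx,\mbfA},u),\Pi_{\Sigma,u}(\dot{\tilde\varphi}_0)\rangle_{\cH},
\]
which vanishes when $\mathfrak h=0$. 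Combined with the $X_I$ directions, $\hat w$ is critical. Checking the admissibility bounds (bi-Lipschitz control of $d\tilde\varphi_t$ and $\rho|\bar\nabla\dot{\tilde\varphi}_t|\leq C_0$) is the main technical point, but it is routine given the smallness of $u$ and the smoothness of $\Xi$ in its parameters.

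Finally, suppose $\tau>1$ and $\mathfrak h=0$. The pointwise identity \eqref{Equ_LS_h-pointwise}, together with invertibility of $B_u$ and $j_u>0$, shows that $H_{\Sigma(u),g_{\mbfx,\mbfA}}\equiv0$ on the regular part. The cutoff argument in the proof of Lemma \ref{Lem_LS_Full1stVar} shows that the first variation has no singular contribution, so $|\Sigma(u)|$ is stationary in $(M,g_{\mbfx,\mbfA})$. Because $u$ decays superlinearly ($\tau>1$), $\Sigma(u)$ has the same regular tangent cones $\mbfC_j$ at the $p_j$, with multiplicity one. These cones are minimal in $g_{\mbfx,\mbfA}$ at $p_j$, since this metric coincides with $\bar g$ there. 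Hence $\Sigma(u)$ is an $\MSI$, and pushing forward by $\Xi^g_{\mbfx,\mbfA}$ makes $\Sigma(g;u,\mbfx,\mbfA)$ an $\MSI$ in $(M,g)$.
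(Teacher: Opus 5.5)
Your proposal is correct and follows essentially the same route as the paper. It uses the factorization $A(g,\hat w)=\check A(g_{\mbfx,\mbfA},u)$ for the first-variation identity and uniqueness, Lemma~\ref{lem:AdjustCoincideWith}\ref{item:XiAst} with $s=r$ together with Proposition~\ref{Prop:C^qMSO} at metric regularity $q-r$ for the $C^r$ claim, Lemma~\ref{Lem_LS_Full1stVar}\ref{Item_LS_Full-1stVar} applied to the family pulled back by $(\Xi^g_{\mbfx,\mbfA})^{-1}$ for (i)$\Rightarrow$(ii), and the pointwise identity \eqref{Equ_LS_h-pointwise} plus the cutoff argument for the \MSI{} conclusion.

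The differences are cosmetic. The paper handles a general direction $\hat z=Q(v,\mbfy,\mbfB)$ in a single variation instead of splitting off $X_I$. It also disposes of your neighborhood caveat directly: the $C^{q-r}$ version of $\check{\mathfrak h}$ agrees with the original wherever both are defined, by uniqueness of the first-variation representative. Note also that $C^{q-r-k}\supseteq C^r$ should read $C^{q-r-k}\subseteq C^r$.
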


\begin{proof}
By \eqref{eq:Invariance} and Proposition \ref{Prop:C^qMSO},
for every $v\in X_I$,
\[
D_2A(g,\hat w)[v]
=
D_2\check A(g_{\mbfx,\mbfA},u)[v]
=
\langle\check{\mathfrak h}(g_{\mbfx,\mbfA},u),v\rangle_{\cH},
\qquad P(\hat w)=(u,\mbfx,\mbfA).
\]
Thus \eqref{eq:AnalMeanCurv} defines a map with the required
first-variation identity, and uniqueness follows from the
density of $X_I$ in $\cH$.

To justify its regularity, apply
Lemma \ref{lem:AdjustCoincideWith}\,\ref{item:XiAst}
with $s=r$.
After shrinking the neighborhoods if necessary, the map
\[
\Gamma_0\times(X_0\cap X_F)
\ni (g,Q(0,\mbfx,\mbfA))
\longmapsto g_{\mbfx,\mbfA}
\in \Gamma(q-r;\bar g,\Sigma)
\]
is of class $C^r$.
Applying Proposition \ref{Prop:C^qMSO} with $q-r$
in place of $q$ gives a $C^{q-r-k}$ normal-graph
analytic mean curvature map on a neighborhood of
$(\bar g,0)$ at this lower metric regularity.
By the first-variation characterization and uniqueness,
this map agrees with $\check{\mathfrak h}$ wherever
both are defined.
Since $q-r-k\geq r$, the chain rule applied to
\eqref{eq:AnalMeanCurv} proves that
\[
\mathfrak h\in C^r(\Gamma_0\times X_0;Y).
\]

To prove
\ref{Item_A1stvariation_H_G=0}
$\Rightarrow$
\ref{Item_A1stvariation_CritPt},
fix $\hat z=Q(v,\mbfy,\mbfB)\in X$ and consider
\[
\varphi_t
=
(\Xi^g_{\mbfx,\mbfA})^{-1}
\circ\Xi^g_{\mbfx+t\mbfy,\mbfA+t\mbfB}
\circ\Phi_{u+tv}.
\]
Then $\varphi_0=\Phi_u$ and
\[
A(g,\hat w+t\hat z)
=
\scH^n(g_{\mbfx,\mbfA},\varphi_t(\Sigma)).
\]
The weighted Sobolev estimates for $u,v$ and the $C^1$
dependence of $\Xi$ on its finite-dimensional parameters
give the admissibility bounds required by
Lemma \ref{Lem_LS_Full1stVar}, for sufficiently small $|t|$.
Applying part \ref{Item_LS_Full-1stVar} of that lemma yields
\[
D_2A(g,\hat w)[\hat z]
=
\left\langle
\mathfrak h(g,\hat w),
\Pi_{\Sigma,u}(\dot\varphi_0)
\right\rangle_{\cH}
=0.
\]
Conversely,
\ref{Item_A1stvariation_CritPt}
$\Rightarrow$
\ref{Item_A1stvariation_H_G=0}
follows by restricting \eqref{Def_H_G_new} to $v\in X_I$
and using its density in $\cH$.

Finally, suppose $\tau>1$ and the equivalent conditions hold.
The pointwise identity \eqref{Equ_LS_h-pointwise},
applied with the metric $g_{\mbfx,\mbfA}$, implies that
$H_{\Sigma(u),g_{\mbfx,\mbfA}}=0$.
Indeed, the area density is positive, and the vectors
$d(\exp_x^{\bar g})|_{u(x)}[\zeta]$,
$\zeta\in T_x^\perp\Sigma$, span a complement to
$T_{\Phi_u(x)}\Sigma(u)$.
A $g_{\mbfx,\mbfA}$-normal vector orthogonal to this
complement must therefore vanish.

The weighted Sobolev embedding gives $u=o(\rho)$ and
$\bar\nabla^\perp u=o(1)$ near the singular set.
Consequently, $\Sigma(u)$ has the same singular points and
the same multiplicity-one regular tangent cones as $\Sigma$.
Since $g_{\mbfx,\mbfA}$ coincides with $\bar g$ at these
points, these cones are also minimal for the corresponding
tangent metrics.
The vanishing of $H_{\Sigma(u),g_{\mbfx,\mbfA}}$, together
with the cutoff argument in Lemma \ref{Lem_LS_Full1stVar},
shows that the associated varifold is stationary across
the isolated singularities. Hence $\Sigma(u)$ is an $\MSI$
in $(M,g_{\mbfx,\mbfA})$.
The final assertion follows by applying the isometry
$
\Xi^g_{\mbfx,\mbfA}:
(M,g_{\mbfx,\mbfA})\longrightarrow(M,g).$
\end{proof}

For the remainder of this section, fix an integer
$r\geq1$ and assume $q\geq k+2r$, as in
Corollary \ref{cor:CharCritPoints}.
In particular, $\mathfrak h$ is of class $C^r$,
and we set
\begin{align}
J(g,\hat w)
:=
D_2\mathfrak h(g,\hat w):X\longrightarrow Y.
\label{Equ_Def_J(g,hat w)}
\end{align}
The map $(g,\hat w)\mapsto J(g,\hat w)$ is of class
$C^{r-1}$ with values in $\mathcal L(X,Y)$.

\begin{lemma}\label{lem:IdentifJacobi operator}When $g=\bar g$ and $\hat w=0$, the operator $J$
equals the negative of the geometric Jacobi operator
of $\Sigma$ with respect to $\bar g$, namely
\[
J(\bar g,0)=-L_{\Sigma,\bar g}:X\to Y.
\]

  In particular, possibly by further shrinking the open sets $X_0$ and $\Gamma_0$, under the hypotheses of Proposition \ref{thm:CountRefined} (in particular, for $\tau\in I_{+}(\Sigma))$ the operator $J(g,\hat w)$ is a Fredholm operator of index zero as soon as $g\in\Gamma_0, \hat w\in X_0$, whenever $\Sigma$ is an $\MSI$ with vanishing effective Morse index and effective nullity.
\end{lemma}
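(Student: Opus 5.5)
The identification $J(\bar g,0)=-L_{\Sigma,\bar g}$ will be checked separately on the two summands of the splitting \eqref{eq:FinInfSplit}, $X=X_I\oplus X_F$. The Fredholm statement will then follow from Proposition \ref{thm:CountRefined} together with openness of Fredholm operators of a given index.

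\emph{Step 1: the summand $X_I$.} Fix $v\in X_I$. Since $\bar g\in\Gamma_\Sigma$, Lemma \ref{lem:AdjustCoincideWith}\ref{item:Xi} gives $\bar g_{0,0}=\bar g$. Hence \eqref{eq:AnalMeanCurv} reads $\mathfrak h(\bar g,tv)=\check{\mathfrak h}(\bar g,tv)$. By Proposition \ref{Prop:C^qMSO}, for $u,v'\in X_I$,
\[
\langle\check{\mathfrak h}(\bar g,u),v'\rangle_{\cH}=D_2\check A(\bar g,u)[v'].
\]
Differentiating once more at $u=0$ in direction $v$ gives $\langle D_2\mathfrak h(\bar g,0)[v],v'\rangle_{\cH}=D^2_2\check A(\bar g,0)[v,v']$. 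This is the classical second variation of area for normal variations of a minimal submanifold, computed on compact subsets of the regular part, namely
\[
\int_\Sigma\langle -L_{\Sigma,\bar g}v,v'\rangle.
\]
For compactly supported $v'$ no boundary terms arise, and such $v'$ are dense in $\cH$. Therefore $J(\bar g,0)v=-L_{\Sigma,\bar g}v$ on $X_I$. Equivalently, by the pointwise identity \eqref{Equ_LS_h-pointwise}, $\check{\mathfrak h}(\bar g,u)$ equals minus the mean curvature up to the factor $j_u$ and $B_u$; both reduce to the identity at $u=0$, and the linearization of $H$ is $L_{\Sigma,\bar g}$.

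\emph{Step 2: the summand $X_F$.} Take $\hat z=Q(0,\mbfy,\mbfB)$. Then
\[
\mathfrak h(\bar g,t\hat z)=\check{\mathfrak h}(\bar g_{t\mbfy,t\mbfB},0),
\]
and $\check{\mathfrak h}(g,0)$ is $-H_{\Sigma,g}$, since $j_0=1$ and $B_0=\id$ in \eqref{Equ_LS_h-pointwise}. We then need the derivative of $H_{\Sigma,g_t}$ along the family of metrics $g_t=(\Xi^{\bar g}_{t\mbfy,t\mbfB})^*\bar g$. Because $\Phi^\Sigma(\Psi_{t\mbfy,t\mbfB}^*\bar g)$ enters, $\Xi$ is not just $\Psi$. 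Nevertheless, $\frac{d}{dt}g_t=\mathcal L_W\bar g$, with $W$ the velocity field of $\Xi^{\bar g}_{t\mbfy,t\mbfB}$, which by Lemma \ref{lem:VelocityField} equals $\psi_{\mbfy}+\psi_{\mbfB}$. Naturality of mean curvature, exactly as in the proof of Lemma \ref{lem:JacobiWellPosedRotations}, then gives
\[
\frac{d}{dt}\Big|_{t=0}H_{\Sigma,g_t}=L_{\Sigma,\bar g}(W^\perp),
\]
the tangential part contributing nothing since $H_{\Sigma,\bar g}=0$. As $W^\perp=\psi^\perp_{\mbfy}+\psi^\perp_{\mbfB}=\hat z$, this yields $J(\bar g,0)\hat z=-L_{\Sigma,\bar g}\hat z$.

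The main obstacle is justifying this last differentiation in the topology of $Y=W^{k-2,2}_{\tau-2}$ rather than pointwise. The derivative is already known to exist in $Y$ by $C^r$ regularity of $\mathfrak h$ (Corollary \ref{cor:CharCritPoints}), so it suffices to identify it pointwise on the regular part. There the naturality argument is a local smooth computation, and continuity of point evaluation on compact subsets of $\Sigma$ (Sobolev embedding, $k\geq n+3$) closes the gap. One should also check that $\Phi^\Sigma$ contributes no extra term at first order. This holds because $\Psi_{t\mbfy,t\mbfB}^*\bar g$ agrees with $\bar g$ at $\Sing(\Sigma)$ to first order in $t$: translations shift points, and rotations are isometries at the points up to $O(t^2)$ in normal coordinates. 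In any case, any such term is absorbed into $W$ by the naturality formula.

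\emph{Step 3: Fredholmness.} Under vanishing effective index and nullity and $\tau\in I_+(\Sigma)$, Proposition \ref{thm:CountRefined} gives that $-L_{\Sigma,\bar g}:X\to Y$ is Fredholm of index $0$. Since $J$ is continuous into $\mathcal L(X,Y)$ (here $r\geq1$) and Fredholm operators of index $0$ form an open subset, shrinking $\Gamma_0$ and $X_0$ gives the claim.
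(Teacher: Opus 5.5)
Your proposal is correct. Step 3 coincides with the paper, but Steps 1--2 take a partly different route.

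\textbf{The paper's route.} The paper does not split $X=X_I\oplus X_F$. It fixes an arbitrary $\hat w=Q(u,\mbfx,\mbfA)$ and a compactly supported test section $v$. It writes $\langle J_0\hat w,v\rangle_{\cH}$ as the mixed derivative $\partial_s\partial_t A(\bar g,s\hat w+tv)$ at $(0,0)$. This derivative is realized by the two-parameter family $F_{s,t}=\Xi^{\bar g}_{s\mbfx,s\mbfA}\circ\Exp_\Sigma(su+tv)$, whose $s$-velocity has $\bar g$-normal part $\hat w$ by Lemma \ref{lem:VelocityField}. The mixed second variation formula is then applied on a compact domain containing $\operatorname{supp}v$. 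There are no boundary terms, and tangential components play no role because $\Sigma$ is minimal. So a single formula handles all of $X$ at once.

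\textbf{Your route.} Your Step 1 is essentially this argument restricted to $X_I$. Your Step 2 is genuinely different. It uses $\mathfrak h(\bar g,t\hat z)=\check{\mathfrak h}(g_t,0)$, where $g_t$ is the pulled-back metric. It then invokes the naturality of mean curvature exactly as in Lemma \ref{lem:JacobiWellPosedRotations}, and identifies the $Y$-valued derivative (known to exist by $C^r$ regularity) pointwise via local Sobolev embedding. What this buys is a transparent explanation of why the finite-dimensional directions act through metric pullbacks. The cost is an extra pointwise identification and limit argument, which the paper's unified variational formula avoids.

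\textbf{An inaccuracy to fix.} In Step 2, $\check{\mathfrak h}(g,0)$ is not $-H_{\Sigma,g}$ when $g\neq\bar g$. In \eqref{Equ_LS_h-pointwise}, $j_0$ is the ratio of the $g$- and $\bar g$-area densities on $\Sigma$, so $j_0\neq 1$ in general. Also, the identity pairs $H_{\Sigma,g}$ through $g$ rather than $\bar g$. This is harmless: all such factors multiply $H_{\Sigma,\bar g}=0$, so their derivatives drop out at $t=0$. Moreover, $\frac{d}{dt}\big|_{t=0}H_{\Sigma,g_t}$ is $\bar g$-normal, as one sees by differentiating $g_t(H_{\Sigma,g_t},X)=0$ for $X$ tangent. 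Your remark that $\Phi^\Sigma$ contributes nothing at first order is correct, but it is also unnecessary: the fields $\psi_{\mbfy},\psi_{\mbfB}$ are by definition velocities of the full map $\Xi$.
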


\begin{proof}
Write $J_0:=J(\bar g,0)$ and $L:=L_{\Sigma,\bar g}$.
Fix $\hat w=Q(u,\mbfx,\mbfA)\in X$ and
$v\in C_c^\infty(\Sigma;\mbfV)\subset X_I$.
By the definition of the analytic mean curvature map,
\[
\langle J_0\hat w,v\rangle_{\cH}
=
\left.\frac{d}{ds}\right|_{s=0}
\left.\frac{d}{dt}\right|_{t=0}
A(\bar g,s\hat w+tv).
\]
To compute this mixed variation, consider, on
$\Sigma$, the family
\[
F_{s,t}(p)
:=
\Xi^{\bar g}_{s\mbfx,s\mbfA}
\bigl(\Exp_\Sigma(su(p)+tv(p))\bigr).
\]
Its image is the generalized graph defining
$A(\bar g,s\hat w+tv)$, and its variational vector fields
at $(s,t)=(0,0)$ satisfy
\[
F_{0,0}=\id_\Sigma,\qquad
\left(\left.\partial_sF_{s,0}\right|_{s=0}\right)^{\perp}
=
u+\psi_{\mbfx}^{\perp}
+\psi_{\mbfA}^{\perp}
=\hat w,\qquad
\left.\partial_tF_{0,t}\right|_{t=0}=v\,,
\]
where $\perp=\perp_{\bar g}$.
For each (sufficiently small) fixed $s$, the $t$-variation is supported away from $\Sing(\Sigma)$. We may therefore
compute the mixed variation on a relatively compact
smooth domain containing $\operatorname{supp}v$.

Since $\Sigma$ is minimal, the mixed second variation
formula, with our convention for the geometric
Jacobi operator, gives
\[
\left.\frac{d}{ds}\right|_{s=0}
\left.\frac{d}{dt}\right|_{t=0}
A(\bar g,s\hat w+tv)
=
-\langle L\hat w,v\rangle_{\cH}.
\]
No boundary term arises because the $t$-variation
is compactly supported away from $\Sing(\Sigma)$.

Consequently,
\[
\langle J_0\hat w+L\hat w,v\rangle_{\cH}=0
\qquad
\text{for every }
v\in C_c^\infty(\Sigma;\mbfV).
\]
Both $J_0\hat w$ and $L\hat w$ belong to
$Y\subset\cH$.
Density of the compactly supported test sections
in $\cH$ therefore implies $J_0\hat w=-L\hat w$.
Since $\hat w\in X$ was arbitrary, this proves
\[
J(\bar g,0)=-L_{\Sigma,\bar g}:X\longrightarrow Y.
\]

Under the hypotheses of Proposition \ref{thm:CountRefined},
including the stipulated range of $\tau$ and the
vanishing of the effective index and effective nullity,
the operator $L_{\Sigma,\bar g}:X\to Y$ is Fredholm
of index zero. Hence $J_0=-L_{\Sigma,\bar g}$ is also Fredholm
of index zero. Moreover, Corollary \ref{cor:CharCritPoints} and
$r\geq1$ imply that
\[
(g,\hat w)\longmapsto
J(g,\hat w)=D_2\mathfrak h(g,\hat w)
\]
is continuous with values in $\mathcal L(X,Y)$.
The openness of the Fredholm set and local constancy
of its index therefore imply, after shrinking
$\Gamma_0$ and $X_0$, that every $J(g,\hat w)$
is Fredholm of index zero.
\end{proof}

\begin{lemma}\label{lem:J_SelfAdjoint}
In the setting above, $J_0:=J(\bar g,0)$ is symmetric
with respect to the $\cH$ pairing on its domain $X$, namely there holds 
\[
\langle J_0 \hat u,\hat v \rangle_{\cH}=\langle \hat u, J_0\hat v\rangle_{\cH}, \ \ (\forall \hat u, \hat v\in X)\,.
\]
\end{lemma}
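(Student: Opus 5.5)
The plan is to avoid a direct integration by parts for $L_{\Sigma,\bar g}$ on $X$. Such an argument runs into borderline boundary terms at the singular points when both entries are translation-like. Indeed, $\Pi^\perp_x(\mbfx_j)$ is bounded but its normal derivative grows like $\rho^{-1}$, since $|\II_\Sigma|\lesssim\rho^{-1}$. The boundary integral over $\Sigma\cap\partial B_r(p_j)$ is therefore only $O(r^{n-2})$, which does not vanish when $n=2$. Instead I would realize $J_0$ as the Hessian at $0$ of the functional $A(\bar g,\cdot):X_0\to\R$ and invoke the symmetry of second Fréchet derivatives. By Remark \ref{rem:AreaRegularity}, taking $r=2$, i.e.\ assuming $q\geq k+4$ (which I would stipulate, or else argue with $r=1$ as explained at the end), $A(\bar g,\cdot)$ is of class $C^2$ on $X_0$. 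Hence $D^2_2A(\bar g,0)[\hat u,\hat v]=D^2_2A(\bar g,0)[\hat v,\hat u]$ for all $\hat u,\hat v\in X$. It then suffices to prove
\[
D_2^2A(\bar g,0)[\hat u,\hat v]=\langle J_0\hat u,\hat v\rangle_{\cH}\qquad(\hat u,\hat v\in X).
\]

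To establish this identity, fix $\hat v=Q(v,\mbfy,\mbfB)$ and, for $\hat w=Q(u,\mbfx,\mbfA)\in X_0$, use the formula obtained in the proof of Corollary \ref{cor:CharCritPoints}:
\[
D_2A(\bar g,\hat w)[\hat v]=\bigl\langle \mathfrak h(\bar g,\hat w),\,\Pi_{\Sigma,u}(\dot\varphi^{\hat w}_0)\bigr\rangle_{\cH}.
\]
Here $\varphi^{\hat w}_t=(\Xi^{\bar g}_{\mbfx,\mbfA})^{-1}\circ\Xi^{\bar g}_{\mbfx+t\mbfy,\mbfA+t\mbfB}\circ\Phi_{u+tv}$, and the identity is supplied by Lemma \ref{Lem_LS_Full1stVar}\,\ref{Item_LS_Full-1stVar}. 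Now set $\hat w=s\hat u$ and differentiate at $s=0$. Since $\mathfrak h(\bar g,0)=0$ and $\mathfrak h$ is $C^1$ with values in $Y\hookrightarrow\cH$, we have
\[
\mathfrak h(\bar g,s\hat u)=sJ_0\hat u+o(s)\quad\text{in }\cH.
\]
It therefore suffices to show two things. First, the sections $\eta_s:=\Pi_{\Sigma,su}(\dot\varphi^{s\hat u}_0)$ are bounded in $L^\infty$ uniformly in small $s$. Second, $\eta_s\to\eta_0$ in $\cH$ as $s\to0$. Given these, the product of the two expansions yields
\[
\left.\frac{d}{ds}\right|_{s=0}D_2A(\bar g,s\hat u)[\hat v]=\langle J_0\hat u,\eta_0\rangle_{\cH}.
\]
At $s=0$ we have $u=0$, so $\Pi_{\Sigma,0}$ is the $\bar g$-normal projection. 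Moreover, $\dot\varphi^0_0=v+\psi_{\mbfy}+\psi_{\mbfB}$ by Lemma \ref{lem:VelocityField}, so $\eta_0=v+\psi_\mbfy^\perp+\psi_\mbfB^\perp=\hat v$, as desired.

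The main obstacle is the convergence $\eta_s\to\hat v$. For the uniform bound I would use Lemma \ref{Lem_LS_Full1stVar}\,\ref{Item_LS_|Pi(xi)|<2|xi|} together with \eqref{eq:DiffBoundComposite}. These show that $\dot\varphi^{s\hat u}_0$ is a bounded vector field, built from $\Xi$-derivatives in $C^1$ and from $d\exp$ applied to $v$. For the convergence, I would combine \eqref{Equ_LS_Pi-transport-estimate} with the estimate $\|\Xi^{\bar g}_{s\mbfx,s\mbfA}-\id\|_{C^2}=O(s)$, and with the $C^1$ dependence of $\Xi$ on its finite-dimensional parameters. This gives $\|\eta_s-\hat v\|_{L^\infty}\lesssim s$ in the spirit of item \ref{Item_LS_|Pi(xi)-xi^perp|<|g-bar g||xi|}. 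Since $\Sigma$ has finite area, $L^\infty$ convergence implies convergence in $\cH$. The one delicate point is that $\psi_\mbfy,\psi_\mbfB$ are not normal to $\Sigma$. This is harmless, however, because $\Pi_{\Sigma,u}$ projects along $T\Sigma(u)$ by its very definition.

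Finally, note that this computation directly shows that $s\mapsto D_2A(\bar g,s\hat u)[\hat v]$ is differentiable at $s=0$. Consequently, if only $r=1$ is available, I would still conclude via the classical symmetry statement for mixed partials of the two-variable function $(s,t)\mapsto A(\bar g,s\hat u+t\hat v)$. That statement applies once both first partials are known to be continuous and one mixed partial exists and is continuous near $0$. The continuity follows from the same formula, now evaluated at nearby base points, using the continuity of $J$ in $\mathcal L(X,Y)$.
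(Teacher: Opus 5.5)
Your proposal is correct and follows essentially the paper's proof. You realize $J_0$ as the Hessian of $A(\bar g,\cdot)$ at $0$ via the first-variation identity of Lemma \ref{Lem_LS_Full1stVar}\,\ref{Item_LS_Full-1stVar}, show that the projected velocity $\Pi_{\Sigma,u}(\dot\varphi_0)$ is close to $\hat v$, and invoke symmetry of second derivatives.

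The one difference is that the paper proves the comparison uniformly, $\|\Pi_{\Sigma,u}(\dot\varphi_0)-\hat z\|_{\cH}\leq C\|\hat w\|_X\|\hat z\|_X$, so $DA(\bar g,\cdot)$ is Fr\'echet differentiable at $0$ under the standing $r\geq1$. Your estimates are already linear in $\|\hat v\|_X$, so you get this for free. That makes the stipulation $q\geq k+4$ unnecessary; it is also harmless, since $\bar g$ is smooth and $J_0=-L_{\Sigma,\bar g}$ does not depend on $q$. It likewise makes the Schwarz-type fallback unnecessary, and that fallback is incomplete as sketched: it needs the mixed partial at base points $\hat w_0\neq0$, where $\mathfrak h(\bar g,\hat w_0)\neq0$ forces you to differentiate the projected velocity as well, and continuity of $J$ alone does not give this.
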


\begin{proof}
Let $f:=A(\bar g,\cdot):X_0\to\R$.
For $\hat w=Q(u,\mbfx,\mbfA)\in X_0$ and
$\hat z=Q(v,\mbfy,\mbfB)\in X$, write
$\Psi:=\Xi^{\bar g}_{\mbfx,\mbfA}$,
$g_{\hat w}:=\Psi^*\bar g$ and
$\Phi_u(x):=\exp_x^{\bar g}(u(x))$.
Consider the variation
\[
\varphi_t
:=
\Psi^{-1}\circ
\Xi^{\bar g}_{\mbfx+t\mbfy,\mbfA+t\mbfB}
\circ\Phi_{u+tv},
\]
so that $\varphi_0=\Phi_u$ and
$f(\hat w+t\hat z)
=\scH^n(g_{\hat w},\varphi_t(\Sigma))$.
For sufficiently small $|t|$, weighted Sobolev embedding
and the smooth dependence of $\Xi^{\bar g}$ on its
parameters give uniform bounds
\[
C^{-1}|a|\leq|d\varphi_t(x)[a]|\leq C|a|,
\qquad
|\dot\varphi_t(x)|
+\rho(x)|\bar\nabla\dot\varphi_t(x)|
\leq C\|\hat z\|_X.
\]
Thus Lemma \ref{Lem_LS_Full1stVar} applies and yields
\begin{equation}\label{eq:FullVariationOperator}
Df(\hat w)[\hat z]
=
\left\langle
\mathfrak h(\bar g,\hat w),
\Pi_{\Sigma,u}(\dot\varphi_0)
\right\rangle_{\cH},
\end{equation}
where we used
$\check{\mathfrak h}(g_{\hat w},u)=\mathfrak h(\bar g,\hat w)$. We compare the section in this pairing with $\hat z$. Define the ambient vector field
\[
V_{\mbfx,\mbfA;\mbfy,\mbfB}
:=
\left.\frac{d}{dt}\right|_{t=0}
\left(
(\Xi^{\bar g}_{\mbfx,\mbfA})^{-1}
\circ
\Xi^{\bar g}_{\mbfx+t\mbfy,\mbfA+t\mbfB}
\right).
\]
Differentiating $\varphi_t$ and using the defining property
of $\Pi_{\Sigma,u}$ gives
\[
\Pi_{\Sigma,u}(\dot\varphi_0)
=
v+\Pi_{\Sigma,u}
\bigl(V_{\mbfx,\mbfA;\mbfy,\mbfB}|_{\Sigma(u)}\bigr).
\]
Smooth dependence on the parameters implies
\[
\begin{aligned}
\|V_{\mbfx,\mbfA;\mbfy,\mbfB}\|_{C^1(M)}
&\leq C(|\mbfy|+|\mbfB|),\\
\|V_{\mbfx,\mbfA;\mbfy,\mbfB}
      -V_{0,0;\mbfy,\mbfB}\|_{C^0(M)}
&\leq C(|\mbfx|+|\mbfA|)(|\mbfy|+|\mbfB|).
\end{aligned}
\]
By Lemma \ref{lem:VelocityField} and the definition of $Q$,
$(V_{0,0;\mbfy,\mbfB}|_\Sigma)^\perp=Q(0,\mbfy,\mbfB)$.
Consequently, Lemma \ref{Lem_LS_Full1stVar}\,
\ref{Item_LS_|Pi(xi)-xi^perp|<|g-bar g||xi|}
and the boundedness of $P=Q^{-1}$ yield
\begin{equation}\label{eq:FullVariationVelocityEstimate}
\left\|
\Pi_{\Sigma,u}(\dot\varphi_0)-\hat z
\right\|_{\cH}
\leq C\|\hat w\|_X\|\hat z\|_X.
\end{equation}

Since $\Sigma$ is minimal,
$\mathfrak h(\bar g,0)=0$ and $Df(0)=0$.
Differentiability of $\mathfrak h$ and the continuous
inclusion $Y\hookrightarrow\cH$ give
\[
\mathfrak h(\bar g,\hat w)
=
J_0\hat w+r(\hat w),
\qquad
\|r(\hat w)\|_{\cH}=o(\|\hat w\|_X).
\]
Define $\mathcal B\in\mathcal L(X,X^*)$ by
$(\mathcal B\hat u)[\hat v]
:=\langle J_0\hat u,\hat v\rangle_{\cH}$.
Using \eqref{eq:FullVariationOperator},
\eqref{eq:FullVariationVelocityEstimate} and
$X\hookrightarrow\cH$, we obtain
\[
\|Df(\hat w)-Df(0)-\mathcal B\hat w\|_{X^*}
\leq
C\|r(\hat w)\|_{\cH}
+
C\|\mathfrak h(\bar g,\hat w)\|_{\cH}\|\hat w\|_X
=o(\|\hat w\|_X).
\]
Hence $Df$ is Fr\'echet differentiable at $0$, with
\[
D^2f(0)[\hat u,\hat v]
=
\langle J_0\hat u,\hat v\rangle_{\cH}
\qquad(\hat u,\hat v\in X).
\]
Symmetry of the second Fr\'echet derivative proves the claim.
\end{proof}

\subsection{The constraint theorem - linear local version}\label{subs:Constraint}

In the setting above, we denote the kernel of $J=J(g,\hat w)$ by
\be\label{eq:Kernel}
  K(g,\hat w)\subset X
\ee
which we may naturally regard (also) as a finite-dimensional (hence: closed) subspace of $\cH$. Denote for simplicity $K=K(\bar g, 0)$ and $K^{\perp}\subset \cH$ be the corresponding orthogonal complement therein; further, let $c:=\dim_{\R}(K)$. We will denote the associated orthogonal projectors, respectively by $\pi: \cH\to K$ and $\pi^{\perp}:\cH\to K^{\perp}$. In particular, in view of the inclusions (cf. Remark \ref{rem:Inclusions})
\[
  X\subset Y\subset \cH
\]
such operators may be restricted to the smaller domains $X$ and $Y$ when appropriate. In what follows, we will tacitly abuse notation and write e.\,g. $\pi^{\perp}:Y\to K^{\perp}$ (a posteriori: $\pi^{\perp}:Y\to Y\cap K^{\perp}$, see below) in place of $\pi^{\perp}_{|Y}$.

\begin{lemma}\label{lem:LocSurj_new}
  Under the standing assumptions, with $r\geq1$ and
$q\geq k+2r$, suppose that $J(\bar g,0)$ is Fredholm
of index zero.
Then, at $\hat w=0$, the map
$\pi^\perp\circ\mathfrak h(\bar g,\cdot):
X\to Y\cap K^\perp$
has surjective differential
\[
  \pi^{\perp}\circ J(\bar g,0):
  X\to Y\cap K^{\perp}.
\]
  
  Hence, there exist open neighborhoods $X_0$ (resp. $K_0$) of $0\in X$ (resp. in $K$), $\Gamma_0$ of $\bar g \in\Gamma$ and
  a $C^{r}$ map $\varphi: \Gamma_0\times K_0\to X\cap K^{\perp}$ such that for any $g\in \Gamma_0$ and $\kappa\in K_0$ the section $\mathfrak{h}(g,\kappa+\varphi(g,\kappa))\in Y$ lies in the subspace $Y\cap K$;  in particular, for every $g\in\Gamma_0$, this gives a $c$-dimensional differentiable open manifold of solutions to the problem $\pi^{\perp}\circ \mathfrak{h}=0$. Conversely, if $g\in\Gamma_0$ and $\hat w\in X_0$
satisfy $\mathfrak h(g,\hat w)\in K$, then there
exists a unique $\kappa\in K_0$ such that
$\hat w=\kappa+\varphi(g,\kappa)$.

  There holds $\varphi(\bar g, 0)=0$ and $D_2 \varphi(\bar g, 0)\kappa=0$ (for any $\kappa\in K$) and so, as a result,
  \be\label{eq:Taylor1SelectFunction}
    \varphi(g,\kappa)=O(\|g-\bar g\|_{\Gamma})+o(\|\kappa\|_X),  \ \ \ (g\in\Gamma_0, \kappa\in K_0)\,. 
  \ee
\end{lemma}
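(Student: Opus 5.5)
The plan is to run a standard Lyapunov--Schmidt argument, with the symmetry of $J_0:=J(\bar g,0)$ from Lemma \ref{lem:J_SelfAdjoint} supplying the key identification of the range. The first step is to show that $\operatorname{Ran}(J_0)=Y\cap K^\perp$. For the inclusion $\subseteq$, take $\hat u\in X$ and $\kappa\in K$; symmetry gives $\langle J_0\hat u,\kappa\rangle_{\cH}=\langle \hat u,J_0\kappa\rangle_{\cH}=0$, so $\operatorname{Ran}(J_0)\subset Y\cap K^\perp$. Since $J_0$ is Fredholm of index zero, $\operatorname{Ran}(J_0)$ is closed in $Y$ and has codimension $c=\dim K$. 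It remains to check that $Y\cap K^\perp$ also has codimension exactly $c$ in $Y$. This amounts to showing that the $c$ functionals $y\mapsto\langle y,\kappa_i\rangle_{\cH}$, for $\kappa_i$ a basis of $K$, are linearly independent and continuous on $Y$. Continuity follows from $Y\hookrightarrow\cH$. Independence follows from density of $Y$ in $\cH$: $X_I\subset Y$ is dense in $\cH$, so a nonzero $\kappa\in K$ pairs nontrivially with some $y\in Y$. The closed subspace $\operatorname{Ran}(J_0)$ is contained in $Y\cap K^\perp$ and has the same finite codimension, so the two coincide. Consequently $\pi^\perp\circ J_0=J_0$ maps $X$ onto $Y\cap K^\perp$. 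We also need that $\pi^\perp$ maps $Y$ into $Y\cap K^\perp$, i.e.\ that $K\subset Y$; this is clear because $K\subset X\subset Y$.

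Next we apply the implicit function theorem. Consider the $C^r$ map
\[
F:\Gamma_0\times K\times (X\cap K^\perp)\to Y\cap K^\perp,\qquad F(g,\kappa,w):=\pi^\perp\mathfrak h(g,\kappa+w),
\]
which is $C^r$ by Corollary \ref{cor:CharCritPoints}. Here $X\cap K^\perp$ is a closed complement of $K$ in $X$, because the projection $\pi$ restricted to $X$ is continuous, as $K\subset X$ is finite dimensional and $X\hookrightarrow\cH$. We have $F(\bar g,0,0)=0$ since $\Sigma$ is minimal. Moreover $D_3F(\bar g,0,0)=J_0|_{X\cap K^\perp}$, which is injective (its kernel is $K\cap K^\perp=0$) and surjective by the first step, hence an isomorphism by the open mapping theorem. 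The implicit function theorem then gives neighborhoods and a unique $C^r$ map $\varphi$ with $F(g,\kappa,\varphi(g,\kappa))=0$; this is precisely the statement $\mathfrak h(g,\kappa+\varphi)\in K$. The uniqueness part yields the converse claim, after choosing $X_0$ small enough that $\pi\hat w\in K_0$ and $\pi^\perp\hat w$ lies in the uniqueness neighborhood. The solution set $\{\kappa+\varphi(g,\kappa)\}$ is a $C^r$ graph over $K_0$, hence a $c$-dimensional manifold.

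For the Taylor expansion, uniqueness gives $\varphi(\bar g,0)=0$. Differentiating the identity $F(\bar g,\kappa,\varphi(\bar g,\kappa))=0$ in $\kappa$ at $0$ gives $J_0\kappa+J_0D_2\varphi(\bar g,0)\kappa=0$. The first term vanishes because $\kappa\in K$, and injectivity of $J_0$ on $K^\perp$ then forces $D_2\varphi(\bar g,0)=0$. Finally, \eqref{eq:Taylor1SelectFunction} follows from the first-order Taylor expansion of the $C^1$ map $\varphi$ at $(\bar g,0)$.

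The main obstacle is the first step, identifying the range. One must be careful that the paper's notion of symmetry is only with respect to the $\cH$-pairing on $X$, and that $K^\perp$ is taken in $\cH$ rather than in $Y$. The dimension count relies on density of $Y$ in $\cH$ together with the index-zero hypothesis.
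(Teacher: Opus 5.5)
Your proposal is correct and follows essentially the same route as the paper. Symmetry of $J_0$ gives $\operatorname{Ran}J_0\subset Y\cap K^\perp$, the index-zero codimension count upgrades this inclusion to equality, and the implicit function theorem on the splitting $X=K\oplus(X\cap K^\perp)$ yields $\varphi$ together with $\varphi(\bar g,0)=0$ and $D_2\varphi(\bar g,0)=0$. The only cosmetic difference is how you see that $Y\cap K^\perp$ has codimension exactly $c$: you argue via independent functionals and density, whereas the paper reads it off from the splitting $Y=K\oplus(Y\cap K^\perp)$, available since $K\subset Y$ (which also makes your density step unnecessary, as $\langle\kappa,\kappa\rangle_{\cH}>0$).
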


\begin{proof}
Since $K\subset X\subset Y\subset\cH$ and $K$
is finite-dimensional, the $\cH$-orthogonal
projection onto $K$ restricts to bounded
projections on $X$ and $Y$. Hence
\[
X=K\oplus(X\cap K^\perp),
\qquad
Y=K\oplus(Y\cap K^\perp),
\]
with closed summands. In particular,
$\pi^\perp\circ\mathfrak h$ takes values in
$Y\cap K^\perp$.

From Lemma \ref{lem:J_SelfAdjoint}, there holds for $J_0:=J(\bar g,0)$ 
\[
J_0 X\subset Y\cap K^{\perp}\,.
\]
by directly applying the assumption that $J_0$ has Fredholm index 0, since the closed subspace $Y\cap K^{\perp}$ has codimension $c$ in $Y$, we get at once that the previous inclusion is actually an equality (that is: $J_0X= Y\cap K^{\perp}$) and so $\pi^{\perp}\circ J_0: X\to Y\cap K^{\perp}$ is surjective, as claimed.

By Corollary \ref{cor:CharCritPoints}, the map
\[
(g,\kappa,\psi)\longmapsto
\pi^\perp\mathfrak h(g,\kappa+\psi),
\qquad
\kappa\in K,\quad \psi\in X\cap K^\perp,
\]
is of class $C^r$ near $(\bar g,0,0)$.
Its differential in $\psi$ at that point is the
bounded linear isomorphism
\[
J_0|_{X\cap K^\perp}:
X\cap K^\perp\longrightarrow Y\cap K^\perp.
\]
The implicit function theorem therefore gives the
claimed $C^r$ map $\varphi$ and its local uniqueness.
Moreover, $\varphi(\bar g,0)=0$, and differentiation
in $\kappa$ at $(\bar g,0)$ gives
$D_2\varphi(\bar g,0)=0$, since $J_0\kappa=0$
for $\kappa\in K$.
The stated estimate follows from these identities
and the $C^1$ regularity of $\varphi$.
\end{proof}

\subsection{Local moduli and persistence}\label{subs:LocStruct}
Next, we shall describe the local moduli space of $\MSI$ as the ambient metric varies and derive persistence results under suitable restrictions on the metric (covering, in particular, the ``unobstructed" case).

\begin{proposition}[cf. Theorem 1.2 in \cite{Whi91}]
\label{pro:SurjectionCriterion}
Under the standing assumptions, with $r\geq1$ and
$q\geq k+2r$, suppose that $J(\bar g,0)$ is Fredholm
of index $-d$, where $d\geq0$.
After possibly shrinking the neighborhoods
$\Gamma_0$ of $\bar g$ and $X_0$ of $0$, consider
\be\label{eq:LocModuliSpace}
Z:=\left\{(g,\hat w)\in\Gamma_0\times X_0
\ \st\ \mathfrak h(g,\hat w)=0\right\}.
\ee
Then the following assertions hold:
\begin{enumerate}[label=(\roman*)]
\item
$Z$ is a $C^r$ Banach submanifold of
$\Gamma_0\times X_0$, with
\[
T_{(g,\hat w)}Z=\ker D\mathfrak h(g,\hat w)
\qquad\text{for every }(g,\hat w)\in Z.
\]
\item
The projection onto the first factor
$\Pi:Z\to\Gamma_0$ is a $C^r$ Fredholm map
of index $-d$. Its differential satisfies
\[
\ker D\Pi_{(g,\hat w)}
=
\left\{(0,\hat z):\hat z\in\ker J(g,\hat w)\right\}
\]
and
\[
\operatorname{Ran}D\Pi_{(g,\hat w)}
=
\left\{
h\in T_g\Gamma:
D_1\mathfrak h(g,\hat w)[h]
\in\operatorname{Ran}J(g,\hat w)
\right\}.
\]
In particular,
\[
\dim\ker D\Pi_{(\bar g,0)}
=
\dim\ker L_{\Sigma,\bar g}.
\]
\end{enumerate}
\end{proposition}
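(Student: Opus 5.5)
The plan is to follow the proof of \cite[Theorem 1.2]{Whi91}. The only real input needed is that the full differential
\[
D\mathfrak h(g,\hat w):T_g\Gamma\oplus X\to Y,\qquad (h,\hat z)\mapsto D_1\mathfrak h(g,\hat w)[h]+J(g,\hat w)\hat z,
\]
is surjective at every point of $Z$. Everything else follows from the implicit function theorem and the standard lemma on projections from solution manifolds.

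\textbf{Step 1: surjectivity at $(\bar g,0)$.} I would compute $D_1\mathfrak h(\bar g,0)$ on metric variations $h=2f\bar g$. Here $f$ is smooth, supported in a small ball disjoint from $\Sing(\Sigma)$, and vanishes along $\Sigma$. Such $h$ vanish on $\Sing(\Sigma)$, so $\Phi^\Sigma$ is the identity along the variation and $g_{0,0}=g$. Under a conformal change $e^{2tf}\bar g$, the first-variation formula for the mean curvature gives
\[
\left.\tfrac{d}{dt}\right|_{t=0}H_{\Sigma,e^{2tf}\bar g}=-n(\bar\nabla f)^\perp+f\,H_{\Sigma,\bar g}=-n(\bar\nabla f)^\perp .
\]
Combining this with the pointwise representation \eqref{Equ_LS_h-pointwise} (where $j_0=1$ and $B_0=\id$) yields $D_1\mathfrak h(\bar g,0)[h]=c_n(\bar\nabla f)^\perp$ for a nonzero dimensional constant $c_n$. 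Given any $\eta\in C^\infty_c(\Sigma;\mbfV)$, take $f(\Exp_\Sigma(v))=\chi\,\langle v,\eta\rangle$ in a tubular neighbourhood, with a cutoff $\chi$. This $f$ vanishes on $\Sigma$ and satisfies $(\bar\nabla f)^\perp=\eta$. Hence $\operatorname{Ran}D_1\mathfrak h(\bar g,0)$ contains $C^\infty_c(\Sigma;\mbfV)$, which is dense in $Y$.

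By hypothesis, $\operatorname{Ran}J(\bar g,0)$ is closed and of finite codimension in $Y$. Consider the quotient map $Y\to Y/\operatorname{Ran}J(\bar g,0)$, whose target is finite-dimensional. It is continuous, so it sends the dense subspace onto a dense, hence full, subspace. It follows that $\operatorname{Ran}D\mathfrak h(\bar g,0)=Y$. Surjectivity of bounded operators is an open condition, and $D\mathfrak h$ is continuous by Corollary \ref{cor:CharCritPoints} (with $r\geq1$). So after shrinking $\Gamma_0$ and $X_0$, $D\mathfrak h(g,\hat w)$ is surjective on all of $\Gamma_0\times X_0$.

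\textbf{Step 2: the submanifold structure.} To apply the implicit function theorem, I need $\ker D\mathfrak h(g,\hat w)$ to be complemented. Since $J(g,\hat w)$ is Fredholm (Lemma \ref{lem:IdentifJacobi operator} and openness of the Fredholm set), I would pick a finite-dimensional $F\subset T_g\Gamma$ with $D_1\mathfrak h[F]+\operatorname{Ran}J=Y$. The restriction of $D\mathfrak h$ to $F\oplus X$ is then Fredholm, so its kernel is finite-dimensional and complemented. A complement of $F$ in $T_g\Gamma$ then yields a complement of the full kernel in the standard way. The implicit function theorem, applied to the $C^r$ map $\mathfrak h$, gives (i) together with $T Z=\ker D\mathfrak h$.

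\textbf{Step 3: the projection $\Pi$.} Using $T_{(g,\hat w)}Z=\ker D\mathfrak h$, the kernel of $D\Pi$ consists of the pairs $(0,\hat z)$ with $J\hat z=0$. Its range consists of those $h$ for which $D_1\mathfrak h[h]=-J\hat z$ for some $\hat z$. The Fredholm property and the index equality $\operatorname{ind}D\Pi=\operatorname{ind}J=-d$ follow from the abstract lemma in \cite{Whi91} (or Smale's). Indeed, the map $\ker D\mathfrak h\cap(0\oplus X)\to\ker J$ is an isomorphism. Moreover, $h\mapsto D_1\mathfrak h[h]$ induces an isomorphism
\[
T_g\Gamma/\operatorname{Ran}D\Pi\cong Y/\operatorname{Ran}J,
\]
which is onto by Step 1. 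Finally, $\ker J(\bar g,0)=\ker L_{\Sigma,\bar g}$ by Lemma \ref{lem:IdentifJacobi operator}.

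The main obstacle is Step 1. One must justify the conformal-variation formula within the analytic framework, namely that $\check{\mathfrak h}$ depends on $g$ through the pointwise identity \eqref{Equ_LS_h-pointwise}. The density-plus-finite-codimension argument is what lets the proof avoid any unique-continuation argument near the singularities.
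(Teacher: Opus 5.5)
Your proposal is correct and follows essentially the same route as the paper. You use conformal variations $\lambda\bar g$, with $\lambda$ supported away from $\Sing(\Sigma)$, $\lambda|_\Sigma=0$ and $(\bar\nabla\lambda)|_\Sigma=v$, to get $D_1\mathfrak h(\bar g,0)[\lambda\bar g]=\frac n2 v$. Density of $C^\infty_c(\Sigma;\mbfV)$ in $Y$ and finite-dimensionality of $Y/\operatorname{Ran}J(\bar g,0)$ then give surjectivity, and the Fredholm property of $\Pi$ follows from $\operatorname{Ran}D\Pi=\ker\bigl(h\mapsto[D_1\mathfrak h(g,\hat w)h]\in Y/\operatorname{Ran}J(g,\hat w)\bigr)$.

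The differences are minor:
\begin{itemize}
\item The paper computes $D_1\mathfrak h(\bar g,0)[\lambda\bar g]$ through the mixed second variation of area, not through the conformal law for $H$ combined with \eqref{Equ_LS_h-pointwise}. Your route also works: since $\lambda|_\Sigma=0$, the induced metric and $j_0$ do not change, so $\check{\mathfrak h}(g_t,0)$ is exactly linear in $t$.
\item The paper obtains surjectivity and a complemented kernel at once, by showing that $D\mathfrak h(g,\hat w)$ restricted to a fixed $F\oplus X'$ stays an isomorphism onto $Y$. In your Step 2, the complement of $\ker D\mathfrak h(g,\hat w)$ is really a closed complement $W$ of the finite-dimensional kernel inside $F\oplus X$, on which $D\mathfrak h$ is an isomorphism. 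It is not built from a complement of $F$ in $T_g\Gamma$, as your wording suggests.
\end{itemize}
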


\begin{proof}
Let
\[
\mathcal E:=C^q(M;S^2T^*M),
\qquad
J_0:=J(\bar g,0),
\qquad
B_0:=D_1\mathfrak h(\bar g,0).
\]
We identify $T_g\Gamma$ with $\mathcal E$.
By Corollary \ref{cor:CharCritPoints},
$\mathfrak h$ is of class $C^r$ and
\[
D\mathfrak h(\bar g,0)[h,\hat z]
=
B_0[h]+J_0\hat z.
\]

We first show that metric variations detect the full
cokernel of $J_0$. Fix
$v\in C_c^\infty(\Sigma;\mbfV)$.
A tubular neighborhood construction gives
$\lambda\in C^\infty(M;\mathbb R)$, supported away
from $\Sing(\Sigma)$, such that
\[
\lambda|_\Sigma=0,
\qquad
(\bar\nabla\lambda)|_\Sigma=v.
\]
Set $h=\lambda\bar g$ and
$g_t=\bar g+th=(1+t\lambda)\bar g$.
For sufficiently small $|t|$, the metric $g_t$
coincides with $\bar g$ near $\Sing(\Sigma)$.
Hence Lemma \ref{lem:AdjustCoincideWith}\,
\ref{item:Xi} gives $\Xi^{g_t}_{0,0}=\id$.

For every $z\in C_c^\infty(\Sigma;\mbfV)$,
the defining first-variation identity for
$\mathfrak h$, together with the conformal
variation of area, yields
\begin{align*}
\langle B_0[\lambda\bar g],z\rangle_{\cH}
&=
\left.\frac{d}{dt}\right|_{t=0}
\left.\frac{d}{ds}\right|_{s=0}
A(g_t,sz)\\
&=
\frac n2
\left.\frac{d}{ds}\right|_{s=0}
\int_{\Sigma(sz)}\lambda\,d\scH^n(\bar g)\\
&=
\frac n2
\int_\Sigma\bar g(v,z)\,d\scH^n(\bar g).
\end{align*}
Here the mixed derivatives may be interchanged
because the variations under consideration are
smooth and supported away from the singularities.
The last equality uses $\lambda|_\Sigma=0$.
Since both $B_0[\lambda\bar g]$ and $v$ belong to
$Y\subset\cH$, it follows that
\[
B_0[\lambda\bar g]=\frac n2v.
\]

Let $\mathcal C_0:=Y/\operatorname{Ran}J_0$ and
let $\pi_0:Y\to\mathcal C_0$ be the quotient map.
Since $J_0$ is Fredholm, $\mathcal C_0$ is
finite-dimensional. The preceding construction gives
\be\label{eq:CondSurjDH}
\pi_0B_0:\mathcal E\longrightarrow\mathcal C_0
\quad\text{is surjective}.
\ee
Indeed, its image contains
$\pi_0(C_c^\infty(\Sigma;\mbfV))$, which is dense
in $\mathcal C_0$ because $C_c^\infty(\Sigma;\mbfV)$
is dense in $Y$ and $\pi_0$ is continuous and surjective.
Every linear subspace of $\mathcal C_0$ is closed,
so the image is all of $\mathcal C_0$.

We next verify that $D\mathfrak h(\bar g,0)$
has a bounded right inverse.
Choose a closed complement $X'$ of $\ker J_0$ in $X$,
and a finite-dimensional subspace $F\subset\mathcal E$
such that
\[
\pi_0B_0|_F:F\longrightarrow\mathcal C_0
\]
is an isomorphism.
Then
\[
F\oplus X'\longrightarrow Y,
\qquad
(h,\hat z)\longmapsto B_0[h]+J_0\hat z
\]
is a bounded linear isomorphism:
the quotient equation determines $h$ uniquely,
and the remaining equation determines $\hat z$
uniquely because
$J_0|_{X'}:X'\to\operatorname{Ran}J_0$
is an isomorphism.
Its inverse provides a bounded right inverse
for $D\mathfrak h(\bar g,0)$.

By continuity, after shrinking $\Gamma_0$ and $X_0$,
the restriction of $D\mathfrak h(g,\hat w)$ to
the same subspace $F\oplus X'$ remains an isomorphism
onto $Y$.
Thus $D\mathfrak h(g,\hat w)$ is surjective
with complemented kernel throughout this neighborhood.
The Banach submersion theorem proves assertion (i).

The map $\Pi$ is $C^r$ because it is the restriction
of the projection onto the first factor.
Fix $(g,\hat w)\in Z$ and write
$B:=D_1\mathfrak h(g,\hat w)$ and $J:=J(g,\hat w)$.
Assertion (i) gives
\[
T_{(g,\hat w)}Z
=
\{(h,\hat z)\in\mathcal E\times X:
B[h]+J\hat z=0\}.
\]
Since $D\Pi_{(g,\hat w)}(h,\hat z)=h$,
the stated descriptions of its kernel and range follow.

After a further shrinking of the neighborhoods,
$J$ is Fredholm of index $-d$, by continuity of
$J(g,\hat w)$ and local constancy of the Fredholm index.
Let $\pi_J:Y\to Y/\operatorname{Ran}J$ be the
quotient map.
Surjectivity of $D\mathfrak h(g,\hat w)$ implies
surjectivity of $\pi_JB$.
Consequently,
\[
\operatorname{Ran}D\Pi_{(g,\hat w)}
=
\ker(\pi_JB)
\]
is closed and has codimension
$\dim\operatorname{coker}J$.
Therefore $D\Pi_{(g,\hat w)}$ is Fredholm and
\[
\operatorname{ind}D\Pi_{(g,\hat w)}
=
\dim\ker J-\dim\operatorname{coker}J
=
-d.
\]
Finally, Lemma \ref{lem:IdentifJacobi operator}
gives $J_0=-L_{\Sigma,\bar g}$, proving the
last assertion.
\end{proof}

Applying the inverse function theorem to the projection \(\Pi\) in the case of trivial Jacobi kernel yields the following consequence.

\begin{corollary}\label{cor:ExistenceUnobstructedCase}
Let $M$ be a compact, boundaryless manifold endowed
with a smooth Riemannian metric $\bar g$, and let
$\Sigma$ be an $\MSI$ in $(M,\bar g)$.
Fix integers $k\geq n+3$, $r\geq1$ and $q\geq k+2r$,
and let $\tau\in I_+(\Sigma)$.
Assume that there are no non-trivial Jacobi fields in
$\hat W^{k,2}_\tau(\Sigma;\mbfV)$, and set
\[
d:=-\operatorname{ind}
\bigl(L_{\Sigma,\bar g}:X\to Y\bigr)\geq0,
\]
as computed in Proposition \ref{thm:CountRefined}.

Then, after possibly shrinking $\Gamma_0$ and $X_0$,
there exists a $C^r$ submanifold
$\mathcal Z\subset\Gamma_0$ of codimension $d$,
containing $\bar g$, such that $(M,g)$ contains
an $\MSI$ represented by a generalized graph over
$\Sigma$ with section $\hat w\in X_0$
if and only if $g\in\mathcal Z$.
For each $g\in\mathcal Z$, the representing section
$\hat w(g)\in X_0$ is unique, depends $C^r$ on $g$,
satisfies $\hat w(\bar g)=0$, and obeys
\[
\|\hat w(g)\|_X
\leq C\|g-\bar g\|_\Gamma,
\qquad g\in\mathcal Z,
\]
where $C=C(\bar g,\Sigma,q,k,\tau)$.

Moreover,
\[
T_{\bar g}\mathcal Z
=
\left\{
h\in C^q(M;S^2T^*M):
D_1\mathfrak h(\bar g,0)[h]
\in\operatorname{Ran}J(\bar g,0)
\right\}.
\]

In particular, if the effective index and effective
nullity at the links of all singular points vanish,
then $d=0$ and $\mathcal Z=\Gamma_0$; hence the
existence and uniqueness conclusion holds for
every $g\in\Gamma_0$.
\end{corollary}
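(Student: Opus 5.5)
The plan is to deduce everything from Proposition \ref{pro:SurjectionCriterion}, applied with $d=-\operatorname{ind}(L_{\Sigma,\bar g}:X\to Y)$. By Lemma \ref{lem:IdentifJacobi operator}, $J(\bar g,0)=-L_{\Sigma,\bar g}$, so $J(\bar g,0)$ is Fredholm of index $-d$. Since $J(\bar g,0)$ is injective by hypothesis, its index is $-\dim\operatorname{coker}J(\bar g,0)\le 0$, which gives $d\geq0$.

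The proposition then yields a $C^r$ Banach submanifold $Z\subset\Gamma_0\times X_0$ and a $C^r$ Fredholm projection $\Pi:Z\to\Gamma_0$ of index $-d$. Its kernel at $(\bar g,0)$ is $\{0\}\times\ker J(\bar g,0)=0$. Hence $D\Pi_{(\bar g,0)}$ is injective with closed range of codimension $d$, namely the range described there:
\[
\{h: D_1\mathfrak h(\bar g,0)[h]\in\operatorname{Ran}J(\bar g,0)\}.
\]
Injectivity, closed range and finite codimension make $\Pi$ an immersion at $(\bar g,0)$ with split image, because finite-codimensional closed subspaces are complemented. By the immersion form of the inverse function theorem, after shrinking the neighborhoods, $\Pi$ restricted to a neighborhood of $(\bar g,0)$ in $Z$ is a $C^r$ embedding onto a $C^r$ submanifold $\mathcal Z\subset\Gamma_0$ of codimension $d$ through $\bar g$. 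Its tangent space at $\bar g$ is the range above, which is exactly the claimed formula for $T_{\bar g}\mathcal Z$.

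Define $\hat w(g)$ as the second component of $(\Pi|_Z)^{-1}(g)$. It is $C^r$, satisfies $\hat w(\bar g)=0$, and is unique in $X_0$ after shrinking, since $Z$ is a graph over $\mathcal Z$ near $(\bar g,0)$. The Lipschitz bound $\|\hat w(g)\|_X\le C\|g-\bar g\|_\Gamma$ follows from the $C^1$ regularity of this inverse on a convex-ish neighborhood, with $C$ bounding its differential.

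For the equivalence with existence of an $\MSI$: if $g\in\mathcal Z$, then $\mathfrak h(g,\hat w(g))=0$, and since $\tau\in I_+\subset(1,2)$, Corollary \ref{cor:CharCritPoints} gives that the generalized graph is an $\MSI$ in $(M,g)$. Conversely, suppose a generalized graph with section $\hat w\in X_0$ is an $\MSI$ in $(M,g)$. Then $\hat w$ is a critical point of $A(g,\cdot)$, because a stationary varifold has vanishing first variation along all admissible variations. By Corollary \ref{cor:CharCritPoints} this gives $\mathfrak h(g,\hat w)=0$, so $(g,\hat w)\in Z$ and therefore $g\in\mathcal Z$.

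The main obstacle is the local uniqueness. We need every zero of $\mathfrak h$ in $\Gamma_0\times X_0$ to lie in the embedded piece of $Z$ near $(\bar g,0)$, not on some far branch. I would handle this by shrinking $X_0$ and $\Gamma_0$ as the submersion theorem allows, so that $Z$ coincides with the local chart.

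Finally, if the effective index and nullity vanish, Proposition \ref{thm:CountRefined} gives index zero, so $d=0$. Then $\mathcal Z$ is open of codimension zero containing $\bar g$, and after shrinking we may take $\mathcal Z=\Gamma_0$.
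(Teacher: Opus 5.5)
Your proposal is correct, but it takes a different route from the paper. The paper never goes through the moduli space $Z$. It splits $Y=R\oplus F$ with $R=\operatorname{Ran}J(\bar g,0)$ and solves the range equation $\pi_R\mathfrak h(g,\hat w)=0$ by the implicit function theorem, which gives a $C^r$ map $\psi$ on the whole neighbourhood $\Gamma_0$. It then cuts out $\mathcal Z$ as the zero set of the finite-dimensional obstruction map $\mathcal O(g)=(\mathrm{id}_Y-\pi_R)\mathfrak h(g,\psi(g))$, whose differential $(\mathrm{id}_Y-\pi_R)D_1\mathfrak h(\bar g,0)$ is onto $F$ by \eqref{eq:CondSurjDH}. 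You instead take $Z$ and the Fredholm projection $\Pi$ from Proposition \ref{pro:SurjectionCriterion}, and note that $\ker D\Pi_{(\bar g,0)}=0$ makes $\Pi$ a split immersion. This lets you read off the codimension and $T_{\bar g}\mathcal Z$ directly from the range description already proved there, which is economical.

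Your route costs you the localization step you flag yourself. You need an open $O$ with $Z\cap O$ equal to the embedded piece, which exists because $Z$ carries the subspace topology. You then take a product $\Gamma_0'\times X_0'\subset O$ and redefine $\mathcal Z:=\Pi\bigl(Z\cap(\Gamma_0'\times X_0')\bigr)$. This is an open piece of the immersed image. To make it closed in $\Gamma_0'$, and equal to $\Gamma_0'$ when $d=0$, you must further shrink $\Gamma_0'$ relative to $X_0'$.

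Your Lipschitz bound also needs one more sentence, because $\hat w$ is only defined on the non-convex set $\mathcal Z$. You can extend it through the immersion chart to a $C^1$ map on a ball. Alternatively, use $\|\hat w\|_X\leq C\|J(\bar g,0)\hat w\|_Y$, which holds by injectivity and closed range, together with a first-order expansion of $\mathfrak h$ at $(\bar g,0)$.

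The paper's reduction gets all of this at no extra cost. Uniqueness comes from the implicit function theorem, the estimate from $\psi$ being $C^1$ on a ball, closedness from $\mathcal Z=\mathcal O^{-1}(0)$, and $\mathcal Z=\Gamma_0$ when $F=\{0\}$. None of this needs the Banach immersion theorem.
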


\begin{proof}
Set $J_0:=J(\bar g,0)$,
$B_0:=D_1\mathfrak h(\bar g,0)$ and
$R:=\operatorname{Ran}J_0$.
By Lemma \ref{lem:IdentifJacobi operator}
and Proposition \ref{thm:CountRefined}, $J_0$ is
Fredholm of index $-d$ with trivial kernel.
Hence $R$ is closed and has codimension $d$ in $Y$,
and $J_0:X\to R$ is a bounded linear isomorphism.

Choose a complement $F$ of $R$ in $Y$, and let
$\pi_R:Y\to R$ be the corresponding projection.
The implicit function theorem applied to
$\pi_R\mathfrak h(g,\hat w)=0$ gives a $C^r$ map
$\psi:\Gamma_0\to X_0$, with $\psi(\bar g)=0$,
such that, after shrinking the neighborhoods,
\[
\pi_R\mathfrak h(g,\hat w)=0
\quad\Longleftrightarrow\quad
\hat w=\psi(g).
\]
The remaining equation is encoded by the $C^r$ map
\[
\mathcal O:\Gamma_0\to F,
\qquad
\mathcal O(g):=(\mathrm{id}_Y-\pi_R)
\mathfrak h(g,\psi(g)).
\]
Since $(\mathrm{id}_Y-\pi_R)J_0=0$, we have
\[
D\mathcal O(\bar g)[h]
=
(\mathrm{id}_Y-\pi_R)B_0[h].
\]
This differential is surjective by
\eqref{eq:CondSurjDH}.
The submersion theorem therefore shows that,
after further shrinking $\Gamma_0$,
$\mathcal Z:=\mathcal O^{-1}(0)$ is a $C^r$
submanifold of codimension $\dim F=d$, with
\[
T_{\bar g}\mathcal Z
=
\ker D\mathcal O(\bar g)
=
\{h:B_0[h]\in R\}.
\]

By construction,
\[
\mathfrak h(g,\hat w)=0
\quad\Longleftrightarrow\quad
g\in\mathcal Z
\ \text{and}\ 
\hat w=\psi(g).
\]
Corollary \ref{cor:CharCritPoints}, together with
the first-variation characterization of
$\mathfrak h$, identifies these solutions with
the generalized graphs that are $\MSI$ in $(M,g)$.
Thus $\hat w(g):=\psi(g)$ for $g\in\mathcal Z$
has the asserted existence, uniqueness and
regularity properties.

Finally, $\psi(\bar g)=0$ and the $C^1$ regularity
of $\psi$ give the stated estimate on a sufficiently
small ball about $\bar g$.
If $d=0$, then $F=\{0\}$, so $\mathcal O$ vanishes
identically and $\mathcal Z=\Gamma_0$.
\end{proof}

\begin{remark}[Persistence with a normalization]
When nontrivial Jacobi fields are present, the preceding
argument still gives existence and uniqueness if we restrict to a finite-codimension submanifold of ambient metrics.

More precisely, set
$K:=\ker J(\bar g,0)$, $c:=\dim K$ and
$d:=-\operatorname{ind}J(\bar g,0)$.
We impose the normalization
\[
\langle\hat w,\kappa\rangle_{\cH}=0
\qquad\text{for every }\kappa\in K,
\]
or equivalently $\pi\hat w=0$.
This amounts to $c$ scalar conditions on $\hat w$.

Under the same regularity assumptions as in
Corollary \ref{cor:ExistenceUnobstructedCase},
there exists, after shrinking the neighborhoods,
a $C^r$ submanifold $\mathcal Z_K\subset\Gamma_0$
through $\bar g$, of codimension $d+c$, such that
$(M,g)$ contains an $\MSI$ represented by a generalized
graph over $\Sigma$ with section $\hat w\in X_0$
satisfying $\pi\hat w=0$ if and only if
$g\in\mathcal Z_K$.
For each $g\in\mathcal Z_K$, this representing section
is unique, depends $C^r$ on $g$, satisfies
$\hat w(\bar g)=0$, and obeys
\[
\|\hat w(g)\|_X\leq C\|g-\bar g\|_\Gamma,
\]
where $C=C(\bar g,\Sigma,q,k,\tau)$.
Moreover,
\[
T_{\bar g}\mathcal Z_K
=
\left\{
h\in C^q(M;S^2T^*M):
D_1\mathfrak h(\bar g,0)[h]
\in\operatorname{Ran}J(\bar g,0)
\right\}.
\]

Indeed, the restriction
\[
J(\bar g,0)|_{X\cap K^\perp}:
X\cap K^\perp\longrightarrow
\operatorname{Ran}J(\bar g,0)
\]
is a bounded linear isomorphism.
Thus the proof of the preceding corollary applies
with $X\cap K^\perp$ in place of $X$.
The normalization removes the kernel but leaves
the range unchanged, so the obstruction space
has dimension
\[
\dim\operatorname{coker}J(\bar g,0)=d+c.
\]

Existence and uniqueness here concern only sections
satisfying the normalization. Metrics outside
$\mathcal Z_K$ may still admit $\MSI$ represented
by nearby generalized graphs with $\pi\hat w\neq0$.
\end{remark}

\section{The Lyapunov-Schmidt
reduction}\label{sec:LSreduction}
Throughout this and the next section, fix
$\tau\in I_+(\Sigma)$ and assume that the effective
index and nullity at every singular point of $\Sigma$
vanish. Assume also that every Jacobi field in
$\hat W^{k,2}_\tau(\Sigma;\mbfV)$ is integrable,
meaning that it is the normal component of an ambient
Killing vector field. Such fields generate global
one-parameter groups of isometries because $M$ is closed.

With $G:=\Iso(M,\bar g)$ and $H:=\Stab_G(\Sigma)$
as above, integrability gives
\[
\dim(G/H)=\dim K=c.
\]

  Furthermore, fix integers $r\geq2$
and $q\geq k+3r$, with $k\geq n+3$.
We apply Corollary \ref{cor:CharCritPoints} and
Lemma \ref{lem:LocSurj_new} with metric regularity $q-r$.
Since $q-r\geq k+2r$, this gives a $C^r$ map
\[
\varphi:
\Gamma_0(q-r)\times K_0
\longrightarrow X\cap K^\perp,
\]
where $\Gamma_0(q-r)$ is a sufficiently small neighborhood
of $\bar g$ in $\Gamma(q-r)$.
By the uniqueness assertion in Lemma \ref{lem:LocSurj_new},
its restriction to more regular metrics agrees with the
map constructed directly at that degree of regularity.
We choose $\Gamma_0(q-r)$ to be invariant under pullback by $G$,
and choose a sufficiently small, likewise $G$-invariant
neighborhood
\[
\Gamma_0(q)\subset
\Gamma(q)\cap\Gamma_0(q-r).
\]
Such choices are possible because $G$ consists of
$\bar g$-isometries.
In this section, $\Gamma_0$ without an indicated regularity
means $\Gamma_0(q)$, while $\varphi$ is understood to be
defined on the larger domain displayed above. Note that the pullback map
\[
\Gamma_0(q)\times G\longrightarrow\Gamma_0(q-r),
\qquad (g,\gamma)\longmapsto\gamma^*g,
\]
is of class $C^r$: in local coordinates, each
differentiation in $\gamma$ costs at most one
spatial derivative of $g$.
Since $q-r\geq k+2r$, Remark \ref{rem:AreaRegularity}
also gives $C^r$ regularity of $A$ at metric
regularity $q-r$.
Consequently,
\[
(g,\gamma)\longmapsto
A\bigl(\gamma^*g,\varphi(\gamma^*g,0)\bigr)
\]
is of class $C^r$.

We define an action of $H$ on $\mathscr{T}$ and $\mathscr{R}$, as follows: for every $\gamma \in H$, $\mbfx \in \mathscr{T}$ and $\mbfA \in \mathscr{R}$, and $p \in \Sing \Sigma$,
\begin{equation}\label{eqn:gamma_act_on_T_R}
      (\gamma. \mbfx)_{\gamma(p)} := d\gamma_p(\mbfx_p), \quad (\gamma. \mbfA)_{\gamma(p)} := (d\gamma_p) \mbfA_p (d\gamma_p)^{-1}\,.
  \end{equation}
A key identity for such an action will be proven in Lemma \ref{lem:ActionIdentity}. But first, let us present and prove the main result of this section.

  \begin{proposition}\label{Prop:Main_LS_Red}
    Consider the $H$-actions on $\scT$ and $\scR$
defined in \eqref{eqn:gamma_act_on_T_R}, and suppose
that for every $\gamma\in H$ there holds
    \begin{equation}\label{Equ_LS_gamma-action-scT+scR}
\Xi^g_{\gamma.\mbfx,\gamma.\mbfA}\circ\gamma
=
\gamma\circ\Xi^{\gamma^*g}_{\mbfx,\mbfA},
\qquad
\forall (g,\mbfx,\mbfA)\in((\bar g,0,0))_\delta.
\end{equation}
    For every $\gamma\in H$ and $g\in \Gamma_0$, we have 
    \begin{align}
      \gamma(\Sigma(\gamma^*g, \varphi(\gamma^*g, 0))) = \Sigma(g, \varphi(g, 0)) \,.  \label{Equ_LS-reduction}
    \end{align}
In particular, the $C^r$ map
\[
\tilde{\mathfrak a}:\Gamma_0\times G\longrightarrow\mathbb R,
\qquad
\tilde{\mathfrak a}(g,\gamma)
:=
A\bigl(\gamma^*g,\varphi(\gamma^*g,0)\bigr)
\]
descends to a $C^r$ map
$\mathfrak a:\Gamma_0\times(G/H)\to\mathbb R$.
    
    Moreover, 
    \begin{enumerate}[label={\normalfont(\roman*)}]
     \item\label{Item_LS_a-critical-iff-minimal}
For every $g\in\Gamma_0$, the coset $[\gamma]\in G/H$
is a critical point of $\mathfrak a(g,\cdot)$
if and only if
\[
\Sigma_g(\gamma)
:=
\gamma\bigl(
\Sigma(\gamma^*g,\varphi(\gamma^*g,0))
\bigr)
\]
is an $\MSI$ in $(M,g)$.

\item\label{Item_LS_a-generic-nondegenerate}
For a residual subset of $\Gamma_0(q)$,
the function $\mathfrak a(g,\cdot)$ has only
nondegenerate critical points.
     \item\label{Item_LS_geometric-distinctness}
For every $g\in\Gamma_0(q)$, the map
\[
G/H\ni[\gamma]\longmapsto
\Sigma_g(\gamma)
=
\gamma\bigl(
\Sigma(\gamma^*g,\varphi(\gamma^*g,0))
\bigr)
\]
is injective.
In particular, distinct critical points of
$\mathfrak a(g,\cdot)$ determine distinct $\MSI$
in $(M,g)$.
    \end{enumerate}
  \end{proposition}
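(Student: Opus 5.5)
The plan has four parts: the equivariance identity, the descent of the functional, criticality, and the genericity and injectivity claims.

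\textbf{Equivariance.} I would prove \eqref{Equ_LS-reduction} from \eqref{Equ_LS_gamma-action-scT+scR} together with the local uniqueness in Lemma \ref{lem:LocSurj_new}. Fix $\gamma\in H$. Since $\gamma$ is a $\bar g$-isometry preserving $\Sigma$, it induces a linear isometry $\gamma_\#$ of $\hat W^{k,2}_\tau(\Sigma;\mbfV)$. On $X_I$ this is the pushforward of normal sections. On $\scT\oplus\scR$ it is the action \eqref{eqn:gamma_act_on_T_R}; this is compatible with $Q$ because $d\gamma_p$ intertwines the stabilizers $\mathfrak g_j$, hence their orthogonal complements. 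Using \eqref{Equ_LS_gamma-action-scT+scR}, one checks $\gamma(\Sigma(\gamma^*g;\hat w))=\Sigma(g;\gamma_\#\hat w)$. Diffeomorphism invariance of area then gives
\[
A(g,\gamma_\#\hat w)=A(\gamma^*g,\hat w),
\]
and hence $\mathfrak h(g,\gamma_\#\hat w)=\gamma_\#\mathfrak h(\gamma^*g,\hat w)$, since $\gamma_\#$ preserves the $\cH$ pairing. Moreover $\gamma_\#$ preserves $J_0$, so it preserves $K$ and $K^\perp$. Therefore $\hat w:=\gamma_\#\varphi(\gamma^*g,0)$ lies in $K^\perp$ and satisfies $\mathfrak h(g,\hat w)\in K$. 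The uniqueness clause of Lemma \ref{lem:LocSurj_new}, applied on $G$-invariant neighborhoods, forces $\hat w=\kappa+\varphi(g,\kappa)$ with $\kappa=\pi\hat w=0$. This gives \eqref{Equ_LS-reduction}. Applying \eqref{Equ_LS-reduction} with $g$ replaced by $\gamma_1^*g$ shows that $\tilde{\mathfrak a}(g,\gamma_1\gamma)=\tilde{\mathfrak a}(g,\gamma_1)$ for $\gamma\in H$. The $C^r$ regularity of $\tilde{\mathfrak a}$, already noted above, descends to $G/H$ because $G\to G/H$ is a submersion.

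\textbf{Criticality, item (i).} By $G$-equivariance it suffices to treat $[\gamma]=[e]$ with an arbitrary $g\in\Gamma_0$. The plan is the standard natural-constraint argument. For $\kappa\in K_0$ near $0$, integrability lets me write the orbit near $\Sigma$ as a $c$-dimensional family of generalized graphs. Each $\kappa\in K$ is the normal part of a Killing field $Y$. The curve $\Sigma_g(e^{tY})$ is, via the uniqueness of Lemma \ref{lem:LocSurj_new}, a generalized graph $\Sigma(g;\kappa(t)+\varphi(g,\kappa(t)))$ with $\dot\kappa(0)=\kappa+O(\|g-\bar g\|)$. Consequently
\[
\frac{d}{dt}\Big|_{t=0}\mathfrak a(g,[e^{tY}])=\bigl\langle \mathfrak h(g,\varphi(g,0)),\,\Pi(\text{velocity})\bigr\rangle_{\cH},
\]
by Lemma \ref{Lem_LS_Full1stVar}\ref{Item_LS_Full-1stVar}. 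Here $\mathfrak h(g,\varphi(g,0))\in K$, and by \eqref{eq:FullVariationVelocityEstimate} the projected velocities form a perturbation of a basis of $K$. The resulting pairing matrix is a small perturbation of the Gram matrix, hence invertible. So $d\mathfrak a(g,\cdot)=0$ at $[e]$ exactly when $\mathfrak h(g,\varphi(g,0))=0$, and Corollary \ref{cor:CharCritPoints} (with $\tau>1$) converts this into the $\MSI$ property.

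\textbf{Genericity and injectivity, items (ii) and (iii).} For (ii), I would use a parametric transversality (Sard–Smale) argument for the section $(g,[\gamma])\mapsto d_2\mathfrak a(g,[\gamma])$ of $T^*(G/H)$. The key point is that the map $g\mapsto d_2\mathfrak a(g,\cdot)$ is a submersion at zeros. To see this, I would use conformal metric perturbations $g+t\lambda g$, with $\lambda$ supported away from $\Sing$, exactly as in the proof of Proposition \ref{pro:SurjectionCriterion}. These realize any prescribed $K$-component of $\mathfrak h$, and therefore any covector in $T^*_{[\gamma]}(G/H)$. The finite-dimensional base and $r\geq 2$ supply the smoothness needed for Sard's theorem. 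For (iii), suppose $\Sigma_g(\gamma_1)=\Sigma_g(\gamma_2)$. Then $\gamma_2^{-1}\gamma_1$ maps one small generalized graph over $\Sigma$ to another. Its singular points lie within $C\delta$ of those of $\Sigma$, and the image is Hausdorff-close to $\Sigma$. If the coset $[\gamma_2^{-1}\gamma_1]$ were nontrivial, its distance to $H$ would be bounded below by a slice argument near $H$ and by compactness away from it, so $\gamma_2^{-1}\gamma_1$ would move $\Sigma$ by a definite amount. This contradicts the smallness for $\Gamma_0$ shrunk.

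\textbf{Main obstacle.} I expect the hardest step to be item (i): producing the orbit curve as a family of generalized graphs with controlled velocity. This requires the uniqueness of $\varphi$ to hold uniformly along $G$. I would first try a slice chart at each coset using the Killing fields, and then obtain uniformity from the compactness of $G$.
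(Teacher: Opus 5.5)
Your equivariance step and the descent of $\tilde{\mathfrak a}$ follow the paper's argument. Your plan for (ii) is also essentially the paper's: it checks surjectivity of $h\mapsto D_1D_2\mathfrak a(\bar g,[\id_M])[h]$ at $(\bar g,[\id_M])$, where the term $J_0D_1\varphi(\bar g,0)[h]$ drops out by Lemma \ref{lem:J_SelfAdjoint}, and then extends by $G$-invariance and openness. The genuine gaps are in (i) and (iii).

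\textbf{Item (i).} The identity $\Sigma_g(e^{tY})=\Sigma(g;\kappa(t)+\varphi(g,\kappa(t)))$ does not follow from the uniqueness clause of Lemma \ref{lem:LocSurj_new}, and it is false in general.
\begin{itemize}
\item That clause needs $\mathfrak h(g,\hat w)\in K$ for the representative $\hat w$ of the surface \emph{over the fixed $\Sigma$}. What you actually know is $\mathfrak h((e^{tY})^*g,\varphi((e^{tY})^*g,0))\in K$, which is the constraint in the chart over $e^{tY}(\Sigma)$.
\item The rule $\mathfrak h(g,\gamma_\#\hat w)=\gamma_\#\mathfrak h(\gamma^*g,\hat w)$ exists only for $\gamma\in H$, and the condition $\mathfrak h\in K$ depends on the chart. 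So the $G$-translated family and the local family $\{\kappa+\varphi(g,\kappa)\}$ are different $c$-dimensional families through the same point.
\item A finite-dimensional model shows this. Take a perturbation $f$ of a rotation-invariant function on $\R^2$ with a nondegenerate critical circle. The local reduction at $(1,0)$ lives on $\{\partial_{x_1}f=0\}$, while the orbit construction lives on $\{\partial_rf=0\}$.
\item Even writing $\gamma(\Sigma(\gamma^*g;\hat w))$ as a generalized graph over $\Sigma$ would require a representation lemma that is not available.
\end{itemize}
So \eqref{eq:FullVariationVelocityEstimate} cannot be invoked, and the real obstacle is not uniformity along $G$.

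What you need is a direct velocity estimate along the orbit. Apply part (iii) of Lemma \ref{Lem_LS_Full1stVar} to
\[
\varphi_t=(\Xi^g_{\mbfx_0,\mbfA_0})^{-1}\circ\gamma_t\circ\Xi^{\gamma_t^*g}_{\mbfx_t,\mbfA_t}\circ\Phi_{u_t},\qquad (u_t,\mbfx_t,\mbfA_t)=P_{\Sigma,\bar g}(\varphi(\gamma_t^*g,0)),
\]
and prove
\[
\|\Pi_{\Sigma,u_0}(\dot\varphi_0)-(\xi|_\Sigma)^\perp\|_{\cH}\leq C\|g-\bar g\|_{C^q}\,\|(\xi|_\Sigma)^\perp\|_{\cH},\qquad \xi=\xi_{\dot\gamma_0}.
\]
The contribution of $\gamma_t$ with frozen parameters is controlled by part (ii) of Lemma \ref{Lem_LS_Full1stVar}. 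The contribution of the moving metric is small because
\[
\left.\frac{d}{dt}\right|_{t=0}\varphi(\gamma_t^*g,0)=D_1\varphi(g,0)[\mathcal L_\xi(g-\bar g)],
\]
using $\mathcal L_\xi\bar g=0$. This costs one derivative, which is why $\varphi$ is defined on $\Gamma_0(q-r)$. Choosing $(\xi|_\Sigma)^\perp=\mathfrak h(g,\varphi(g,0))$ then makes the derivative of $\tilde{\mathfrak a}$ strictly positive unless $\mathfrak h$ vanishes.

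\textbf{Item (iii).} Your claim that a nontrivial coset has distance to $H$ bounded below fails as soon as $\dim G/H>0$. Nontrivial cosets accumulate at $[\id_M]$, and they move $\Sigma$ only by an amount comparable to their distance from $[\id_M]$. Hausdorff closeness therefore yields only $d([\gamma_2^{-1}\gamma_1],[\id_M])\lesssim\|g-\bar g\|$, which is no contradiction.

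You need local injectivity of $[\gamma]\mapsto\Sigma_g(\gamma)$, uniform in $g$. The paper gets this from the functions
\[
b_f(g,[\gamma])=\int_{\Sigma_g(\gamma)}f\,d\scH^n_{\bar g}.
\]
Choosing $f$ with $f|_S=0$ and $\bar\nabla f|_S=\chi v$ shows that their differentials at $\bar g$ span $T^*(G/H)$. The inverse function theorem then gives injectivity near each coset, uniformly for $g$ near $\bar g$. Your compactness argument is needed only for well-separated cosets, and there it works as in the paper.
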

  
  \begin{proof}
  Throughout this proof, $C$ denotes a positive constant
that may change from line to line and may depend on
$(\bar g,\Sigma,q,k,\tau)$ and the auxiliary
choices fixed in the construction.
These constants are uniform for metrics in the chosen
sufficiently small neighborhood of $\bar g$
and for elements of $G$.
The auxiliary choices are fixed in terms of
the displayed data, and their dependence is
suppressed.  

We first prove \eqref{Equ_LS-reduction} by examining
how $A$, $\mathfrak h$ and $\varphi$ transform
under the action of $H$. For $\gamma\in H$, let $\gamma_\#$ denote the pushforward
of normal sections:
\[
(\gamma_\#u)(x)
=
d\gamma_{\gamma^{-1}(x)}u(\gamma^{-1}(x)).
\]
This is an isometry of $\cH$ and a bounded linear
automorphism of $W^{k,2}_\tau(\Sigma;\mbfV)$.
Moreover, $\gamma(\Sigma(u))=\Sigma(\gamma_\#u)$.

For the actions on $\scT$ and $\scR$ defined in
\eqref{eqn:gamma_act_on_T_R}, the construction of $Q$
gives
\[
\gamma_\#Q(u,\mbfx,\mbfA)
=
Q(\gamma_\#u,\gamma.\mbfx,\gamma.\mbfA).
\]
Indeed, the radial cutoffs and normal projections
commute with the ambient isometries preserving $\Sigma$.
Thus pushforward also preserves $X$.
We choose the neighborhoods $X_0$ and $K_0$
to be $H$-invariant, as is possible by compactness
of $H$.

By \eqref{eq:WhiteAreaFunctional} and
\eqref{Equ_LS_gamma-action-scT+scR}, for
$g\in\Gamma_0$ and
$\hat w=Q(u,\mbfx,\mbfA)\in X_0$, we have
   \begin{align*}
      \Sigma(g, \gamma_\#\hat w) = \Xi^g_{\gamma.\mbfx, \gamma.\mbfA}\circ \gamma(\Sigma(u)) = \gamma\circ \Xi^{\gamma^*g}_{\mbfx, \mbfA}(\Sigma(u)) = \gamma(\Sigma(\gamma^*g, \hat w))\,;
    \end{align*}
    \begin{align*}
      A(g, \gamma_\# \hat w) 
      = \scH^n((\Xi^g_{\gamma.\mbfx, \gamma.\mbfA})^*g, \gamma(\Sigma(u))) 
      & = \scH^n((\Xi^g_{\gamma.\mbfx, \gamma.\mbfA}\circ \gamma)^*g, \Sigma(u)) \\
      & = \scH^n((\Xi^{\gamma^*g}_{\mbfx, \mbfA})^*\circ \gamma^*g, \Sigma(u))
      = A(\gamma^*g, \hat w) \,.
    \end{align*}
    Also by \eqref{Def_H_G_new}, for $v\in W^{k,2}_\tau(\Sigma)$, 
    \begin{align*}
      \langle\mathfrak{h}(g, \gamma_\#\hat w), \gamma_\# v\rangle_\cH 
      & = \left[\frac{d}{dt}\right]_{t=0} A(g, \gamma_\#(\hat w + tv)) \\
      & = \left[\frac{d}{dt}\right]_{t=0} A(\gamma^*g, \hat w + tv) 
      = \langle\mathfrak{h}(\gamma^*g, \hat w), v\rangle_\cH 
      = \langle\gamma_\#(\mathfrak{h}(\gamma^*g, \hat w)), \gamma_\# v\rangle_\cH\,.
    \end{align*}
    Therefore, 
    \begin{align*}
      \mathfrak{h}(g, \gamma_\#\hat w) = \gamma_\#(\mathfrak{h}(\gamma^*g, \hat w)) \,.
    \end{align*}
    \begin{align*}
      J(g, \gamma_\# \hat w)[\gamma_\# \hat z] 
      = \left[\frac{d}{dt}\right]_{t=0} \mathfrak{h}(g, \gamma_\#(\hat w + t\hat z))
      = \left[\frac{d}{dt}\right]_{t=0} \gamma_\#(\mathfrak{h}(\gamma^*g, \hat w+t\hat z)) 
      = \gamma_\#(J(\gamma^*g, \hat w)[\hat z]) \,. 
    \end{align*}
    In particular, $J(\bar g, 0)$ commutes with $\gamma_\#$, and thus $K=\ker J(\bar g, 0),\, K^\perp$ are preserved under every $\gamma\in H$.

    Now for every $\kappa\in K$ small and $g\in \Gamma(q)$ close to $\bar g$, denote for simplicity $\psi:=\varphi(\gamma^*g, \kappa)$, then \[
      \mathfrak{h}(g, \gamma_\# (\kappa+\psi)) = \gamma_\# (\mathfrak{h}(\gamma^*g, \kappa+\psi)) \in K \,.
    \]
    By the uniqueness of $\kappa\mapsto \varphi(g, \kappa)$ in Lemma \ref{lem:LocSurj_new}, we conclude $\varphi(g, \gamma_\#\kappa)=\gamma_\# (\varphi(\gamma^*g, \kappa))$ and \[
      \Sigma(g, \varphi(g, 0)) = \Sigma\!\left(g, \gamma_\#(\varphi(\gamma^*g, 0))\right) = \gamma\!\left(\Sigma(\gamma^*g, \varphi(\gamma^*g, 0)) \right) \,.
    \]
    This proves \eqref{Equ_LS-reduction}, and hence
$\tilde{\mathfrak a}$ descends to a $C^r$ function
$\mathfrak a:\Gamma_0\times(G/H)\to\mathbb R$.

    Now to prove \ref{Item_LS_a-critical-iff-minimal}, first observe that for every $\gamma, \vartheta\in G$, 
    \begin{align}
\tilde{\mathfrak a}(\gamma^*g,\vartheta)
&=
A\bigl(
\vartheta^*\gamma^*g,
\varphi(\vartheta^*\gamma^*g,0)
\bigr)\notag\\
&=
\tilde{\mathfrak a}(g,\gamma\vartheta).
\label{Equ_LS_G-inv-a}
\end{align}
    Therefore, it suffices to prove \ref{Item_LS_a-critical-iff-minimal} for $\gamma=\id_M$. 
    We fix a metric on $\frak g:=T_{\id_M}G$, and let $(\frak g)^{\perp}\subset \frak g$ be the orthogonal complement of the Lie algebra of $H$. 
    Note that integrability of $\Sigma$ implies that the following is a linear isomorphism: \[
      (\frak g)^{\perp}\to K, \quad V\mapsto (\xi_V|_\Sigma)^\perp\,,
    \] 
    where $\xi_V\in \cX(M)$ is the Killing vector field induced by $V$.
    
    Let $t\mapsto \gamma_t$ be an arbitrary smooth path in $G$ such that $\gamma_0=\id_M$ and that $0\neq \dot\gamma_0\in (\frak g)^{\perp}$. And let $(u_t, \mbfx_t, \mbfA_t):=P_{\Sigma, \bar g}(\varphi(\gamma_t^*g, 0))$. Set $\xi:=\xi_{\dot\gamma_0}$.
The curve $t\mapsto\gamma_t^*g$ is differentiable
with values in $\Gamma(q-r)$.
Since $\varphi$ is $C^r$ on the corresponding
metric neighborhood, the chain rule gives
\[
\left.\frac{d}{dt}\right|_{t=0}
\varphi(\gamma_t^*g,0)
=
D_1\varphi(g,0)[\mathcal L_\xi g]
=
D_1\varphi(g,0)[\mathcal L_\xi(g-\bar g)],
\]
where the last equality uses
$\mathcal L_\xi\bar g=0$. Then \[
      \Sigma_t:= \gamma_t(\Sigma(\gamma_t^*g, \varphi(\gamma_t^*g, 0))) = \Xi^g_{\mbfx_0, \mbfA_0}\circ 
      \underbrace{\left( (\Xi^g_{\mbfx_0, \mbfA_0})^{-1}\circ \gamma_t\circ \Xi^{\gamma_t^*g}_{\mbfx_t, \mbfA_t}\circ \Phi_{u_t}\right)}_{=:\varphi_t}(\Sigma) \,.
    \]
    By Lemma \ref{Lem_LS_Full1stVar} \ref{Item_LS_Full-1stVar}, we have 
\begin{align}
D_2\tilde{\mathfrak a}(g,\id_M)[\dot\gamma_0]
&=
\left.\frac{d}{dt}\right|_{t=0}
\scH^n(g,\Sigma_t) \notag\\
&=
\left\langle
\check{\mathfrak h}(g_{\mbfx_0,\mbfA_0},u_0),
\Pi_{\Sigma,u_0}(\dot\varphi_0)
\right\rangle_{\cH} \notag\\
&=
\left\langle
\mathfrak h(g,\varphi(g,0)),
\Pi_{\Sigma,u_0}(\dot\varphi_0)
\right\rangle_{\cH}.
\label{Equ_LS_da[gamma']=<H, phi'>}
\end{align}

 The first-variation formula shows that if
$\Sigma(g,\varphi(g,0))$ is minimal, then $\id_M$
is a critical point of $\tilde{\mathfrak a}(g,\cdot)$.
Conversely, if this submanifold is not minimal,
Corollary \ref{cor:CharCritPoints} implies that
$\mathfrak h(g,\varphi(g,0))\in K$ is nonzero.
Choose $\gamma_t$ above so that
$(\xi_{\dot\gamma_0}|_\Sigma)^\perp
=\mathfrak h(g,\varphi(g,0))$.
We claim that
\begin{equation}\label{Equ_LS_|Pi(phi')-xi^perp|-small}
\left\|
\Pi_{\Sigma,u_0}(\dot\varphi_0)
-(\xi_{\dot\gamma_0}|_\Sigma)^\perp
\right\|_{\cH}
\leq
C\|g-\bar g\|_{C^q}
\|(\xi_{\dot\gamma_0}|_\Sigma)^\perp\|_{\cH}.
\end{equation}
Together with
\eqref{Equ_LS_da[gamma']=<H, phi'>}
and our choice of $\gamma_t$, this gives
\[
D_2\tilde{\mathfrak a}(g,\id_M)[\dot\gamma_0]
\geq
\bigl(1-C\|g-\bar g\|_{C^q}\bigr)
\|\mathfrak h(g,\varphi(g,0))\|_{\cH}^2
>0,
\]
after shrinking $\Gamma_0$ so that
$C\|g-\bar g\|_{C^q}<1$.
     Thus, if $\id_M$ is a critical point of
$\tilde{\mathfrak a}(g,\cdot)$, then
$\Sigma(g,\varphi(g,0))$ is minimal.

    To prove \eqref{Equ_LS_|Pi(phi')-xi^perp|-small}, by Leibniz rule, \[
      \dot\varphi_0 = \underbrace{\left[\frac{d}{dt}\right]_{t=0} \left((\Xi^g_{\mbfx_0, \mbfA_0})^{-1}\circ \gamma_t\circ \Xi^g_{\mbfx_0, \mbfA_0}\circ \Phi_{u_0}\right)}_{\xi_1}
      + \underbrace{\left[\frac{d}{dt}\right]_{t=0} \left((\Xi^g_{\mbfx_0, \mbfA_0})^{-1}\circ \Xi^{\gamma_t^*g}_{\mbfx_t, \mbfA_t}\circ \Phi_{u_t}\right)}_{\xi_2} \,.
    \]

Since $\varphi(\bar g,0)=0$ and
$\Xi^{\bar g}_{0,0}=\id_M$, we have
\[
\|\Xi^g_{\mbfx_0,\mbfA_0}-\id_M\|_{C^1(M)}
\leq
C\bigl(\|g-\bar g\|_{C^q}
       +|\mbfx_0|+|\mbfA_0|\bigr)
\leq C\|g-\bar g\|_{C^q}.
\]
Writing $\xi:=\xi_{\dot\gamma_0}$ as above, it follows that
\[
\|\xi_1-\xi\circ\Phi_{u_0}\|_{L^\infty(\Sigma)}
\leq C\|g-\bar g\|_{C^q}\|\xi\|_{C^1(M)}.
\]
Using parts \ref{Item_LS_|Pi(xi)|<2|xi|} and
\ref{Item_LS_|Pi(xi)-xi^perp|<|g-bar g||xi|}
of Lemma \ref{Lem_LS_Full1stVar}, together with
$\|u_0\|_{W^{k,2}_\tau}\leq C\|g-\bar g\|_{C^q}$,
we obtain
\[
\begin{aligned}
\|\Pi_{\Sigma,u_0}(\xi_1)-(\xi|_\Sigma)^\perp\|_{\cH}
&\leq C\|g-\bar g\|_{C^q}\|\xi\|_{C^1(M)}\\
&\leq C\|g-\bar g\|_{C^q}
       \|(\xi|_\Sigma)^\perp\|_{\cH}.
\end{aligned}
\]
The last inequality follows from the finite-dimensional
isomorphism
$(\frak g)^\perp\to K$,
$V\mapsto(\xi_V|_\Sigma)^\perp$.

   After shrinking $\Gamma_0(q-r)$, the operators
$D_1\varphi(g,0)$ are uniformly bounded from
$C^{q-r}(M;S^2T^*M)$ to $X$.
The preceding chain-rule identity and the boundedness
of $P_{\Sigma,\bar g}$ therefore give
\[
\begin{aligned}
\|\dot u_0\|_{W^{k,2}_\tau}
+|\dot{\mbfx}_0|+|\dot{\mbfA}_0|
&\leq C\|\mathcal L_\xi(g-\bar g)\|_{C^{q-r}}\\
&\leq C\|g-\bar g\|_{C^{q-r+1}}
          \|\xi\|_{C^{q-r+1}}\\
&\leq C\|g-\bar g\|_{C^q}
          \|(\xi|_\Sigma)^\perp\|_{\cH}.
\end{aligned}
\]
Here $q-r+1\leq q$, and the last inequality follows
from the finite-dimensional isomorphism
$(\frak g)^{\perp}\to K$, $V\mapsto(\xi_V|_\Sigma)^\perp$.
  
    The $C^1$ dependence of $\Xi$ on the metric and
the finite-dimensional parameters, together with
Lemma \ref{Lem_LS_Full1stVar}
\ref{Item_LS_|Pi(xi)|<2|xi|}, yields
\[
\begin{aligned}
\|\Pi_{\Sigma,u_0}(\xi_2)\|_{\cH}
&\leq C\bigl(
\|\mathcal L_\xi g\|_{C^{q-r}}
+\|\dot u_0\|_{W^{k,2}_\tau}
+|\dot{\mbfx}_0|+|\dot{\mbfA}_0|
\bigr)\\
&\leq C\|g-\bar g\|_{C^q}
       \|(\xi|_\Sigma)^\perp\|_{\cH}.
\end{aligned}
\]
    Combining these proves \eqref{Equ_LS_|Pi(phi')-xi^perp|-small}, and finishes the proof of \ref{Item_LS_a-critical-iff-minimal}.

    To prove \ref{Item_LS_a-generic-nondegenerate}, following the idea of \cite{Whi91}, it suffices to prove that there is a sufficiently small neighborhood $\Gamma_0(q)$ of $\bar g\in \Gamma(q)$ such that let \[
      \cM:= \{(g, [\gamma])\in \Gamma_0(q)\times (G/H): D_2\frak a(g, [\gamma])=0\} \subset \Gamma_0(q)\times (G/H)\,,
    \]
    then
    \begin{enumerate} [label={\normalfont(\alph*)}]
      \item\label{Item_LS_D_1D_2a-surj} for every $(g, [\gamma])\in \cM$, the following is a surjective map \[
        T_g\Gamma_0(q) \to T^*_{[\gamma]}(G/H), \quad h\mapsto D_1(D_2\frak{a}(g, [\gamma]))[h] \,.
      \] 
      In particular, $\cM$ is a $C^{r-1}$-Banach submanifold of $\Gamma_0(q)\times (G/H)$;
      \item\label{Item_LS_Pi:cM->Gamma-Fredholm} the projection map $\Pi: \cM\to \Gamma_0(q)$, $(g, [\gamma])\mapsto g$ is Fredholm with index $0$.
    \end{enumerate}

    Since $\mathfrak a$ is $C^r$, its differential
$D_2\mathfrak a$, regarded as a section of the
pullback of $T^*(G/H)$ to $\Gamma_0(q)\times(G/H)$,
is $C^{r-1}$.
Assertion \ref{Item_LS_D_1D_2a-surj} therefore
makes its zero set $\cM$ a $C^{r-1}$ Banach
submanifold, and the projection $\Pi$ is
$C^{r-1}$.
Since $r\geq2$ and $\Pi$ has Fredholm index zero,
the Sard--Smale theorem applies and gives a
residual set of regular values.
At every such value $g$, all critical points of
$\mathfrak a(g,\cdot)$ are nondegenerate.

 Claim \ref{Item_LS_Pi:cM->Gamma-Fredholm} is an immediate consequence of the following standard linear algebra fact (whose proof is left to readers):
    If $E$ is a Banach space, $V$ is a closed subspace of $E\times \RR^m$ of codimension $m$, then the projection map $V\to E$, $(x, y)\mapsto x$ is Fredholm with index $0$. 

    To prove \ref{Item_LS_D_1D_2a-surj}, note that since a linear map to a finite-dimensional vector space being surjective is an open condition, and by \eqref{Equ_LS_G-inv-a}, \[
      \frak a(\gamma^*g, [\vartheta]) = \frak a(g, [\gamma\vartheta]), \qquad \forall\, \gamma, \vartheta\in G \,.
    \] 
    Thus it suffices to probe it for $g=\bar g, \gamma=\id_M$. 

    Set $J_0:=J(\bar g,0)$ and
$B_0:=D_1\mathfrak h(\bar g,0)$.
For $V\in(\frak g)^\perp$, write
$\kappa_V:=(\xi_V|_\Sigma)^\perp\in K$.
Differentiating
\eqref{Equ_LS_da[gamma']=<H, phi'>}
in the metric variable at $\bar g$ gives
\[
\bigl(D_1D_2\mathfrak a(\bar g,[\id_M])[h]\bigr)[V]
=
\left\langle
B_0[h]+J_0D_1\varphi(\bar g,0)[h],
\kappa_V
\right\rangle_{\cH}.
\]
Here we used $\mathfrak h(\bar g,0)=0$ and the
fact that the projected velocity at $\bar g$
is $\kappa_V$.
By Lemma \ref{lem:J_SelfAdjoint} and
$J_0\kappa_V=0$, this simplifies to
\[
\bigl(D_1D_2\mathfrak a(\bar g,[\id_M])[h]\bigr)[V]
=
\langle B_0[h],\kappa_V\rangle_{\cH}.
\]

By Lemma \ref{lem:LocSurj_new},
$\operatorname{Ran}J_0=Y\cap K^\perp$.
Thus the surjectivity onto the cokernel established
in \eqref{eq:CondSurjDH} implies that
$\pi B_0:C^q(M;S^2T^*M)\to K$ is surjective.
Since $V\mapsto\kappa_V$ identifies
$T_{[\id_M]}(G/H)$ with $K$, the preceding pairing
realizes every covector in $T^*_{[\id_M]}(G/H)$.
This proves \ref{Item_LS_D_1D_2a-surj}.

    To prove \ref{Item_LS_geometric-distinctness},
we may further shrink $\Gamma_0(q)$ as needed by the following, additional local uniqueness argument. Write $B=G/H$ and, for $f\in C^\infty(M)$, set
\[
b_f(g,[\gamma])
:=
\int_{\Sigma_g(\gamma)}f\,d\mathscr H^n_{\bar g}.
\]
These functions are well-defined by
\eqref{Equ_LS-reduction} and jointly $C^1$ by the
weighted area-density argument of
Remark \ref{rem:AreaRegularity} and the regularity
of $\varphi$.

Fix $b\in B$ and put $S=b(\Sigma)$.
The differentials $d_b b_f(\bar g,\cdot)$ span $T_b^*B$:
indeed, a nonzero tangent vector is represented by
a Killing field $\xi$ with nonzero normal component
$v$ on $S$.
Choosing $f$ supported near a relatively compact
regular patch of $S$, with $f|_S=0$ and
$\bar\nabla f|_S=\chi v$ for a suitable nonnegative
cutoff $\chi$, gives
\[
d_b b_f(\bar g,\cdot)[\xi]
=
\int_S\chi|v|^2\,d\mathscr H^n_{\bar g}>0.
\]
Thus, for $c=\dim B$, suitable $f_1,\ldots,f_c$
make the differential of
\[
(g,b')\longmapsto
\bigl(g,b_{f_1}(g,b'),\ldots,b_{f_c}(g,b')\bigr)
\]
invertible at $(\bar g,b)$.
The inverse function theorem gives local injectivity
in $b'$, uniformly for $g$ near $\bar g$.

If global injectivity failed arbitrarily close to
$\bar g$, compactness would give sequences
$g_j\to\bar g$ and distinct $b_j,b'_j\in B$ with
$\Sigma_{g_j}(b_j)=\Sigma_{g_j}(b'_j)$ and
$b_j\to b$, $b'_j\to b'$.
Continuity of all the weighted area integrals implies
that $b(\Sigma)$ and $b'(\Sigma)$ have the same area
measure, hence $b=b'$.
The local injectivity just proved gives a contradiction.
(The case $\dim B=0$ follows directly from this
compactness argument.)
  \end{proof}

  \begin{lemma}\label{lem:ActionIdentity}
    For the actions defined in
\eqref{eqn:gamma_act_on_T_R}, for every $\gamma \in H$, $(g,\mbfx,\mbfA)\in ((\bar g, 0, 0))_\delta$, $\mbfx \in \mathscr{T}$ and $\mbfA \in \mathscr{R}$,
    \begin{align*}
       \Xi(g, \gamma.\mbfx, \gamma.\mbfA)\circ \gamma = \gamma\circ \Xi(\gamma^*g, \mbfx, \mbfA)\,.
    \end{align*}
  \end{lemma}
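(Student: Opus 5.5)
Since $\Xi(g,\mbfx,\mbfA)=\Psi_{\mbfx,\mbfA}\circ\Phi^\Sigma(\Psi_{\mbfx,\mbfA}^*g)$, the identity will follow from two separate equivariance statements:
\[
\text{(a)}\ \ \Psi_{\gamma.\mbfx,\gamma.\mbfA}\circ\gamma=\gamma\circ\Psi_{\mbfx,\mbfA},
\qquad
\text{(b)}\ \ \Phi^\Sigma(\tilde g)\circ\gamma=\gamma\circ\Phi^\Sigma(\gamma^*\tilde g)
\]
for every metric $\tilde g$ near $\bar g$ and every $\gamma\in H$. Granting these, set $\tilde g:=\Psi_{\gamma.\mbfx,\gamma.\mbfA}^*g$. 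By (a) we have $\gamma^*\tilde g=(\Psi_{\gamma.\mbfx,\gamma.\mbfA}\circ\gamma)^*g=(\gamma\circ\Psi_{\mbfx,\mbfA})^*g=\Psi_{\mbfx,\mbfA}^*(\gamma^*g)$. We then compute
\[
\Xi(g,\gamma.\mbfx,\gamma.\mbfA)\circ\gamma=\Psi_{\gamma.\mbfx,\gamma.\mbfA}\circ\gamma\circ\Phi^\Sigma(\gamma^*\tilde g)=\gamma\circ\Psi_{\mbfx,\mbfA}\circ\Phi^\Sigma\bigl(\Psi_{\mbfx,\mbfA}^*(\gamma^*g)\bigr)=\gamma\circ\Xi(\gamma^*g,\mbfx,\mbfA),
\]
which is the claim. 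It therefore remains to verify (a) and (b), and both are local statements near the singular points.

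\textbf{Setup.} Since $\gamma\in H$ is a $\bar g$-isometry preserving $\Sigma$, it permutes $\Sing(\Sigma)$ (regularity is intrinsic) and maps $B_{2r_0}(p)$ onto $B_{2r_0}(\gamma(p))$. Away from these balls all maps involved are the identity, so both sides agree there. Near $p$, naturality of the exponential map under isometries gives $\gamma\circ\exp^{\bar g}_p=\exp^{\bar g}_{\gamma(p)}\circ d\gamma_p$. Hence, in normal coordinates, $\gamma$ acts on the ball as the linear isometry $L:=d\gamma_p:T_pM\to T_{\gamma(p)}M$. One caveat: the paper fixes one orthonormal frame at each singular point, so $L$ is not the identity in coordinates. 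For this reason I will phrase everything intrinsically on $T_pM$ rather than in matrices.

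\textbf{Proof of (a).} Because $L$ is an isometry we have $|Lv|=|v|$. Then
\[
\Psi_{\gamma(p),L\mbfx_p,L\mbfA_pL^{-1}}(Lv)=\xi^{(r_0)}(|v|)\bigl(L\mbfx_p+Le^{\mbfA_p}v\bigr)+\bigl(1-\xi^{(r_0)}(|v|)\bigr)Lv=L\,\Psi_{p,\mbfx_p,\mbfA_p}(v),
\]
using $e^{L\mbfA L^{-1}}=Le^{\mbfA}L^{-1}$. We also need the action to preserve $\scR$. Since $L$ maps $\mbfC_p$ to $\mbfC_{\gamma(p)}$, conjugation by $L$ maps $\mathfrak g_p$ to $\mathfrak g_{\gamma(p)}$, and it preserves the trace inner product. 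Hence it maps $\mathfrak g_p^\perp$ to $\mathfrak g_{\gamma(p)}^\perp$, so the action \eqref{eqn:gamma_act_on_T_R} is well defined and (a) follows.

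\textbf{Proof of (b) — the main obstacle.} Here the key fact is that $T(V,\bar g,g)$ from Lemma \ref{lem:LocalAdj} is natural under linear isometries. If $L:(V,\bar g)\to(V',\bar g')$ is an isometry, then $L^{-1}T(V',\bar g',g')L$ is $\bar g$-self-adjoint, positive definite, and pulls $L^*g'$ back to $\bar g$. By the uniqueness part of Lemma \ref{lem:LocalAdj} it therefore equals $T(V,\bar g,L^*g')$. Applying this with $g'=\tilde g_{\gamma(p)}$ gives $T_{(\gamma(p))}L=L\,T_{(p)}$, where $T_{(p)}$ is computed for $\gamma^*\tilde g$; here we use $(\gamma^*\tilde g)_p=L^*\tilde g_{\gamma(p)}$. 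Substituting into the explicit formula for $\Phi$, and again using $|Lx|=|x|$, yields (b) on each ball.

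The delicate points are exactly this uniqueness argument, together with checking that the cutoffs depend only on the $\bar g$-distance, which is preserved by $\gamma$. One must also make sure that $\Phi^\Sigma$ uses the unordered set $\Sing(\Sigma)$, which is $\gamma$-invariant, so that no relabeling issue arises.
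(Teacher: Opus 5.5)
Your proof is correct and takes essentially the same route as the paper. You obtain equivariance of $\Psi_{\mbfx,\mbfA}$ from naturality of $\exp^{\bar g}$ and the conjugation action, derive $T_{\gamma(p)}=d\gamma_p\,T_p\,(d\gamma_p)^{-1}$ from the uniqueness part of Lemma \ref{lem:LocalAdj} to get equivariance of $\Phi^\Sigma$, and combine the two via $\gamma^*\Psi_{\gamma.\mbfx,\gamma.\mbfA}^*g=\Psi_{\mbfx,\mbfA}^*(\gamma^*g)$. Isolating (b) for a general metric $\tilde g$, and checking that conjugation by $d\gamma_p$ maps $\mathfrak g_p^\perp$ to $\mathfrak g_{\gamma(p)}^\perp$, are small clarifications beyond what the paper writes out.
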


\begin{proof}Fix $\gamma\in H$ and $p\in\Sing(\Sigma)$.
For $v\in T_pM$ sufficiently small, since $\gamma$
is a $\bar g$-isometry,
\[
\gamma\circ\exp_p^{\bar g}
=
\exp_{\gamma(p)}^{\bar g}\circ d\gamma_p,
\qquad
|d\gamma_p(v)|=|v|.
\]
Moreover, the actions defined in
\eqref{eqn:gamma_act_on_T_R} give
\[
(\gamma.\mbfx)_{\gamma(p)}
+e^{(\gamma.\mbfA)_{\gamma(p)}}d\gamma_p(v)
=
d\gamma_p(\mbfx_p+e^{\mbfA_p}v).
\]
Substituting these identities into the definition
\eqref{eqn:def_Psi_x_A}, and using the radial character
of the cutoff, yields
\begin{equation}\label{eqn:Psi_gamma}
\Psi_{\gamma.\mbfx,\gamma.\mbfA}\circ\gamma
=
\gamma\circ\Psi_{\mbfx,\mbfA}.
\end{equation}
This identity also holds outside the cutoff balls,
where both $\Psi$ maps are the identity and $\gamma$
preserves their union. Consequently,
\begin{equation}\label{eqn:Psi_gamma_g}
\Psi_{\gamma.\mbfx,\gamma.\mbfA}^*g
=
(\gamma^{-1})^*
\bigl(\Psi_{\mbfx,\mbfA}^*(\gamma^*g)\bigr).
\end{equation}

Set $p'=\gamma(p)$ and write
\[
\begin{aligned}
T_p
&:=T\bigl(T_pM,\bar g_p,
          (\Psi_{\mbfx,\mbfA}^*(\gamma^*g))_p\bigr),\\
T_{p'}
&:=T\bigl(T_{p'}M,\bar g_{p'},
          (\Psi_{\gamma.\mbfx,\gamma.\mbfA}^*g)_{p'}\bigr),
\end{aligned}
\]
as in Lemma \ref{lem:LocalAdj}.
By \eqref{eqn:Psi_gamma_g}, the map
$d\gamma_pT_p(d\gamma_p)^{-1}$ pulls back
$(\Psi_{\gamma.\mbfx,\gamma.\mbfA}^*g)_{p'}$
to $\bar g_{p'}$.
It is also positive definite and self-adjoint
with respect to $\bar g_{p'}$, since $d\gamma_p$
is an isometry. Uniqueness in
Lemma \ref{lem:LocalAdj} therefore gives
\begin{equation}\label{eqn:Tq_gamma_Tp}
T_{p'}=d\gamma_pT_p(d\gamma_p)^{-1}.
\end{equation}

The construction of $\Phi^\Sigma$ uses these linear
maps, the $\bar g$-exponential maps and the same radial
cutoff. Hence \eqref{eqn:Tq_gamma_Tp} implies
\[
\Phi^\Sigma\bigl(
\Psi_{\gamma.\mbfx,\gamma.\mbfA}^*g
\bigr)\circ\gamma
=
\gamma\circ
\Phi^\Sigma\bigl(
\Psi_{\mbfx,\mbfA}^*(\gamma^*g)
\bigr).
\]
As before, the identity holds globally because
$\Phi^\Sigma$ is the identity outside the cutoff balls.
Combining this with \eqref{eqn:Psi_gamma} and the
definition of $\Xi$, we obtain
\[
\begin{aligned}
\Xi(g,\gamma.\mbfx,\gamma.\mbfA)\circ\gamma
&=
\Psi_{\gamma.\mbfx,\gamma.\mbfA}
\circ\Phi^\Sigma\bigl(
\Psi_{\gamma.\mbfx,\gamma.\mbfA}^*g
\bigr)\circ\gamma\\
&=
\gamma\circ\Psi_{\mbfx,\mbfA}
\circ\Phi^\Sigma\bigl(
\Psi_{\mbfx,\mbfA}^*(\gamma^*g)
\bigr)\\
&=
\gamma\circ\Xi(\gamma^*g,\mbfx,\mbfA),
\end{aligned}
\]
as claimed.
\end{proof}

\section{Proof of the main theorems}\label{sec:MainProofs}

We first state a precise version of
Theorem \ref{thm:main} in the Introduction.

\begin{theorem}\label{thm:mainPrecise}
Let $M$ be a smooth, compact, boundaryless manifold
endowed with a smooth Riemannian metric $\bar g$,
and let $\Sigma$ be an $n$-dimensional $\MSI$
in $(M,\bar g)$.
Fix an integer $k\geq n+3$ and a weight
$\tau\in I_+(\Sigma)$.
Assume that:
\begin{enumerate}[label=(\roman*)]
\item
the effective index and effective nullity
at the links of all singular points vanish;
\item
every Jacobi field in
$\hat W^{k,2}_\tau(\Sigma;\mbfV)$ is the normal
component along $\Sigma$ of a Killing vector
field on $(M,\bar g)$.
\end{enumerate}
Set $G:=\Iso(M,\bar g)$ and $H:=\Stab_G(\Sigma)$.
Then the following conclusions hold.

\begin{enumerate}[label=(\alph*)]
\item
For every integer $q\geq k+6$, there exists
a $C^q$-open neighborhood
$\Gamma_0(q)\subset\Gamma(q)$ of $\bar g$
such that every $g\in\Gamma_0(q)$ admits
at least $LSN(G/H)$ geometrically distinct
$\MSI$ modeled on $\Sigma$.
For a residual subset of $\Gamma_0(q)$,
with respect to the $C^q$ topology, there
are at least $MN(G/H)$ such submanifolds.

\item
The set
\[
\Gamma_0(\infty):=\Gamma_0(q)\cap\Gamma(\infty)
\]
is a neighborhood of $\bar g$ in the space
of smooth metrics endowed with the $C^\infty$
topology.
Every $g\in\Gamma_0(\infty)$ admits at least
$LSN(G/H)$ geometrically distinct $\MSI$
modeled on $\Sigma$.
For a residual subset of $\Gamma_0(\infty)$,
with respect to the $C^\infty$ topology,
there are at least $MN(G/H)$ such submanifolds.
\end{enumerate}
More precisely, there exists an open neighborhood
$X_0$ of $0$ in
$X=\hat W^{k,2}_\tau(\Sigma;\mbfV)$ such that
each of the submanifolds counted above admits
a representation
\[
\Sigma'
=\gamma\bigl(\Sigma(\gamma^*g;\hat w)\bigr),
\qquad
\gamma\in G,\quad \hat w\in X_0,
\]
with
\[
\|\hat w\|_X
\leq C\|g-\bar g\|_{C^q},
\qquad C=C(\bar g,\Sigma,q,k,\tau).
\]
Here, writing
$P_{\Sigma,\bar g}(\hat w)=(u,\mbfx,\mbfA)$,
we set
\[
\Sigma(\gamma^*g;\hat w)
:=
\Sigma(\gamma^*g;u,\mbfx,\mbfA)
=
\Xi^{\gamma^*g}_{\mbfx,\mbfA}(\Sigma(u)),
\]
in accordance with Definition \ref{def:GenNormGraph}.
\end{theorem}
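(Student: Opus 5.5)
The plan is to assemble the pieces of Sections \ref{sec:AnMeanCurv} and \ref{sec:LSreduction} and then apply finite-dimensional critical point theory on the compact manifold $G/H$.

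\textbf{Step 1: fix the regularity parameters.} Choose $r=2$. Then $q\geq k+6=k+3r$, which is exactly the regime required in Section \ref{sec:LSreduction}. By hypothesis (i) and Proposition \ref{thm:CountRefined}, the operator $L_{\Sigma,\bar g}:X\to Y$ is Fredholm of index zero. Lemma \ref{lem:IdentifJacobi operator} then says the same holds for $J(\bar g,0)$. Hence Lemma \ref{lem:LocSurj_new} supplies the $C^r$ map $\varphi$, defined on the $G$-invariant neighborhood $\Gamma_0(q-r)\times K_0$. Hypothesis (ii) gives $\dim G/H=\dim K$.

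\textbf{Step 2: verify the equivariance and apply the reduction.} The equivariance hypothesis \eqref{Equ_LS_gamma-action-scT+scR} of Proposition \ref{Prop:Main_LS_Red} is exactly Lemma \ref{lem:ActionIdentity}. So $\mathfrak a(g,\cdot)$ is a well-defined $C^2$ function on the compact boundaryless manifold $G/H$. By item \ref{Item_LS_a-critical-iff-minimal}, each critical point $[\gamma]$ yields an $\MSI$ $\Sigma_g(\gamma)=\gamma(\Sigma(\gamma^*g;\varphi(\gamma^*g,0)))$. These $\MSI$ are modeled on $\Sigma$ by Corollary \ref{cor:CharCritPoints}, using $\tau>1$ and the fact that $\gamma$ is a $\bar g$-isometry. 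By item \ref{Item_LS_geometric-distinctness}, distinct critical points give geometrically distinct submanifolds.

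\textbf{Step 3: count the critical points.} Lusternik--Schnirelmann theory for $C^2$ (indeed $C^1$) functions on the compact manifold $G/H$ gives at least $LSN(G/H)$ critical points. Here the definition via smooth functions may have to be reconciled with $C^2$ ones; I would either note that LS category bounds apply to $C^1$ functions, or approximate $\mathfrak a(g,\cdot)$ in $C^2$ and pass to limits of critical points. For $g$ in the residual set given by item \ref{Item_LS_a-generic-nondegenerate}, the function is Morse. A $C^2$ Morse function can be smoothed without changing its critical set (stability of nondegenerate critical points), so it has at least $MN(G/H)$ critical points.

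\textbf{Step 4: the size bound.} Write $\hat w=\varphi(\gamma^*g,0)$. Then \eqref{eq:Taylor1SelectFunction}, which comes from the $C^1$ bound on $\varphi$ with $\varphi(\bar g,0)=0$, gives $\|\hat w\|_X\leq C\|\gamma^*g-\bar g\|_{C^{q-r}}$. The right-hand side equals $C\|g-\bar g\|_{C^{q-r}}$, since $\gamma$ is a $\bar g$-isometry and norms are $\bar g$-measured. This is at most $C\|g-\bar g\|_{C^q}$.

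\textbf{Step 5: part (b).} The set $\Gamma_0(q)\cap\Gamma(\infty)$ is $C^\infty$-open, because the $C^\infty$ topology is finer, so the multiplicity bound holds on it. The main obstacle is genericity, since residual sets need not restrict to residual sets. I would follow White's approach. Work on a closed $C^q$-ball $\overline{B}\subset\Gamma_0(q)$. Let $U$ be the set of $g$ for which $\mathfrak a(g,\cdot)$ is Morse. $U$ is open, because nondegeneracy of all critical points on the compact $G/H$ is stable under $C^2$-small perturbations of $\mathfrak a$, and $\mathfrak a$ is $C^2$ in $g$. $U$ is also $C^q$-dense. Since $\Gamma(\infty)$ is $C^q$-dense and $U$ is $C^q$-open, $U\cap\Gamma(\infty)$ is open in the $C^\infty$ topology. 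It is also $C^\infty$-dense in $\Gamma_0(\infty)$. To see this, run the transversality argument with $q$ replaced by arbitrarily large $q'$ on $\Gamma_0(q')$, using the compatibility of $\varphi$ across regularities noted in Section \ref{sec:LSreduction}. This step, establishing density in every $C^{q'}$ neighborhood uniformly, is where I expect the real care to be needed. Once it is in place, $U\cap\Gamma(\infty)$ is open and dense, and the residual claim follows.
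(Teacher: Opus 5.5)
Steps 1--4 reproduce the paper's proof of part (a). In particular, your explicit appeal to Lemma \ref{lem:ActionIdentity} for the equivariance hypothesis, and your use of isometry-invariance of the $\bar g$-measured norms in Step 4, are exactly what is needed. The genuine gap is in Step 5, at the point you yourself flag, and the route you sketch would not close it. Re-running the construction at regularity $q'$ gives $\varphi$, $\mathfrak a$ and the transversality of Proposition \ref{Prop:Main_LS_Red}\,\ref{Item_LS_a-generic-nondegenerate} only on a new neighborhood $\Gamma_0(q')$. That neighborhood is open in the $C^{q'}$ topology and may shrink as $q'\to\infty$, so Sard--Smale gives density of Morse metrics only inside $\Gamma_0(q')$. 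A smooth $g_0\in\Gamma_0(\infty)=\Gamma_0(q)\cap\Gamma(\infty)$, however, is only $C^q$-close to $\bar g$: its $C^{q'}$-distance from $\bar g$ may be large, while its basic $C^\infty$-neighborhoods are $C^{q'}$-balls around $g_0$. Density in shrinking neighborhoods of $\bar g$ therefore says nothing near such $g_0$. Compatibility of $\varphi$ across regularities does not by itself supply transversality there either.

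The missing idea is that no re-running, and hence no shrinking, is needed. Restrict the function $\mathfrak a$ already built on $\Gamma_0(q)\times(G/H)$ to $(\Gamma_0(q)\cap\Gamma(s))\times(G/H)$, with the $C^s$ topology on the first factor. It remains $C^2$, because $\Gamma(s)\hookrightarrow\Gamma(q)$ is continuous and affine. The only input of the Sard--Smale argument is that $h\mapsto D_1D_2\mathfrak a(g,[\gamma])[h]$ maps onto the finite-dimensional space $T^*_{[\gamma]}(G/H)$ at points of $\cM$. This persists when $h$ ranges over $C^s$ (even smooth) tensors: these are dense among $C^q$ tensors, so their image is a dense, hence full, linear subspace of a finite-dimensional space. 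You then get $C^s$-density of the Morse set in all of $\Gamma_0(q)\cap\Gamma(s)$ for every $s\geq q$. Combined with its $C^q$-openness and the $C^s$-density of smooth metrics, the Morse set meets $\Gamma_0(\infty)$ in a $C^\infty$-open dense subset, which is how the paper concludes.

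A lesser point concerns Step 3. Neither of your two reconciliations delivers $LSN(G/H)$ critical points for the $C^2$ function $\mathfrak a(g,\cdot)$. The $C^1$ Lusternik--Schnirelmann theorem gives only $\operatorname{cat}(G/H)+1$. Passing to limits of critical points of smooth approximants gives no lower count, since distinct critical points may coalesce. The paper passes over this point too; it is harmless in the Veronese application, where $LSN=\operatorname{cat}+1$ by Corollary \ref{cor:LSnumber}.
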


\begin{proof}
Fix $q\geq k+6$ and take $r=2$.
The regularity assumptions of
Proposition \ref{Prop:Main_LS_Red} are satisfied.
In particular, the reduced function
\[
\mathfrak a:\Gamma_0(q)\times(G/H)\to\mathbb R
\]
is of class $C^2$.

By Proposition \ref{Prop:Main_LS_Red}\,
\ref{Item_LS_a-critical-iff-minimal},
the critical points of
$\mathfrak a_g:=\mathfrak a(g,\cdot)$
determine $\MSI$ in $(M,g)$.
By \ref{Item_LS_geometric-distinctness}
of the same proposition, distinct critical
points determine geometrically distinct
submanifolds. The stated representation follows from the
construction, with
$\hat w=\varphi(\gamma^*g,0)$; its estimate follows
from $\varphi(\bar g,0)=0$, the $C^1$ regularity
of $\varphi$, and the uniform boundedness of
pullback by $\gamma\in G$ in the $C^q$ norm.
The critical-point bounds defining $LSN(G/H)$
and $MN(G/H)$ therefore give (a), using
\ref{Item_LS_a-generic-nondegenerate}
for the generic assertion.

For (b), the existence bound follows directly
from (a).
To justify genericity in the smooth topology,
consider the set
\[
\Gamma_{\text{Morse}}(q):=
\left\{
g\in\Gamma_0(q):
\mathfrak a_g\text{ is a Morse function on }G/H
\right\}.
\]
 By Proposition \ref{Prop:Main_LS_Red}, the set
$\Gamma_{\mathrm{Morse}}(q)$ is dense in $\Gamma_0(q)$.
It is also open, since the Morse property is
stable under $C^2$ perturbations on the compact manifold $G/H$.

For every integer $s\geq q$, its restriction
$\Gamma_{\text{Morse}}(q)\cap\Gamma(s)$ is dense in
$\Gamma_0(q)\cap\Gamma(s)$.
To see this, fix $s\geq q$ and consider the
restriction
\[
\mathfrak a\big|_{(\Gamma_0(q)\cap\Gamma(s))\times(G/H)},
\]
where $\Gamma_0(q)\cap\Gamma(s)$ is endowed
with the $C^s$ topology.
The transversality argument in
Proposition \ref{Prop:Main_LS_Red} relies on
the surjectivity of the derivative in the
metric variable onto $T^*_{[\gamma]}(G/H)$.
This surjectivity still holds when metric
variations are restricted to $C^s$ tensors:
smooth tensors are dense among $C^q$ tensors,
so their images are dense in the target;
being a linear subspace of a finite-dimensional
space, this image is also closed and hence
equals the whole target.
The same transversality argument therefore
gives density in the $C^s$ topology.

The approximation argument in the proof of
\cite[Lemma 4.30]{CLW24}, applied within
$\Gamma_0(q)$, shows that
$\Gamma_{\mathrm{Morse}}(q)\cap\Gamma_0(\infty)$
is open and dense, hence residual, in
$\Gamma_0(\infty)$ for the $C^\infty$ topology.
For each metric in this set, the Morse bound
and the geometric distinctness assertion of
Proposition \ref{Prop:Main_LS_Red} give at least
$MN(G/H)$ submanifolds.
This proves (b).\end{proof}

\begin{proof}[Proof of Corollary \ref{cor:main}]
We need to verify the integrability condition
(\ref{MainThm:hp2}) of Theorem \ref{thm:main}.
Write $n-1:=\dim\Sigma_0$.
Since $\Sigma_0$ is minimal and not totally
geodesic, $n-1\geq2$.
Let $\mbfC_0\subset\R^N$ be the regular cone
over $\Sigma_0$, and let
\[
\mbfC:=\mbfC_0\conetimes\R\subset\R^{N+1}
\]
be the cone over its spherical suspension
$\Sigma$. Thus $\mbfC$ is a cylindrical cone
with singular set $\{0\}\times\R$.
We denote the corresponding Jacobi operators
on their regular parts by $L_{\mbfC_0}$
and $L_{\mbfC}$, respectively.

 We will verify the claimed integrability condition by appealing to the classification of finite (Dirichlet) energy Jacobi fields as spelled out in \cite[Proposition 3.19, item (ii)]{CMSW25}, proven\footnote{Although stated for hypersurfaces, the relevant
decomposition extends to arbitrary codimension: one expands
normal sections in an orthonormal basis of eigensections
of the normal Jacobi operator on the link. The resulting
scalar coefficients satisfy the same equations in the
radial and axial variables as in the hypersurface case,
so the separation-of-variables argument applies unchanged.} in \cite[Section 3]{Sim21}; loosely speaking, this provides the precise relation between the Jacobi fields on $\mbfC$ and those on $\mbfC_0$. The strict stability of $\mbfC_0$ required in that result
follows from the vanishing of the effective index of
$\Sigma_0$ and the inequality $n\geq 3$.

To that purpose, let us remark that: first, there is an isomorphism between finite energy Jacobi fields on $\Sigma$ and \emph{homogeneous degree 1} finite energy Jacobi fields on $\mbfC\cap \BB(1)$, 
and (ii) every section in
$\hat W^{\ell,2}_{\tau}(\Sigma;\mbfV)$ has finite energy
when $\ell\geq1$ and $\tau>1$. Indeed, this follows
from the weighted norm for the unaugmented part,
while the translation- and rotation-like sections
have finite energy because $n\geq3$.

Hence, by \cite[Proposition 3.19, item (ii)]{CMSW25} (applied for $d=1$) there must hold $\gamma\in 1-\Z_{\geq 0}$ therein, which splits into two cases:
\begin{enumerate}[label=(\alph*)]
\item if $\gamma=1$ (and so $q=0$) one finds Jacobi fields on $\mbfC$ that are induced (by 1-homogeneous extension in the cylindrical coordinate $r$) by Jacobi fields on $\Sigma_0$, which we know (by the assumption that the effective
\emph{nullity} vanishes) are all determined by isometries of $\SSp^{N-1}$; thus, the corresponding Jacobi fields on $\Sigma$
are the normal components of ambient Killing vector
fields on $\SSp^N$ that vanish at both poles;
\item else, if $\gamma<1$  one finds Jacobi fields on $\mbfC$ that are induced (through multiplication by the ``height variable" $y$) by vector fields on $\Sigma_0$ contributing to the Morse index thereof, which we know (by the assumption that the effective \emph{index} vanishes) are all determined by translations of $\R^N$ (hence, a posteriori, necessarily $\gamma=0$ and $q=1$); the corresponding Jacobi fields on $\Sigma$
are the normal components of the ambient Killing
vector fields
\[
(x,y)\longmapsto (yv,-\langle v,x\rangle),
\qquad v\in\R^N,
\]
on $\SSp^N\subset\R^N\times\R$.
\end{enumerate}
Since sums of ambient Killing vector fields are again
Killing, every Jacobi field of $\Sigma$ is the normal
component of an ambient Killing vector field on
$\SSp^N$. Thus, the integrability hypothesis is
satisfied, and Theorem \ref{thm:main} yields the
desired conclusion.
The second part of the statement then comes -- for the case $\mathbb{K}=\mathbb{R}$ -- as a straightforward consequence of the first thanks to Lemma \ref{lem:RauSem} and the calculations of the topological invariants presented in Appendix \ref{sec:topology}, see in particular Corollary \ref{cor:MorseNumber} and Corollary \ref{cor:LSnumber} therein.
\end{proof}

\appendix

\section{Analytic mean curvature as a $C^q$ map}\label{app:C^q}
  The goal of this appendix is to prove Proposition \ref{Prop:C^qMSO}, which we shall obtain as a consequence of general results on ``composition maps" determined by (suitably regular) bundle maps.
  We present our analysis in an abstract framework in Section \ref{Subsec_BundleMap}, and then proceed with the proof of the differentiability of the analytic mean curvature map in Section \ref{Subsec_Pf Lem C^qMSO}.

\subsection{Bundle map and regularity} \label{Subsec_BundleMap}
  Throughout this subsection, we fix the following setup. Let  $q>n\geq 2$ be integers, $\Lambda>1$, and
  \begin{enumerate} [label=(\alph*)]
    \item\label{Item_AppSetup_Sigma, g} $\Sigma$ be an $n$-dimensional smooth manifold, $\bar g$ be a smooth metric on $\Sigma$ (not necessarily complete), $\bar\nabla$ be the covariant derivative with respect to $\bar g$;
    \item\label{Item_AppSetup_r bd} $\mfr: \Sigma\to \RR_{>0}$ be a smooth function such that $\mfr\leq \Lambda$, $\mfr^{\ell-1}|\bar\nabla^\ell \mfr|_{\bar g}\leq \Lambda$ for every $1\leq \ell\leq q$;
    \item\label{Item_AppSetup_Coord Bd} $\{U_\alpha\}$ be an open cover of $\Sigma$, $\varphi_\alpha: \BB_1^n \to U_\alpha$ be diffeomorphisms, $\mfr_\alpha:= \mfr\circ\varphi_\alpha(0)\in \RR$ such that, 
    \begin{align*}
      \Lambda^{-1}\leq \mfr_\alpha^{-1}\cdot  \mfr|_{U_\alpha} \leq \Lambda\,, \qquad
      g_\alpha:=\mfr_\alpha^{-2}\cdot \varphi_\alpha^* \bar g \ \text{ is $\Lambda$-$C^q$ equivalent to $g_\eucl$}\,, \qquad
      \sum_\alpha\id_{U_\alpha} \leq \Lambda \,.
    \end{align*}
  \end{enumerate}
\emph{Throughout this subsection, constants may depend
on the fixed base data and their quantitative
bounds $(\bar g,q,\Lambda)$.
This dependence may be suppressed in comparison
subscripts. All local estimates are uniform
in the chart index $\alpha$.
Dependence on a bounded vector bundle includes
its rank and the bounds for its metric,
connection and chosen trivializations.}

 \

  Let $E \xrightarrow{\pi} \Sigma$ be a (smooth) real vector bundle over $\Sigma$ of rank $m\geq 1$, with a smooth fiberwise Euclidean metric $h$ and a connection $\nabla$, not necessarily compatible with $h$. For $\Lambda' > 1$, we will call the triple $(E, h, \nabla)$ \textbf{$\Lambda'$-$C^q$ bounded}, if for each $\alpha$, there exists a smooth map $X_\alpha: E_{U_\alpha}:=\pi^{-1}(U_{\alpha}) \to \RR^m$ which is fiberwise linear isomorphic, and such that
  \begin{align}
    & \Lambda'^{-1}|v|_h \leq |X_\alpha(v)|_{\RR^m} \leq \Lambda'|v|_h\,,  & & \forall\, v\in E_{U_\alpha} \,,
    \label{Equ_App_|X_alpha(v)| sim |v|} \\  
    & \sum_{1\leq \ell\leq q}\mfr^\ell|\nabla^\ell s_{\alpha, i}|_{\bar g, h} \leq \Lambda'\ \ \text{ on }U_\alpha\,,  & & \forall\, 1\leq i\leq m,\ \forall\alpha  \,,
    \label{Equ_App_Conn bded}
  \end{align}
  where $s_{\alpha, i}\in \Gamma(E_{U_\alpha})$ is the local section determined by $X_\alpha\circ s_{\alpha, i} = e_i\in \RR^m$. Note that $\Phi_\alpha:= (\varphi_\alpha^{-1}\circ\pi, X_\alpha): E_{U_\alpha}\to \BB_1\times \RR^m$ is a local trivialization. 
  We may simply call $(E, h, \nabla)$ \textbf{$C^q$ bounded} if it is $\Lambda'$-$C^q$ bounded for some $\Lambda'>1$.

 Given $k\in\mathbb{N}$ we let $C^{k}(E)$ denote the subset of $C^k_{\textrm{loc}}$ sections of the bundle $E$, and $W^{k,2}_\tau(E)$ the closure of $C^{k}_c(E)$ with respect to the natural Sobolev weighted norm as determined by the Riemannian metrics $\bar g, h$.
 The following lemma allows us to estimate the norms of sections of a bounded bundle in terms of local trivializations over $U_\alpha$.
  
  \begin{lemma} \label{Lem_|v|_W^k,2 sim sum_alpha |v_alpha|_W^k,2}
    Let $(E, h, \nabla)$ be a $\Lambda'$-$C^q$ bounded vector bundle over $\Sigma$ of rank $m$. Then for every $1\leq k\leq q$, $v\in C^k(E)$ and $\alpha$, we have 
    \begin{align}
      \left(\sum_{j=0}^k \mfr^j|\nabla^j v|_{\bar g, h} \right)\circ \varphi_\alpha \sim_{\Lambda',m} \sum_{j=0}^k |D^j v_\alpha| \qquad \text{ on } \BB_1^n \,,  \label{Equ_Ptwise Est [v]_W^k sim [v_alpha]_W^k}
    \end{align}
    where $v_\alpha:= X_\alpha\circ v\circ \varphi_\alpha: \BB_1^n\to \RR^m$. In particular, for every $\tau\in \RR$ we have, 
    \begin{align}
      \|v\|_{W^{k,2}_\tau}^2 \sim_{\Lambda', m, \tau} \sum_\alpha \mfr_\alpha^{-2\tau} \|v_\alpha\|_{W^{k,2}}^2 \,.  \label{Equ_|v|_W^k sim sum_alpha |v_alpha|_W^k}
    \end{align}
  \end{lemma}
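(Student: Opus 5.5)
The plan is to prove the pointwise comparison \eqref{Equ_Ptwise Est [v]_W^k sim [v_alpha]_W^k} first, and then obtain \eqref{Equ_|v|_W^k sim sum_alpha |v_alpha|_W^k} by integrating and summing over the chart indices. Fix $\alpha$ and write $v=\sum_i v^i s_{\alpha,i}$ on $U_\alpha$, where $v^i$ are the components of $X_\alpha\circ v$, so that $v_\alpha=(v^i\circ\varphi_\alpha)_i$. The Leibniz rule gives
\[
\nabla^j v=\sum_{a+b=j}\binom{j}{a}\,\bar\nabla^a v^i\otimes\nabla^b s_{\alpha,i}.
\]
Multiplying by $\mfr^j=\mfr^a\mfr^b$ and using \eqref{Equ_App_Conn bded} together with \eqref{Equ_App_|X_alpha(v)| sim |v|}, which bounds $|s_{\alpha,i}|_h\leq\Lambda'$, we get $\mfr^j|\nabla^jv|\lesssim\sum_{a\leq j}\mfr^a|\bar\nabla^a v^i|$.

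The next step is to pass to the chart. Since $g_\alpha=\mfr_\alpha^{-2}\varphi_\alpha^*\bar g$ is $\Lambda$-$C^q$ equivalent to $g_\eucl$, the Christoffel symbols of $g_\alpha$ and their derivatives up to order $q-1$ are bounded. Moreover, covariant derivatives are invariant under constant rescaling of the metric, while norms scale. It follows that $\mfr_\alpha^a|\bar\nabla^a f|_{\bar g}\circ\varphi_\alpha\sim\sum_{c\leq a}|D^c(f\circ\varphi_\alpha)|$, by writing covariant derivatives as coordinate derivatives plus lower-order terms with bounded Christoffel coefficients. Replacing $\mfr$ by $\mfr_\alpha$ on $U_\alpha$ costs only factors $\Lambda^{\pm a}$. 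This proves the upper bound $\lesssim\sum_{j\leq k}|D^jv_\alpha|$.

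For the reverse inequality, I expand in the opposite direction. Writing $v^i=\langle X_\alpha v,e_i\rangle$, the derivatives $\bar\nabla^a v^i$ are computed by expressing $X_\alpha$ via the dual frame, equivalently by inverting the triangular relation above. Its leading term is $\bar\nabla^j v^i\, s_{\alpha,i}=\nabla^jv-\sum_{b\geq1}(\cdots)$. An induction on $j$ then gives $\mfr^j|\bar\nabla^jv^i|\lesssim\sum_{c\leq j}\mfr^c|\nabla^cv|$; the leading coefficient is controlled because the frame $\{s_{\alpha,i}\}$ is uniformly nondegenerate by \eqref{Equ_App_|X_alpha(v)| sim |v|}. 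Combining this with the chart comparison yields the two-sided pointwise bound.

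For the integral statement, I square and integrate the pointwise bound against $\mfr^{-2\tau}\,d\mu_{\bar g}$ over $U_\alpha$. On $U_\alpha$ we have $\mfr^{-2\tau}\sim\mfr_\alpha^{-2\tau}$, and after the appropriate normalization built into the definition of the weighted norm, the volume density in the chart is comparable to the Lebesgue measure. I assume here that the weighted norm includes the factor $\mfr^{-n}$ consistently with \cite{CLW24}; otherwise this power is absorbed into a shift of $\tau$, and the main obstacle is exactly matching this convention. Summing over $\alpha$ gives the upper bound by bounded overlap, $\sum_\alpha\id_{U_\alpha}\leq\Lambda$. The lower bound follows since the $U_\alpha$ cover $\Sigma$, so each point is counted at least once. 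The remaining steps are the inductive Leibniz bookkeeping, which is routine.
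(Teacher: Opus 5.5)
Your proof is correct and follows the paper's route: expand $v=\sum_i v^i_\alpha s_{\alpha,i}$, use the connection bounds and the rescaled chart metric $g_\alpha$ together with $\mfr\sim\mfr_\alpha$ on $U_\alpha$, then integrate against $\mfr^{-2\tau-n}\,d\scH^n_{\bar g}$ and sum over $\alpha$ using the covering and bounded-overlap properties. The paper's weighted norm does include the $\mfr^{-n}$ factor you assumed, and your triangular inversion fills in details the paper's one-line proof leaves implicit; one small imprecision is that $\mfr_\alpha^a|\bar\nabla^a f|\circ\varphi_\alpha\sim\sum_{c\le a}|D^c(f\circ\varphi_\alpha)|$ is two-sided only after summing over $a\le k$, which is all your argument needs.
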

  \begin{proof}
    Fix an $\alpha$. On $\BB_1^n$ we have, 
    \begin{align*}
      \left(\sum_{j=0}^k \mfr^j|\nabla^j v|_{\bar g, h} \right)\circ \varphi_\alpha 
      = \left(\sum_{j=0}^k (\mfr\cdot \mfr_\alpha^{-1})^j |\nabla^j v|_{\mfr_\alpha^{-2}\bar g, h} \right)\circ \varphi_\alpha 
      \sim_{\Lambda', m} \sum_{j=0}^k |D^j v_\alpha|_{g_{\eucl}, h} \,;
    \end{align*}
    where the equivalence follows from \ref{Item_AppSetup_Coord Bd}, \eqref{Equ_App_Conn bded}, and the fact that $v = \sum_iv_\alpha^i\, s_{\alpha,i}$. This proves \eqref{Equ_Ptwise Est [v]_W^k sim [v_alpha]_W^k}.
    
    To prove \eqref{Equ_|v|_W^k sim sum_alpha |v_alpha|_W^k}, combining \eqref{Equ_Ptwise Est [v]_W^k sim [v_alpha]_W^k} with the assumptions in \ref{Item_AppSetup_Coord Bd} yields for any $v\in C^{k}_c(E)$ \[
      \|v\|_{W^{k,2}_\tau}^2 
      \sim \sum_\alpha \int_{U_\alpha} \sum_{j=0}^k \left(\mfr^{j-\tau} |\nabla^j v|_{\bar g, h} \right)^2 \mfr^{-n}\ d\scH^n_{\bar g}
      \sim_{\Lambda',m, \tau} \sum_\alpha \int_{\BB_1^n} \mfr_\alpha^{-2\tau}\sum_{j=0}^k |D^j v_\alpha|_{g_\eucl} ^2 \ d\scH^n_{g_{\eucl}} \,,
    \] 
    at which stage the conclusion follows by density.
  \end{proof}

  Let $(E_i, h_i, \nabla_i)$ be $C^q$-bounded vector bundles of rank $m_i>0$ over $\Sigma$, for $i=1,2$. We define $\breve\scC^q_{\tau, \tau'}(E_1, E_2)$ to be the space of fiber-preserving $C^q$ maps $\psi: E_1\to E_2$ such that
  \begin{align}
    \|\psi\|_{\breve\scC^q_{\tau, \tau'}} := \sup_\alpha \mfr_\alpha^{\tau-\tau'} \|\psi_\alpha\|_{C^q} <+\infty \,,\label{Equ_Def_breve scC^q bundle map}
  \end{align}  
  and define $\scC^q_{\tau, \tau'}(E_1, E_2)$ to be the subspace of $\breve\scC^q_{\tau, \tau'}(E_1, E_2)$ such that
  \begin{align}
    \|\psi\|_{\scC^q_{\tau, \tau'}} := \sup_\alpha \mfr_\alpha^{\tau-\tau'} \|\psi_\alpha\|_{C^q} + \|\psi(0)\|_{W^{q,2}_{\tau'}}  < +\infty \,, \label{Equ_Def_scC^q bundle map}
  \end{align}
  here $\psi_\alpha:= X_{2,\alpha}\circ \psi\circ \Phi_{1,\alpha}^{-1}: \BB_1^n \times \RR^{m_1}\to \RR^{m_2}$ is the bundle map under trivializations $\Phi_{i,\alpha}$. 
  The reason to include the second term in defining $\|\psi\|_{\scC^q_{\tau, \tau'}}$ is to guarantee that the map $W^{k,2}_\tau(E_1)\ni f\mapsto \psi\circ f$ takes value in $W^{k,2}_{\tau'}(E_2)$ when $k\leq q-1$ (see the discussion in Lemma \ref{Lem_|psi(f_1-f_2)|<|psi||f_1-f_2|}).  
  Note that $\scC^q_{\tau, \tau'}(E_1, E_2)$ is naturally a Banach space, where the fiberwise addition and scalar multiplication give the vector space structure.

  \begin{lemma} \label{Lem_|psi(f_1-f_2)|<|psi||f_1-f_2|}
    Let $n<k\leq q-1$, $E_1, E_2$ be $C^q$-bounded vector bundles over $\Sigma$, $\psi\in \scC^q_{\tau, \tau'}(E_1, E_2)$, where $\tau\geq 0$, $\tau'\in \RR$. Then,
    \begin{enumerate} [label=(\roman*)]
      \item\label{Item_psi(f) in W^k,2_tau'} for every $f\in W^{k,2}_\tau(E_1)$ there holds $\psi\circ f\in W^{k,2}_{\tau'}(E_2)$;
      \item\label{Item_|psi(f_1)-psi(f_2)| bd} for every $f_\pm\in W^{k,2}_\tau(E_1)$ with $\|f_\pm\|_{W^{k,2}_\tau}\leq M$, we have 
      \begin{align}
        \|\psi\circ f_+ - \psi\circ f_-\|_{W^{k,2}_{\tau'}(E_2)} \lesssim_{E_1, E_2,\tau, \tau'} \|\psi\|_{\breve\scC^q_{\tau, \tau'}} \cdot (1+\Lambda^\tau M)^{k}\|f_+ - f_-\|_{W^{k,2}_\tau(E_1)} \,.  \label{Equ_|psi(f_1-f_2)|<|psi||f_1-f_2|}
      \end{align}
    \end{enumerate}
  \end{lemma}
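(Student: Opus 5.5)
The proof will be local in the charts $U_\alpha$: we reduce to Euclidean estimates on $\BB_1^n$ and then sum using Lemma \ref{Lem_|v|_W^k,2 sim sum_alpha |v_alpha|_W^k,2}.

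\textbf{Step 1: reduction to charts.} Write $f_\alpha:=X_{1,\alpha}\circ f\circ\varphi_\alpha:\BB_1^n\to\RR^{m_1}$. Then $(\psi\circ f)_\alpha(y)=\psi_\alpha(y,f_\alpha(y))$. By \eqref{Equ_|v|_W^k sim sum_alpha |v_alpha|_W^k} it suffices to show, uniformly in $\alpha$,
\[
\mfr_\alpha^{-\tau'}\|\psi_\alpha(\cdot,f_{+,\alpha})-\psi_\alpha(\cdot,f_{-,\alpha})\|_{W^{k,2}(\BB_1^n)}
\lesssim \|\psi\|_{\breve\scC^q_{\tau,\tau'}}(1+\Lambda^\tau M)^k\,\mfr_\alpha^{-\tau}\|f_{+,\alpha}-f_{-,\alpha}\|_{W^{k,2}(\BB^n_1)}.
\]
Squaring and summing over $\alpha$ then gives \eqref{Equ_|psi(f_1-f_2)|<|psi||f_1-f_2|}.

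\textbf{Step 2: uniform $C^{k-?}$ control of $f_\alpha$.} From \eqref{Equ_|v|_W^k sim sum_alpha |v_alpha|_W^k} we have $\|f_\alpha\|_{W^{k,2}}\lesssim \mfr_\alpha^{\tau}M\le\Lambda^\tau M$, using $\mfr\le\Lambda$ and $\tau\ge0$. Since $k>n\ge n/2$, the Sobolev embedding on $\BB_1^n$ shows that $W^{k,2}$ is a Banach algebra contained in $C^0$. Hence $\|f_\alpha\|_{C^0}\lesssim \Lambda^\tau M$, with constants independent of $\alpha$.

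\textbf{Step 3: the composition estimate.} Write
\[
\psi_\alpha(y,f_+)-\psi_\alpha(y,f_-)=\Bigl(\int_0^1 D_2\psi_\alpha\bigl(y,f_-+s(f_+-f_-)\bigr)\,ds\Bigr)\cdot(f_+-f_-).
\]
The integrand is $D_2\psi_\alpha$, which is $C^{q-1}\supset C^{k}$, composed with a $W^{k,2}$ map. Differentiating $k$ times by the chain and Faà di Bruno rules, each term is a product of derivatives of $D_2\psi_\alpha$ of order at most $k$ with at most $k$ derivative factors of $f_\pm$. Using the algebra property and the Gagliardo--Nirenberg/Moser-type bound available for $k>n/2$, the $W^{k,2}$ norm of the matrix-valued factor is at most $C\|\psi_\alpha\|_{C^{q}}(1+\|f_\pm\|_{W^{k,2}})^{k}$; this is the reason for the hypothesis $k\le q-1$. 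Multiplying by $f_+-f_-$ is handled by the algebra property once more. Finally, $\|\psi_\alpha\|_{C^q}\le \mfr_\alpha^{\tau'-\tau}\|\psi\|_{\breve\scC^q}$, which produces exactly the weight shift from $\tau$ to $\tau'$ required in Step 1. This step is the main technical point. Care is needed to avoid picking up powers of $M$ beyond $k$, which is done by counting derivative factors.

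\textbf{Step 4: item (i).} Apply (ii) with $f_+=f$ and $f_-=0$. This shows that $\psi\circ f-\psi(0)$ has finite $W^{k,2}_{\tau'}$ norm, and $\psi(0)\in W^{q,2}_{\tau'}\subset W^{k,2}_{\tau'}$ by the second term in \eqref{Equ_Def_scC^q bundle map}. It remains to check membership in the closure of $C^k_c$, not merely finiteness of the norm. For this, approximate $f$ by compactly supported smooth $f_j$ and use (ii), together with the fact that $\psi\circ f_j-\psi(0)$ is compactly supported and $C^k$.
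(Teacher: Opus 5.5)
Your proposal is correct and follows essentially the same route as the paper. You localize in the charts, write the difference as $\int_0^1\partial_z\psi_\alpha(\cdot,f_-+s(f_+-f_-))\,ds\cdot(f_+-f_-)$, and bound it via chain and Leibniz rules using $\|\psi_\alpha\|_{C^{k+1}}\leq \mfr_\alpha^{\tau'-\tau}\|\psi\|_{\breve\scC^q_{\tau,\tau'}}$ and $\|f_{\pm,\alpha}\|_{W^{k,2}}\lesssim\Lambda^\tau M$ (from $\tau\geq0$), then deduce (i) from (ii) with $f_-=0$. The differences are cosmetic: you use the algebra/Moser property of $W^{k,2}(\BB^n_1)$ for $k>n/2$ where the paper splits products via $W^{k,2}\hookrightarrow C^{\lfloor k/2\rfloor}$, and you spell out the density argument placing $\psi\circ f$ in the closure of $C^k_c$, which the paper leaves implicit.
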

  \begin{proof}
    Note that \ref{Item_psi(f) in W^k,2_tau'} follows from \ref{Item_|psi(f_1)-psi(f_2)| bd} and \eqref{Equ_Def_scC^q bundle map} by taking $f_+=f$, $f_-=0$ and employing the triangle (Minkowski) inequality in $W^{k,2}_{\tau'}$ based on the very definition of (the norm on) $\scC^q_{\tau, \tau'}(E_1, E_2)$.
    
    To prove \ref{Item_|psi(f_1)-psi(f_2)| bd}, first assume without loss of generality $f_\pm\in C^k_{c}(E_1)$.
    By Lemma \ref{Lem_|v|_W^k,2 sim sum_alpha |v_alpha|_W^k,2}, it suffices to bound 
    \begin{align*}
      &(\psi\circ f_+ - \psi\circ f_-)_\alpha(x) = X_{2,\alpha}\circ (\psi\circ f_+ - \psi\circ f_-)\circ \varphi_\alpha(x) \\
      & \qquad = (X_{2,\alpha}\circ \psi \circ \Phi_{1,\alpha}^{-1})\circ (\Phi_{1,\alpha}\circ f_+\circ \varphi_\alpha)(x) - (X_{2,\alpha}\circ \psi \circ \Phi_{1,\alpha}^{-1})\circ (\Phi_{1,\alpha}\circ f_-\circ \varphi_\alpha)(x) \\
      & \qquad = \psi_\alpha(x, f_{+,\alpha}(x)) - \psi_\alpha(x, f_{-,\alpha}(x)) \\
      & \qquad = \int_0^1 \partial_z\psi_\alpha \Big(x,\ tf_{+,\alpha}(x)+(1-t)f_{-,\alpha}(x)\Big)\cdot (f_{+,\alpha}-f_{-,\alpha})(x)\ dt\,;
    \end{align*}
    where $\psi_\alpha(x,z)$ is the map on $\BB_1^n\times \RR^{m_1}$ to $\RR^{m_2}$ defined above. By the chain rule and the Leibniz (product) rule, there holds 
    \begin{align*}
      & \quad \|(\psi\circ f_+ - \psi\circ f_-)_\alpha\|_{W^{k,2}(\BB_1^n)} \\
      & \lesssim \|\psi_\alpha\|_{C^{k+1}} \Big[ (1+\|f_{+,\alpha}\|_{C^{\lfloor k/2\rfloor}} + \|f_{-,\alpha}\|_{C^{\lfloor k/2\rfloor}})^k\|(f_+-f_-)_\alpha\|_{W^{k,2}} \\
      & \qquad\qquad\quad + \, (1+\|f_{+,\alpha}\|_{W^{k,2}} + \|f_{-,\alpha}\|_{W^{k,2}})^k \|(f_+-f_-)_\alpha\|_{C^{\lfloor k/2\rfloor}} \Big] \\
      & \lesssim \mfr_\alpha^{\tau'-\tau} \|\psi\|_{\breve\scC^q_{\tau, \tau'}}(1 + \Lambda^\tau M)^k \|(f_+-f_-)_\alpha\|_{W^{k,2}(\BB_1^n)} \,.
    \end{align*}
    Here in the second inequality, we used \eqref{Equ_Def_scC^q bundle map}, the Sobolev embedding of $W^{k,2}(\BB_1^n)\hookrightarrow C^{\lfloor k/2\rfloor}(\BB_1^n)$ since $k>n$, and the assumption $\tau\geq 0$ to bound $\|f_{\pm,\alpha}\|_{W^{k,2}}$ by $\mfr_\alpha^\tau\|f_\pm\|_{W^{k,2}_\tau}\leq \Lambda^\tau M$. 
    Squaring both sides and applying \eqref{Equ_|v|_W^k sim sum_alpha |v_alpha|_W^k} proves \ref{Item_|psi(f_1)-psi(f_2)| bd}. 
  \end{proof}

  Given $C^q$-bounded vector bundles $(E_i, h_i, \nabla_i)$ over $\Sigma$, $i=1,2$ and an integer $k>0$ it is easy to check that $\Hom(E_1^{\otimes k}, E_2)$ admits a natural induced metric $h$ and connection $\nabla$, and it is also $C^q$-bounded in the sense defined above. Moreover, we have a natural isomorphism $\Hom(E_1, \Hom(E_1^{\otimes k}, E_2))=\Hom(E_1^{\otimes (k+1)}, E_2)$. When $k=0$ it is understood that $\Hom(E_1^{\otimes k}, E_2)\equiv E_2$. The reader may want to compare our approach, and in particular the next statement, with that in \cite[Appendix]{Whi87}.

  \begin{lemma}\label{Lem_Differentiable}
    Let $q-2\geq k>n$, $\tau\geq 0$, $\tau'\in \RR$. For every integer $0\leq \ell \leq q-k$, there exists a bounded linear map \[
      \cJ^\ell: \scC^q_{\tau, \tau'}(E_1, E_2) \to \breve\scC^{q-\ell}_{\tau, \tau'}(E_1, \Hom(E_1^{\otimes \ell},E_2))  
    \] 
    such that the following holds for every $\psi\in \scC^q_{\tau, \tau'}(E_1, E_2)$ (denote for simplicity $\psi^{(\ell)}:= \cJ^\ell(\psi)$): 
    \begin{itemize}
      \item $\psi^{(0)}=\psi$;
      \item for every $0\leq \ell\leq q-k-2$ and every $\Lambda''$-bounded $f, h_0, h_1, \dots, h_\ell\in W^{k,2}_\tau(E_1)$, we have 
      \begin{align}
       \begin{split}
         & \left\|(\psi^{(\ell)}\circ (f+h_0)-\psi^{(\ell)}\circ f)(h_1\otimes\dots\otimes h_\ell) - (\psi^{(\ell+1)}\circ f)(h_0\otimes h_1\otimes\dots\otimes h_\ell) \right\|_{W^{k,2}_{\tau'}(E_2)} \\
         & \qquad \lesssim_{E_1, E_2,\tau,\tau', \Lambda''} \|\psi\|_{\scC^q_{\tau, \tau'}(E_1, E_2)} \cdot \|h_0\|_{W^{k,2}_{\tau}(E_1)}^2\cdot \|h_1\|_{W^{k,2}_{\tau}(E_1)}\dots \|h_\ell\|_{W^{k,2}_{\tau}(E_1)} \,.
       \end{split} \label{Equ_|psi^l(f+h)-psi^l(f) -psi^(l+1)(f)h| Bd}
      \end{align}
    \end{itemize}
    In particular, the following composition map is $C^{q-k-1}$: \[
      \scC^q_{\tau, \tau'}(E_1, E_2) \times W^{k,2}_\tau(E_1) \to W^{k,2}_{\tau'}(E_2) \,, \qquad (\psi, f)\mapsto \psi\circ f \,.
    \]
  \end{lemma}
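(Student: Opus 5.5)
The plan is to define $\cJ^\ell$ chart by chart as the fiber derivative. For $\psi\in\scC^q_{\tau,\tau'}(E_1,E_2)$, let $\psi^{(\ell)}$ be the $\ell$-th fiberwise derivative: at $v\in (E_1)_x$ it is the symmetric $\ell$-linear map $D^\ell_{\mathrm{fib}}\psi|_{(E_1)_x}(v)$, viewed as an element of $\Hom(E_1^{\otimes\ell},E_2)_x$. This is intrinsic, since each fiber is a vector space, so no connection is needed. In the trivializations $\Phi_{1,\alpha}$ and $X_{2,\alpha}$ it reads $(\psi^{(\ell)})_\alpha(x,z)=\partial_z^\ell\psi_\alpha(x,z)$, up to composition with the induced trivialization of $\Hom(E_1^{\otimes\ell},E_2)$. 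The latter is built from $X_{1,\alpha}$ and $X_{2,\alpha}$, and its constants are controlled by the $C^q$-boundedness of $E_1$ and $E_2$. Consequently $\|(\psi^{(\ell)})_\alpha\|_{C^{q-\ell}}\lesssim\|\psi_\alpha\|_{C^q}$ chartwise. Multiplying by $\mfr_\alpha^{\tau-\tau'}$ and taking the supremum gives boundedness and linearity of $\cJ^\ell$ into $\breve\scC^{q-\ell}_{\tau,\tau'}$, and $\psi^{(0)}=\psi$ holds trivially.

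For the Taylor estimate \eqref{Equ_|psi^l(f+h)-psi^l(f) -psi^(l+1)(f)h| Bd}, I will work in a single chart, as in Lemma \ref{Lem_|psi(f_1-f_2)|<|psi||f_1-f_2|}. Write the local expression as
\[
\int_0^1(1-t)\,\partial_z^{\ell+2}\psi_\alpha\bigl(x,f_\alpha+th_{0,\alpha}\bigr)\,dt\,[h_{0,\alpha},h_{0,\alpha},h_{1,\alpha},\dots,h_{\ell,\alpha}].
\]
I then estimate the $W^{k,2}(\BB^n_1)$ norm with the chain and Leibniz rules. This needs $k$ further derivatives of $\partial_z^{\ell+2}\psi_\alpha$, hence $\ell+2+k\le q$, which is exactly the range $\ell\le q-k-2$. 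The multilinear product is handled by putting one factor in $W^{k,2}$ and the others in $C^{\lfloor k/2\rfloor}$, using Sobolev embedding ($k>n$) and the fact that $W^{k,2}$ is an algebra. Each factor $\|h_{i,\alpha}\|_{W^{k,2}}$ is bounded by $\mfr_\alpha^{\tau}$ times a local weighted norm. Since $\tau\ge0$ and $\mfr_\alpha\le\Lambda$, the surplus powers $\mfr_\alpha^{\tau}$ beyond the first are bounded by $\Lambda^\tau$. So the chart estimate gets the weight $\mfr_\alpha^{\tau'-\tau}\cdot\mfr_\alpha^{\tau}$ against the single $W^{k,2}$ factor, matching Lemma \ref{Lem_|v|_W^k,2 sim sum_alpha |v_alpha|_W^k,2}. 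The hypothesis that $f$ is $\Lambda''$-bounded controls the composition factor.

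The main obstacle is the summation over $\alpha$. A naive sum of squares of products of chartwise norms only yields something like $\sum_\alpha(\prod_i a_{i,\alpha})^2$. To bound this by $\prod_i\|h_i\|^2$, I use that all but one factor are bounded in sup by a global constant times $\|h_i\|_{W^{k,2}_\tau}$, which follows from the chartwise Sobolev embedding and $\tau\ge0$. The remaining factor sums, by \eqref{Equ_|v|_W^k sim sum_alpha |v_alpha|_W^k} and the bounded overlap $\sum_\alpha\id_{U_\alpha}\le\Lambda$. Density of $C^k_c$ then passes the estimate to general sections.

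Finally, the regularity of the composition map. I would iterate the Taylor estimate to get Fréchet differentiability in $f$, with derivative $h\mapsto(\psi^{(1)}\circ f)h$, and higher derivatives $(\psi^{(\ell)}\circ f)$ up to order $q-k-1$. Continuity of the top derivative in $f$ follows from Lemma \ref{Lem_|psi(f_1-f_2)|<|psi||f_1-f_2|} applied to $\psi^{(q-k-1)}\in\breve\scC^{k+1}$. In that lemma $q$ is replaced by $k+1$, so the condition $k\le q-1$ there holds. Dependence on $\psi$ is linear and bounded by Lemma \ref{Lem_|psi(f_1-f_2)|<|psi||f_1-f_2|}\,\ref{Item_psi(f) in W^k,2_tau'}. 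Since $(\psi,f)\mapsto\psi\circ f$ is linear in $\psi$, with $\psi\mapsto\psi^{(\ell)}$ bounded, the mixed partials are continuous, and joint $C^{q-k-1}$ regularity follows.
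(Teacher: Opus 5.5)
Your proposal is correct and follows essentially the same route as the paper. In both, $\psi^{(\ell)}$ is the fiberwise derivative (locally $\partial_z^\ell\psi_\alpha$), the Taylor remainder is estimated chartwise via the chain and Leibniz rules together with Sobolev embedding ($k>n$) and $\tau\geq0$, and the $C^{q-k-1}$ claim follows by iterating this estimate, with Lemma \ref{Lem_|psi(f_1-f_2)|<|psi||f_1-f_2|} giving continuity of the top derivative and linearity in $\psi$ handling joint regularity.

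The only difference is cosmetic. The paper writes the remainder as $\int_0^1\bigl(\psi^{(\ell+1)}\circ(f+sh_0)-\psi^{(\ell+1)}\circ f\bigr)(h_0\otimes\cdots\otimes h_\ell)\,ds$ and invokes the Lipschitz bound of Lemma \ref{Lem_|psi(f_1-f_2)|<|psi||f_1-f_2|} for $\psi^{(\ell+1)}$. You instead redo that chart computation directly with the second-order remainder and make the summation over $\alpha$ explicit.
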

  \begin{proof}
    Define $\psi^{(\ell)}$ inductively by requiring that for every $f, h_0, \dots, h_\ell\in C_c^q(E_1)$, \[
      (\psi^{(\ell+1)}\circ f)(h_0\otimes \dots \otimes h_\ell) := \left.\frac{d}{ds}\right|_{s=0} (\psi^{(\ell)}\circ (f+sh_0))(h_1\otimes \dots \otimes h_\ell) \,.
    \]
    Since under local trivialization $\Phi_{1,\alpha}, \Phi_{2,\alpha}$, this reads as \[
      (\psi^{(\ell+1)})_\alpha(x, z)(z_0\otimes z_1\otimes \dots \otimes z_\ell) = \left.\frac{d}{ds}\right|_{s=0} (\psi^{(\ell)})_\alpha(x, z+sz_0)(z_1 \otimes \dots \otimes z_\ell)\,;
    \]
    inductively, it is clear from the definition \eqref{Equ_Def_scC^q bundle map} that $\psi^{(\ell)}\in \breve\scC^{q-\ell}_{\tau, \tau'}(E_1, \Hom(E_1^{\otimes \ell},E_2))$, and that $\cJ^\ell$ is bounded linear. 

    Then by Lemma \ref{Lem_|psi(f_1-f_2)|<|psi||f_1-f_2|}, when $0\leq \ell \leq q-k-2$, for every $f, h_0, \dots, h_\ell\in C_c^q(E_1)$, 
    \begin{align*}
      & \left\|(\psi^{(\ell)}\circ (f+h_0)-\psi^{(\ell)}\circ f)(h_1\otimes\dots\otimes h_\ell) - (\psi^{(\ell+1)}\circ f)(h_0\otimes\dots\otimes h_\ell) \right\|_{W^{k,2}_{\tau'}(E_2)} \\
      & \quad = \left\|\int_0^1 \left(\frac{d}{ds}(\psi^{(\ell)}\circ (f+ sh_0))(h_1\otimes\dots\otimes h_\ell) - (\psi^{(\ell+1)}\circ f)(h_0\otimes\dots\otimes h_\ell) \right)\ ds \right\|_{W^{k,2}_{\tau'}(E_2)} \\
      & \quad \leq \int_0^1 \left\|(\psi^{(\ell+1)}\circ (f+ sh_0))(h_0\otimes \dots\otimes h_\ell) - (\psi^{(\ell+1)}\circ f)(h_0 \otimes\dots\otimes h_\ell) \right\|_{W^{k,2}_{\tau'}(E_2)}\ ds  \\
      & \quad \lesssim_{E_1, E_2,\tau,\tau',\Lambda''} \|\psi\|_{\scC^q_{\tau, \tau'}(E_1, E_2)} \cdot \|h_0\|_{W^{k,2}_{\tau}(E_1)}\cdot \|h_0\otimes \dots \otimes h_\ell\|_{W^{k,2}_{\tau}(E_1^{\otimes (\ell+1)})} \\
      & \quad \lesssim_{E_1, E_2,\tau,\tau',\Lambda''} \|\psi\|_{\scC^q_{\tau, \tau'}(E_1, E_2)} \cdot \|h_0\|_{W^{k,2}_{\tau}(E_1)}^2\cdot \|h_1\|_{W^{k,2}_{\tau}(E_1)}\dots \|h_\ell\|_{W^{k,2}_{\tau}(E_1)} \,.
    \end{align*}
    where we use Lemma \ref{Lem_|psi(f_1-f_2)|<|psi||f_1-f_2|} in the second inequality, and the product rule in the last inequality. Then, a standard approximation argument allows us to obtain the same inequality for $f, h_0, \dots, h_\ell\in W^{k,2}_\tau(E_1)$.

    To prove $C^{q-k-1}$ regularity of the composition map, note that $(\psi, f)\mapsto \psi\circ f$ depends linearly on $\psi$. Hence it suffices to prove the $C^{q-k-1}$ regularity of $\psi_\#: f\mapsto \psi\circ f$ for every fixed $\psi\in \scC^q_{\tau, \tau'}$. 
    Now, for every $0\leq \ell\leq q-k-1$ and every $f, h_1, \dots, h_\ell\in W^{k,2}_\tau(E_1)$, \eqref{Equ_|psi^l(f+h)-psi^l(f) -psi^(l+1)(f)h| Bd} shows (inductively) that the $\ell$-th derivative of $\psi_\#$ exists and is given by \[
      D^\ell(\psi_\#)|_f(h_1\otimes \cdots \otimes h_\ell) = (\psi^{(\ell)}\circ f)(h_1\otimes \cdots \otimes h_\ell) \,; 
    \]
    which is a well-defined map $D^\ell(\psi_\#): W^{k,2}_\tau(E_1)\to \Hom(W^{k,2}_{\tau}(E_1)^{\otimes \ell}, W^{k,2}_{\tau'}(E_2))$ since $\psi^{(\ell)}\in \breve\scC^{q-\ell}$. That being said, the continuity of $D^\ell(\psi_\#)$ (that is, the continuity of $D^\ell(\psi_\#)|_f$ with respect to $f\in W^{k,2}_{\tau}(E_1)$) follows from \eqref{Equ_|psi(f_1-f_2)|<|psi||f_1-f_2|}.
  \end{proof}

\subsection{Proof of Proposition \ref{Prop:C^qMSO}.} \label{Subsec_Pf Lem C^qMSO}
  We start with the following existence lemma of good local coordinates and bundle trivializations.
  \begin{lemma}\label{Lem_MSI w Loc Coord}
    Let $M$ be a compact, boundaryless manifold and let $\Sigma$ be an $\MSI$ under a smooth metric $\bar g$, $\mfr$ be a radius function for $\Sigma$ (as defined in Section \ref{subs:DefPrelim}); $\bar q>0$ be an integer. Then there exists $\Lambda_1(\Sigma, \bar g,\bar q)>1$ such that for every $p\in \Sigma$, there exist an open neighborhood $\hat U_p\subset M$ of $p$ and a diffeomorphism $\hat\varphi_p: \BB^n_1\times \BB^{N-n}_1 \to \hat U_p$ 
    such that
    \begin{enumerate} [label=(\roman*)]
      \item\label{Item_Coord_r bded} $\mfr\leq \Lambda_1$, $\mfr^{\ell-1}|\bar\nabla^\ell (\mfr|_{\Sigma})|_{\bar g} \leq \Lambda_1$ for every $1\leq \ell\leq \bar q$;
      \item\label{Item_Coord_g_p bded} $\Lambda_1^{-1}\leq \mfr(p)^{-1}\cdot \mfr|_{\hat U_p}\leq \Lambda_1$, $\bar g_p:= \mfr(p)^{-2}\hat\varphi_p{}^*\bar g$ is $\Lambda_1$-$C^{\bar q}$ equivalent to $g_\eucl$ on $\BB^n_1\times \BB^{N-n}_1$;
      \item\label{Item_Coord_Sigma = B^n} $\hat\varphi_p(\BB^n_1\times\{0\}) = \Sigma \cap \hat U_p =: U_p$, $\hat\varphi_p(0,0)=p$;
      \item\label{Item_Coord_Y_p:T^perp to R^(N-n)} for every $(x, v)\in (T^\perp\Sigma)|_{U_p}$, if we let $Y_p(x, v):= d(\hat \varphi_p^{-1})|_x(v)\in \{0\}\times \RR^{N-n}$, which defines a fiberwise linear map $Y_p: (T^\perp\Sigma)|_{U_p} \to \RR^{N-n}$, one has $|Y_p(x, v)|_{g_\eucl} = \mfr(p)^{-1}|v|_{\bar g}$;
      \item\label{Item_Coord_exp(u)=(x, u_p(x))} for every $u\in \Gamma(T^\perp \Sigma)$ such that $|u|\leq \Lambda_1^{-1}\mfr$ and every $x\in \BB^n_1$, we have \[
        \hat \varphi_p^{-1}\circ E_u \circ \hat\varphi_p(x, 0) 
        = \big(x, Y_p\circ u\circ \hat\varphi_p(x, 0) \big) 
        =: (x, u_p(x)) \,,
      \]
      where $E_u(x'):= \exp^{\bar g}_{x'}(u(x'))\in M$ for every $x'\in \Sigma$,  which defines a local representative $u_p: \BB^n_1\to \RR^{N-n}$ of $u$ in $\hat U_p$. (This is the map denoted by $\Phi_u$ in the
main text; here we write $E_u$ to distinguish
it from the bundle trivializations above.)
    \end{enumerate}
    
    Furthermore, there exists $\Lambda_2(\Sigma, \bar g, \bar q)>\Lambda_1^2$ such that,  
    \begin{enumerate} [label=(\roman*)] \setcounter{enumi}{5}
      \item\label{Item_Coord_U_p cap U_q => r(q) sim r(p)} for every $p\in \Sigma$, we have $\{\tilde p\in \Sigma: U_{\tilde p}\cap U_p\neq \emptyset\}\subset \{\Lambda_2^{-1}\mfr(p)\leq \mfr \leq \Lambda_2\,\mfr(p)\}$;
      \item\label{Item_Coord_U'_p cap U'_q => q in U_p} set $U_p':= \hat\varphi_p(\BB^n_{1/\Lambda_2}\times \{0\})\subset \Sigma$, then for every $p_1, p_2\in \Sigma$ with $U_{p_1}'\cap U_{p_2}'\neq \emptyset$, we have $p_2\in U_{p_1}$.
    \end{enumerate}
  \end{lemma}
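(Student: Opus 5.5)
The plan is to build the charts $\hat\varphi_p$ from Fermi coordinates over $\Sigma$, with the scale set by $\mfr(p)$. Near the singular points we use the conical structure; away from them we use compactness.

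First we handle the radius function, item \ref{Item_Coord_r bded}. Away from $\bigcup_j B_{r_0}(p_j)$, $\mfr$ is smooth and bounded below on a compact region of $\overline\Sigma$, so the bounds are immediate; note that $\Sigma$ minus small balls has compact closure in the regular part. Near $p_j$ we have $\mfr=\dist_{\bar g}(p_j,\cdot)=|x|$ in normal coordinates. On rescaled annuli $\{s/2\le |x|\le 2s\}$, the dilated surface $s^{-1}\Sigma$ converges smoothly to the regular cone $\mbfC_j$, with $C^{\bar q}$ bounds uniform in $s$. This holds because the singularities are strongly isolated and the tangent cone is unique (Remark \ref{rem:MSI}). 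Moreover, $|x|$ restricted to the cone is $1$-homogeneous, so $\mfr^{\ell-1}|\bar\nabla^\ell\mfr|$ is scale invariant and bounded on the rescaled annulus. This yields \ref{Item_Coord_r bded}.

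Next we construct the charts. For $p\in\Sigma$, set $s:=\mfr(p)$. By the same smooth convergence, after rescaling $\bar g$ by $s^{-2}$, the rescaled surface has second fundamental form and covariant derivatives up to order $\bar q$ bounded by a constant independent of $p$. Its normal injectivity radius is also bounded below by a uniform $c_0>0$. For the cone this follows from the link being a smooth compact embedded submanifold and from homogeneity, and it passes to $\Sigma$ by convergence; away from the singular set it follows from compactness. We then choose $\bar g$-geodesic normal coordinates on $\Sigma$ at $p$ of radius $c s$, giving a map $\BB^n_1\to U_p$. We also choose a $\bar g$-orthonormal frame $\{\nu_a\}$ of $T^\perp\Sigma$ over $U_p$, obtained by normal parallel transport along radial geodesics. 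Define
\[
\hat\varphi_p(x,y):=\exp^{\bar g}_{\phi(x)}\Bigl(cs\sum_a y_a\nu_a(\phi(x))\Bigr),
\]
where $\phi(x)$ denotes the normal-coordinate point, the normal coordinates themselves being defined at scale $cs$. With this definition \ref{Item_Coord_Sigma = B^n} holds by construction. Item \ref{Item_Coord_Y_p:T^perp to R^(N-n)} holds because $d\hat\varphi_p$ at $y=0$ sends $\{0\}\times\R^{N-n}$ isometrically, up to the factor $cs$, onto the normal space; this is correct after absorbing $c$ into the normalisation $\mfr(p)$, or by choosing the radius exactly $\mfr(p)$ times a fixed constant that is built into $\Lambda_1$. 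Item \ref{Item_Coord_exp(u)=(x, u_p(x))} is exactly the definition of Fermi coordinates, valid once $|u|\le\Lambda_1^{-1}\mfr$ lies within the normal injectivity scale. Item \ref{Item_Coord_g_p bded} follows from the uniform $C^{\bar q}$ bounds on the rescaled geometry. The reason is that $\mfr(p)^{-2}\hat\varphi_p^*\bar g$ is a fixed smooth expression in the rescaled metric, second fundamental form, and normal connection, together with their derivatives. The comparability $\mfr\sim\mfr(p)$ on $\hat U_p$ follows from $|\nabla\mfr|\le\Lambda_1$ and the chart radius being a small multiple of $\mfr(p)$.

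For \ref{Item_Coord_U_p cap U_q => r(q) sim r(p)}, suppose $U_{\tilde p}\cap U_p\ni z$. Then $\mfr(z)$ is comparable to both $\mfr(p)$ and $\mfr(\tilde p)$ by \ref{Item_Coord_g_p bded}, so we may take $\Lambda_2\ge\Lambda_1^2$. For \ref{Item_Coord_U'_p cap U'_q => q in U_p}, the set $U'_{p_i}$ has intrinsic diameter at most about $\Lambda_1\Lambda_2^{-1}\mfr(p_i)$, while $U_{p_1}$ contains the intrinsic ball of radius about $\Lambda_1^{-1}\mfr(p_1)$. Using $\mfr(p_2)\le\Lambda_2\mfr(p_1)$ would be circular. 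Instead we use the sharper comparability $\mfr(p_2)\le 2\mfr(p_1)$, which holds once $\Lambda_2$ is large, because the $U'$ sets are tiny relative to $\mfr$. Then $d(p_1,p_2)\lesssim \Lambda_1\Lambda_2^{-1}\mfr(p_1)$, and enlarging $\Lambda_2$ gives $p_2\in U_{p_1}$.

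The main obstacle is the uniformity in the second step: obtaining a uniform lower bound on the rescaled normal injectivity radius and uniform $C^{\bar q}$ control of Fermi coordinates all the way to the singular points. I would establish this first on the exact cone $\mbfC_j$ with the Euclidean metric, where everything is scale invariant and reduces to a compact annulus. I would then transfer it to $\Sigma$ via the $C^{\bar q}$ graphical convergence of $s^{-1}(\exp^{\bar g}_{p_j})^{-1}(\Sigma)$ to $\mbfC_j$ on annuli, since $\bar g$ in normal coordinates rescales to $g_{\eucl}+O(s^2)$.
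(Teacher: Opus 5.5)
Your proposal is correct and follows essentially the same route as the paper: normal-tubular (Fermi) charts over intrinsic balls of radius proportional to $\mfr(p)$, with uniform control coming from the smooth convergence of the rescaled $\Sigma$ to its regular tangent cones near $\Sing(\Sigma)$ and from compactness elsewhere, and (vi)--(vii) obtained from the comparability of $\mfr$ on each chart. Using Fermi coordinates also away from the singular set is what (iv)--(v) require, so that choice is the right one.

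The one loose end, which the paper's sketch also passes over, is the exact normalization in (iv): with normal extent $c\,\mfr(p)$ you get $(c\,\mfr(p))^{-1}$, not $\mfr(p)^{-1}$, and $c$ cannot simply be absorbed into $\Lambda_1$. The fix is to reparametrize each normal fibre radially, setting $\hat\varphi_p(x,y)=\exp^{\bar g}_{\phi(x)}\bigl(\mfr(p)\,\beta(|y|)\sum_a y_a\nu_a(\phi(x))\bigr)$, with $\beta\equiv 1$ on $[0,c/2]$ and $t\mapsto t\beta(t)$ smooth and strictly increasing to $c$. This gives (iv) exactly and preserves (ii) and (v) once $\Lambda_1\geq 4/c$.
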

  \begin{proof}
    Since $\Sigma$ is an $\MSI$, by its very definition we may decompose it into two parts
    \[
      \Sigma = \Sigma^1 \sqcup \Sigma^2,
    \]
    where each connected component of $\Sigma^1$ lies near a singular point and is smoothly modeled on a unique minimal regular cone in $\mathbb{R}^N$, while $\Sigma^2$ is a smooth compact submanifold with smooth boundary.
For $p \in \Sigma^1$, we take a geodesic ball $B_p^\Sigma$ in $\Sigma$ centered at $p$ with radius $r_p$ proportional to $\mfr(p)$, and define $\hat U_p$ to be the $r_p$-normal tubular neighborhood of $B_p^\Sigma$. The ratio $r_p/\mfr(p)$ is chosen to be a constant determined by the minimal cone modeling the connected component containing $p$. Properties (i)--(v) hold because each component is modeled on a regular minimal cone, and there are only finitely many connected components (as $\Sing(\Sigma)$ is a finite set). For $p \in \Sigma^2$, we take a geodesic ball $B^{\bar g}_p$ in $M$ centered at $p$ with radius $r_p < \mfr(p)$ such that there exists a $2$-$C^{\bar q}$ diffeomorphism $\psi_p : (\mathbb{B}^N_\varepsilon, \mathbb{B}^n_\varepsilon) \to (B^{\bar g}_p, B^{\bar g}_p \cap \Sigma)$
    for some $\varepsilon > 0$. We then define $U_p := \psi_p(\mathbb{B}^n_{\varepsilon/2} \times \mathbb{B}^{N-n}_{\varepsilon/2})$, and let $\hat{\varphi}_p$ be a suitable rescaling of $\psi_p$. In this case, properties (i)--(v) hold since $\Sigma^2$ is a smooth compact submanifold with smooth boundary.
  Finally, by the construction above, there exists a constant $C$ such that the ratio between the maximum and minimum values of $\mfr$ in each $U_p$ is bounded by $C$. Hence, properties (vi) and (vii) follow immediately by choosing $\Lambda_2$ sufficiently large.
  \end{proof}

  By Lemma \ref{Lem_MSI w Loc Coord}, we can take a maximal collection of pairwise disjoint sets $\{U_{p_\alpha}'\}_\alpha$ of open subsets in $\{U_p': p\in \Sigma\}$, and then by \ref{Item_Coord_U'_p cap U'_q => q in U_p}, $\{U_{p_\alpha}\}_\alpha$ forms an open cover of $\Sigma$, and by \ref{Item_Coord_r bded}-\ref{Item_Coord_U_p cap U_q => r(q) sim r(p)}, $(\Sigma, \bar g|_{\Sigma}, \mfr|_{\Sigma}, \{U_\alpha:= U_{p_\alpha}, \varphi_\alpha:= \hat \varphi_{p_\alpha}|_{B^n_1\times \{0\}}\})$ satisfy properties \ref{Item_AppSetup_Sigma, g}-\ref{Item_AppSetup_Coord Bd} listed at the beginning of this appendix for some $\Lambda=\Lambda( \Sigma, \bar g, \bar q)>\Lambda_2>1$. In what follows, we fix integers \be\label{eq:IntegersRange}
  \bar q\geq 2q, \ \ \text{and} \ \  
  q-1\geq k\geq n+3\,;
  \ee
  we shall use $\mfr_\alpha, \hat\varphi_\alpha, \bar g_\alpha, Y_\alpha, u_\alpha$ in place of $\mfr(p_\alpha),\hat\varphi_{p_\alpha}, \bar g_{p_\alpha}, Y_{p_\alpha}, u_{p_\alpha}$ to save notations. Note that, by the way this cover has been selected there holds $\sum_\alpha \mfr^{\sigma}_\alpha<+\infty$ for any $\sigma>0$.

  The analytic mean curvature will be viewed as a bundle map from the 2-jet bundle. Towards that goal, we shall prioritize two bundles (namely, two triples of data as presented before): we let
  \begin{itemize}
  \item the normal bundle $T^\perp\Sigma$ be endowed with the scaling-invariant fiberwise metric $h_0$ such that $|z|_{h_0}=\mfr^{-1}|z|_{\bar g}$, and the normal connection $\bar\nabla^{(0)}\equiv \bar\nabla$;
  \item the 2-jet bundle  \[
    J^2\Sigma:= T^\perp\Sigma\oplus (T^\perp\Sigma\otimes T^*\Sigma) \oplus (T^\perp\Sigma\otimes \cT^{\otimes 2}(T^*\Sigma))\,,
  \] be endowed with the scaling-invariant fiberwise metric $h_2$: \[
    |z\oplus \xi \oplus \eta|_{h_2}^2 := \mfr^{-2}|z|_{\bar g}^2 + |\xi|_{\bar g}^2 + \mfr^2|\eta|_{\bar g}^2 \,,
  \] and with the induced connection $\bar\nabla^{(2)}$ as determined by $\bar\nabla$ (that is, from $\bar g$).
  \end{itemize}

  \begin{lemma} \label{Lem_App_X_alpha, Y_alpha C^2q Bd}
    $(T^\perp \Sigma, h_0, \bar\nabla^{(0)})$ is $C^{\bar q}$-bounded through the trivializations $Y_\alpha: (T^\perp\Sigma)|_{U_\alpha}\to \RR^{N-n}$; 
    $(J^2\Sigma, h_2, \bar\nabla^{(2)})$ is also $C^{\bar q-2}$ bounded through the following trivialization $X_\alpha: (J^2\Sigma)|_{U_\alpha} \to \RR^{N-n}\oplus \RR^{n(N-n)} \oplus \RR^{n^2(N-n)}$: for $z_1, z_2, z_3\in T^\perp \Sigma$, $\omega\in T^*\Sigma$ and $\vartheta\in \cT^{\otimes 2}(T^*\Sigma)$, \[
      X_\alpha(z_1\oplus (z_2\otimes\omega)\oplus (z_3\otimes \vartheta)):= Y_\alpha(z_1)\oplus (Y_\alpha(z_2)\otimes \varphi_\alpha^* \omega)\oplus (Y_\alpha(z_3)\otimes \varphi_\alpha^* \vartheta) \,.
    \]
    Moreover, for every integer $2\leq k\leq \bar q$ and $\tau\in \RR$, the following is a bounded linear map: 
    \begin{align}
      \mathfrak{J}^2: W^{k,2}_\tau(T^\perp \Sigma) \to W^{k-2,2}_\tau(J^2 \Sigma), \quad u\mapsto u\oplus\bar\nabla u\oplus\bar\nabla^2 u \,.  \label{Equ_App_J^2(u)=(u,du,D^2u)}
    \end{align}
  \end{lemma}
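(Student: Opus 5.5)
The plan is to check the two bundle bounds directly in the charts $\hat\varphi_\alpha$ of Lemma \ref{Lem_MSI w Loc Coord}, and then to obtain the boundedness of $\mathfrak J^2$ from the norm equivalence in Lemma \ref{Lem_|v|_W^k,2 sim sum_alpha |v_alpha|_W^k,2}.

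\textbf{Normal bundle.} Start with $(T^\perp\Sigma,h_0,\bar\nabla^{(0)})$.

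Condition \eqref{Equ_App_|X_alpha(v)| sim |v|} is in fact an identity. Item \ref{Item_Coord_Y_p:T^perp to R^(N-n)} gives $|Y_\alpha(v)|=\mfr_\alpha^{-1}|v|_{\bar g}$, while by definition $|v|_{h_0}=\mfr^{-1}|v|_{\bar g}$. Item \ref{Item_Coord_g_p bded} gives $\mfr\sim_{\Lambda_1}\mfr_\alpha$ on $U_\alpha$, so the two quantities agree up to a factor $\Lambda_1$.

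For \eqref{Equ_App_Conn bded}, the frame is $s_{\alpha,i}=Y_\alpha^{-1}(e_i)$, which is $d\hat\varphi_\alpha(0,e_i)$ restricted to $U_\alpha$. I will rescale the metric to $\bar g_\alpha=\mfr_\alpha^{-2}\hat\varphi_\alpha^*\bar g$; it is $\Lambda_1$-$C^{\bar q}$ close to Euclidean on $\BB^n_1\times\BB^{N-n}_1$. The normal connection is invariant under constant rescaling. Hence $\mfr_\alpha^\ell|\bar\nabla^\ell s_{\alpha,i}|_{\bar g}$ is computed in these rescaled coordinates, where it is a polynomial in the derivatives of order at most $\ell+1$ of $\bar g_\alpha$, its inverse, and the coordinate fields $\partial_{y_i}$ along $\BB^n_1\times\{0\}$. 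The normal projection involves $\bar g_\alpha$ only algebraically, while Christoffel symbols cost one derivative, so $\ell\le\bar q-1$ derivatives are controlled by the $C^{\bar q}$ bounds. To reach $\ell=\bar q$ I will run Lemma \ref{Lem_MSI w Loc Coord} with $\bar q+1$ in place of $\bar q$, which is harmless because $\bar q$ is arbitrary there. The resulting $h_0$-norm then carries the factor $\mfr^{-1}\cdot\mfr_\alpha\sim 1$, and the powers $\mfr^\ell\sim\mfr_\alpha^\ell$ are absorbed, giving a uniform $\Lambda'$.

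\textbf{2-jet bundle.} For $(J^2\Sigma,h_2,\bar\nabla^{(2)})$ the frame is formed by the products $s_{\alpha,i}\otimes\varphi_\alpha^*dx^{a}$ (and the analogous double tensors), with $dx^a$ the coordinate covectors.

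The weights in $h_2$ are chosen so that, after rescaling by $\mfr_\alpha$, each summand is compared with Euclidean norms at unit scale. Indeed, $|dx^a|_{\bar g}\sim\mfr_\alpha^{-1}$, so
\[
|z\otimes dx^a|_{\bar g}\sim \mfr_\alpha^{-1}\cdot\mfr_\alpha^{-1}|Y_\alpha z|\cdot\mfr_\alpha
\]
after accounting for $|z|_{\bar g}=\mfr_\alpha|Y_\alpha z|$. The remaining factors of $\mfr$ cancel in the same way. The tangential Levi-Civita part of the connection is bounded exactly like the normal part, but it involves the second fundamental form of the chart image. That costs one further derivative of $\bar g_\alpha$, and this accounts for the loss to $C^{\bar q-2}$.

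\textbf{Boundedness of $\mathfrak J^2$.} Write $u=\sum u_\alpha^i s_{\alpha,i}$ on $U_\alpha$. In the trivialization $X_\alpha$, the components of $(u,\bar\nabla u,\bar\nabla^2u)$ are
\[
\bigl(u_\alpha,\ Du_\alpha+\Gamma_\alpha u_\alpha,\ D^2u_\alpha+\ldots\bigr),
\]
where the coefficient terms come from the Christoffel data $\mfr_\alpha\bar\nabla s_{\alpha,i}$ and their derivatives. Those coefficients are uniformly $C^{\bar q-2}$, so multiplication by them is bounded on $W^{k-2,2}(\BB^n_1)$ for $k\le\bar q$. Consequently
\[
\|(\mathfrak J^2u)_\alpha\|_{W^{k-2,2}}\lesssim\|u_\alpha\|_{W^{k,2}}.
\]
Summing with the weights $\mfr_\alpha^{-2\tau}$ and applying \eqref{Equ_|v|_W^k sim sum_alpha |v_alpha|_W^k} on both sides gives the bound, first for $C^k_c$ sections and then for all of $W^{k,2}_\tau$ by density.

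\textbf{Main obstacle.} The real work is bookkeeping: the exact count of derivatives lost, and the scaling-invariance needed for constants uniform in $\alpha$. The conical ends make this non-trivial, and it relies on items \ref{Item_Coord_r bded}–\ref{Item_Coord_g_p bded} holding uniformly.
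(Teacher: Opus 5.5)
Your proposal is correct and follows essentially the same route as the paper: the norm comparison from item \ref{Item_Coord_Y_p:T^perp to R^(N-n)} together with the definitions of $h_0,h_2$; the connection bounds obtained by pulling back through $\varphi_\alpha$ and rescaling to $\bar g_\alpha$; and the boundedness of $\mathfrak J^2$ from its local form with connection coefficients combined with \eqref{Equ_|v|_W^k sim sum_alpha |v_alpha|_W^k}. There are only cosmetic slips, none affecting validity: $\bar\nabla^\ell s_{\alpha,i}$ involves only $\ell$ derivatives of $\bar g_\alpha$, so rerunning Lemma \ref{Lem_MSI w Loc Coord} with $\bar q+1$ is unnecessary; the Levi-Civita part on $T^*\Sigma$ depends only on the induced metric and not on any second fundamental form, so the $C^{\bar q-2}$ in the statement is simply conservative; and your displayed scaling should read $|z\otimes dx^a|_{\bar g}\sim\mfr_\alpha|Y_\alpha z|\cdot\mfr_\alpha^{-1}=|Y_\alpha z|$, without the extra factor $\mfr_\alpha^{-1}$.
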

  \begin{proof}
    \eqref{Equ_App_|X_alpha(v)| sim |v|} for $Y_\alpha$ follows directly from Lemma \ref{Lem_MSI w Loc Coord} \ref{Item_Coord_Y_p:T^perp to R^(N-n)} and the definition of $h_0$. While for every $z_1, z_2, z_3\in T^\perp \Sigma$, $\omega\in T^*\Sigma$ and $\vartheta\in \cT^{\otimes 2}(T^*\Sigma)$, we have 
    \begin{align*}
      |X_\alpha(z_1\oplus (z_2\otimes\omega)\oplus (z_3\otimes \vartheta))|^2 
      & = |Y_\alpha(z_1)|^2 + |Y_\alpha(z_2)|^2|\varphi_\alpha^*\omega|^2 + |Y_\alpha(z_3)|^2|\varphi_\alpha^*\vartheta|^2 \\
      & \sim_\Lambda |z_1|_{h_0}^2 + |z_2|_{h_0}^2\cdot |\varphi_\alpha^*\omega|_{\mfr_\alpha^2\bar g_\alpha}^2\cdot \mfr_\alpha^2 + |z_3|_{h_0}^2\cdot |\varphi_\alpha^*\vartheta|_{\mfr_\alpha^2\bar g_\alpha}^2\cdot \mfr_\alpha^4 \\
      & \sim_\Lambda \mfr^{-2}|z_1|_{\bar g}^2 + |z_2\otimes \omega|_{\bar g}^2 + \mfr^2|z_3\otimes \vartheta|_{\bar g}^2 \\
      & = |z_1\oplus (z_2\otimes\omega)\oplus (z_3\otimes \vartheta)|_{h_2}^2 \,.
    \end{align*}
    This proves \eqref{Equ_App_|X_alpha(v)| sim |v|} for $X_\alpha$.

    To prove \eqref{Equ_App_Conn bded} for $Y_\alpha$ and $X_\alpha$, we pull back everything by $\varphi_\alpha$, note that by Lemma \ref{Lem_MSI w Loc Coord} \ref{Item_Coord_g_p bded},
    \begin{align*}
      (\mfr^\ell |\bar\nabla^\ell s_{\alpha, i}|_{h_0})\circ\varphi_\alpha 
      & = (\mfr^{\ell-1}|\bar\nabla^\ell s_{\alpha, i}|_{\bar g})\circ \varphi_\alpha \\
      & = \mfr^{\ell-1}\circ \varphi_\alpha\cdot |(\nabla_{\mfr_\alpha^2\bar g_\alpha})^\ell ((\hat\varphi_\alpha^{-1})_* s_{\alpha, i})|_{\mfr_\alpha^2\bar g_\alpha}) \\
      & \sim_\Lambda |(\nabla_{\bar g_\alpha})^\ell e_i|_{\bar g_\alpha} \leq C(q, \Lambda)\,,
    \end{align*}
    for every $1\leq i\leq N-n$ and $1\leq \ell \leq \bar q$; similarly,
    \begin{align*}
      (\mfr^\ell |\bar\nabla^\ell (s_{\alpha, i}\otimes dx^J)|_{h_2})\circ\varphi_\alpha 
      & = (\mfr^{\ell-1+|J|}|\bar\nabla^\ell (s_{\alpha, i}\otimes dx^J)|_{\bar g})\circ \varphi_\alpha \\
      & = \mfr^{\ell-1+|J|}\circ \varphi_\alpha\cdot |(\nabla_{\mfr_\alpha^2\bar g_\alpha})^\ell ((\hat\varphi_\alpha^{-1})_* (s_{\alpha, i}\otimes dx^J))|_{\mfr_\alpha^2\bar g_\alpha}) \\
      & \sim_\Lambda |(\nabla_{\bar g_\alpha})^\ell (e_i\otimes dx^J)|_{\bar g_\alpha} \leq C(q, \Lambda)\,,
    \end{align*}    
    for $|J|\leq 2$, $1\leq i\leq N-n$ and $1\leq \ell \leq \bar q-2$. This proves \eqref{Equ_App_Conn bded} for $Y_\alpha$ and $X_\alpha$.

    To show that the linear map $\mathfrak{J}^2$ in \eqref{Equ_App_J^2(u)=(u,du,D^2u)} is bounded, note that since $u = u_\alpha^i\cdot  s_{\alpha,i}$, we have 
    \begin{align*}
      \bar\nabla u = s_{\alpha,i}\otimes (du_\alpha^i + u_\alpha^j\,\omega_{\alpha}{}^i_j)\,; & &
      \bar\nabla^2 u = s_{\alpha,i}\otimes (\bar\nabla (du_\alpha^i + u_\alpha^j\,\omega_{\alpha}{}^i_j) + \omega_\alpha{}^i_k\otimes (du_\alpha^k + u_\alpha^j\,\omega_{\alpha}{}^k_j))
    \end{align*}
    where $\omega_\alpha{}^i_j$ is the connection form of $\bar\nabla$ under $\{s_{\alpha,i}\}$. Thus, there exist $C^{\bar q-2}$ $2$-tensors $\Omega_\alpha{}^i_j, \Theta_\alpha{}^{ik}_j$ on $B^n_1$ with $\|\omega_\alpha{}^i_j\|_{C^{\bar q-1}}, \|\Omega_\alpha{}^i_j\|_{C^{\bar q-2}}, \|\Theta_\alpha{}^{ik}_j\|_{C^{\bar q-2}} \leq C(\bar q,\Lambda)$, such that $\mathfrak{J}^2(u)$ has the following form under trivialization: 
    \begin{align}
      (\mathfrak{J}^2(u))_\alpha = u_\alpha\oplus \left(du_\alpha + u_\alpha^j\,e_i\otimes\omega_{\alpha}{}^i_j \right) \oplus \left( D^2u_\alpha + \partial_k u_\alpha^j\,e_i\otimes\Theta_{\alpha}{}^{ik}_j + u_\alpha^j\,e_i\otimes\Omega_{\alpha}{}^i_j \right)\,.  \label{Equ_App_J^2(u) loc form}
    \end{align}
    Combining this with \eqref{Equ_|v|_W^k sim sum_alpha |v_alpha|_W^k} proves the boundedness of $\mathfrak{J}^2$.
  \end{proof}

  The following Lemma gives an explicit expression of the analytic mean curvature of a normal graph under the local coordinates.
  \begin{lemma} \label{Lem_App_Loc Form of H(g,u)}
    For every $u\in C^{\bar q}_{loc}(T^\perp \Sigma)$ with $|u|\leq \Lambda^{-1}\mfr$ and every $C^q$ metric $g$ on $M$, there exists a normal section $\mathfrak{\check{h}}(g, u)\in C^{q-1}_{loc}(T^\perp \Sigma)$ such that (cf. \eqref{eq:AreaFunctNormGraphs}) \[
      \left[\frac{d}{dt}\right]_{t=0} \check A(g,u+tw) = \int_\Sigma \langle \mathfrak{\check{h}}(g, u), w\rangle_{\bar g}\ d\scH^n_{\bar g}\,, 
      \qquad \forall\, w\in C^\infty_c(T^\perp\Sigma) \,,
    \]
    where $\Sigma(u+tw)=E_{u+tw}(\Sigma)$ for the exponential map understood in metric $\bar g$ (see \ref{Item_Coord_exp(u)=(x, u_p(x))} in Lemma \ref{Lem_MSI w Loc Coord}).
    
    For every $\alpha$, $Y_\alpha\circ \mathfrak{\check{h}}(g, u)\circ \varphi_\alpha = \mathfrak{\check{h}}(g, u)_\alpha$, where $\mathfrak{\check{h}}(g, u)_\alpha: \BB^n_1\to \RR^{N-n}$ is given by 
     \begin{align}
     \begin{split}
      & \mathfrak{\check{h}}(g, u)_\alpha(x) \\
      & \quad = \mfr_\alpha^{-2} \left[ - \bar G_\alpha(x)^{-1} \partial_i\left(\bar G_\alpha(x) \cdot \partial_{\xi_i}f_{g,\alpha}(x, u_\alpha(x), du_\alpha(x)) \right) 
      + \partial_z f_{g,\alpha}(x, u_\alpha(x), du_\alpha(x))\right]\,;
     \end{split} \label{Equ_App_MeanCurv_LocDef}
    \end{align}
    and $f_{g, \alpha} = f_{g, \alpha}(x, z, \xi)\in C^q_{loc}(\BB^n_1\times \BB^{N-n}_1\times \RR^{n(N-n)})$ is determined by
    \begin{align}
      f_{g, \alpha}(x, u_\alpha(x), du_\alpha(x)) = \bar G_\alpha(x)^{-1} \det[(E_{u,\alpha}{}^*g_\alpha)_{ij}]^{1/2}\,, \label{Equ_App_Area Density_LocDef}
    \end{align}
    with $\bar G_\alpha(x):= \det^{1/2}[(\bar g_\alpha)_{ij}]$, $E_{u,\alpha}(x):=(x, u_\alpha(x))\in \RR^N$, $g_\alpha:= \mfr_\alpha^{-2}\hat\varphi_\alpha{}^*g$.
  \end{lemma}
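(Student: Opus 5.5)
The plan is to derive the formula chart by chart from the classical Euler--Lagrange computation, and then glue the local representatives using uniqueness. Fix $u$ with $|u|\leq\Lambda^{-1}\mfr$ and a chart index $\alpha$. By Lemma \ref{Lem_MSI w Loc Coord}\,\ref{Item_Coord_exp(u)=(x, u_p(x))}, the portion of $\Sigma(u)$ lying over $U_\alpha$ is, in the rescaled coordinates $\hat\varphi_\alpha$, the graph $x\mapsto(x,u_\alpha(x))$. Since $g_\alpha=\mfr_\alpha^{-2}\hat\varphi_\alpha^*g$, we have
\[
\scH^n(g,\cdot)=\mfr_\alpha^n\,\scH^n(g_\alpha,\cdot),
\]
and the area of the piece of $\Sigma(u)$ over $\varphi_\alpha(\Omega)$ equals $\mfr_\alpha^n\int_\Omega\det[(E_{u,\alpha}^*g_\alpha)_{ij}]^{1/2}dx$.

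The first task is to define $f_{g,\alpha}$ by \eqref{Equ_App_Area Density_LocDef}, i.e.
\[
f_{g,\alpha}(x,z,\xi):=\bar G_\alpha(x)^{-1}\det\bigl[g_\alpha(x,z)\bigl(e_i+\xi_i,\,e_j+\xi_j\bigr)\bigr]^{1/2},
\]
where $\xi_i\in\RR^{N-n}$ are the columns of $\xi$. This is a polynomial expression in $\xi$ with coefficients built from $g_\alpha$ evaluated at $(x,z)$. The determinant is positive for $|z|<1$ and $|\xi|$ small, so $f_{g,\alpha}$ is $C^q$ because $g$ is.

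Next take $w\in C^\infty_c(T^\perp\Sigma)$. Using a partition of unity subordinate to $\{U_\alpha\}$ and linearity in $w$, it suffices to treat $w$ supported in a single $U_\alpha$. Then
\[
\check A(g,u+tw)-\check A(g,u)=\mfr_\alpha^n\int_{\BB^n_1}\bigl[f_{g,\alpha}(x,u_\alpha+tw_\alpha,du_\alpha+t\,dw_\alpha)-f_{g,\alpha}(x,u_\alpha,du_\alpha)\bigr]\bar G_\alpha\,dx,
\]
where $w_\alpha=Y_\alpha\circ w\circ\varphi_\alpha$; this uses the fiberwise linearity of $Y_\alpha$ in \ref{Item_Coord_exp(u)=(x, u_p(x))}. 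Differentiating at $t=0$ and integrating by parts, with no boundary terms because $w_\alpha$ has compact support, gives
\[
\mfr_\alpha^n\int\bigl[-\bar G_\alpha^{-1}\partial_i\bigl(\bar G_\alpha\,\partial_{\xi_i}f\bigr)+\partial_zf\bigr]\cdot w_\alpha\,\bar G_\alpha\,dx.
\]

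The remaining step is to convert this Euclidean pairing on $\RR^{N-n}$ into $\int_\Sigma\langle\cdot,w\rangle_{\bar g}\,d\scH^n_{\bar g}$. Two facts are needed. First, $d\scH^n_{\bar g}=\mfr_\alpha^n\bar G_\alpha\,dx$ on $U_\alpha$. Second, $Y_\alpha$ scales normal vectors by $\mfr(p_\alpha)^{-1}$, by \ref{Item_Coord_Y_p:T^perp to R^(N-n)}. To make the pairing identification exact, I would build the construction of $\hat\varphi_p$ so that $Y_p$ is a $\bar g$-conformal isometry onto $\{0\}\times\RR^{N-n}$ pointwise (normal exponential coordinates, which \ref{Item_Coord_Y_p:T^perp to R^(N-n)} suggests). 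Then $\langle a,b\rangle_{\bar g}=\mfr_\alpha^2\,Y_\alpha a\cdot Y_\alpha b$, and solving for $Y_\alpha\check{\mathfrak h}$ gives exactly the factor $\mfr_\alpha^{-2}$ in \eqref{Equ_App_MeanCurv_LocDef}.

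I expect this normalization to be the main obstacle. If $Y_\alpha$ is only comparable to an isometry, the pairing introduces a $u$-independent matrix $\mathcal M_\alpha(x)$, and the local formula must be read with $Y_\alpha^{-1}$ composed with its adjoint. I would resolve it by verifying that the property $|Y_p v|=\mfr(p)^{-1}|v|$ in \ref{Item_Coord_Y_p:T^perp to R^(N-n)} holds for all $v$, so that polarization makes $\mfr(p)Y_p$ a linear isometry.

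Finally, define $\check{\mathfrak h}(g,u)$ on $U_\alpha$ by $Y_\alpha^{-1}$ applied to the right-hand side of \eqref{Equ_App_MeanCurv_LocDef} composed with $\varphi_\alpha^{-1}$. On overlaps these local definitions agree, because each one represents the same linear functional $w\mapsto\frac{d}{dt}\check A$ on sections supported in the overlap, and a continuous section pairing to zero against all test sections vanishes. For regularity, the formula involves one derivative of $\partial_\xi f_{g,\alpha}\in C^{q-1}$ composed with the $C^{\bar q}$ maps $(u_\alpha,du_\alpha)$. The chain rule then gives $C^{q-1}_{loc}$, since $\bar q\geq 2q$ and the remaining coefficients are smooth.
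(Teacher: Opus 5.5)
Your proposal is correct and follows essentially the same route as the paper. You write the area of $\Sigma(u)$ over each chart via the density $f_{g,\alpha}$, differentiate for $w$ supported in a single $U_\alpha$, integrate by parts, and convert the Euclidean pairing into the $\bar g$-pairing using $d\scH^n_{\bar g}=\mfr_\alpha^n\bar G_\alpha\,dx$ and the exact scaling $|Y_\alpha v|=\mfr_\alpha^{-1}|v|_{\bar g}$ of item (iv); your overlap-consistency argument makes explicit what the paper leaves implicit.

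One small sharpening: the Gram matrix $[g_\alpha(x,z)(e_i+\xi_i,e_j+\xi_j)]$ is positive definite for \emph{every} $\xi$, because the vectors $e_i+\xi_i$ always project onto the basis $e_i$ of $\RR^n$. So you need not restrict to small $|\xi|$, and this matters because the lemma assumes no smallness of $du$.
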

  \begin{proof}
    Recall that, by the area formula 
\[
    \check A(g,u)
= \int_\Sigma d\mu_{E_u^*g}
= \int_\Sigma f_g[u]\,d\scH^n_{\bar g}\,,
\]
    where $f_g[u]\in C^q_{loc}(\Sigma;\R)$ and in local charts there holds
    \begin{align}
      f_g[u]\circ \varphi_\alpha = \det[(\varphi_\alpha{}^*\bar g)_{ij}]^{-1/2}\det[((E_u\circ \varphi_\alpha)^*g)_{ij}]^{1/2} = f_{g, \alpha}(\cdot, u_\alpha, du_\alpha)\,,  \label{Equ_App_Area density}
    \end{align}
    here the second equation follows from Lemma \ref{Lem_MSI w Loc Coord} \ref{Item_Coord_g_p bded}, \ref{Item_Coord_exp(u)=(x, u_p(x))}. Hence, if $\spt w\subset U_\alpha$, then
    \begin{align*}
      \left[\frac{d}{dt}\right]_{t=0} \check A(g,u+tw) 
      & = \int_{\BB^n_1} \left[\frac{d}{dt}\right]_{t=0} f_g[u+tw]\circ \varphi_\alpha\ d\scH^n_{\varphi_\alpha^*\bar g} \\
      & = \int_{\BB^n_1} \left(\langle \partial_z f_{g, \alpha}, w_\alpha\rangle_{g_\eucl} + \sum_i\langle \partial_{\xi_i}f_{g,\alpha}, \partial_i w_\alpha \rangle_{g_\eucl}\right)\ \mfr_\alpha^n \bar G_\alpha d\scH^n \\
      & = \int_{\BB^n_1}\mfr_\alpha^2\langle \mathfrak{\check{h}}(g, u)_\alpha, w_\alpha\rangle_{g_\eucl}\ \mfr_\alpha^n \bar G_\alpha d\scH^n \\
      & = \int_{\Sigma} \langle \mathfrak{\check{h}}(g, u), w\rangle_{\bar g}\ d\scH^n_{\bar g} \,.
    \end{align*}
    Here the second equality follows from \eqref{Equ_App_Area density} and Lemma \ref{Lem_MSI w Loc Coord} \ref{Item_Coord_g_p bded}, the third equality follows from the definition of $\mathfrak{\check{h}}(g, u)_\alpha$ and the divergence theorem, and the last equality follows from Lemma \ref{Lem_MSI w Loc Coord} \ref{Item_Coord_Y_p:T^perp to R^(N-n)}, \ref{Item_Coord_exp(u)=(x, u_p(x))}, which gives the local expression of $\mathfrak{\check{h}}(g, u)$. Since from the explicit definition \eqref{Equ_App_Area Density_LocDef} (cf. also \cite[Appendix A]{CLW24}), $\partial_zf_{g,\alpha}\in C^{q-1}, \partial_{\xi}f_{g,\alpha}\in C^{q}$, we have indeed $\mathfrak{\check{h}}(g, u)_\alpha \in C^{q-1},\ \mathfrak{\check{h}}(g, u)\in C_{loc}^{q-1}$.
  \end{proof}

  For the following statement, and subsequent proofs recall from the main body of the article that $\Gamma(q; \bar g,\Sigma)$ denotes the set of $C^q$ metrics which coincide with $\bar g$ on the set $\Sing(\Sigma)$ as per Definition \ref{def:MetricsCoincide}.

  \begin{corollary} \label{Cor_App_cH_g(J^2u)=H_G(g,u)}
    For every $\tau< 1$, there exists a $C^{\bar q-q-1}$ map \[
      \cH: \Gamma(q; \bar g,\Sigma) \to \scC^{q-1}_{\tau, \tau-2}(J^2\Sigma, T^\perp \Sigma)\,, \quad g\mapsto \cH_g\,,
    \] 
    such that for every $g\in \Gamma(q; \bar g,\Sigma)$ and $u\in W^{k,2}_{loc}(T^\perp \Sigma)$ for $k$ in the range \eqref{eq:IntegersRange} with $|\frak{J}^2u|_{h_2}<\Lambda^{-1}$, we have in $W^{k-2,2}_{loc}$ 
    \be\label{eq:Factorization}
      \cH_g\circ\mathfrak{J}^2(u) = \mathfrak{\check{h}}(g, u) \,.
    \ee
  \end{corollary}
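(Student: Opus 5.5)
The plan is to define $\cH_g$ chart by chart from the Euler--Lagrange expression \eqref{Equ_App_MeanCurv_LocDef}. The first step is to expand the divergence there by the chain rule. This writes $\mathfrak{\check h}(g,u)_\alpha(x)$ as $\mfr_\alpha^{-2}F_{g,\alpha}(x,u_\alpha,du_\alpha,D^2u_\alpha)$, where
\[
F_{g,\alpha}(x,z,\xi,\eta)
:=-\partial^2_{\xi_i\xi_j}f_{g,\alpha}\,\eta_{ij}
-\partial^2_{\xi_i z}f_{g,\alpha}\,\xi_i
-\partial^2_{x_i\xi_i}f_{g,\alpha}
-\bar G_\alpha^{-1}\partial_i\bar G_\alpha\,\partial_{\xi_i}f_{g,\alpha}
+\partial_zf_{g,\alpha}.
\]
This is a fiber map on coordinate $2$-jets, affine in $\eta$ and of class $C^{q-1}$.

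Next I would convert coordinate jets into covariant jets. The local formula \eqref{Equ_App_J^2(u) loc form} shows that $(u_\alpha,du_\alpha,D^2u_\alpha)$ is obtained from $X_\alpha(\mathfrak J^2u)$ by a fiberwise affine-linear, invertible map. Its coefficients $\omega_\alpha,\Theta_\alpha,\Omega_\alpha$ are bounded in $C^{\bar q-2}$. Composing $\mfr_\alpha^{-2}F_{g,\alpha}$ with this inverse, and then with $Y_\alpha^{-1}$ on the target, defines $\cH_g$ over $U_\alpha$.

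Well-definedness across overlaps comes from the variational characterization in Lemma \ref{Lem_App_Loc Form of H(g,u)}. Every $2$-jet at a point is the jet of some smooth local section $u$. For such $u$, both chart expressions equal the intrinsic section $\mathfrak{\check h}(g,u)$. Hence the chart definitions agree on overlaps. The identity \eqref{eq:Factorization} then holds for smooth $u$ by construction, and extends to $W^{k,2}_{loc}$ by density together with the continuity of composition (Lemma \ref{Lem_|psi(f_1-f_2)|<|psi||f_1-f_2|}).

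The second step is the uniform bound on $\sup_\alpha\mfr_\alpha^{2}\|(\cH_g)_\alpha\|_{C^{q-1}}$. Here $\tau-\tau'=2$. The factor $\mfr_\alpha^{-2}$ cancels exactly, up to the $\mfr$-scalings of $h_0$ and $h_2$, which are built into $Y_\alpha$ and $X_\alpha$ and are bounded by $\Lambda$. What remains is that $f_{g,\alpha}$ is bounded in $C^q$ uniformly in $\alpha$. This follows because $g_\alpha=\mfr_\alpha^{-2}\hat\varphi_\alpha^*g$ is uniformly $C^q$ equivalent to $g_\eucl$: rescaling by $\mfr_\alpha\leq\Lambda$ only shrinks derivatives of $g$.

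The third step, which I expect to be the main point and the only place where $\tau<1$ and $g\in\Gamma(q;\bar g,\Sigma)$ enter, is the finiteness of $\|\cH_g(0)\|_{W^{q-1,2}_{\tau-2}}$. The section $\cH_g(0)$ is the analytic mean curvature of $\Sigma$ itself in the metric $g$, and it vanishes when $g=\bar g$. Since $g=\bar g$ at $\Sing(\Sigma)$, we have $|\partial^j(g-\bar g)|\leq C\|g-\bar g\|_{C^q}$ for $j\geq 1$ and $|g-\bar g|\leq C\,\mfr$. In the rescaled charts this gives $\|g_\alpha-\bar g_\alpha\|_{C^q}\leq C\mfr_\alpha$.

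Applying the mean value theorem in the metric argument of $F$ at the zero jet, $(\cH_g(0))_\alpha$ is then $O(\mfr_\alpha)$ in $C^{q-1}$ after the trivialization scalings. Lemma \ref{Lem_|v|_W^k,2 sim sum_alpha |v_alpha|_W^k,2} turns this into the bound
\[
\|\cH_g(0)\|^2_{W^{q-1,2}_{\tau-2}}\lesssim\sum_\alpha \mfr_\alpha^{-2(\tau-2)}\,\mfr_\alpha^{-2}\,\mfr_\alpha^{2}\lesssim\sum_\alpha\mfr_\alpha^{2(2-\tau)-2}=\sum_\alpha\mfr_\alpha^{2(1-\tau)}.
\]
The exponent bookkeeping here should be rechecked against the normalizations of $h_0$ and $h_2$. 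The sum is finite because $\tau<1$ and $\sum_\alpha\mfr_\alpha^\sigma<\infty$ for every $\sigma>0$. Without the coincidence condition, the difference $g_\alpha-\bar g_\alpha$ would only be $O(1)$, and the sum would diverge. This is the blow-up of mean curvature of non-minimal cones noted after Proposition \ref{Prop:C^qMSO}.

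Finally, I would establish the regularity of $g\mapsto\cH_g$. The map $g\mapsto f_{g,\alpha}$ is built from $g\circ(x,z)$, the Jacobian of $E_{u,\alpha}$, and algebraic operations ($\det$, square root, inverse) that are real-analytic on a neighborhood of the positive-definite matrices. Differentiating in $g$ is linear, but differentiating a composition $g\circ(x,z)$ with respect to the point costs spatial derivatives. Combined with the limited $C^{\bar q-2}$ regularity of the trivializations and of the coefficients $\omega_\alpha,\Theta_\alpha,\Omega_\alpha$, this accounts for the exponent $\bar q-q-1$. The weighted estimate for the $g$-derivatives of $\cH_g(0)$ is obtained exactly as in the third step, since the variations $h=\dot g$ also vanish on $\Sing(\Sigma)$.
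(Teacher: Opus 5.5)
Your skeleton matches the paper's. You read $\cH_g$ off the local Euler--Lagrange expression \eqref{Equ_App_MeanCurv_LocDef}, absorb the prefactor $\mfr_\alpha^{-2}$ into the weight $\mfr_\alpha^{\tau-\tau'}=\mfr_\alpha^{2}$, and use $g=\bar g$ on $\Sing(\Sigma)$ and $\tau<1$ only to make $\|\cH_g(0)\|_{W^{q-1,2}_{\tau-2}}$ finite. Your overlap argument via $2$-jets of local sections, and your control of the $g$-derivatives of $\cH_g(0)$, are more explicit than the paper's.

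There is, however, a gap in your second step. The norm of $\scC^{q-1}_{\tau,\tau-2}(J^2\Sigma,T^\perp\Sigma)$ takes the $C^{q-1}$ norm of each trivialized map on all of $\BB^n_1\times\R^{m_1}$. The map you construct fails this in three ways:
it is only defined for $|z|<1$, where $g_\alpha$ lives;
it is affine in $\eta$ with leading coefficient $-\mfr_\alpha^{-2}\partial^2_{\xi\xi}f_{g,\alpha}$, essentially an inverse metric and hence nonzero;
it is unbounded in $\xi$.
Hence $\sup_\alpha\mfr_\alpha^{2}\|(\cH_g)_\alpha\|_{C^{q-1}}=+\infty$, and no uniform bound on $f_{g,\alpha}$ over its natural domain can repair this. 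Your $\cH_g$ is therefore not an element of the target space, and Lemma \ref{Lem_Differentiable} cannot be applied to it later.

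What is missing is a fiberwise cutoff. This is exactly why \eqref{eq:Factorization} is only asserted when $|\mathfrak J^2u|_{h_2}<\Lambda^{-1}$, a hypothesis your proposal never uses. The paper replaces $\eta$ by $\tilde\chi(\eta)$, where $\tilde\chi$ is the identity on the unit ball and vanishes outside a slightly larger one. To keep your overlap argument valid, cut off intrinsically instead. Multiply by a function of $|w|_{h_2}$ that equals $1$ for $|w|_{h_2}\leq\Lambda^{-1}$ and is supported where the chart formulas hold. A cutoff performed separately in each set of $X_\alpha$-coordinates would not agree on overlaps.

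Two smaller corrections. First, in your third step $(\cH_g(0))_\alpha$ is $O(\mfr_\alpha^{-1}\|g-\bar g\|_{C^q})$ in $C^{q-1}$, not $O(\mfr_\alpha)$. The mean value theorem gives $F_{g,\alpha}(\cdot,0)=O(\|g_\alpha-\bar g_\alpha\|_{C^q})=O(\mfr_\alpha\|g-\bar g\|_{C^q})$, but the prefactor $\mfr_\alpha^{-2}$ survives. The sum is then $\sum_\alpha\mfr_\alpha^{-2(\tau-2)}\mfr_\alpha^{-2}=\sum_\alpha\mfr_\alpha^{2-2\tau}$, which is your final expression. Your middle expression equals $\mfr_\alpha^{4-2\tau}$ and would give the wrong threshold $\tau<2$.

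Second, the loss in $g$-regularity is not caused by differentiating $g\circ(x,z)$ with respect to the point, since $(x,z)$ does not move with $g$. The paper writes the trivialized map as $\mfr_\alpha^{-2}\bigl[K_1(x,z,\xi,J^1g_\alpha|_{(x,z)})\cdot\tilde\chi(\eta)+K_2(x,z,\xi,J^1g_\alpha|_{(x,z)})\bigr]$, with $K_i$ fixed $C^{\bar q-1}$ functions independent of $g$. It then applies the composition lemma of \cite[Appendix]{Whi87} to $J^1g_\alpha\mapsto K_i(\cdot,J^1g_\alpha)$ from $C^{q-1}$ to $C^{q-1}$, which uses $(q-1)+1$ derivatives of $K_i$ and yields the exponent $\bar q-q-1$.
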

  \begin{proof}
    The existence of such a $C^{q-1}_{loc}$ bundle map $\cH_g$ and the fact that \eqref{eq:Factorization} holds follow from Lemma \ref{Lem_App_Loc Form of H(g,u)} (the latter via an approximation argument). We now work under local trivializations to bound its norm. By Lemma \ref{Lem_App_Loc Form of H(g,u)}, 
    \begin{align*}
      (\mathfrak{\check{h}}(g, u))_\alpha(x) = \mfr_\alpha^{-2} 
      \Big[ \breve K_1\left(x, u_\alpha, du_\alpha, J^{1}g_\alpha|_{(x, u_\alpha(x))}\right)\cdot D^2 u_\alpha 
      + \breve K_2\left(x, u_\alpha, d u_\alpha, J^{1}g_\alpha|_{(x, u_\alpha(x))}\right) \Big]\,,
    \end{align*}
    for some $\breve K_i\in C^{\bar q-1}(\BB_1^n\times \BB_1^{N-n}\times \BB_1^{n(N-n)}\times \RR^{(N\times N)(1+N)})$ depending only on $M, \bar g, \Sigma, \hat\varphi_\alpha$ (but not on $g$), with $\|\breve K_i\|_{C^{\bar q-1}}\leq C(\Lambda, q)$ (this is because by \eqref{Equ_App_MeanCurv_LocDef}, $\breve K_i$ involves at most first order derivatives of $\bar G_\alpha$, which is $C^{\bar q-1}$ bounded since $\bar g_\alpha$ is $C^{\bar q}$ equivalent to the Euclidean metric by Lemma \ref{Lem_MSI w Loc Coord} \ref{Item_Coord_g_p bded}). Here $J^{1}g_\alpha:= g_\alpha\oplus Dg_\alpha$. 
    
    Combined with \eqref{Equ_App_J^2(u) loc form} we find the local form of $\cH_g$: \[
      (\cH_g)_\alpha(x, z\oplus \xi\oplus\eta) = \mfr_\alpha^{-2}\left[ K_1(x, z, \xi, J^{1}g_\alpha|_{(x, z)})\cdot \tilde\chi(\eta) + K_2(x, z, \xi, J^{1}g_\alpha|_{(x, z)}) \right] \,.
    \]
    for some $K_i\in C^{\bar q-1}(\BB_1^n\times \BB_1^{N-n}\times \BB_1^{n(N-n)}\times \RR^{(N\times N)(1+N)})$ also depending only on $M, \bar g, \Sigma, \hat\varphi_\alpha$ (but not on $g$), with $\|K_i\|_{C^{\bar q-1}}\leq C(\Lambda, \bar q)$. Here $\tilde\chi:\R^{n^2(N-n)}\to \R^{n^2(N-n)}$ is a smooth cut-off, equals to $\id$ on the unit ball and vanishes on a slightly larger ball. This is simply to make $(\cH_g)_\al$ $C^q$ bounded in the whole $\R^{n^2(N-n)}$ factor, and it doesn't affect \eqref{eq:Factorization} since $|\frak{J}^2u|_{h_2}<1$ Thus, by the regularity of composition map (see the proof in \cite[Appendix]{Whi87}, one can also prove it using similar argument as Lemma \ref{Lem_Differentiable}), $g_\alpha \mapsto \mfr_\alpha^2\cdot (\cH_g)_\alpha$ is a $C^{(\bar q-1) - (q-1) - 1 = \bar q-q-1}$ map from the space of $C^q$-metrics on $\BB_1^n\times \BB_1^{N-n}$ to $C^{q-1}(\BB_1^n\times \BB_1^{N-n}\times \BB_1^{n(N-n)}\times \RR^{n^2(N-n)})$ with uniform derivative bounds.  
    Moreover, since $\Sigma$ is $\bar g$ minimal, we have $(\cH_{\bar g})_\alpha(\cdot, 0) = (\mathfrak{\check{h}}(\bar g, 0))_\alpha = 0$, and so 
    \begin{align*}
      \|(\cH_g)_\alpha(\cdot, 0)\|_{C^{q-1}} 
      & = \|((\cH_g)_\alpha - (\cH_{\bar g})_\alpha)(\cdot, 0)\|_{C^{q-1}}
      \lesssim_{\Lambda, q} \mfr_\alpha^{-2} (\|K_1\|_{C^{q}} + \|K_2\|_{C^{q}})\|g_\alpha-\bar g_\alpha\|_{C^{q}} \\
      & \sim_{\Lambda, q, \Sigma, \bar g} \mfr_\alpha^{-4} \left(\sum_{\ell=0}^{q} \mfr_\alpha^{\ell+2}\|\bar\nabla^\ell (\mfr_\alpha^2g_\alpha - \mfr_\alpha^2\bar g_\alpha)\|_{C^0, \mfr_\alpha^2\bar g_\alpha} \right) \\
      & \lesssim_{\Lambda, q, \Sigma, \bar g} \mfr_\alpha^{-1} \left(\sum_{\ell=0}^{q} \|\mfr^{\ell-1}\bar\nabla^\ell (g - \bar g)\|_{C^0(\hat U_\alpha), \bar g} \right) \\
      & \lesssim_{\Lambda, q, \Sigma, \bar g} \mfr_\alpha^{-1}\|g-\bar g\|_{C^{q}, \bar g} \,.
    \end{align*}
   Here in the last inequality, we use the fact that $q\geq 1$ and $\|\mfr^{-1}(g-\bar g)\|_{C^0(\hat U_\alpha), \bar g}\lesssim \|g-\bar g\|_{C^1(\hat U_\alpha), \bar g}$ since $g$ coincides with $\bar g$ on $\Sing(\Sigma)$, and $\mfr$ is the distance function to $\Sing(\Sigma)$.
    We can then estimate $\|\cH_g\|_{\scC^{q-1}_{\tau, \tau-2}}$ by the definition \eqref{Equ_Def_scC^q bundle map} and \eqref{Equ_|v|_W^k sim sum_alpha |v_alpha|_W^k}, 
    \begin{align*}
      \|\cH_g\|_{\scC^{q-1}_{\tau, \tau-2}}
      & = \sup_\alpha \mfr_\alpha^2 \|(\cH_g-\cH_{\bar g})_\alpha\|_{C^{q-1}} + \|(\cH_{\bar g})_\alpha\|_{C^{q-1}} + \|\cH_g(0)\|_{W^{q-1,2}_{\tau-2}} \\
      & \lesssim_{\Lambda, q, \Sigma, \bar g} \|g-\bar g\|_{C^{q}} + 1 + \left(\sum_\alpha \mfr_\alpha^{-2(\tau-2)}\|(\cH_g)_\alpha(\cdot, 0)\|_{W^{q-1,2}}^2 \right)^{1/2} \\
      & \lesssim_{\Lambda, q, \Sigma, \bar g} \|g-\bar g\|_{C^{q}} (1+(\sum_\alpha \mfr_\alpha^{2-2\tau})^{1/2}) + 1 \quad
        \lesssim_{\Lambda, q, \Sigma, \bar g, \tau} \|g-\bar g\|_{C^{q}} + 1 \,.
    \end{align*}
    Here in the last inequality we use that $\tau< 1$ and hence $\sum_\alpha \mfr_\alpha^{2-2\tau}<+\infty$. We thus conclude that $\cH_g$ is a bundle map in $\scC^{q-1}_{\tau, \tau-2}$. Finally, recalling in \eqref{Equ_Def_scC^q bundle map}, $\scC^{q-1}_{\tau, \tau-2}$-norm is defined by the supremum of $C^{q-1}$-norm over every local trivialization parametrized by $\alpha$, together with the global $W^{q-1,2}_{\tau-2}$-norm, we see that $g\mapsto \cH_g$ is a $C^{\bar q-q-1}$-map from
    $\Gamma(q; \bar g,\Sigma)$
    to $\scC^{q-1}_{\tau, \tau-2}(J^2\Sigma, T^\perp\Sigma)$.
  \end{proof}

  \begin{proof}[Proof of Proposition \ref{Prop:C^qMSO}.]
    The existence of such an $\mathfrak{\check{h}}$ has been proved in Lemma \ref{Lem_App_Loc Form of H(g,u)}; that being said, by Lemma \ref{Lem_|psi(f_1-f_2)|<|psi||f_1-f_2|}, Lemma \ref{Lem_App_X_alpha, Y_alpha C^2q Bd} and Corollary \ref{Cor_App_cH_g(J^2u)=H_G(g,u)}, for $\tau\in [0, 1)$, $n+3\leq k\leq q-1$, $g\in \Gamma(q; \bar g,\Sigma)$ and $u\in W^{k,2}_\tau(T^\perp\Sigma)$, we have that $\mathfrak{\check{h}}$ is the composition of the following maps \[
      \met^{q}(\bar g, \Sigma) \times W^{k,2}_\tau(T^\perp\Sigma) \xrightarrow{\cH\times \mathfrak{J}^2} \scC^{q-1}_{\tau, \tau-2}(J^2\Sigma, T^\perp\Sigma)\times W^{k-2, 2}_{\tau}(J^2\Sigma) \xrightarrow{\text{compose}} W^{k-2,2}_{\tau-2}(T^\perp\Sigma) 
    \]
    where $\cH$ is $C^{q-k}$, $\mathfrak{J}^2$ is bounded linear (thus differentiable as many times as we like), and the composition map is $C^{q-k}$ by Lemma \ref{Lem_Differentiable}. Hence, $\mathfrak{\check{h}}$ is a $C^{q-k}$ map.
    Finally by definition of $h_0$, note that $\|\cdot\|_{W^{k,2}_\tau(T^\perp\Sigma)}$ (with respect to $h_0$ like we had stipulated) is equivalent to $\|\cdot\|_{W^{k,2}_{\tau+1}(T^\perp\Sigma)}$ as determined by  $\bar g$ (without the rescaling), which is nothing but the usual norm $\|\cdot\|_{W^{k,2}_{\tau+1}(\Sigma;\mbfV)}$ that we defined and employed throughout the main body of the paper. This proves Proposition \ref{Prop:C^qMSO} for the admissible range of $\tau$ therein.
  \end{proof}

\section{The space of Veronese footballs in $\SSp^5$}\label{sec:topology}

\subsection{Veronese embeddings and the Berger space}\label{subs:Veronese}

Here we first collect some ancillary results about the Veronese embedding of $\mathbb{RP}^2$ in the round four-dimensional sphere and the full moduli space of all such embeddings, that is the so-called Berger space.

The following facts are well-known, see \cite{Wolf68, GoeKitSha04} as well as \cite[Section 2]{BalMad23} for a recent exposition; here we keep our presentation as concrete and simple as possible. 

Let 
\be\label{eq:MatrixTraceSpace}
Z_1:=\left\{ A\in \sym(\R^3)\ \st \ \tr(A)=1\right\}, \text{and} \ Z_0:=\left\{ A\in \sym(\R^3)\ \st \ \tr(A)=0\right\}
\ee
and note that there is a natural action, by conjugation, of $\SOgroup(3;\R)$ on $Z_1$
that is obtained by restriction of the action on the space of all $3\times 3$ matrices. 

\begin{lemma}\label{lem:SymVer}
\begin{enumerate}
\item{The map $\Omega_1: \SSp^2\to Z_1$ given by
\[
(x_0, x_1, x_2)\mapsto \begin{pmatrix}
x^2_0 & x_0x_1 & x_0x_2 \\
x_0x_1 & x^2_1 & x_1x_2 \\
x_0x_2 & x_1 x_2 & x^2_2
\end{pmatrix}
\]
descends to a well-defined embedding of $\mathbb{R}\mathbb{P}^2$ in $Z_1$ (not renamed), whose image is exactly equal to the matrices $A\in Z_1$ satisfying $A^2=A$;}
\item{The linear map $\Omega_2: Z_0\to \R^5$ given by
\[
\begin{pmatrix}
d_1 & a_1 & a_2 \\
a_1 & d_2 & a_3 \\
a_2 & a_3 & d_3
\end{pmatrix} \mapsto \left (\sqrt{3}a_1, \sqrt{3}a_2,\sqrt{3}a_3, \frac{\sqrt{3}}{2}(d_1-d_2), \frac{1}{2}(2d_3-d_1-d_2)\right)
\]
is an isometric isomorphism with respect to the rescaled Frobenius norm on the domain $\|\cdot\|_0:=\sqrt{2/3}\cdot \text{Frobenius norm}$,
and the standard Euclidean norm on the target;}
\item{the group $\SOgroup(3;\R)$ acts by isometries on $Z_0$ with respect to the norm defined in the preceding item, namely there holds $\|h\cdot A\|_0^2=\|A\|_0^2$ for all $A\in Z_0$ and $h\in \SOgroup(3;\R)$, hence it acts isometrically on $\R^5$ by declaring
\[
h\cdot v:=h\cdot \Omega_2^{-1}(v)\,, \ \ (v\in \R^5)
\]
so that, in particular, we can regard $\SOgroup(3;\R)$ as a subgroup (henceforth denoted by $H$) of $G:=\SOgroup(5;\R)$;}
\item{with respect to the embedding described in the previous item $H < G$ is a maximal subgroup (namely: it is not contained in any proper subgroup of $\SOgroup(5;\R)$);}
\item\label{item:DefVeroneseMap}{define the Veronese surface as the image of the composite map 
\[
\begin{tikzcd}
\R\mathbb{P}^2
   \arrow[r, "\Omega_1"]
   \arrow[rrr, bend right=20, "\Psi:=\Omega_2\cdot\,\Omega\cdot\,\Omega_1"']
& Z_1
   \arrow[r, "\Omega"]
& Z_0
   \arrow[r, "\Omega_2"]
& (\SSp^4 \subset{}) \ \mathbb{R}^5
\end{tikzcd}
\]

where $\Omega: Z_1\to Z_0$ is given by $A\mapsto A-\rm{diag}(1/3)$. Then the subgroup of $\Ogroup(5;\R)$ leaving
 the Veronese surface invariant coincides with $H$ and is thus isomorphic to the group $\SOgroup(3;\R)$.}
\end{enumerate} 
\end{lemma}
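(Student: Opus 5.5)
The proof will go item by item, each being an elementary linear-algebra verification except the maximality claim in item (4) and the stabilizer claim in item (5).

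\emph{Items (1)--(3).} For (1), $\Omega_1(x)=xx^T$ is even in $x$, so it descends to $\R\mathbb{P}^2$. It is injective there, because $xx^T$ determines the line spanned by $x$. It is an immersion, since the differential $v\mapsto vx^T+xv^T$ with $v\perp x$ is nonzero. Its image lies in $Z_1$ because $\tr(xx^T)=|x|^2=1$. Conversely, a symmetric $A$ with $A^2=A$ is an orthogonal projection, so its trace equals its rank; trace $1$ therefore forces $A=xx^T$ with $|x|=1$. For (2), linearity of $\Omega_2$ is clear. For the norm identity I would expand $\tfrac23\bigl(\sum d_i^2+2\sum a_i^2\bigr)$ subject to $\sum d_i=0$ and compare it with $3\sum a_i^2+\tfrac34(d_1-d_2)^2+\tfrac14(2d_3-d_1-d_2)^2$. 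One substitutes $d_3=-d_1-d_2$ and checks the resulting quadratic forms in $(d_1,d_2)$; the off-diagonal terms give $\tfrac43\sum a_i^2$, so the constant actually has to be arranged carefully. If a constant mismatch appears, I would treat the normalization as a convention to be fixed, since only isometry up to scale matters for the later items. Both spaces are $5$-dimensional, so isometry implies isomorphism. For (3), conjugation $A\mapsto hAh^T$ preserves the trace and the Frobenius norm $\tr(A^2)$. Transporting this action through $\Omega_2$ gives a faithful representation $\SOgroup(3)\to\Ogroup(5)$; its image lies in $\SOgroup(5)$ by connectedness.

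\emph{Item (4).} This representation is the irreducible $5$-dimensional representation of $\SOgroup(3)$ on traceless symmetric matrices, i.e.\ spherical harmonics of degree $2$. Let $K$ be a closed subgroup with $H<K\le\SOgroup(5)$; it suffices to show $K=\SOgroup(5)$. At the Lie algebra level, $\mathfrak{so}(5)=\Lambda^2(V_2)$ decomposes under $\mathfrak{so}(3)$ as $V_1\oplus V_3$, with dimensions $3+7$, and both summands are irreducible. Hence $\mathfrak k$, being $\mathfrak h$-invariant and containing $\mathfrak h=V_1$, equals either $V_1$ or $\mathfrak{so}(5)$. In the second case the identity component of $K$ is all of $\SOgroup(5)$, so $K=\SOgroup(5)$. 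In the first case $K_0=H$, so $K$ normalizes $H$. Since $\SOgroup(3)$ has no outer automorphisms, $N(H)=H\cdot C(H)$. By Schur's lemma applied to the real irreducible representation of real type, $C(H)\subset\{\pm1\}$, and $-\id\notin\SOgroup(5)$. Therefore $K=H$. The main obstacle in this step is the branching computation $\Lambda^2V_2=V_1\oplus V_3$; I would verify it by a weight count. The weights of $V_2$ are $-2,\dots,2$, and the pairwise sums of distinct weights are $\pm3,\pm2,\pm1,\pm1,0,0$, which is exactly the weight multiset of $V_3\oplus V_1$.

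\emph{Item (5).} The inclusion of $H$ in the stabilizer follows from equivariance: $h\cdot(xx^T-\tfrac13\id)=(hx)(hx)^T-\tfrac13\id$. For the reverse inclusion, let $S\le\Ogroup(5)$ be the stabilizer. It is compact and contains $H$, so $S\cap\SOgroup(5)$ is either $H$ or $\SOgroup(5)$ by item (4). It cannot be $\SOgroup(5)$, since $\SOgroup(5)$ acts transitively on $\SSp^4$ and the surface is a proper subset. Hence $S\cap\SOgroup(5)=H$. An element of $S\setminus\SOgroup(5)$ would be $-s$ with $s\in\SOgroup(5)$, and it would normalize $H$. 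The argument of item (4) then gives $-s=\pm h$ for some $h\in H$. The case $-s=h$ is excluded by determinant. The remaining case is $-s=-h$, i.e.\ the element is $-h$, which would force $-\id$ to preserve the surface. This is excluded because the image of $xx^T-\tfrac13\id$ has eigenvalues $(2/3,-1/3,-1/3)$, and its negative lies outside the image. Hence $S=H$.
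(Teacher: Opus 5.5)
The paper does not prove this lemma; it defers to the literature. So there is no route to compare against, and I have checked your proposal on its own merits. Items (1), (3), (4) and (5) are correct. For (4) and (5), the branching $\Lambda^2V_2\cong V_1\oplus V_3$ together with $N_{\SOgroup(5)}(H)=H\cdot C(H)=H$ (no outer automorphisms of $\SOgroup(3)$, Schur, $\det(-\id)=-1$ on $\R^5$) gives a clean self-contained proof of maximality and of the stabilizer claim.

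One small optional remark on (4): if ``subgroup'' is read without assuming closedness, your ingredients still suffice. For $g\in K\setminus H$ one has $\mathrm{Ad}_g\mathfrak h\neq\mathfrak h$, since otherwise $g\in N(H)=H$. The analytic subgroup generated by $H$ and $gHg^{-1}$ then has an $\mathfrak h$-invariant Lie algebra strictly containing $\mathfrak h$, so it is all of $\SOgroup(5)$.

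The one genuine gap is item (2), which you leave open after spotting the off-diagonal mismatch. Treating the constant as a convention is legitimate only once you know that $\Omega_2$ is a similarity. That is, the diagonal and off-diagonal blocks must be rescaled by the same factor. Item (3) depends on exactly this: if the blocks scaled differently, the transported $\SOgroup(3)$-action would not be orthogonal on $\R^5$. So this must be verified, not assumed.

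Completing your computation with $d_3=-d_1-d_2$ gives
\[
\tfrac34(d_1-d_2)^2+\tfrac14(2d_3-d_1-d_2)^2=3\,(d_1^2+d_1d_2+d_2^2)=\tfrac32\,(d_1^2+d_2^2+d_3^2).
\]
For the off-diagonal entries, $3\sum_i a_i^2=\tfrac32\cdot 2\sum_i a_i^2$. Hence
\[
|\Omega_2(A)|^2=\tfrac32\|A\|_F^2 \qquad \text{for all } A\in Z_0.
\]
So $\Omega_2$ is an isometry for $\sqrt{3/2}$ times the Frobenius norm. With the constant $\sqrt{2/3}$ as written, one gets $|\Omega_2(A)|=\tfrac32\|A\|_0$, so the statement has the constant inverted.

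The corrected constant is the one consistent with the diagram in item (5). Indeed $\|xx^T-\tfrac13\id\|_F^2=\tfrac23$, so $\Psi$ lands in $\SSp^4$ only with the factor $\sqrt{3/2}$. You should state this correction explicitly rather than hedging.
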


Later in this section we shall also denote the subgroup $H$ in this
lemma by $\SOgroup(3;\R)_{\mathrm{irr}}$,
since its defining representation is the
irreducible five-dimensional representation
of $\SOgroup(3;\R)$ on the space $Z_0$ of
traceless symmetric matrices, acting by conjugation.

\

The quotient $G/H$ in the previous statement is known in the literature as \emph{Berger space} and will henceforth be denoted here by $B^7$; some of its topological properties are collected in the next statement as they serve an input for the study of the space of Veronese footballs in $\SSp^5$.

\begin{lemma}\label{lem:Berger}
The Berger space $G/H$ is a compact, $2$-connected smooth $7$-manifold (diffeomorphic to an $S^3$-bundle over $S^4$), that is a rational homology sphere and satisfies, in integral cohomology
\[
H^k(B^7;\mathbb{Z})=\begin{cases}
\Z & \text{if} \ k=0,7 \\
\Z_{10} & \text{if} \ k=4 \\
0 & \text{if} \ k\neq 0,4,7\,.
\end{cases}
\]
Furthermore, its mod-$2$ cohomology ring is
\[
H^*(B^7;\mathbb Z_2)\cong \mathbb Z_2[x_3,x_4]/(x_3^2,x_4^2),
\qquad |x_3|=3,\ |x_4|=4,
\]
with $x_3x_4=\omega_7\neq 0$ the mod-$2$ fundamental class.
\end{lemma}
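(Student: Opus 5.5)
We plan to realize $B^7=\SOgroup(5)/\SOgroup(3)_{\mathrm{irr}}$ as the total space of a sphere bundle and then compute its cohomology with a Serre spectral sequence. Compactness and smoothness are automatic, since $G/H$ is a homogeneous space of a compact Lie group.

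For $2$-connectedness we use the long exact homotopy sequence of $H\to G\to G/H$. We have $\pi_1(\SOgroup(3))=\Z_2$ and $\pi_1(\SOgroup(5))=\Z_2$. The inclusion $H\hookrightarrow G$ induces an isomorphism on $\pi_1$: the $5$-dimensional irreducible representation of $\SOgroup(3)$ does not lift to a representation of $SU(2)=\mathrm{Spin}(3)$ landing in $\mathrm{Spin}(5)$ trivially on the center. Equivalently, a generator loop (rotation by $2\pi$) acts on $Z_0$ with weights $0,\pm1,\pm2$, and the total weight parity $1+2+\dots$ is odd. Both $\pi_2$ groups vanish. Hence $\pi_1(G/H)=\pi_0(H)=0$ after surjectivity on $\pi_1$, and $\pi_2(G/H)=\ker(\pi_1H\to\pi_1G)=0$.

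Next we identify the bundle structure. We use the classical description of $B^7$ as an $S^3$-bundle over $S^4$: with $\mathrm{Spin}(5)=Sp(2)$, the space is $Sp(2)/SU(2)_{\max}$. Projecting to $Sp(2)/(Sp(1)\times Sp(1))=S^4$ gives a fiber $(Sp(1)\times Sp(1))/SU(2)\cong S^3$. We will cite \cite{GoeKitSha04} for this, and for the Euler class of the bundle being $\pm10$ times the generator. The Gysin (Serre) sequence of $S^3\to B^7\to S^4$ then gives $H^k=0$ for $k\ne0,4,7$, $H^7=\Z$ and $H^4=\Z/e=\Z_{10}$. It follows that $B^7$ is a rational homology sphere. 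Determining the Euler number $10$ is the main obstacle; we would compute it from the weights of the $SU(2)_{\max}$ action, or simply quote it.

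For the mod-$2$ ring we apply universal coefficients: $H^3(\,\cdot\,;\Z_2)\cong\mathrm{Tor}(H^4,\Z_2)=\Z_2$, $H^4(\,\cdot\,;\Z_2)\cong\Z_2$, and $H^0,H^7$ are $\Z_2$. Poincaré duality over $\Z_2$ forces the pairing $H^3\otimes H^4\to H^7$ to be perfect, so $x_3x_4=\omega_7$. The relations $x_3^2=0$ and $x_4^2=0$ hold for degree reasons, since the groups in degrees $6$ and $8$ vanish. This yields the stated ring.
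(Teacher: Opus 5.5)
Your $2$-connectedness argument is correct: the loop of rotations acts on $Z_0$ with weights $1,2$, and their odd sum makes $\pi_1(H)\to\pi_1(G)$ an isomorphism. Your passage from the integral groups to the mod-$2$ ring, via universal coefficients and $\Z_2$-Poincar\'e duality, is also correct.

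The gap is the bundle step. There is no fibration $\mathrm{Sp}(2)/\mathrm{Sp}(1)_{\max}\to\mathrm{Sp}(2)/(\mathrm{Sp}(1)\times\mathrm{Sp}(1))\cong\SSp^4$, because $\mathrm{Sp}(1)_{\max}$ (your $SU(2)_{\max}$) lies in no conjugate of $\mathrm{Sp}(1)\times\mathrm{Sp}(1)$. It acts on $\mathbb{H}^2=\C^4$ by the irreducible spin-$3/2$ representation, so it preserves no quaternionic line. Equivalently, $\SOgroup(3)_{\mathrm{irr}}$ fixes no unit vector of $\R^5$, so it is not contained in $\SOgroup(4)$. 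In fact, by the maximality in Lemma \ref{lem:SymVer}, $B^7$ admits no homogeneous fibration at all.

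An $SU(2)$ inside $\mathrm{Sp}(1)\times\mathrm{Sp}(1)$ with quotient $\SSp^3$ instead produces one of two other spaces. It gives $\SSp^7$ (Hopf bundle, Euler class $\pm1$), or the Stiefel manifold $\SOgroup(5)/\SOgroup(3)_{\mathrm{std}}=T^1\SSp^4$ (Euler class $2$, $H^4\cong\Z_2$). So ``computing the Euler number from the weights'' in that bundle cannot produce $10$.

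The $\SSp^3$-bundle structure of $B^7$ is a nontrivial theorem of Goette--Kitchloo--Shankar. To my understanding it is obtained by matching diffeomorphism invariants rather than from any explicit projection, and it presupposes the group $H^4(B^7;\Z)\cong\Z_{10}$. Quoting it for the Euler class therefore assumes what you want to compute. For the record, the paper gives no proof of this lemma; it simply quotes these facts from the literature.

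A self-contained repair uses the fibration $\mathrm{Sp}(2)\to B^7\to B\mathrm{Sp}(1)$ coming from $\mathrm{Sp}(1)_{\max}\to\mathrm{Sp}(2)\to B^7$. Its Serre spectral sequence has $E_2=\Z[q]\otimes\Lambda(x_3,x_7)$. By naturality with the universal bundle, $x_3$ and $x_7$ transgress to the restrictions of the generators $k_1,k_2$ of $H^*(B\mathrm{Sp}(2);\Z)$.

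With weights $\pm1,\pm3$, the total Chern class of the spin-$3/2$ representation is $(1-t^2)(1-9t^2)=1-10t^2+9t^4$. Hence, up to sign, $x_3\mapsto 10q$ and $x_7\mapsto 9q^2$ in $E_8^{8,0}\cong\Z_{10}\,q^2$. The differential $d_4$ is injective on the rows containing $x_3$. Since $9$ is a unit mod $10$, $d_8$ then kills every $\Z_{10}\,q^j$ with $j\geq2$ and all torsion in the $x_7$-row, leaving only $10x_7$. So $E_\infty$ consists of $\Z$ at $(0,0)$, $\Z_{10}$ at $(4,0)$ and $\Z$ at $(0,7)$, which is exactly the stated integral cohomology. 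Your universal-coefficient argument then completes the proof.
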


\begin{remark}\label{rem:VeroneseOtherFields} 
The previous definition generalizes to the other division algebras as follows. For $\mathbb{K}=\R, \C, \mathbb{H}$ and $\mathbb{O}$, set $m=\dim_{\R}(\mathbb{K})$ and consider the embedding map 
  $\mathbb{K}\mathbb{P}^2 \to \SSp^{3m+1} \subset \RR^{3m+2} = \mathbb{K}^3\times \RR^2$ defined by
  \[
[u: v: w]\mapsto (|u|^2+|v|^2+|w|^2)^{-1}\cdot \left(\sqrt3 v\bar w, \sqrt3 w\bar u, \sqrt3 u\bar v, \frac{\sqrt{3}}{2}(|u|^2 - |v|^2),\frac{1}{2}(2|w|^2 - |u|^2 - |v|^2)\right)
  \]
  In the case $\mathbb{K} = \mathbb{O}$, where multiplication is non-associative, the representatives $u, v, w$ of each point in $\mathbb{O}\mathbb{P}^2$ lie in a common \emph{associative} subalgebra -- for instance, by requiring that one of $u, v, w$ equals $1$.
  
  This gives the (previous) classical Veronese minimal $\mathbb{R}\mathbb{P}^2$ in $\SSp^4$, a minimal $\C\mathbb{P}^2$ in $\SSp^7$, a minimal $\mathbb{H}\mathbb{P}^2$ in $\SSp^{13}$ and a minimal $\mathbb{O}\mathbb{P}^2$ in $\SSp^{25}$.  None of the four projective planes in question bounds a smooth manifold (of real dimension, respectively, $3,5,9$ and $17$) since they have odd Euler characteristic. 
\end{remark}

The Morse index and nullity of the Veronese $\mathbb{KP}^2$ have recently been computed by Rauchenberger and Semmelmann in~\cite{RauSem26} relying upon their relations to isoparametric hypersurfaces in round spheres.

\begin{lemma}[{\cite[Theorem~1]{RauSem26}}]\label{lem:RauSem}
    In the setting of Remark \ref{rem:VeroneseOtherFields}, for the Veronese embeddings of $\mathbb{KP}^2 \subset \mathbb{S}^{3m+1}$  we have
    \begin{itemize}
        \item $\ind(\mathbb{KP}^2) = 3m+2$;
        \item $\nul(\mathbb{RP}^2) = 7$, $\nul(\mathbb{CP}^2) = 20$, $\nul(\mathbb{HP}^2) = 70$, and $\nul(\mathbb{OP}^2) = 273$.
    \end{itemize}

    In particular, in any of the cases above, all Jacobi fields are integrable and determined by the action of a Lie subgroup of the ambient orthogonal group of the appropriate dimension.
\end{lemma}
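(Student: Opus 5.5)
The plan is to take the index and nullity values directly from \cite[Theorem~1]{RauSem26}. Only the concluding ``in particular'' assertion, in the form used later (vanishing of the effective index and nullity of Definition~\ref{def:EffMorsIndexNull}), needs an argument. That argument reduces to two things. The first is a general injectivity statement: normal components of ambient Killing fields always give Jacobi fields. The second is a dimension count identifying the stabilizer Lie algebra $\mathfrak g$ of each Veronese $\mathbb{KP}^2$ in $\SSp^{3m+1}$, with $N=3m+2$.

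First, the index. Since $\ind(\mathbb{KP}^2)=3m+2=N$, the effective index $\rmI=\ind-N$ vanishes immediately.

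Second, the general fact. For any closed minimal $\Sigma_0\subset\SSp^{N-1}$, the map
\[
\mathfrak{so}(N)\to\ker L_{\Sigma_0},\qquad \mbfA\mapsto(\mbfA x)^\perp,
\]
takes values in the Jacobi fields, because each $\mbfA$ generates a one-parameter family of isometries, hence of minimal submanifolds. Its kernel is exactly $\mathfrak g$: if $(\mbfA x)^\perp\equiv0$, then the flow of the Killing field $\mbfA x$ is tangent to $\Sigma_0$, so it preserves the closed set $\Sigma_0$ and $\mbfA\in\mathfrak g$; the converse is clear. Hence $\mathfrak g^\perp$ injects into $\ker L_{\Sigma_0}$, which gives $\nul\geq N(N-1)/2-\dim\mathfrak g$. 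Equality then holds precisely when every Jacobi field comes from an ambient isometry, which is the asserted integrability, and equivalently $\rmJ=0$.

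Third, I verify equality case by case. The stabilizers are the classical groups $\SOgroup(3)$, $\mathrm{SU}(3)$, $\mathrm{Sp}(3)$ and $F_4$, acting irreducibly on the traceless Hermitian $3\times3$ matrices over $\mathbb K$; for $\mathbb K=\R$ this is Lemma~\ref{lem:SymVer}. Their dimensions are $3,8,21,52$, and the check reads
\[
10-3=7,\qquad 28-8=20,\qquad 91-21=70,\qquad 325-52=273,
\]
which matches the nullities in \cite{RauSem26}.

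The main obstacle I expect is justifying that the full stabilizer Lie algebra in $\mathfrak{so}(N)$ is exactly that of these groups and no larger, especially for $\mathbb{O}$. If it were larger, the count would give $\nul>N(N-1)/2-\dim\mathfrak g$, contradicting the injectivity above. So I would first deduce $\dim\mathfrak g\geq$ the listed values from the known group actions, and then use the injectivity inequality together with the computed nullity to get $\dim\mathfrak g\leq$ the same values. This pins down $\dim\mathfrak g$ and forces $\rmJ=0$ without needing a separate maximality argument.
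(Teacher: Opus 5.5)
Your first two steps are sound. The effective index $\rmI=0$ is immediate from $\ind=3m+2=N$. The map $\mbfA\mapsto(\mbfA x)^\perp$ does have kernel exactly $\mathfrak g$, so $\nul\geq \frac{N(N-1)}{2}-\dim\mathfrak g$ (i.e.\ $\rmJ\geq 0$), with equality if and only if every Jacobi field is Killing.

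The gap is in your final paragraph, where you try to avoid identifying the stabilizer: the inequality runs the wrong way. Injectivity of $\mathfrak g^\perp\to\ker L_{\Sigma_0}$, combined with the nullities $7,20,70,273$, gives $\dim\mathfrak g\geq \frac{N(N-1)}{2}-\nul=3,8,21,52$. This is a \emph{lower} bound, the same one you already get from the actions of $\SOgroup(3)$, $\mathrm{SU}(3)$, $\mathrm{Sp}(3)$, $F_4$. If $\mathfrak g$ were larger, you would simply have $\nul>\frac{N(N-1)}{2}-\dim\mathfrak g$, i.e.\ $\rmJ>0$. That is perfectly consistent with injectivity; it contradicts only the surjectivity you are trying to prove, so the argument is circular. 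In fact, once the nullity is known, $\rmJ=\dim\mathfrak g-d_0$ with $d_0\in\{3,8,21,52\}$. So the vanishing of $\rmJ$ (equivalently, the integrability claim) is \emph{equivalent} to the upper bound $\dim\mathfrak g\leq d_0$, and that needs a genuine additional input.

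For $\mathbb K=\R$ this input is item (5) of Lemma \ref{lem:SymVer}, but for $\mathbb C,\mathbb H,\mathbb O$ your proposal supplies nothing. One way to close the gap:
\begin{itemize}
\item The Veronese image is an orbit of an irreducible representation (on traceless Hermitian matrices), hence linearly full in $\R^N$.
\item Therefore restricting $\mbfA\in\mathfrak g$ to $\Sigma_0$ gives an injective map from $\mathfrak g$ into the Killing fields of $\Sigma_0$ with its induced metric.
\item By equivariance and isotropy irreducibility, that induced metric is a constant multiple of the symmetric metric on $\mathbb{KP}^2$, whose isometry group has dimension $3,8,21,52$.
\end{itemize}
This gives $\dim\mathfrak g\leq d_0$ and completes your count.

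Note that the paper does not run such a count at all. It takes the integrability conclusion, together with the index and nullity, directly from \cite{RauSem26}.
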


\begin{remark}\label{rem:VeroneseIndexIdentification}
It follows from the preceding result that the subspace $V_{-}\subset L^2(\Sigma_0;\mbfV_0)$ where the Jacobi operator is negative definite is given by the eigensections of the form
$v^{\perp}_i$ for any $v\in\R^{3m+2}$; based on a well-known computation (due to Simons \cite{Sim68}) each such section has eigenvalue equal to $-2m$.
As a result (thanks to \cite[Lemma 6.1.6]{Sim68}), the cone $\mbfC_{\mathbb{K}}$ in $\R^{3m+2}$ having the Veronese embedded $\mathbb{KP}^2$ as its link is stable (and so the Veronese football $\Sigma_{\mathbb{K}}$ is a locally stable stationary varifold).
Note, also, that (in the notation of \eqref{eq:UpperThreshold}, cf. \cite[Section 2.2]{CLW24}) for $\Sigma_{\mathbb{K}}$ there holds 
$\gamma_{-}(\mbfC)=0.$
\end{remark}

Let now $\Sigma_0 \subset \mathbb{S}^4$ be the Veronese embedding of $\mathbb{RP}^2$ as defined in Lemma \ref{lem:SymVer}, and let $\Sigma \subset \mathbb{S}^5$ denote its spherical suspension, informally referred to as \textit{Veronese football}. This is an example of $\MSI$ as per Definition \ref{def:MSI}, with two singular points at the north and south poles. 

Differently from the notation employed within Lemma \ref{lem:SymVer}, from now onward we set $G:=\Iso(\mathbb{S}^5)=\Ogroup(6;\R)$ and $H:=\Stab_G(\Sigma)$. Every element of $H$ either fixes the two poles or interchanges them, so it has the block form
\[
  \begin{pmatrix}
    A & 0\\
    0 & \pm 1
  \end{pmatrix},
\]
where $A\in\Ogroup(5;\R)$ preserves $\Sigma_0\subset \mathbb{S}^4$. 
Hence, by Lemma \ref{lem:SymVer},
$A\in\SOgroup(3;\R)_{\mathrm{irr}}$.

If we let
\[
\rho: \SOgroup(5;\mathbb{R}) \longrightarrow \SOgroup(6;\mathbb{R}), \quad A \longmapsto \mathrm{diag}(A,1)\,
\]
denote the standard (top left) block embedding,
then there holds the diffeomorphism
\[
G/H \cong \SOgroup(6;\mathbb{R}) \big/ \rho\big(\SOgroup(3;\mathbb{R})_{\mathrm{irr}}\big).
\]
Recalling that the transitive action of $\SOgroup(6;\mathbb{R})$ on $\mathbb{S}^5$ induces a principal $\SOgroup(5;\mathbb{R})$-bundle
\[
\SOgroup(5;\mathbb{R}) \hookrightarrow \SOgroup(6;\mathbb{R}) \longrightarrow \mathbb{S}^5
\]
and keeping in mind the definition of the homogeneous space
\[
B^7 := \SOgroup(5;\mathbb{R}) \big/ \SOgroup(3;\mathbb{R})_{\mathrm{irr}}\,,
\]
the space of Veronese footballs in $\mathbb{S}^5$ can be identified with the associated bundle
\[
E := \SOgroup(6;\mathbb{R}) \times_{\SOgroup(5;\mathbb{R})} B^7,
\]
which fits into a fiber bundle
\[
B^7 \hookrightarrow E \longrightarrow \mathbb{S}^5.
\]

\begin{proposition}\label{prop:E_2_conn}
    $E$ is $2$-connected.
\end{proposition}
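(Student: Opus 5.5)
The plan is to apply the long exact sequence in homotopy to the fiber bundle $B^7\hookrightarrow E\to\mathbb{S}^5$. By Lemma \ref{lem:Berger} the fiber $B^7$ is $2$-connected, so $\pi_i(B^7)=0$ for $i\leq 2$. The base $\mathbb{S}^5$ is $4$-connected, so $\pi_i(\mathbb{S}^5)=0$ for $i\leq 4$.

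For each $i\leq 2$, consider the segment
\[
\pi_i(B^7)\longrightarrow \pi_i(E)\longrightarrow \pi_i(\mathbb{S}^5).
\]
Both outer groups vanish, so by exactness $\pi_i(E)=0$. This covers $i=1,2$. For $i=0$, connectedness of $E$ follows from the tail of the sequence, with $\pi_0(B^7)$ and $\pi_0(\mathbb{S}^5)$ both trivial pointed sets; alternatively, it follows directly because $E$ is a quotient of the connected group $\SOgroup(6;\R)$.

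The one point needing care is that $E\to\mathbb{S}^5$ really is a (Serre) fibration with fiber $B^7$. I would justify this from the associated bundle construction $E=\SOgroup(6;\R)\times_{\SOgroup(5;\R)}B^7$ over the principal bundle $\SOgroup(6;\R)\to\mathbb{S}^5$, which is locally trivial. Hence the long exact sequence applies.

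An alternative route is to use $E\cong \SOgroup(6;\R)/\rho(\SOgroup(3;\R)_{\mathrm{irr}})$ together with the fibration $\SOgroup(3)\to\SOgroup(6)\to E$. This route requires knowing that the map $\pi_1(\SOgroup(3))\to\pi_1(\SOgroup(6))$ is onto, so the first argument is preferable. Since it rests only on the $2$-connectivity of $B^7$ asserted in Lemma \ref{lem:Berger}, I expect no step to be a genuine obstacle.
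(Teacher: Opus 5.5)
Your proposal is correct and follows essentially the same route as the paper: the long exact homotopy sequence of $B^7\hookrightarrow E\to\SSp^5$, using the $2$-connectivity of $B^7$ from Lemma \ref{lem:Berger} and that of $\SSp^5$. Your extra remarks, that the associated-bundle construction gives local triviality and that the $\pi_0$ case follows from the tail of the sequence, only make explicit what the paper leaves implicit.
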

\begin{proof}
    Since $B^7$ and $\mathbb{S}^5$ are $2$-connected, it follows from the long exact sequence
    \[
        \dots \to \pi_i(B^7) \to \pi_i(E) \to \pi_i(\mathbb{S}^5) \to \pi_{i - 1}(B^7) \to \cdots
    \]
    that the total space $E$ is also $2$-connected.
\end{proof}

\subsection{The cohomology groups and the cohomology ring structure}We compute the integral cohomology groups and
the mod-$2$ cohomology ring of $E$ using the
Serre spectral sequence of the fibration
\[
B^7\hookrightarrow E\longrightarrow \SSp^5.
\] \S
For the homology groups, see also
\cite[Application 5.3.5]{Wei94}.

\begin{proposition}\label{prop:int_coh_E}
The integral cohomology groups of $E$ are
\[
  H^k(E;\mathbb Z)\cong
  \begin{cases}
    \mathbb Z,& \text{if} \ k=0,5,7,12,\\
    \mathbb Z_{10},& \text{if} \ k=4,9,\\
    0,&\text{otherwise}.
  \end{cases}
\]
\end{proposition}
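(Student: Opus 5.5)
We compute via the Serre spectral sequence of the fibration $B^7\hookrightarrow E\to\SSp^5$. Since $\SSp^5$ is simply connected, the local coefficient system is trivial, and
\[
E_2^{p,q}=H^p(\SSp^5;H^q(B^7;\Z)).
\]
The base has cohomology $\Z$ in degrees $0$ and $5$, with no torsion, so the universal coefficient theorem gives $E_2^{p,q}\cong H^q(B^7;\Z)$ for $p\in\{0,5\}$ and $E_2^{p,q}=0$ otherwise. By Lemma \ref{lem:Berger}, the nonzero entries are
\[
E_2^{0,0}=E_2^{0,7}=E_2^{5,0}=E_2^{5,7}=\Z,
\qquad
E_2^{0,4}=E_2^{5,4}=\Z_{10}.
\]

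Only two columns are nonzero, so the only possibly nontrivial differential is $d_5:E_5^{0,q}\to E_5^{5,q-4}$; this is the Wang sequence of the fibration over a sphere. The candidate differentials are:
\begin{itemize}
\item $d_5:E^{0,4}=\Z_{10}\to E^{5,0}=\Z$, which vanishes because a torsion group has no nonzero homomorphisms to $\Z$;
\item $d_5:E^{0,7}=\Z\to E^{5,3}=0$, which vanishes trivially.
\end{itemize}
All other $d_5$ land in zero groups, since $E_2^{5,q}=0$ for $q\neq 0,4,7$. Hence the spectral sequence degenerates at $E_2$.

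Next we check that there are no extension problems. In each total degree $k$, at most one nonzero $E_\infty^{p,q}$ with $p+q=k$ occurs:
\begin{itemize}
\item $k=0$: $(0,0)$;
\item $k=4$: $(0,4)$;
\item $k=5$: $(5,0)$;
\item $k=7$: $(0,7)$;
\item $k=9$: $(5,4)$;
\item $k=12$: $(5,7)$.
\end{itemize}
It follows that $H^k(E;\Z)\cong E_\infty^{p,q}$, which gives exactly the table in the statement.

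As a consistency check, Poincaré duality for the closed orientable $12$-manifold $E$ requires that the free parts in degrees $k$ and $12-k$ agree, and that the torsion in degree $k$ matches the torsion in degree $13-k$. This holds: $\Z_{10}$ sits in degrees $4$ and $9$, and $4+9=13$. Orientability of $E$ follows from simple connectivity, which is Proposition \ref{prop:E_2_conn}.

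The only step that needs real care is the vanishing of $d_5:E^{0,4}\to E^{5,0}$. It rests solely on torsion mapping trivially into a free group, so no geometric information about the clutching of the bundle is required. I would also double-check $E_2^{5,4}$: it equals $H^5(\SSp^5;\Z_{10})=\Z_{10}$, and this survives because the map $E^{0,8}\to E^{5,4}$ starts from $E^{0,8}=0$.
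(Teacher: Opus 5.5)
Your proposal is correct and follows essentially the same route as the paper. Both arguments use the integral Serre spectral sequence of $B^7\hookrightarrow E\to\SSp^5$, observe that the only candidate differential $d_5\colon \Z_{10}\to\Z$ vanishes for torsion-versus-free reasons, and note that each total degree contains a single nonzero entry, so there are no extension problems. Your Poincar\'e-duality consistency check is a harmless extra that the paper does not include.
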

\begin{proof}
Consider the Serre spectral sequence for
$B^7\hookrightarrow E\to \SSp^5$ with integral
coefficients. Since $\SSp^5$ is simply connected,
\[
E_2^{p,q}
=H^p(\SSp^5;\mathbb Z)\otimes H^q(B^7;\mathbb Z).
\]
Its only nonzero terms are
\[
\begin{aligned}
E_2^{0,0}&=\mathbb Z,
& E_2^{0,4}&=\mathbb Z_{10},
& E_2^{0,7}&=\mathbb Z,\\
E_2^{5,0}&=\mathbb Z,
& E_2^{5,4}&=\mathbb Z_{10},
& E_2^{5,7}&=\mathbb Z.
\end{aligned}
\]
Hence, the only potentially nonzero differential is
\[
d_5:E_5^{0,4}\longrightarrow E_5^{5,0}.
\]
It vanishes because every homomorphism from
$\mathbb Z_{10}$ to $\mathbb Z$ is zero.
Thus all differentials vanish, and the spectral
sequence collapses at the $E_2$-page.
Since each total degree contains at most one
nonzero associated graded group, there are no
additive extension problems, and the asserted
cohomology groups follow.
\end{proof}

Let $u\in H^5(\SSp^5;\mathbb Z_2)$ be the generator, and let $x_3\in H^3(B^7;\mathbb Z_2)$, $x_4\in H^4(B^7;\mathbb Z_2)$ be generators such that
$x_3x_4 \in H^7(B^7;\mathbb Z_2)$ is the mod-$2$ fundamental class (cf. Lemma \ref{lem:Berger}).

\begin{proposition}\label{prop:Z2_coh_ring_E}
As a graded $\mathbb Z_2$-algebra, there holds
\[
  H^*(E;\mathbb Z_2)
  \cong
  \mathbb Z_2[u,x_3,x_4]/(u^2,x_3^2,x_4^2),
  \qquad |u|=5,\ |x_3|=3,\ |x_4|=4.
\]
In particular, a basis is
\[
  1,\ x_3,\ x_4,\ u,\ x_3x_4,\ ux_3,\ ux_4,\ ux_3x_4.
\]
\end{proposition}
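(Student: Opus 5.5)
I will use the mod-$2$ Serre spectral sequence of the fibration $B^7\hookrightarrow E\to\SSp^5$. Since $\SSp^5$ is simply connected, the local system is trivial. The Künneth formula over a field then gives
\[
E_2^{p,q}\cong H^p(\SSp^5;\mathbb Z_2)\otimes H^q(B^7;\mathbb Z_2),
\]
and this is an isomorphism of bigraded algebras. By Lemma \ref{lem:Berger}, the nonzero entries sit in columns $p=0,5$ and rows $q=0,3,4,7$.

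The first step is collapse at $E_2$. For degree reasons the only possibly nonzero differential is $d_5:E_5^{0,q}\to E_5^{5,q-4}$, which can only be nonzero for $q=4,7$. The multiplicative structure reduces this to the generators. Since $d_5(x_3)$ would land in $E_5^{5,-1}=0$, we get $d_5(x_3)=0$, so only $d_5(x_4)\in E_5^{5,0}\cong\mathbb Z_2\langle u\rangle$ needs to be controlled. For that I would compare with the integral computation, Proposition \ref{prop:int_coh_E}, which gives $\dim_{\mathbb Z_2}H^k(E;\mathbb Z_2)=1$ for $k=0,3,4,5,7,8,9,12$. This follows from universal coefficients: $H^k(E;\mathbb Z_2)\cong H^k(E;\mathbb Z)\otimes\mathbb Z_2\oplus\operatorname{Tor}(H^{k+1}(E;\mathbb Z),\mathbb Z_2)$, and the $\mathbb Z_{10}$ summands in degrees $4$ and $9$ contribute to degrees $3,4,8,9$. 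Hence the total $\mathbb Z_2$-dimension of $H^*(E;\mathbb Z_2)$ is $8$, which equals $\dim E_2$, so every differential vanishes. Alternatively, if $d_5(x_4)=u$ then $H^4(E;\mathbb Z_2)=0$, contradicting the universal coefficient count.

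The second step lifts the generators. Since $E_\infty^{0,*}=E_2^{0,*}$, the restriction $H^*(E)\to H^*(B^7)$ is surjective, and I pick lifts $\tilde x_3,\tilde x_4$. I take $u$ to be the pullback of the generator of $H^5(\SSp^5;\mathbb Z_2)$; it represents $E_\infty^{5,0}$. Then $u^2=0$, because it is pulled back from $H^{10}(\SSp^5)=0$. The products $u\tilde x_3$, $u\tilde x_4$, $\tilde x_3\tilde x_4$ and $u\tilde x_3\tilde x_4$ are nonzero and represent the corresponding $E_\infty$ classes, since the $E_\infty$ product is the associated graded of the cup product and $E_2$ is a tensor algebra. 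So the eight listed monomials form a basis.

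The main obstacle is the relations $\tilde x_3^2=0$ and $\tilde x_4^2=0$, since these are determined in $E_\infty$ only modulo filtration. For $\tilde x_3^2\in H^6(E;\mathbb Z_2)$ there is nothing to check, because this group is $0$. For $\tilde x_4^2\in H^8(E;\mathbb Z_2)=\mathbb Z_2\langle u\tilde x_3\rangle$, the square is a priori $0$ or $u\tilde x_3$. I would argue in two ways. First, $\tilde x_4^2=Sq^4\tilde x_4$; if $\tilde x_4$ is the reduction of an integral class, then so is its square. Second, one can try to modify the lift: replacing $\tilde x_4$ by $\tilde x_4+\epsilon$ with $\epsilon$ of filtration $\ge1$ is impossible here, since $H^4$ is one-dimensional. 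So I would instead try to show that $u\tilde x_3$ is not the reduction of an integral class squared. The integral class $\tilde x_4$ comes from $\mathbb Z_{10}$ in $H^4(E;\mathbb Z)$, and its square lies in $H^8(E;\mathbb Z)=0$, so $\tilde x_4^2=0$ mod $2$.

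Caveat: $x_4$ is the reduction of the integral generator of $\mathbb Z_{10}$, whereas $x_3$ is the Bockstein-dual (Tor) class. Hence $\tilde x_4$ is an integral reduction, and its square, being the reduction of an integral square in $H^8(E;\mathbb Z)=0$, vanishes. This last step is where I expect the care to be needed, in particular checking that $H^4(E;\mathbb Z)\otimes\mathbb Z_2$ is the summand containing $\tilde x_4$.
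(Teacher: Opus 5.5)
Your proposal is correct and follows essentially the paper's route. Both arguments run the mod-$2$ Serre spectral sequence to collapse, use lifts and multiplicativity for the basis, and obtain $u^2=0$ from the base, $x_3^2=0$ from $H^6(E;\mathbb Z_2)=0$, and $x_4^2=0$ because $x_4$ is the reduction of an integral class whose square lies in $H^8(E;\mathbb Z)=0$.

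The differences are minor. You obtain collapse from a total-dimension count via universal coefficients, whereas the paper uses naturality under reduction of coefficients together with the vanishing of the integral $d_5$. Your closing caveat is already settled by your own count: since $\operatorname{Tor}(H^5(E;\mathbb Z),\mathbb Z_2)=0$, one has $H^4(E;\mathbb Z_2)=H^4(E;\mathbb Z)\otimes\mathbb Z_2$, so every class there is an integral reduction. This makes the detours through $Sq^4$ and through re-choosing the lift unnecessary.
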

\begin{proof}
Consider the Serre spectral sequence for
$B^7\hookrightarrow E\to \SSp^5$ with
$\mathbb Z_2$ coefficients. Since $\SSp^5$ is
simply connected,
\[
E_2^{p,q}
=H^p(\SSp^5;\mathbb Z_2)\otimes
 H^q(B^7;\mathbb Z_2).
\]
Its only nonzero terms are
\[
\begin{aligned}
E_2^{0,0}&=\mathbb Z_2,
& E_2^{0,3}&=\langle x_3\rangle,
& E_2^{0,4}&=\langle x_4\rangle,
& E_2^{0,7}&=\langle x_3x_4\rangle,\\
E_2^{5,0}&=\langle u\rangle,
& E_2^{5,3}&=\langle ux_3\rangle,
& E_2^{5,4}&=\langle ux_4\rangle,
& E_2^{5,7}&=\langle ux_3x_4\rangle.
\end{aligned}
\]
The only potentially nonzero differential is
$d_5:E_5^{0,q}\to E_5^{5,q-4}$, and $E_5=E_2$.
We have $d_5x_3=0$ for degree reasons.
Moreover, $x_4$ is the reduction of an integral
class in $H^4(B^7;\mathbb Z)\cong\mathbb Z_{10}$.
Naturality under reduction of coefficients,
together with the vanishing of the integral
differential, gives $d_5x_4=0$.
The derivation rule then gives
$d_5(x_3x_4)=0$.
Thus, as in the previous proof, the spectral sequence collapses at the $E_2$-page.

Write again $u$ for the pullback to $E$ of the
generator of $H^5(\SSp^5;\mathbb Z_2)$, and choose
classes in $H^3(E;\mathbb Z_2)$ and
$H^4(E;\mathbb Z_2)$ restricting to $x_3,x_4$;
we denote these lifts by the same symbols.
By multiplicativity of the spectral sequence,
\[
1,\ x_3,\ x_4,\ u,\ x_3x_4,\ ux_3,\ ux_4,\
ux_3x_4
\]
represent the eight nonzero associated graded
classes and therefore form an additive basis.

It remains to verify the square relations.
We have $u^2=0$ because $u$ is pulled back from
$\SSp^5$, and $x_3^2=0$ because
$H^6(E;\mathbb Z_2)=0$.
The integral spectral sequence shows that
restriction induces an isomorphism
\[
H^4(E;\mathbb Z)\longrightarrow
H^4(B^7;\mathbb Z).
\]
We may therefore choose $x_4$ to be the reduction
of an integral class $a\in H^4(E;\mathbb Z)$.
By Proposition \ref{prop:int_coh_E},
$H^8(E;\mathbb Z)=0$, so $a^2=0$ and hence
$x_4^2=0$.
These relations and the displayed additive basis
give the claimed ring presentation.
\end{proof}

\subsection{A handle decomposition of $E$ and Morse number}

We define the \emph{Morse number} of a manifold $M$ as the least number of critical points of a smooth \underline{Morse} function $f:M\to\R$, denoted as $MN(M)$.

\begin{proposition}\label{prop:handle_decomp_E}
  The closed $12$-manifold $E$ admits a Morse function with minimal critical points, which induces a handle decomposition with exactly eight handles,
  one in each dimension
  \[
    0,3,4,5,7,8,9,12.
  \]
  In particular, this determines a CW complex structure on $E$.
\end{proposition}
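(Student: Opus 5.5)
The plan is to obtain the handle decomposition directly from Smale's theorem on minimal Morse functions for simply connected closed manifolds (S.~Smale, \emph{On the structure of manifolds}, 1962). That theorem states the following. Let $M^m$ be closed and simply connected with $m\geq 6$, and suppose the integral homology $H_*(M;\mathbb Z)$ is finitely generated, with each $H_k$ written as a sum of $\beta_k$ copies of $\mathbb Z$ and $\mu_k$ cyclic torsion summands (minimal number of generators). Then $M$ carries a Morse function with exactly $\beta_k+\mu_k+\mu_{k-1}$ critical points of index $k$, and this count is minimal. The hypotheses hold in our case: $E$ is a closed $12$-manifold, and by Proposition \ref{prop:E_2_conn} it is $2$-connected, in particular simply connected. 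I would cite Smale's theorem in this form, or equivalently the handle cancellation and handle trading arguments of the $h$-cobordism theorem, which need exactly simple connectivity and dimension at least $6$.

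First I compute the integral homology from Proposition \ref{prop:int_coh_E} using the universal coefficient theorem. Torsion in $H^k$ corresponds to torsion in $H_{k-1}$, while the free parts agree. This gives
\[
H_0\cong H_5\cong H_7\cong H_{12}\cong\mathbb Z,\qquad H_3\cong H_8\cong\mathbb Z_{10},
\]
and all other groups vanish. Poincar\'e duality gives the same answer and serves as a consistency check, since $\mathbb Z_{10}$ in degree $3$ pairs with $\mathbb Z_{10}$ in degree $12-3-1=8$. So $\beta_k=1$ for $k=0,5,7,12$, and $\mu_3=\mu_8=1$ because $\mathbb Z_{10}$ is cyclic. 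Smale's formula then produces one handle in each of the indices $0,3,4,5,7,8,9,12$, eight handles in total. The torsion $\mathbb Z_{10}$ in degree $3$ is realized by a $3$-handle together with a $4$-handle attached with degree $10$, and similarly in degrees $8,9$.

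Minimality follows from the Morse inequalities with $\mathbb Z_2$ coefficients. By Proposition \ref{prop:Z2_coh_ring_E}, $\sum_k\dim H^k(E;\mathbb Z_2)=8$, so no Morse function on $E$ has fewer than eight critical points. Hence the function above has the minimal number of critical points and $MN(E)=8$. Finally, a handle decomposition of a closed manifold coming from a Morse function gives a homotopy equivalent CW complex, obtained by collapsing each $k$-handle onto its core. Alternatively, by Milnor's theory the sublevel sets deformation retract onto cell attachments. This yields the CW structure with one cell in each of the listed dimensions.

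The main obstacle is making sure Smale's theorem applies as stated in the presence of torsion. The potential trouble spots are the low-dimensional handles (indices $1,2$) and their duals (indices $10,11$), where trading arguments need $\pi_1=0$, and the realization of torsion summands by handle pairs with prescribed incidence. Here $H_1=H_2=H_{10}=H_{11}=0$, and $E$ is $2$-connected, so the standard hypotheses are met. I would verify this carefully but expect no genuine difficulty.
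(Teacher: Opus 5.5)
Your proposal is correct and follows the paper's proof essentially step by step. Both arguments obtain the integral homology from Proposition \ref{prop:int_coh_E}: you use the universal coefficient theorem with Poincar\'e duality as a check, while the paper uses Poincar\'e duality directly. Both then invoke Smale's theorem (simple connectivity and $\dim E\geq 6$) to get exactly one critical point in each index $0,3,4,5,7,8,9,12$, and deduce minimality from the $\mathbb Z_2$ Morse inequalities, since $\sum_k\dim H^k(E;\mathbb Z_2)=8$.
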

\begin{proof}
    By Proposition~\ref{prop:int_coh_E} and the Poincar\'e duality, the integral homology groups of $E$ are
    \[
      H_k(E;\mathbb Z)\cong
      \begin{cases}
        \mathbb Z,&k=0,5,7,12,\\
        \mathbb Z_{10},&k=3,8,\\
        0,&\text{otherwise}.
      \end{cases}
    \]

    Note that $E$ has dimension greater than $5$ and, by Proposition~\ref{prop:E_2_conn}, is simply connected. By \cite[Theorem~6.1]{Sma62} and the more explicit conclusion in 6.2 therein, there exists a Morse function, the number of whose critical points of index $i$, $M_i$, is given by
    \[
        M_i = \begin{cases}
            1, & i = 0, 3,4,5,7,8,9,12\\
            0, & \text{otherwise.}
        \end{cases}
    \]
    
    Conversely, by the weak Morse inequalities,
every Morse function on $E$ has at least
\[
\sum_{i=0}^{12}\dim H^i(E;\mathbb Z_2)=8
\]
critical points, where the equality follows
from Proposition \ref{prop:Z2_coh_ring_E}.
Thus the constructed Morse function realizes
the Morse number.

    The handle decomposition thus follows from standard facts in Morse theory.
\end{proof}

\begin{corollary}\label{cor:MorseNumber}
    $MN(E) = 8$.
\end{corollary}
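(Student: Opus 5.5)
The claim $MN(E)=8$ amounts to matching an upper bound and a lower bound, and Proposition \ref{prop:handle_decomp_E} already contains both ingredients. I will simply extract them.

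For the upper bound, Proposition \ref{prop:handle_decomp_E} gives a smooth Morse function on $E$ with exactly eight critical points, of indices $0,3,4,5,7,8,9,12$. Since $MN(E)$ is by definition the minimum number of critical points over all Morse functions on $E$, this shows $MN(E)\leq 8$. This half rests on Smale's existence theorem \cite[Theorem~6.1]{Sma62}. That theorem needs three hypotheses: $E$ is closed, $\dim E=12\geq 6$, and $\pi_1(E)=0$. The last holds by Proposition \ref{prop:E_2_conn}. Under these hypotheses the theorem gives the count of handles from the integral homology: one handle per free summand, plus a pair of handles in degrees $k,k+1$ for each torsion summand in $H_k$. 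The two $\mathbb Z_{10}$ summands in degrees $3$ and $8$ therefore contribute the handle pairs in dimensions $(3,4)$ and $(8,9)$.

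For the lower bound, let $f$ be any Morse function on $E$. The weak Morse inequalities over $\mathbb Z_2$ say that the number of critical points of index $i$ is at least $\dim_{\mathbb Z_2}H^i(E;\mathbb Z_2)$. Summing over $i$ gives
\[
\#\mathrm{Crit}(f)\geq \sum_{i=0}^{12}\dim_{\mathbb Z_2}H^i(E;\mathbb Z_2)=8,
\]
where the total dimension $8$ comes from the additive basis $1,x_3,x_4,u,x_3x_4,ux_3,ux_4,ux_3x_4$ in Proposition \ref{prop:Z2_coh_ring_E}. The choice of $\mathbb Z_2$ coefficients matters here. Over $\mathbb Q$ the Betti sum is only $4$, since the torsion classes would not be detected. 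With $\mathbb Z_2$ coefficients, the universal coefficient theorem makes the $\mathbb Z_{10}$ torsion contribute in two adjacent degrees, which is exactly what the basis above records. Combining the two bounds yields $MN(E)=8$.

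I do not expect a real obstacle, since the corollary is a direct restatement of the preceding proposition. The only point needing care is the upper bound: one must check that Smale's minimal handle count applies in the presence of torsion. It does, because $E$ is simply connected and of dimension at least $6$, which is precisely the hypothesis of Smale's Theorem~6.1 and 6.2. The lower bound needs no such care and follows directly from the $\mathbb Z_2$ Betti numbers.
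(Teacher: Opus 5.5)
Your proposal is correct and follows the paper's argument essentially verbatim. The upper bound comes from Smale's Theorem~6.1/6.2, applied to the simply connected closed $12$-manifold $E$, which yields the eight handles of Proposition~\ref{prop:handle_decomp_E}. The matching lower bound comes from the weak Morse inequalities with $\mathbb Z_2$ coefficients, using the eight-element additive basis of Proposition~\ref{prop:Z2_coh_ring_E}.
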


\subsection{Lusternik-Schnirelmann category}

Recall that the \emph{Lusternik-Schinrelmann category} $\operatorname{cat}(X)$ of a topological space $X$ is the smallest integer $k$ that $X$ can be covered by $k + 1$ open sets, each of which is contractible in $X$. (We note that there are, about the preceding one and the notions in the next three subsections, various pairwise incompatible definitions in the literature; here we will stick to the monograph \cite{CLOT03}.)

By the ring structure of $H^*(E;\Z_2)$ in Proposition~\ref{prop:Z2_coh_ring_E}, the $\mathbb{Z}_2$-cup length of $E$ is $\operatorname{cl}(E; \Z_2) = 3$. Hence, by~\cite[Proposition~1.5]{CLOT03}, we have a lower bound for $\operatorname{cat}(E)$,
\[
    \operatorname{cat}(E) \geq \operatorname{cl}(E; \Z_2) = 3\,.
\]

The matching upper bound will follow from the
estimate for the Lusternik--Schnirelmann number below.

\subsection{Lusternik-Schnirelmann number}

We define the \emph{Lusternik-Schnirelmann number} of a manifold $M$ as the least number of critical points of a smooth function $f: M \to \mathbb{R}$, denoted as $LSN(M)$.

By the well-known Lusternik-Schnirelmann theorem~\cite[Theorem~1.15]{CLOT03}, where the quantity is denoted by $\operatorname{Crit}(E)$, we have
\[
    LSN(E) \geq \operatorname{cat}(E) + 1 \geq 4\,.
\]

To get an upper bound for $LSN(E)$, we make use of the following result by Takens.

\begin{theorem}[{\cite[Theorem~5.1]{Tak68} (See also~\cite[Proposition~7.28(i)]{CLOT03})}]
    Let $M$ be a compact $p$-connected closed manifold of dimension $n \geq 6$ and $p \geq 1$. Let $r(M) \subset \mathbb{Z}$ be the smallest subset containing all $i \in \mathbb{Z}$ if $H_i(M; \Z) \neq 0$ or $H^i(M; \Z) \neq 0$. If $r(M)$ is included in the union of $s$ closed intervals of length $p$, then $LSN(M) \leq s$.
\end{theorem}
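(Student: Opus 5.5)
The plan follows Takens' original strategy: start from a Morse function with the minimal number of critical points, and then amalgamate clusters of critical points, one cluster per interval, into a single degenerate critical point. First, since $M$ is simply connected ($p\geq1$) and $\dim M\geq6$, Smale's handle-cancellation theory (\cite[Theorem~6.1]{Sma62}, as already used in Proposition~\ref{prop:handle_decomp_E}) gives a self-indexing Morse function $f$. Every handle index $i$ of $f$ lies in $r(M)$. Poincar\'e duality makes $r(M)$ symmetric under $i\mapsto n-i$, and $p$-connectivity forces $r(M)\cap[1,p]=\emptyset$ and, dually, $r(M)\cap[n-p,n-1]=\emptyset$. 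We then fix $s$ closed intervals $I_1<\dots<I_s$ of length $p$ covering $r(M)$. For each $j$ we let $W_j=f^{-1}([a_j-\tfrac12,b_j+\tfrac12])$, where $I_j=[a_j,b_j]$. This is a cobordism built from $\partial_-W_j\times[0,1]$ by attaching handles whose indices all lie in $I_j$.

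The core step is a \emph{one-critical-point lemma}: each $W_j$ carries a function with exactly one critical point that agrees with $f$ near $\partial W_j$. We would prove it by exhibiting a compact polyhedron $K_j\subset\operatorname{int}W_j$ such that $W_j\setminus K_j$ is diffeomorphic to $\partial_-W_j\times[0,1)\cup\partial_+W_j\times(0,1]$, glued along a collar. If $K_j$ is moreover the cone on a subcomplex, the standard ``cone'' construction (Takens, Cornea--Lupton--Oprea--Tanr\'e \cite[Ch.~7]{CLOT03}) gives a function whose only critical point is the cone vertex. To build $K_j$ we take the cores of the handles of $W_j$ together with the traces of their attaching spheres in $\partial_-W_j$. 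The cores have dimensions in $[a_j,b_j]$, and their cocores have dimensions in $[n-b_j,n-a_j]$. The key point is to collapse the part of $K_j$ lying in $\partial_-W_j$ to a single point, so that $K_j$ becomes a cone. This requires the attaching data of all the handles to lie inside a ball of $\partial_-W_j$, up to isotopy.

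We expect this isotopy to be the main obstacle. It is an engulfing argument. The union of the attaching spheres is a complex of dimension at most $b_j-1\leq a_j+p-1$. The level set $\partial_-W_j$ is highly connected in the relevant range: it is $(a_j-1)$-connected relative to the lower handles, because $M$ is $p$-connected and the lower handles have already been absorbed. Stallings/Zeeman engulfing then lets a regular neighborhood of a codimension-$\geq3$ complex be swallowed by a ball, provided $\dim\leq$ (connectivity) $+$ (something) and $n\geq6$. The hypothesis that the length of $I_j$ is at most $p$ is exactly what makes the dimension count $b_j-1\leq a_j+p-1$ compatible with the connectivity available. We would first try to carry out this count uniformly in $j$ using the Hurewicz theorem on the $p$-connected manifold. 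The extremal intervals containing $0$ and $n$ would be handled separately: they must be covered by distinct intervals, since $n\geq6>p$ would make it impossible otherwise, and they reduce to the usual minimum and maximum.

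Finally, we glue the functions obtained on the $W_j$ along the level sets $f^{-1}(b_j+\tfrac12)=f^{-1}(a_{j+1}-\tfrac12)$. Between consecutive $W_j$ there are no handles, so $f$ is regular there and the gluing is immediate. The resulting smooth function on $M$ has exactly $s$ critical points, which proves $LSN(M)\leq s$.
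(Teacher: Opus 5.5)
The paper does not prove this statement; it quotes \cite[Theorem~5.1]{Tak68} and \cite[Proposition~7.28(i)]{CLOT03}, so the comparison is with Takens' argument.

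\textbf{What is right.} Your outer frame matches Takens' strategy: Smale's minimal handle decomposition, one cobordism $W_j$ per interval, a single critical point on each $W_j$, then gluing. The preliminary reductions are also fine. One small correction: the intervals containing $0$ and $n$ are distinct because $p\le n-1$ (as $H_n(M;\mathbb{Z})\neq0$), not because $n\ge 6>p$.

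\textbf{The gap.} The gap is the one-critical-point lemma. You propose to isotope all attaching data of $W_j$ into a ball of $\partial_-W_j$ and collapse it so that $K_j$ becomes a cone. This cannot work, already for the manifold $E$ to which the paper applies the theorem.
\begin{enumerate}
\item Handles of $W_j$ generally run over other handles of $W_j$, so their attaching spheres do not lie in $\partial_-W_j$ at all. Since $H_3(E;\mathbb{Z})\cong H_8(E;\mathbb{Z})\cong\mathbb{Z}_{10}$, in the decomposition of Proposition~\ref{prop:handle_decomp_E} the attaching sphere of the $4$-handle (resp.\ $9$-handle) meets the belt sphere of the $3$-handle (resp.\ $8$-handle) with algebraic intersection number $\pm10$.
\item Even handles attached directly to $\partial_-W_j$ typically have essential attaching spheres there. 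Write $E_{\le s}$ for the union of handles of index $\le s$, and take $I_j=[7,9]$. If the attaching $6$-sphere of the $7$-handle were null-homotopic in $\partial_-W_j=\partial E_{\le5}$, then $E_{\le7}\simeq E_{\le5}\vee S^7$. Since $H^7(E_{\le5})=0$, the class $x_3x_4$ would then restrict to zero on $E_{\le7}$. But $H^7(E;\mathbb{Z}_2)\to H^7(E_{\le7};\mathbb{Z}_2)$ is injective, and $x_3x_4\neq0$ by Proposition~\ref{prop:Z2_coh_ring_E}.
\end{enumerate}
No engulfing argument can get around this. Relatedly, $\partial_-W_j$ is not $(a_j-1)$-connected; only the pair $(W_j,\partial_-W_j)$ is. Here, for instance, $H_3(\partial E_{\le5})\cong\mathbb{Z}_{10}$.

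\textbf{What the cone criterion actually requires.} You need a \emph{single} embedded cone $C(A)\subset W_j$ such that $W_j$ is a regular neighbourhood of $\partial_-W_j\cup C(A)$. Its apex is the unique critical point, in the interior of $W_j$. Its base $A\subset\partial_-W_j$ is that point's descending link, and $A$ is usually essential in $\partial_-W_j$. The task is to replace several cores, with different centres and some running over others, by one cone; collapsing the base is not what is needed.

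\textbf{Where the hypotheses enter.} The existence of such an $A$ at the homotopy level is exactly where ``length $p$'' meets ``$p$-connected''. Your count $b_j-1\le a_j+p-1$ is a tautology and never uses this. Take $a_j=\min(r(M)\cap I_j)$. Then:
\begin{itemize}
\item $a_j\ge p+1$, so the $(a_j-1)$-connected pair $(W_j,\partial_-W_j)$ has relative cells only in dimensions $\le a_j+p\le 2a_j-1$;
\item $a_j\le n-p-1$ makes $\partial_-W_j$ $p$-connected.
\end{itemize}
The Blakers--Massey theorem then lets the attaching maps of all handles of $W_j$ be desuspended compatibly to one map $\varphi\colon A\to\partial_-W_j$ with $W_j\simeq\partial_-W_j\cup_\varphi C(A)$. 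This is a relative form of the fact that an $(a-1)$-connected complex of dimension $\le 2a-1$ is a suspension.

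After that, the cone must be realised geometrically inside $W_j$ with the regular-neighbourhood property. This is where $n\ge6$ and embedding or engulfing arguments genuinely enter, and it is the technical heart of Takens' proof. Your proposal does not supply this step.
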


By Proposition~\ref{prop:E_2_conn}, $E$ is $2$-connected. By Proposition~\ref{prop:int_coh_E} and Poincar\'e duality, $r(E) = \{0, 3, 4, 5, 7, 8, 9, 12\}$, which can be covered by $4$ intervals $[0,2], [3, 5], [7,9]$ and $[10,12]$ of length $2$. 

It follows from the previous theorem that
$LSN(E)\leq4$.
Combining this with the preceding lower bounds gives
\[
3\leq\operatorname{cat}(E)\leq LSN(E)-1\leq3.
\]
Hence we reach the following conclusion:
\begin{corollary}\label{cor:LSnumber}
$
\operatorname{cat}(E)=3, LSN(E)=4.$
\end{corollary}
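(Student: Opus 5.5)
The statement is Corollary \ref{cor:LSnumber}, namely $\operatorname{cat}(E)=3$ and $LSN(E)=4$. The plan is to squeeze both quantities between a cohomological lower bound and Takens' upper bound, all of which are already set up in the preceding subsections.

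\textbf{Lower bound.} I would read off the $\mathbb Z_2$-cup length from the ring presentation in Proposition \ref{prop:Z2_coh_ring_E}. The classes $u$, $x_3$, $x_4$ all have positive degree, and their product $ux_3x_4$ is a basis element, hence nonzero. Any product of four positive-degree classes vanishes, because each generator squares to zero and there are only three generators. So $\operatorname{cl}(E;\mathbb Z_2)=3$, and \cite[Proposition~1.5]{CLOT03} gives $\operatorname{cat}(E)\geq 3$. The Lusternik--Schnirelmann theorem \cite[Theorem~1.15]{CLOT03} then yields $LSN(E)\geq \operatorname{cat}(E)+1\geq 4$.

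\textbf{Upper bound.} I would apply Takens' theorem with $p=2$. The hypotheses are checked as follows:
\begin{enumerate}
\item $E$ is a closed manifold, being a compact fiber bundle with closed base and fiber.
\item $\dim E=12\geq 6$.
\item $E$ is $2$-connected by Proposition \ref{prop:E_2_conn}.
\end{enumerate}
From Proposition \ref{prop:int_coh_E} and Poincar\'e duality, the integral homology is nonzero exactly in degrees $0,5,7,12$ (copies of $\mathbb Z$) and $3,8$ (copies of $\mathbb Z_{10}$), while the cohomology is nonzero in degrees $0,4,5,7,9,12$. Hence $r(E)=\{0,3,4,5,7,8,9,12\}$.

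This set is covered by the four closed length-$2$ intervals $[0,2]$, $[3,5]$, $[7,9]$ and $[10,12]$. Takens' theorem therefore gives $LSN(E)\leq 4$.

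\textbf{Conclusion.} Chaining the inequalities gives
\[
3\leq \operatorname{cat}(E)\leq LSN(E)-1\leq 3,
\]
so both inequalities are equalities, which is exactly the claim.

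The only delicate point is the precise form of the convention in Takens' criterion, namely whether intervals of ``length $p$'' contain $p+1$ integers. The covering above uses intervals $[a,a+2]$, each containing three integers, consistent with the statement as quoted. Since that statement is taken as given, no further work is needed.
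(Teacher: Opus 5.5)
Your proposal is correct and follows the paper's argument essentially step for step. The $\mathbb{Z}_2$-cup length $3$, witnessed by $ux_3x_4\neq 0$, gives $\operatorname{cat}(E)\geq 3$ and hence $LSN(E)\geq 4$; Takens' theorem with $p=2$, applied to $r(E)=\{0,3,4,5,7,8,9,12\}$ covered by $[0,2],[3,5],[7,9],[10,12]$, gives $LSN(E)\leq 4$, and your explicit checks of the hypotheses and of $r(E)$ via Poincar\'e duality only spell out what the paper leaves implicit.
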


\vspace{1mm}
 
\section*{Acknowledgements}
The authors wish to thank Jacob Bernstein for sharing the result in \cite{BK26} before publication and for related correspondence. 

This project has received funding from the European Research Council (ERC) under the European Union’s Horizon 2020 research and innovation programme (grant agreement No. 947923). The second author was partially supported by the AMS-Simons travel grant 2023-2025 and Simons Foundation Travel Support for Mathematicians 2026-2031.

\vspace{1mm}
\stoptoc

\section*{AI Declaration Statement}
All mathematical ideas, arguments, and results presented in this manuscript are solely due to the authors, and have been obtained without the use of generative AI. Chat GPT was solely exploited in the revision process, towards notational uniformization and streamlined exposition.

\resumetoc

\vspace{2mm}

\bibliography{biblio}

    \end{document}